\documentclass[aop,preprint]{imsart}

\RequirePackage[OT1]{fontenc}
\RequirePackage{amsthm,amsmath}
\RequirePackage{amsfonts,extarrows,mathrsfs,xcolor,graphicx,epstopdf,changepage,amssymb,caption,longtable,booktabs}
\RequirePackage[numbers]{natbib}
\RequirePackage[colorlinks,citecolor=blue,urlcolor=blue]{hyperref}


\startlocaldefs
\numberwithin{equation}{section}
\theoremstyle{plain}
\newtheorem{theorem}{Theorem}[section]
\newtheorem{proposition}{Proposition}[section]
\newtheorem{corollary}{Corollary}[section]
\newtheorem{lemma}{Lemma}[section]

\newtheorem{remark}{Remark}
\newtheorem{example}{Example}
\endlocaldefs

\begin{document}

\begin{frontmatter}
\title{On Limiting Behavior of Stationary
Measures for Stochastic Evolution Systems with
Small Noise Intensity\thanksref{T1}}
\runtitle{On Limiting Behavior of Stationary
Measures}
\thankstext{T1}{This work was supported
by the National Natural Science Foundation of China (NSFC)(Nos. 11371252, 11271356, 11371041, 11431014, 11401557),
Research and Innovation Project of Shanghai
Education Committee (No. 14zz120),
Key Laboratory of Random Complex Structures and Data Science,
Academy of Mathematics and Systems Science, CAS, and
the Fundamental Research Funds for the Central Universities (No. WK0010000048).}

\begin{aug}
\author{\fnms{Lifeng} \snm{Chen}\ead[label=e1]{1000360929@smail.shnu.edu.cn}},
\author{\fnms{Zhao} \snm{Dong}\ead[label=e2]{dzhao@amt.ac.cn}},
\author{\fnms{Jifa} \snm{Jiang}\thanksref{t1}\ead[label=e3]{jiangjf@shnu.edu.cn}}
\and
\author{\fnms{Jianliang} \snm{Zhai}\ead[label=e4]{zhaijl@ustc.edu.cn}}

\thankstext{t1}{Corresponding author}

\runauthor{L. Chen, Z. Dong, J. Jiang and J. Zhai}

\address{L. Chen\\
Mathematics and Science College\\
Shanghai Normal University\\
Shanghai 200234\\
People's Republic of China\\
\printead{e1}\\
}
\address{Z. Dong\\
Academy of Mathematics and Systems Science\\
Chinese Academy of Sciences\\
Beijing 100190\\
People's Republic of China\\
\printead{e2}\\
}
\address{J. Jiang\\
Mathematics and Science College\\
Shanghai Normal University\\
Shanghai 200234\\
People's Republic of China\\
\printead{e3}\\
}
\address{J. Zhai\\
Wu Wen-Tsun Key Laboratory of Mathematics\\
University of Science and Technology of China\\
Hefei Anhui 230026\\
People's Republic of China\\
\printead{e4}\\
}
\end{aug}

\begin{abstract}
The limiting behavior of stochastic evolution processes with small noise intensity $\epsilon$ is investigated in distribution-based approach. Let $\mu^{\epsilon}$ be stationary measure for stochastic process $X^{\epsilon}$ with small $\epsilon$ and $X^{0}$ be a semiflow on a Polish space.  Assume that $\{\mu^{\epsilon}: 0<\epsilon\leq\epsilon_0\}$ is tight. Then all their limits in weak sense are $X^0-$invariant and their supports are contained in Birkhoff center of $X^0$.  Applications are made to various stochastic evolution systems, including stochastic ordinary differential equations, stochastic partial differential equations, stochastic functional differential equations driven by Brownian motion or L\'{e}vy process.
\end{abstract}

\begin{keyword}[class=MSC]
\kwd{60B10}
\kwd{60G10}
\kwd{37A50}
\kwd{37B25}
\kwd{28C10}
\kwd{60H10}
\kwd{60H15}
\kwd{34K50}
\end{keyword}

\begin{keyword}
\kwd{stationary measure}
\kwd{Lyapunov function}
\kwd{limit measure}
\kwd{support}
\kwd{Birkhoff center}
\kwd{stochastic evolution system}
\end{keyword}

\end{frontmatter}

\section{Introduction}
Mumford \cite{Mum} addressed that

{\it ``Stochastic differential equations are more fundamental and relevant to modeling the
world than deterministic equations $\cdots$.
A major step in making the equation more relevant is to add a small stochastic term.
Even if the size of the stochastic term goes to 0, its asymptotic effects need
not. It seems fair to say that all differential equations are better models of
the world when a stochastic term is added and that their classical analysis
is useful only if it is stable in an appropriate sense to such perturbations".}
This shows that it is important to check the asymptotic stability of stochastic systems with small noise.
For this purpose, a basic method is to study the stationary measures and their limit measures.
The latters are called  zero-noise limits by Young \cite{Young} and Cowieson and Young \cite{CLai}, where they proved SRB measures can be realized as zero-noise limits. Huang, Ji, Liu and Yi \cite{HJLY,HJLY0} have investigated stochastic ordinary differential equations with small white noise where the drift vector field is dissipative. They have shown that limiting measures are invariant for the flow generated by the drift vector field and their supports are in its global attractor. For non-degenerate noise, Freidlin and Wentzell \cite{FW} estimated the concentration of limiting measure for stationary measures via the large-deviation technique and proved that the stationary measure value $\mu^\epsilon(P)$ tends to zero for any subset $P$ not intersecting with any attractor for the drift vector field, which implies that any limiting measure will support on the global attractor of  the drift vector field;   Li and Yi \cite{LY,YL} have presented more precise estimation for stationary measures near the global attractor or outside of the global attractor via the Fokker-Planck equation and the level set method  developed in \cite{HJLY0}, which are applied by them to study systematic measures of biological network including degeneracy, complexity, and robustness. Hwang \cite{Hwang} proved limiting probability measure of Gibbs measures for gradient system with additive noise concentrates on the minimal energy states. Huang, Ji, Liu and Yi \cite{HJLYd}  have explored the stochastic  stability of invariant sets and measures for gradient systems with noises.

This paper is intended to establish the close connection between deterministic dynamical systems and their stochastic perturbations by considering the limiting behavior of stationary measures for
stochastic evolution systems with small random perturbations.
These stochastic evolution systems $X^{\epsilon}(t,x)$ may be solutions of various stochastic differential equations driven by white or L\'{e}vy noise with the intensity $\epsilon$.
The corresponding solution of deterministic equations is denoted by $X^{0}(t,x)$. Let $\mu^{\epsilon}$ be the stationary probability measure of $X^{\epsilon}(t,x)$. We prove that all their limits of stationary measures $\mu^{\epsilon}$ of $X^{\epsilon}$ are $X^0-$invariant and their supports are contained in the Birkhoff center of $X^0$ as $\epsilon$ tends to zero (see Theorem \ref{mthm}). For various stochastic differential equations with small noise intensity, we prove the probability convergence property and provide the existence of stationary measures and their tightness and applications to all corresponding stochastic systems (see sections 3-5).
Usually, a global attractor for finite dimensional system has positive Lebesgue measure if it is not a globally stable equilibrium, however, the Birkhoff center always has zero Lebesgue measure for dissipative system. Compared to the existing results, which mostly focus on SODEs with non-degenerate noise, ours gives much more precise positions for limiting measures to support. We note that our result is the best if we don't put any restriction to types of noise because we can construct a diffusion term such that a sequence of stationary measures weakly converges to a given invariant measure of the drift vector field (see Proposition \ref{procons} and Remark \ref{Re8}). As far as we know, among all existing examples (see, for example, \cite {FW,HJLY,Hwang}), the limiting measures support at stable orbits, such as, stable equilibrium or closed orbits. A natural question arises : when a dissipative drift vector field has no stable motion in its global attractor, where does any limiting measure support?  Utilizing our result, we construct Bernoulli's lemniscate with non-degenerate noise such that stationary measures weakly converge to a delta measure at a saddle of the drift vector field, however, the global attractor in this case is the closed domain surrounded by the lemniscate of   Bernoulli (see Example 3). In May-Leonard system perturbed by a one dimensional white noise (see Example 4), we have proved that the limiting measures will support at the three saddles when the deterministic May-Leonard system admits a heteroclinic cycle.
Also, from this example, the limiting measures can be distinguished by different initial values because of the
various kinds of asymptotic behavior for deterministic equations. In a word, limiting measures always support at ``most relatively stable positions".

This article is organized as follows. In section 2, we present the framework to study the limiting measures of stationary measures for stochastic evolution processes and their supports.
From sections 3--5, we prove the probability convergence, the existence of stationary measures and their tightness for various stochastic differential equations. Specially,
in section 3, we deal with all these problems of stochastic ordinary differential equations (SODEs).
In section 4, we investigate stochastic reaction-diffusion equations, stochastic $2D$ Navier-Stokes equations and stochastic Burgers type equations driven by Brownian motions or  L\'{e}vy process.
In sections 5, we consider a class of stochastic functional differential equations (SFDEs). Section 6 collects the basic properties on invariant measures of deterministic flow.

Here and throughout of this article, we will use the same symbol $|\cdot|$ to
denote Euclidean norm of a vector or the operator norm of a matrix. Sometimes we will write $X^{\epsilon}(t,x)$, $X^{0}(t,x)$ as
$X^{\epsilon}_{t}(x)$, $X^{0}_{t}(x)$, respectively, unless noted otherwise.

\section{General framework to study limiting measures}

In this section, we will give general criterion on studying limiting measures of stationary measures for stochastic
evolution processes and describe their concentration.

Let $(\Omega,\mathcal{F},\mathbb{P})$ be a probability space,
$(M,\rho)$ be a Polish space and $\mathcal{B}(M)$ be the Borel $\sigma-$algebra on $M$.
Assume that $\Phi_{t}(x):=X^{0}_{t}(x)$ is a deterministic semi-dynamical system (semiflow) on $(M,\rho)$
and for $0<\epsilon\ll 1$, $X^{\epsilon}_t(x)$ is a noise driven process on $(M,\rho)$ with noise intensity $\epsilon$.

Throughout this article we assume that $\Phi:\mathbb{R}_{+}\times M\longrightarrow M$ is a mapping
with the following properties

(i) $\Phi_{\cdot}(x)$ is continuous, for all $x\in M$,

(ii) $\Phi_{t}(\cdot)$ is Borel measurable, for all $t\in\mathbb{R}_{+}$,

(iii) $\Phi_{0}={\rm id}$, $\Phi_{t}\circ\Phi_{s}(x)=\Phi_{t+s}(x)$,
for all $t,s\in \mathbb{R}_{+}$, $x\in M$. Here $\circ$ denotes composition of mappings.

Let $\{X^\epsilon_t(x),t\geq 0\}$ be a family of processes with initial value $x$ on state space $M$, $\epsilon\in(0,1]$.
The {\it probability transition function} is defined as
$$P^{\epsilon}_t(x,A):=\mathbb{P}(X^{\epsilon}_t(x)\in A), t\geq 0, x\in M, A\in \mathcal{B}(M).$$
A probability measure $\mu^{\epsilon}$ on $\mathcal{B}(M)$ is called {\it stationary} (or {\it invariant}) with respect
to $\{P^\epsilon_t\}_{ t\geq 0}$  if
$$P^{\epsilon}_t\mu^{\epsilon}=\mu^{\epsilon}\ \textrm{for\ any} \:\: t\geq 0.$$

Let $\mathscr{I}^{\epsilon}$ denote the set of all stationary measures
of the process $\{X^{\epsilon}_{t}\}_{t\geq0}.$

For our purpose, a necessary condition is $X^{\epsilon}_T(x)\xlongrightarrow{\mathbb{P}}X^{0}_T(x)$ as
$\epsilon\rightarrow 0$. For a technical reason, we impose the following Hypothesis.

{\bf Hypothesis (Probability Convergence)}: For any given compact set $K\subset M$, $T>0$ and $\delta>0$,
\begin{equation}\label{pc}
\displaystyle\lim_{\epsilon\to 0}\sup_{x\in K}
\mathbb{P}\{\displaystyle
\rho\big(X^{\epsilon}(T,x),\Phi(T,x)\big)\geq \delta\}=0.
\end{equation}

\begin{theorem}
\label{mthm}Assume hypothesis {\rm(\ref{pc})} holds. If $\mu^{\epsilon_i}\in \mathscr{I}^{\epsilon_i}$, and $\mu^{\epsilon_i} \overset{w}{\rightarrow}
\mu $ as $\epsilon_i \rightarrow 0$, then $\mu$ is an invariant
measure of $\Phi$, i.e. $\mu\circ\Phi_{t}^{-1}=\mu$ for every $t\geq 0$.
Moreover, this invariant probability measure $\mu$ is concentrated on $B(\Phi)$, where $B(\Phi):
=\overline{\{x\in M:x\in\omega(x)\}}$ denotes the Birkhoff center of $\Phi$
(see the definition in Appendix).
\end{theorem}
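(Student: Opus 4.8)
The plan is to verify the two assertions separately: first that the weak limit $\mu$ is $\Phi$-invariant, and then that it is carried by the Birkhoff center. For invariance I would fix $t\geq 0$ and test the identity $\mu\circ\Phi_t^{-1}=\mu$ against the measure-determining class of bounded Lipschitz functions $f:M\to\mathbb{R}$, with bound $\|f\|_\infty$ and Lipschitz constant $L$. The starting point is the stationarity of each $\mu^{\epsilon_i}$, which gives $\int_M f\,d\mu^{\epsilon_i}=\int_M (P^{\epsilon_i}_t f)(x)\,d\mu^{\epsilon_i}(x)$ with $(P^{\epsilon_i}_t f)(x)=\mathbb{E}[f(X^{\epsilon_i}_t(x))]$. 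Since $\int_M f\,d\mu^{\epsilon_i}\to\int_M f\,d\mu$ by weak convergence, it suffices to show that the right-hand side converges to $\int_M f\circ\Phi_t\,d\mu$.

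The key step is the decomposition
\begin{align*}
\int_M P^{\epsilon_i}_t f\,d\mu^{\epsilon_i}-\int_M f\circ\Phi_t\,d\mu
&=\int_M\big(P^{\epsilon_i}_t f-f\circ\Phi_t\big)\,d\mu^{\epsilon_i}\\
&\quad+\Big(\int_M f\circ\Phi_t\,d\mu^{\epsilon_i}-\int_M f\circ\Phi_t\,d\mu\Big),
\end{align*}
whose two terms I call $I_i$ and $J_i$. For $I_i$ I would exploit Hypothesis (\ref{pc}). Because $\{\mu^{\epsilon_i}\}$ converges weakly it is tight (Prokhorov), so given $\eta>0$ there is a compact $K$ with $\mu^{\epsilon_i}(M\setminus K)<\eta$ for all $i$. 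Splitting the integral over $K$ and $M\setminus K$, on $K$ I bound $|P^{\epsilon_i}_t f(x)-f(\Phi_t(x))|\leq \mathbb{E}|f(X^{\epsilon_i}_t(x))-f(\Phi_t(x))|\leq L\delta+2\|f\|_\infty\,\mathbb{P}\{\rho(X^{\epsilon_i}_t(x),\Phi_t(x))\geq\delta\}$ and take the supremum over $x\in K$; Hypothesis (\ref{pc}) forces the probability term to $0$ uniformly on $K$, while on $M\setminus K$ the integrand is bounded by $2\|f\|_\infty$. Letting $i\to\infty$ and then $\delta,\eta\to 0$ yields $I_i\to 0$.

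For $J_i$ I would invoke the weak convergence $\mu^{\epsilon_i}\overset{w}{\rightarrow}\mu$ applied to the bounded function $f\circ\Phi_t$; by the portmanteau theorem this gives $J_i\to 0$ \emph{provided} the integrand is continuous, i.e.\ provided the spatial map $\Phi_t(\cdot)$ is continuous. I expect this interchange of $\Phi_t$ with the weak limit to be the main obstacle: under the bare Borel measurability in (ii) one cannot pass $\Phi_t$ through a weak limit, since a common Lusin continuity set for the whole tight family $\{\mu^{\epsilon_i}\}\cup\{\mu\}$ is not available, and the control of $J_i$ must therefore rest on the continuity of the semiflow (or on restricting $f$ to functions whose composition with $\Phi_t$ is $\mu$-a.e.\ continuous). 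Combining $I_i\to 0$ and $J_i\to 0$ gives $\int_M f\circ\Phi_t\,d\mu=\int_M f\,d\mu$ for every such $f$, whence $\mu\circ\Phi_t^{-1}=\mu$ for all $t\geq 0$.

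For the support statement I would use that $\mu$ is now a $\Phi$-invariant probability measure and apply the Poincar\'e recurrence theorem to the measure-preserving system $(\Phi_t,\mu)$. Fixing a countable base $\{U_n\}$ of the separable space $M$, recurrence shows that for $\mu$-a.e.\ $x$ and every $n$ with $x\in U_n$ the orbit returns to $U_n$ at arbitrarily large times, i.e.\ $x\in\omega(x)$; hence the recurrent set $R=\{x:x\in\omega(x)\}$ satisfies $\mu(R)=1$. Since $B(\Phi)=\overline{R}$ is closed with $\mu(B(\Phi))=1$ and the support of $\mu$ is the smallest closed set of full measure, it follows that $\operatorname{supp}(\mu)\subseteq B(\Phi)$. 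The only remaining care here is the continuous-time/semiflow form of recurrence, which I would reduce to the time-one map $\Phi_1$ together with the countable-base argument above, citing the invariant-measure facts collected in the Appendix.
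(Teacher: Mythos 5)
Your proposal is correct and follows essentially the same route as the paper: stationarity of $\mu^{\epsilon_i}$ plus tightness reduces everything to comparing $P^{\epsilon_i}_tf$ with $f\circ\Phi_t$ on a compact set, Hypothesis (\ref{pc}) kills that difference, and the support statement is the Poincar\'e recurrence theorem (your reduction to the time-one map is a harmless variant of the appendix's rational-times argument). The only cosmetic differences are that you test against bounded Lipschitz functions where the paper uses general $g\in C_b(M)$ together with a uniform-continuity-near-$\Phi(T\times K)$ lemma, and the spatial continuity of $\Phi_t(\cdot)$ that you correctly flag as needed for the term $J_i$ is exactly what the paper also invokes when it declares $\widetilde{K}=\Phi(T\times K)$ compact ``since $\Phi(T,x)$ is continuous on $x$'' and when it passes $\int g\circ\Phi_T\,d\mu^{\epsilon_i}$ to the limit.
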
 \begin{proof}
Let $\mu^{\epsilon_i} \overset{w}{\rightarrow}
\mu $ as $\epsilon_i \rightarrow 0$.
It suffices to prove that for any nonzero $g\in C_b(M)$ and $T>0$,
  $$\int g(x)\mu \circ\Phi_T^{-1}(dx)=\int g(x)\mu(dx),$$
or equivalently,
  $$\int g\big(\Phi(T,x)\big)\mu(dx)=\int g(x)\mu(dx).$$
Since $\{\mu^{\epsilon_i}\}$ is relatively compact, it is tight. For every $\eta>0$,
there exists a compact set
$K\subset M$ such that $\displaystyle\inf_{\epsilon_i}\mu^{\epsilon_i}(K)\geq
1-\frac{\eta}{\|g\|}$.
\begin{align*}
&|\int g(x)\mu^{\epsilon_i}\circ\Phi(T,\cdot)^{-1}(dx)-\int g(x)\mu^{\epsilon_i}(dx)|\\
=&|\int \mathbb{E}g\big(\Phi(T,x)\big)\mu^{\epsilon_i}(dx)-\int \mathbb{E}g\big(X^{\epsilon_i}(T,x)\big)\mu^{\epsilon_i}(dx)|\\
\leq&\int \mathbb{E}|g\big(\Phi(T,x)\big)-g\big(X^{\epsilon_i}(T,x)\big)|\mu^{\epsilon_i}(dx)\\
\leq&\int\mathbb{E}|I_{K}(x)[g(\Phi(T,x))-g(X^{\epsilon_i}(T,x))]|\mu^{\epsilon_i}(dx)+
2\eta.
\end{align*}
$\widetilde{K}:=\Phi(T\times K)\subset M$ is a compact set since $\Phi(T, x)$ is continuous on $x$. We claim that
there exists $\delta>0$ such that $\forall y, z \in M$ with $z\in \widetilde{K}$
and $\rho(y,z)<\delta$, one has
$$ |g(y)-g(z)|<\eta.  $$
If not, then there exist $\eta_{0}>0$ and $y_{n}\in M$ and $z_{n}\in \widetilde{K}$ with $\rho(y_{n},z_{n})<\frac{1}{n}$
such that $|g(y_{n})-g(z_{n})|\geq \eta_{0}$, $n=1,2,\cdots$. The
compactness of $\widetilde{K}$ and $\{z_{n}\}\subset \widetilde{K}$
imply that, without loss of generality,  $z_{n}\rightarrow z_{0}\in\widetilde{K}$
as $n\rightarrow\infty$. Therefore, it follows from $\rho(y_{n},z_{n})<\frac{1}{n}$
that $y_{n}\rightarrow z_{0}$. By the continuity of $g$, letting $n\rightarrow\infty$, we have
$$  0=|g(z_{0})-g(z_{0})|\geq \eta_{0}, $$
a contradiction.

Hence one can derive that
\begin{align*}
  &\int\mathbb{E}|I_{K}(x)[g\big(\Phi(T,x)\big)-g\big(X^{\epsilon_i}(T,x)\big)]|\mu^{\epsilon_i}(dx)\\
     =&\int_K\mathbb{E}|I_{\{\rho(\Phi(T,x),X^{\epsilon_i}(T,x))\geq\delta \}}(\omega)[g\big(\Phi(T,x)\big)-g\big(X^{\epsilon_i}(T,x)\big)]|\mu^{\epsilon_i}(dx)\\
    &+\int_K\mathbb{E}|I_{\{\rho(\Phi(T,x),X^{\epsilon_i}(T,x))<\delta \}}(\omega)[g\big(\Phi(T,x)\big)-g\big(X^{\epsilon_i}(T,x)\big)]|\mu^{\epsilon_i}(dx)\\
    \leq&2\|g\|\displaystyle\sup_{x\in K}
  \mathbb{P}\Big(\displaystyle\ \rho\big(X^{\epsilon_i}(T,x),\Phi(T,x)\big)\geq\delta\Big)+\eta.
\end{align*}
Therefore, by the hypothesis (\ref{pc}), one can show that
\begin{align*}
 &\displaystyle\limsup_{\epsilon_i \rightarrow 0}|\int g(x)\mu^{\epsilon_i}\circ\Phi(T,\cdot)^{-1}(dx)-\int g(x)\mu^{\epsilon_i}(dx)|\\
    \leq&2\|g\|\displaystyle\lim_{\epsilon_i \rightarrow 0}\displaystyle\sup_{x\in K}
  \mathbb{P}\big\{\displaystyle\ \rho\big(X^{\epsilon_i}(T,x),\Phi(T,x)\big)\geq\delta\big\}+\eta+
    2\eta=3\eta .
\end{align*}
Since $\eta>0$ is arbitrary and $\mu^{\epsilon_i}\xlongrightarrow{w}\mu$, hence
$\int g(x)\mu\circ\Phi(T,\cdot)^{-1}(dx)=\int g(x)\mu(dx)$. This shows
that $\mu$ is an invariant probability measure of the semiflow $\Phi$.

It remains to prove that $\mu(B(\Phi))=1$. Indeed, the result of this fact relies on
the following well-known lemma, the Poincar\'{e} recurrence theorem.
\qquad\end{proof}

\begin{lemma}\label{PRe}
The support of semiflow $\Phi$-invariant probability measure $\mu$ is
contained in $B(\Phi)$. Consequently this implies that $\mu(B(\Phi))=1$.
\end{lemma}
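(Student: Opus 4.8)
The plan is to reduce the statement to the Poincaré recurrence theorem by showing that $\mu$-almost every point is positively recurrent, i.e. lies in its own $\omega$-limit set. Write $R:=\{x\in M:x\in\omega(x)\}$ for the set of recurrent points, so that by definition $B(\Phi)=\overline{R}$. Since $B(\Phi)$ is closed, once I establish $\mu(R)=1$ I immediately obtain $\mu(B(\Phi))\geq\mu(R)=1$; because $B(\Phi)$ is then a closed set of full measure and the support of a Borel measure on a Polish space is the smallest such set, it follows that $\mathrm{supp}(\mu)\subseteq B(\Phi)$. Thus the whole lemma rests on proving $\mu(R)=1$, equivalently that the set $N:=M\setminus R$ of non-recurrent points is $\mu$-null.

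First I would give a workable description of $N$. A point $x$ fails to be recurrent precisely when some neighborhood of $x$ is eventually avoided by its orbit: there exist an open $U\ni x$ and a threshold $m$ with $\Phi_t(x)\notin U$ for all $t>m$. Fixing a countable base $\{U_k\}$ of the separable space $M$ and restricting to integer thresholds, this yields the countable decomposition $N=\bigcup_{k,m}N_{k,m}$, where $N_{k,m}:=\{x\in U_k:\Phi_t(x)\notin U_k\ \forall\,t>m\}$. Measurability of each $N_{k,m}$ follows from the two structural hypotheses on $\Phi$: by continuity of $t\mapsto\Phi_t(x)$ and closedness of $U_k^c$, the condition ``$\Phi_t(x)\notin U_k$ for all real $t>m$'' is equivalent to the same condition over rational $t>m$, so that $N_{k,m}=U_k\cap\bigcap_{t\in\mathbb{Q},\,t>m}\Phi_t^{-1}(U_k^c)$, a countable intersection of Borel sets by the Borel measurability of each $\Phi_t$.

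Next I would invoke the Poincaré recurrence theorem for the single measure-preserving map $\Phi_1$, which preserves $\mu$ since $\mu$ is $\Phi$-invariant. Applied to $U_k$, it asserts that $\mu$-almost every point of $U_k$ satisfies $\Phi_n(x)\in U_k$ for infinitely many integers $n$, so the exceptional set of points returning only finitely often is $\mu$-null. But any $x\in N_{k,m}$ has $\Phi_n(x)\notin U_k$ for every integer $n>m$, hence returns to $U_k$ only finitely often; therefore $N_{k,m}$ lies inside that exceptional null set and $\mu(N_{k,m})=0$. Summing over the countable family gives $\mu(N)=0$, hence $\mu(R)=1$, which completes the argument.

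The main obstacle I anticipate is the passage from the discrete-time recurrence supplied by Poincaré's theorem to the continuous-time notion $x\in\omega(x)$ appearing in the definition of the Birkhoff center; the two are bridged precisely by the reduction to the time-one map $\Phi_1$ together with the observation that eventual avoidance in continuous time forces eventual avoidance along integer times. A secondary point demanding care is measurability: the set $N$ is defined by an uncountable quantifier over $t$, and it is only the continuity in $t$ together with the Borel measurability of each $\Phi_t$ that permits rewriting it as a countable Borel expression.
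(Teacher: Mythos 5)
Your proof is correct. It proves the same recurrence statement the paper establishes in its Appendix ($\mu\{x: x\in\omega(x)\}=1$, hence $\mu(B(\Phi))=1$ and $\mathrm{supp}(\mu)\subseteq B(\Phi)$ since $B(\Phi)$ is a closed full-measure set), but the route differs in two respects. First, you black-box the classical discrete-time Poincar\'e recurrence theorem applied to the time-one map $\Phi_1$ (which preserves $\mu$), whereas the paper re-derives the recurrence estimate from scratch in continuous time: it shows directly that the sets $C_{k}=A\setminus\bigcup_{s\geq k,\,s\in\mathbb{Q}}(\Phi_{s}\circ\Phi_{t})^{-1}(A)$ are null via the monotonicity of $\bigcup_{s\geq k}\Phi_{s}^{-1}(A)$ under the invariance of $\mu$. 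Second, your decomposition runs through the complement: a non-recurrent point eventually avoids some basic open set $U_k$ in continuous time, hence (a fortiori) at all large integer times, hence lies in the $\Phi_1$-exceptional null set for $U_k$; this makes the countable cover $N=\bigcup_{k,m}N_{k,m}$ do all the work and requires no condition on the base. The paper instead works with the recurrent parts $\widehat{U}_n\subseteq U_n$ and a $\limsup_n\widehat{U}_n$ construction, which forces it to choose a base with $\mathrm{diam}(U_n)\to 0$ in order to upgrade ``returns to some basic neighborhood infinitely often'' to $x\in\omega(x)$. Both proofs use the same two structural facts in the same way (continuity of $t\mapsto\Phi_t(x)$ plus closedness of $U_k^c$ to reduce an uncountable quantifier to a countable one, and Borel measurability of each $\Phi_t$); your version is shorter because it delegates the measure-theoretic core to a standard theorem, while the paper's is self-contained.
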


The above result is a slightly variant version of the
Poincar\'{e} recurrence theorem (see e.g., Ma\~{n}\'{e} \cite[Theorem 2.3, p.29]{Ma}) to obtain
the concentration of invariant measures. For readers' convenience, we also give a self-contained
proof of Lemma \ref{PRe} which is postponed to Appendix.

\begin{remark} Observing the proof of Theorem {\rm\ref{mthm}}, we only need to prove the
probability convergence property for a compact set $K$ satisfying the definition of tightness.
This remark will be used in {\rm SPDEs} of section {\rm 4}.
\end{remark}

In order to apply  Theorem \ref{mthm} to various stochastic differential equations, the probability convergence (\ref{pc}) and  the existence
of stationary measures for $X^{\epsilon}_t(x)$ and their tightness are needed to be proved. In the rest of this paper, we will check them for various stochastic evolution systems.

\section{ODEs driven by L\'{e}vy noise}
Let $(\Omega,\mathcal{F},\mathbb{P})$ be a probability space equipped with a filtration $\{\mathcal{F}_t, t\geq 0\}$ satisfying the usual conditions,  $W=\{W_{t}, t\geq 0\}$ a $k$-dimensional Wiener process and $N$ a Poisson random measure on $\mathbb{R}_{+}\times (\mathbb{R}^{l}\backslash \{O\})$ with the $\sigma$-finite intensity measure $\nu$ on $\mathbb{R}^{l}\backslash \{O\}$, and denote its associated compensator as $\tilde{N}(dt,dy)=N(dt,dy)-\nu(dy)dt$. Denote by $(L_2(\mathbb{R}^k,\mathbb{R}^m),\|\cdot\|_2)$ the Hilbert space of all Hilbert-Schmidt operators from $\mathbb{R}^k$ to $\mathbb{R}^m$. Actually, $L_2(\mathbb{R}^k,\mathbb{R}^m)$ is $m\times k$ matrices set.

Consider the following SODEs driven by a L\'evy process
\begin{equation}\label{lsode}
\begin{split}
dX^{\epsilon,x}(t)=&b(X^{\epsilon,x}(t))dt+\epsilon\sigma(X^{\epsilon,x}(t))dW_{t}\\
&+\epsilon\int_{|y|_{\mathbb{R}^l}<c}F(X^{\epsilon,x}(t-),y)\tilde{N}(dt,dy)
\end{split}
\end{equation}
with initial condition $X^{\epsilon,x}(0)=x\in \mathbb{R}^m$ and $\epsilon, c>0$. The mappings $b:\mathbb{R}^m\rightarrow \mathbb{R}^m$ and $\sigma: \mathbb{R}^m\rightarrow L_2(\mathbb{R}^{k},\mathbb{R}^m)$ are $\mathcal{B}(\mathbb{R}^m)$ measurable functions,
$F:\mathbb{R}^m\times \mathbb{R}^l\rightarrow\mathbb{R}^m$ is $\mathcal{B}(\mathbb{R}^m)\otimes\mathcal{B}(\mathbb{R}^l)$ measurable function.

$b$, $\sigma$ and $F$ are called to satisfy \emph{local Lipschitz condition}, respectively, if for every integer $n\geq 1$, there is a positive constant $L_1(n)$ such that for all $x,y\in \mathbb{R}^{m}$ with $|x|\leq n$ and $|y|\leq n$,
\begin{equation}\label{bLy}
  |b(x)-b(y)|^{2} \leq L_1(n)|x-y|^{2},
\end{equation}
\begin{equation}\label{dLy}
  \|\sigma(x)-\sigma(y)\|_2^{2} \leq L_1(n)|x-y|^{2},
\end{equation}
\begin{equation}\label{FLy}
  \int_{_{\|z\|_{\mathbb{R}^l}<c}}|F(x,z)-F(y,z)|^{2}\nu(dz) \leq L_1(n)|x-y|^{2},
\end{equation}
respectively. In addition, we say that $F$ satisfies \emph{local growth condition}, if for every integer $n\geq 1$, there is a positive constant $L_2(n)$
such that for all $|x|\leq n$,
\begin{equation}\label{locGth}
  \int_{_{\|z\|_{\mathbb{R}^l}<c}}|F(x,z)|^{2}\nu(dz) \leq L_2(n)(1+|x|^{2}).
\end{equation}

If $L_i(n)$, $i=1,2$ are independent of $n$, we say that the coefficient functions admit \emph{global Lipschitz}
and \emph{linear growth conditions}.

For a $C^2$ scalar function $V$, and $\epsilon\geq 0$, we define
  \begin{align*}
    \mathcal{L}^{\epsilon}V(x):=&
  \langle \nabla V(x),b(x)\rangle+
  \frac{\epsilon^{2}}{2}\displaystyle\sum_{i,j=1}^{m}a_{ij}(x)\frac{\partial^{2}V(x)}{\partial x_{i}\partial x_{j}}\\
  &+
  \int_{|y|_{\mathbb{R}^l}<c}\big(V(x+\epsilon F(x,y))-V(x)-\langle \nabla V(x),\epsilon F(x,y)\rangle\big)\nu(dy),
   \end{align*}
where $A(x)=(a_{ij}(x)):= \sigma(x)\sigma^T(x)$ is the diffusion matrix. Let $\mathcal{S}^{\epsilon}$ denote the set of all stationary measures of {\rm (\ref{lsode})} for a given $\epsilon$. The following is
the main result of this section.

\begin{theorem}[Support on Limiting Measures]\label{supportode}
Let $b(x)$, $\sigma(x)$ and $F(x,y)$ in {\rm (\ref{lsode})} be locally Lipschitz continuous and locally linear growth,
and $F(x,y)$ locally bounded with respect to $(x,y)$.
Suppose that there exists a nonnegative function $V(x)\in C^{2}(\mathbb{R}^{m})$ such that
\begin{equation}\label{Lyunbd}
  \displaystyle\inf_{|x|>R}V(x)\rightarrow +\infty, \quad \textrm{as} \ R\rightarrow\infty,\ {\rm and}
\end{equation}
\begin{equation}\label{Lproper}
  \displaystyle\sup_{|x|>R}\mathcal{L}^{\epsilon}V(x)\leq -A_{R}\rightarrow-\infty \:
  \textrm{as}\: R\rightarrow\infty.
\end{equation}
If $\mu^{\epsilon_i}_{x_i}\in \mathcal{S}^{\epsilon_i},$ and $\mu^{\epsilon_i}_{x_i} \overset{w}{\rightarrow}
\mu $ as $\epsilon_i \rightarrow 0$, then $\mu$ is an invariant
measure of $X^{0}(t)$, which supports on the Birkhoff center $B(X^{0})$.
\end{theorem}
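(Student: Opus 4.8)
The plan is to deduce this result directly from the general Theorem \ref{mthm}. Since the weak convergence $\mu^{\epsilon_i}_{x_i}\overset{w}{\rightarrow}\mu$ is taken as a hypothesis, the one thing that must be supplied is the probability-convergence property (\ref{pc}) for the L\'evy-driven system (\ref{lsode}); once (\ref{pc}) is established, Theorem \ref{mthm} yields at once that $\mu$ is $X^0$-invariant and concentrated on $B(X^0)$. The Lyapunov function $V$ and conditions (\ref{Lyunbd})--(\ref{Lproper}) enter only to furnish the uniform non-explosion estimates that make a localization argument work, which is forced upon us because $b,\sigma,F$ are merely \emph{locally} Lipschitz and of \emph{local} linear growth. (The same bound also shows $\sup_{\epsilon}\int V\,d\mu^{\epsilon}<\infty$, hence by (\ref{Lyunbd}) the family $\{\mu^\epsilon\}$ is tight, so no mass escapes and the weak limit $\mu$ is a genuine probability measure.)

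First I would prove a uniform exit estimate. For $n>0$ put $\tau^{\epsilon}_n(x):=\inf\{t\ge 0:|X^{\epsilon,x}(t)|\ge n\}$. Using the local boundedness of $F$, the local growth bounds (\ref{dLy}), (\ref{locGth}), and $V\in C^2$, one checks via a Taylor expansion of the jump term, with the bound $|V(x+\epsilon F)-V(x)-\langle\nabla V(x),\epsilon F\rangle|\le\tfrac{\epsilon^2}{2}\|D^2V\|_{\infty,\mathrm{loc}}\,|F|^2$ and the prefactor $\epsilon^2\le 1$, that $\mathcal{L}^{\epsilon}V$ is bounded above on each ball $\{|x|\le R\}$ uniformly in $\epsilon\in(0,1]$. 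Together with (\ref{Lproper}), which makes $\mathcal{L}^{\epsilon}V\le 0$ outside a large ball, this produces a constant $C^{\ast}$, independent of $\epsilon$, with $\sup_x\mathcal{L}^{\epsilon}V(x)\le C^{\ast}$. Dynkin's formula for the stopped process gives $\mathbb{E}\,V\big(X^{\epsilon,x}(T\wedge\tau^{\epsilon}_n)\big)\le V(x)+C^{\ast}T$, while $V\ge 0$ and $|X^{\epsilon,x}(\tau^{\epsilon}_n)|\ge n$ give the lower bound $\mathbb{E}\,V\big(X^{\epsilon,x}(T\wedge\tau^{\epsilon}_n)\big)\ge\big(\inf_{|y|\ge n}V(y)\big)\,\mathbb{P}(\tau^{\epsilon}_n\le T)$. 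Hence, for any compact $K$,
$$
\sup_{x\in K}\mathbb{P}\big(\tau^{\epsilon}_n(x)\le T\big)\le\frac{\sup_{x\in K}V(x)+C^{\ast}T}{\inf_{|y|\ge n}V(y)},
$$
and by the coercivity (\ref{Lyunbd}) the right side tends to $0$ as $n\to\infty$, uniformly in $\epsilon\in(0,1]$ and $x\in K$.

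Next I would control the difference on the localized event. Set $\widetilde K:=\Phi([0,T]\times K)$, which is compact, and choose $n>\sup_{z\in\widetilde K}|z|$, so that $X^{0}(\cdot,x)$ stays inside $\{|y|<n\}$ for $x\in K$. Applying It\^o's formula for jump processes to $|X^{\epsilon,x}(t\wedge\tau^{\epsilon}_n)-X^{0}(t\wedge\tau^{\epsilon}_n,x)|^2$ and taking expectations annihilates the martingale parts and leaves a drift term bounded, on $\{|X^{\epsilon}|\le n\}$, by $(1+L_1(n))|X^{\epsilon}-X^{0}|^2$ together with the noise contributions $\epsilon^2\|\sigma(X^{\epsilon})\|_2^2$ and $\epsilon^2\int_{|y|<c}|F(X^{\epsilon},y)|^2\nu(dy)$, each dominated by a constant $C(n)$ through (\ref{dLy}), (\ref{locGth}). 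Gr\"onwall's inequality then gives $\mathbb{E}\,|X^{\epsilon,x}(T\wedge\tau^{\epsilon}_n)-X^{0}(T\wedge\tau^{\epsilon}_n,x)|^2\le\epsilon^2\,C(n,T)$, uniformly in $x\in K$. Splitting the event $\{\rho(X^{\epsilon,x}(T),X^{0}(T,x))\ge\delta\}$ according to whether $\tau^{\epsilon}_n\le T$ or not, bounding the former by the exit estimate and the latter by Chebyshev's inequality, I obtain
$$
\sup_{x\in K}\mathbb{P}\big\{\rho\big(X^{\epsilon,x}(T),X^{0}(T,x)\big)\ge\delta\big\}\le\frac{\epsilon^2 C(n,T)}{\delta^2}+\frac{\sup_{x\in K}V(x)+C^{\ast}T}{\inf_{|y|\ge n}V(y)}.
$$
Letting first $\epsilon\to 0$ and then $n\to\infty$ verifies the probability-convergence hypothesis (\ref{pc}).

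With (\ref{pc}) in hand and $\mu^{\epsilon_i}_{x_i}\overset{w}{\rightarrow}\mu$ assumed, Theorem \ref{mthm} applies verbatim and gives that $\mu$ is $X^0$-invariant with support in the Birkhoff center $B(X^0)$, as claimed. The main obstacle is exactly the tension between the \emph{local} regularity of the coefficients and the required \emph{uniform}-in-$\epsilon$, uniform-in-$x$ control: without a Lyapunov structure the localization radius $n$ could not be decoupled from $\epsilon$. Conditions (\ref{Lproper}) and (\ref{Lyunbd}) resolve this — the former providing a uniform upper bound on $\mathcal{L}^{\epsilon}V$ together with supermartingale behavior far from the origin, the latter converting Lyapunov control into spatial tightness so that the exit probability genuinely vanishes. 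The jump integral adds only the bookkeeping of estimating the nonlocal generator and the exit overshoot, both handled by the local boundedness of $F$, and introduces no essentially new difficulty.
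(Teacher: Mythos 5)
Your proposal is correct and follows essentially the same route as the paper, which obtains Theorem \ref{supportode} by combining the probability-convergence result (Theorem \ref{reofLy}, proved by exactly your truncation-plus-Lyapunov-exit-estimate-plus-Gronwall scheme), the tightness criterion (Theorem \ref{tightcri}), and the general Theorem \ref{mthm}. The only cosmetic difference is that you derive the uniform exit bound directly from (\ref{Lproper}) via Dynkin's formula (giving a bound linear in $T$), whereas the paper invokes Khasminskii's estimate under the auxiliary condition $\mathcal{L}^{\epsilon}V\leq c^{*}V$, which in the setting of Theorem \ref{supportode} is recovered from (\ref{Lproper}) and the local boundedness of $\mathcal{L}^{\epsilon}V$ after replacing $V$ by $V+1$.
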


The proof of the Theorem \ref{supportode} follows from subsections 3.1 and 3.2.
\subsection{The criterion for probability convergence}
By standard arguments, we have

\begin{lemma}\label{HT}
Suppose that the coefficient functions $b,\sigma$ and $F$ admit global Lipschitz
and linear growth conditions with positive constant $L$.
Then the system {\rm(\ref{lsode})} admits a unique strong solution $X^{\epsilon,x}=\{X^{\epsilon,x}(t):t\geq 0\}$, which is adapted and has c\`{a}dl\`{a}g sample paths. Moreover, for every fixed $T>0$, there is a constant $D_{L,T}$ such that
for each $x\in \mathbb{R}^m$,
\begin{equation}\label{HTE}
\sup_{\epsilon\in(0,1]}\ \sup_{t\in[0,T]}\mathbb{E}|X^{\epsilon,x}(t)|^2 \leq D_{L,T}(1+|x|^2).
\end{equation}
\end{lemma}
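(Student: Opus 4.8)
The plan is to establish the two stated conclusions separately, treating existence and regularity of the solution together with the a priori moment bound first, and the stated convergence consequence second. The statement to prove is Lemma~\ref{HT}, asserting existence and uniqueness of a strong c\`adl\`ag solution under global Lipschitz and linear growth conditions, together with the uniform-in-$\epsilon$ second moment bound \eqref{HTE}. Since the lemma is flagged as following ``by standard arguments,'' I would organize the proof around the classical Picard iteration/fixed-point scheme adapted to the jump-diffusion setting, followed by a Gronwall estimate for the moment bound.

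First I would address existence and uniqueness. Under the global Lipschitz hypotheses \eqref{bLy}--\eqref{FLy} (with $L_1(n)\equiv L$) and linear growth, the drift, the Brownian diffusion coefficient $\sigma$, and the jump kernel $F$ all satisfy the standard conditions guaranteeing well-posedness of SDEs driven by Wiener process plus compensated Poisson measure. The approach is a Picard iteration: define $X^{\epsilon,x}_{(0)}(t)\equiv x$ and iteratively solve the integral equation
\begin{equation*}
X^{\epsilon,x}_{(n+1)}(t)=x+\int_0^t b(X^{\epsilon,x}_{(n)}(s))\,ds+\epsilon\int_0^t\sigma(X^{\epsilon,x}_{(n)}(s))\,dW_s+\epsilon\int_0^t\int_{|y|<c}F(X^{\epsilon,x}_{(n)}(s-),y)\,\tilde N(ds,dy).
\end{equation*}
I would estimate $\mathbb{E}\sup_{t\le T}|X^{\epsilon,x}_{(n+1)}(t)-X^{\epsilon,x}_{(n)}(t)|^2$ using the Burkholder--Davis--Gundy inequality for the Brownian stochastic integral and the It\^o isometry (together with Kunita's inequality or the BDG inequality for jump integrals) for the compensated Poisson integral, where the key point is that the isometry converts the $\tilde N$ integral into a $\nu$-integral against $|F|^2$, on which the Lipschitz bound \eqref{FLy} acts directly. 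This yields a contraction-type recursion controlled by a Gronwall argument, giving a Cauchy sequence in the appropriate $L^2$-space of c\`adl\`ag adapted processes; its limit is the unique strong solution, and pathwise uniqueness follows from the same Lipschitz estimates applied to the difference of two solutions.

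For the moment bound \eqref{HTE}, I would apply It\^o's formula for jump processes to $|X^{\epsilon,x}(t)|^2$, take expectations (killing the two martingale terms), and bound the resulting drift, diffusion, and jump contributions using the linear growth conditions. The diffusion and jump terms each contribute a factor $\epsilon^2\le 1$, which is exactly what makes the bound uniform over $\epsilon\in(0,1]$; concretely, one obtains an inequality of the form $\mathbb{E}|X^{\epsilon,x}(t)|^2\le |x|^2+C\int_0^t\big(1+\mathbb{E}|X^{\epsilon,x}(s)|^2\big)\,ds$ with $C=C(L)$ independent of $\epsilon$, and Gronwall's inequality then produces the claimed constant $D_{L,T}$ with the linear dependence $D_{L,T}(1+|x|^2)$. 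The one technical caution, rather than a genuine obstacle, is justifying that the stochastic integrals are true martingales (so their expectations vanish) before the moment bound is in hand; I would circumvent this by first running the argument with a localizing sequence of stopping times $\tau_R=\inf\{t:|X^{\epsilon,x}(t)|\ge R\}$, deriving the Gronwall estimate for the stopped process, and then letting $R\to\infty$ via Fatou's lemma. Since everything reduces to standard stochastic calculus estimates under global Lipschitz and linear growth hypotheses, I do not expect any step to present a serious difficulty; the only point demanding genuine care is the handling of the compensated jump integral, where one must use the correct moment inequality (It\^o isometry for the $L^2$ estimate, BDG/Kunita for the supremum estimate) rather than naively treating it like a continuous martingale.
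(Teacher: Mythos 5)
Your proposal is correct and follows exactly the standard route the paper itself invokes without writing out (the paper proves this lemma only by the phrase ``By standard arguments''): Picard iteration with BDG/It\^o-isometry estimates for existence and uniqueness, then It\^o's formula, localization, and Gronwall for the uniform second-moment bound, with the factor $\epsilon^2\le 1$ giving uniformity in $\epsilon$. No gaps.
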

%


Denote by $X^{0,x}(t)$ the solution for (\ref{lsode}) as $\epsilon=0$. Then we have

\begin{proposition}\label{procong}
Suppose that the coefficient functions $b,\sigma$ and $F$ admit global Lipschitz
and linear growth conditions with positive constant $L$. Then there exists a
constant $D^*_{L,T}$ such that for every $\epsilon\in(0,1]$
\[ \displaystyle \mathbb{E}[\displaystyle\sup_{0\leq t\leq T}|X^{\epsilon,x}(t)-X^{0,x}(t)|^{2}]\leq
D^*_{L,T}\epsilon^{2}(1+|x|^2) \]
for all $x\in \mathbb{R}^m$.
\end{proposition}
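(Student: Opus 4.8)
The plan is to work with the integral form of both equations and estimate the difference pathwise before taking expectations. Writing $Y^\epsilon(t):=X^{\epsilon,x}(t)-X^{0,x}(t)$ and subtracting the integral representations of (\ref{lsode}) at levels $\epsilon$ and $0$, one has
$$Y^\epsilon(t)=\int_0^t\big(b(X^{\epsilon,x}(s))-b(X^{0,x}(s))\big)ds+\epsilon\int_0^t\sigma(X^{\epsilon,x}(s))dW_s+\epsilon\int_0^t\int_{|y|<c}F(X^{\epsilon,x}(s-),y)\tilde N(ds,dy).$$
The key structural observation is that, since $X^{0,x}$ carries no noise, the diffusion and jump contributions appear entirely with the prefactor $\epsilon$ and have no difference structure, whereas the drift contribution does. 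This asymmetry is exactly the source of the $\epsilon^2$ on the right-hand side. First I would apply $|a+b+c|^2\le 3(|a|^2+|b|^2+|c|^2)$ and set $\phi(t):=\mathbb{E}\big[\sup_{0\le r\le t}|Y^\epsilon(r)|^2\big]$.

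For the drift term, Cauchy--Schwarz in time followed by the global Lipschitz bound (\ref{bLy}) gives $\mathbb{E}\sup_{r\le t}|\int_0^r(b(X^{\epsilon,x})-b(X^{0,x}))ds|^2\le tL\int_0^t\phi(s)\,ds$, which produces the Grönwall integral. For the diffusion term I would invoke Doob's maximal inequality together with the It\^o isometry, and for the jump term the corresponding $L^2$ maximal inequality for integrals against the compensated measure $\tilde N$ (the $p=2$ case, combining Doob with the isometry $\mathbb{E}|\int_0^t\int_{|y|<c}F\,d\tilde N|^2=\mathbb{E}\int_0^t\int_{|y|<c}|F|^2\nu(dy)ds$). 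Applying the linear growth bounds on $\sigma$ and $F$ turns both of these into $C\epsilon^2\int_0^T(1+\mathbb{E}|X^{\epsilon,x}(s)|^2)\,ds$, and Lemma \ref{HT} bounds the integrand uniformly for $\epsilon\in(0,1]$ by $D_{L,T}(1+|x|^2)$. Hence the two noise terms together are controlled by $C_1\epsilon^2(1+|x|^2)$.

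Collecting the estimates yields $\phi(t)\le C_1\epsilon^2(1+|x|^2)+C_2\int_0^t\phi(s)\,ds$ on $[0,T]$, and Grönwall's inequality then gives $\phi(T)\le C_1\epsilon^2(1+|x|^2)e^{C_2T}=:D^*_{L,T}\epsilon^2(1+|x|^2)$, as claimed. The main technical point to secure before invoking Grönwall is the \emph{finiteness} $\phi(t)<\infty$: Lemma \ref{HT} only controls $\sup_t\mathbb{E}|X^{\epsilon,x}(t)|^2$, not the expectation of the supremum, so I would first strengthen it to $\mathbb{E}\sup_{t\le T}|X^{\epsilon,x}(t)|^2<\infty$ by the same BDG/maximal-inequality argument, or equivalently localize via stopping times $\tau_n=\inf\{t:|X^{\epsilon,x}(t)|>n\}$, derive the estimate for the stopped process, and pass $n\to\infty$ by Fatou. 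The jump integral is the only genuinely nonstandard ingredient, and this integrability/localization step is where the care is needed; once the relevant $L^2$ maximal inequality is in place, the remainder is routine.
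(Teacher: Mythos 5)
Your proof is correct, and it is exactly the standard argument the authors have in mind (the paper omits the proof of Proposition \ref{procong} altogether, stating it right after Lemma \ref{HT}, which is itself justified only ``by standard arguments''): subtract the integral forms, note that the noise terms carry the factor $\epsilon$ with no difference structure, estimate the drift via Cauchy--Schwarz and the Lipschitz bound, the martingale terms via Doob/It\^{o} isometry and linear growth together with the moment bound (\ref{HTE}), and close with Gr\"{o}nwall. Your remark about first securing $\mathbb{E}\sup_{t\le T}|X^{\epsilon,x}(t)|^2<\infty$ (or localizing) before applying Gr\"{o}nwall is the right point of care and is handled correctly.
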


\

\begin{theorem}\label{reofLy}
Let $b$, $\sigma$ and $F$ be locally Lipschitz continuous and locally linear growth.
If there exist a function $V\in C^{2}(\mathbb{R}^{m},\mathbb{R}_{+})$, $\epsilon_{0}>0$ and a constant $c^*<+\infty$, such that {\rm (\ref{Lyunbd})} and
\begin{equation}\label{Lyfc}
  \mathcal{L}^{\epsilon}V(x)\leq c^* V(x),\quad \forall\epsilon\in[0,\epsilon_0]
\end{equation}
hold.
Then there exists a global unique solution $X^{\epsilon,x}(t)$ to {\rm(\ref{lsode})} for all $x\in \mathbb{R}^{m}$
and all $\epsilon\in[0,\epsilon_0]$.
Moreover the hypothesis {\rm(\ref{pc})} holds,
that is, for any given compact set $K\subset \mathbb{R}^{m}$, $T>0$ and $\delta>0$,
$$\displaystyle\lim_{\epsilon\to 0}\sup_{x\in K}
\mathbb{P}\{|X^{\epsilon,x}(T)-X^{0,x}(T)|\geq \delta\}=0.$$
\end{theorem}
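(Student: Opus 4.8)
The plan is to prove the two assertions in turn, with a single Lyapunov estimate doing the work for both. Under the local Lipschitz and local linear growth hypotheses, (\ref{lsode}) admits a unique local strong solution up to a possible explosion time $\tau_\infty^{\epsilon,x}$, so the task is to upgrade this to a global solution and then establish (\ref{pc}). For each $R>0$ I would introduce the exit time $\tau_R^{\epsilon,x}:=\inf\{t\geq 0:|X^{\epsilon,x}(t)|\geq R\}$, so that $\tau_\infty^{\epsilon,x}=\lim_{R\to\infty}\tau_R^{\epsilon,x}$. The central object is the stopped process $e^{-c^*(t\wedge\tau_R^{\epsilon,x})}V(X^{\epsilon,x}(t\wedge\tau_R^{\epsilon,x}))$: applying It\^{o}'s formula for jump processes, the drift, diffusion and jump-integral terms assemble exactly into $\mathcal{L}^\epsilon V$, and the hypothesis (\ref{Lyfc}) then shows this process is a supermartingale for every $\epsilon\in[0,\epsilon_0]$. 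Up to $\tau_R^{\epsilon,x}$ the solution lies in $\overline{B_R}$, where local boundedness of $F$ and the local growth conditions make every stochastic and jump integral well-defined, so It\^{o}'s formula applies legitimately on $[0,t\wedge\tau_R^{\epsilon,x}]$.

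From the supermartingale property, $\mathbb{E}\big[e^{-c^*(t\wedge\tau_R^{\epsilon,x})}V(X^{\epsilon,x}(t\wedge\tau_R^{\epsilon,x}))\big]\leq V(x)$. On $\{\tau_R^{\epsilon,x}\leq t\}$ one has $|X^{\epsilon,x}(\tau_R^{\epsilon,x})|\geq R$, hence $V(X^{\epsilon,x}(\tau_R^{\epsilon,x}))\geq m_R:=\inf_{|y|\geq R}V(y)$, giving
$$ e^{-c^* t}m_R\,\mathbb{P}\{\tau_R^{\epsilon,x}\leq t\}\leq V(x),\qquad\text{i.e.}\qquad \mathbb{P}\{\tau_R^{\epsilon,x}\leq t\}\leq \frac{e^{c^* t}V(x)}{m_R}. $$
Since (\ref{Lyunbd}) forces $m_R\to\infty$ as $R\to\infty$, letting $R\to\infty$ yields $\mathbb{P}\{\tau_\infty^{\epsilon,x}\leq t\}=0$ for every $t$, so the solution is global for all $x$ and all $\epsilon\in[0,\epsilon_0]$, which is the first claim. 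Because $V$ is continuous and $K$ compact, $V_K:=\sup_{x\in K}V(x)<\infty$, so the same bound delivers the uniform exit estimate $\sup_{x\in K}\mathbb{P}\{\tau_R^{\epsilon,x}\leq T\}\leq e^{c^* T}V_K/m_R$, which tends to $0$ as $R\to\infty$ uniformly in $\epsilon\in[0,\epsilon_0]$.

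For the probability convergence I would localize. Given $R$, choose globally Lipschitz and linearly growing coefficients $b_R,\sigma_R,F_R$ agreeing with $b,\sigma,F$ on $\overline{B_R}$ (a standard cutoff), and let $Y^{\epsilon,x}_R$ solve the corresponding SDE; by Lemma \ref{HT} it exists globally and uniquely, and by pathwise uniqueness of the localized equation $Y^{\epsilon,x}_R(t)=X^{\epsilon,x}(t)$ for $t\leq\tau_R^{\epsilon,x}$ (and likewise for the deterministic solutions). The deterministic Lyapunov inequality $\tfrac{d}{dt}V(X^{0,x}(t))=\langle\nabla V,b\rangle\leq c^* V$ together with Gronwall confines $X^{0,x}([0,T])$ to the sublevel set $\{V\leq e^{c^*T}V_K\}$, which (\ref{Lyunbd}) guarantees is bounded; hence for $R$ exceeding the radius $R_0$ of this set the deterministic trajectory never leaves $B_R$ and $X^{0,x}=Y^{0,x}_R$ on $[0,T]$. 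Then
$$ \mathbb{P}\{|X^{\epsilon,x}(T)-X^{0,x}(T)|\geq\delta\}\leq \mathbb{P}\{\tau_R^{\epsilon,x}\leq T\}+\mathbb{P}\{|Y^{\epsilon,x}_R(T)-Y^{0,x}_R(T)|\geq\delta\}, $$
and Chebyshev's inequality together with Proposition \ref{procong} applied to the globally Lipschitz coefficients (with constant $L_R$) bounds the last term by $\delta^{-2}D^*_{L_R,T}\epsilon^2(1+\sup_{x\in K}|x|^2)$. Fixing $\eta>0$, I would first pick $R>R_0$ so the uniform exit estimate is below $\eta/2$, then with $R$ (hence $L_R$) fixed pick $\epsilon$ small so the second term is below $\eta/2$; this gives $\sup_{x\in K}\mathbb{P}\{\cdots\}<\eta$ and proves (\ref{pc}).

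The hard part is the localization step: Proposition \ref{procong} and Lemma \ref{HT} are available only under global Lipschitz and linear growth, whereas the present hypotheses are merely local, so the pathwise identification $X^{\epsilon,x}=Y^{\epsilon,x}_R$ up to $\tau_R^{\epsilon,x}$ must be justified via local uniqueness, and the cutoff coefficients must simultaneously preserve agreement on $\overline{B_R}$ and acquire global bounds — here the assumed local boundedness of $F$ is exactly what permits a controlled truncation of the jump coefficient. A secondary technical point is that, because of jumps, the process may overshoot the sphere $|y|=R$ at $\tau_R^{\epsilon,x}$; local boundedness of $F$ keeps this overshoot finite, so the supermartingale computation and the inequality $V(X^{\epsilon,x}(\tau_R^{\epsilon,x}))\geq m_R$ remain valid.
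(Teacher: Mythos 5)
Your proposal is correct and follows essentially the same route as the paper: truncate the coefficients to obtain global Lipschitz versions, identify the solution with the truncated one up to the exit time $\tau_R$, control $\sup_{x\in K}\mathbb{P}(\tau_R^{\epsilon,x}\leq T)$ via the Lyapunov estimate $e^{c^*T}V(x)/\inf_{|y|>R}V(y)$ (which the paper simply cites from Khasminskii and you derive from the discounted supermartingale), and then apply Proposition \ref{procong} with Chebyshev on the truncated system. The only cosmetic difference is that you confine the deterministic trajectory by Gronwall applied to $\mathcal{L}^0 V\leq c^*V$, whereas the paper uses compactness of $K$ and continuous dependence on initial data to get $\inf_{x\in K}\tau_n^{0,x}>T$; both are valid.
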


\begin{proof} For the global existence and uniqueness of solution to (\ref{lsode}) we refer to a similar proof in Khasminskii
\cite[Theorem 1.1.3 and Theorem 3.3.5]{Khas}, for instance. Without loss of generality, we assume $c^*>0$. Let $\tau_{n}^{\epsilon,x}=\inf\{t:|X^{\epsilon,x}(t)|>n\}$ and $\tau_{n}^{0,x}=\inf\{t:|X^{0,x}(t)|>n\}$. It is easy to see that  $\tau_{n}^{\epsilon,x}$ and $\tau_{n}^{0,x}$
nondecreasingly tend to infinity as $n\rightarrow \infty$.

For each $n\in\mathbb{N}^{*}$, let $S^{n}(r)$ be a nonincreasing $C^{\infty}$ function with values in $[0,1]$
such that
\[  S^{n}(r)=
\begin{cases}
1 & \textrm{if} \ r\in[0, n] ,\\
\frac{n+\frac{1}{2}}{r} & \textrm{if} \ r\in[n+1,+\infty).
\end{cases} \]
Construct functions
\begin{eqnarray}
b_{n}(x)=b\big(xS^{n}(|x|)\big),
\end{eqnarray}
$\sigma_{n}(x)$ and $F_{n}(x,y)$ similarly. Then $b_{n}(x)$, $\sigma_{n}(x)$ and $F_{n}(x,y)$ clearly satisfy global Lipschitz
and linear growth conditions. Let $X_{n}^{\epsilon,x}(t)$ be the solution associated with
functions $b_{n}(x)$, $\sigma_{n}(x)$ and $F_{n}(x,y)$. It is easy to see that $X^{\epsilon,x}(t)=X_{n}^{\epsilon,x}(t)$ for $t\leq\tau_{n}^{\epsilon,x}$. Repeating the proof in  Khasminskii
\cite[Theorem 3.5, p.76]{Khas},  we know that
$$\mathbb{P}(\tau_{n}^{\epsilon,x}\leq T)\leq \frac{\exp(c^*T)V(x)}{\inf_{|y|>n}V(y)},$$
which implies that $\mathbb{P}(\tau_{n}^{\epsilon,x}\leq T)\rightarrow 0$ as $n\rightarrow\infty$ uniformly for $x\in K$.
This shows that $\forall\eta>0$, $\exists N_{0}\in\mathbb{N}^{*}$, such that $\forall n\geq N_{0}$, we have
$\displaystyle\sup_{x\in K}\mathbb{P}(\tau_{n}^{\epsilon,x}\leq T)<\eta$.
The compactness of $K$ and continuity for solution $X^{0,x}(t)$ with respect to initial point ensure that there exists $N_{1}\in \mathbb{N}^{*}$, such that for all $n\geq N_{1}$,
$$  \displaystyle\inf_{x\in K}\tau_{n}^{0,x}>T.$$
Now choosing $n\geq N_{0}\vee N_{1}$, we have
\begin{align*}
&\displaystyle\sup_{x\in K}
\mathbb{P}\{|X^{\epsilon,x}(T)-X^{0,x}(T)|\geq \delta\}\\
\leq &\displaystyle\sup_{x\in K}
\mathbb{P}\{|X^{\epsilon,x}(T)-X^{0,x}(T)|\geq \delta,T<\tau_{n}^{\epsilon,x}\wedge \tau_{n}^{0,x}\}\\
&+\displaystyle\sup_{x\in K}\mathbb{P}(\tau_{n}^{\epsilon,x}\wedge\tau_{n}^{0,x}\leq T)\\
= &\displaystyle\sup_{x\in K}
\mathbb{P}\{|X_{n}^{\epsilon,x}(T)-X_{n}^{0,x}(T)|\geq \delta,T<\tau_{n}^{\epsilon,x}\wedge \tau_{n}^{0,x}\}\\
&+\displaystyle\sup_{x\in K}\mathbb{P}(\tau_{n}^{\epsilon,x}\leq T)\\
\leq &\frac{1}{\delta^{2}}\displaystyle\sup_{x\in K}\mathbb{E}|X_{n}^{\epsilon,x}(T)-X_{n}^{0,x}(T)|^{2}
+\eta\\
\leq &\displaystyle\sup_{x\in K}\frac{D^{*}_{L_{n},T}(1+|x|^2)}{\delta^{2}}\epsilon^{2}
+\eta\\
\leq &2\eta ,\quad \forall\epsilon\in(0,\epsilon_{0}]
\end{align*}
for some constant $\epsilon_{0}>0$. The proof is complete.
\end{proof}

\subsection {The criteria on the existence of stationary measures and their tightness}

Following the arguments as Khasminskii in \cite{KHAS1,Khas}, we obtain the criterion on the tightness of a family of stationary measures for (\ref{lsode}).

\begin{theorem}[Tightness Criterion]\label{tightcri}
Suppose that
$b(x)$, $\sigma(x)$ and $F(x,y)$ in {\rm(\ref{lsode})} are locally Lipschitz continuous and locally linear growth, and $F(x,y)$
is locally bounded with respect to $(x,y)$, and that there exists a scalar function $V(x)\in C^{2}(\mathbb{R}^{m},\mathbb{R}_+)$ such that {\rm(\ref{Lyunbd})}  and {\rm(\ref{Lproper})} hold.
Then for any $x\in \mathbb{R}^{m}$, there exists at least
a stationary measure $\mu^{\epsilon}_x$ for every $\epsilon$, and
the set $\mathcal{S}:=\bigcup\{\mathcal{S}^{\epsilon}:0<\epsilon\leq\epsilon_0\}$ of stationary measures
is tight.
\end{theorem}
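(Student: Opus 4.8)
The plan is to run a Krylov--Bogolyubov and Khasminskii type argument driven by the Lyapunov function $V$, keeping every estimate uniform in $\epsilon\in(0,\epsilon_0]$. First I would record two preliminaries. Since $\mathcal{L}^{\epsilon}V\le -A_R\to-\infty$ outside large balls while $V\to+\infty$ by (\ref{Lyunbd}), replacing $V$ by $V+1$ (which leaves $\mathcal{L}^{\epsilon}V$ unchanged, as $\mathcal{L}^{\epsilon}$ annihilates constants) shows (\ref{Lyfc}) holds; hence Theorem \ref{reofLy} grants a global, non-exploding solution $X^{\epsilon,x}(t)$, and the Feller property of $P^{\epsilon}_t$ follows from continuous dependence on the initial datum (local Lipschitz plus a localized Gronwall estimate in the spirit of Proposition \ref{procong}). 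Second, I would verify that $K:=\sup_{0<\epsilon\le\epsilon_0}\sup_{x\in\mathbb{R}^m}\mathcal{L}^{\epsilon}V(x)<\infty$: outside a fixed ball this is $\le -A_{R_0}$, while inside it the drift and diffusion terms are bounded by continuity and local linear growth, and the jump term is controlled by the second-order Taylor estimate $|V(x+\epsilon F)-V(x)-\langle\nabla V,\epsilon F\rangle|\le C\epsilon^{2}|F|^{2}$ together with the local $L^{2}(\nu)$-bound (\ref{locGth}); here the \emph{local boundedness of $F$} is exactly what keeps $x+\epsilon F$ in a fixed compact set so that $\nabla^{2}V$ stays bounded.

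The heart of the proof is a single occupation-measure tail estimate. Applying Dynkin's formula to $V$ along $X^{\epsilon,x}$, localized by $\tau_n=\inf\{t:|X^{\epsilon,x}(t)|\ge n\}$, and using $\mathcal{L}^{\epsilon}V(y)\le K\,I_{\{|y|\le R\}}-A_R\,I_{\{|y|>R\}}$ together with $V\ge 0$, I would obtain $A_R\,\mathbb{E}\int_0^{T\wedge\tau_n}I_{\{|X^{\epsilon,x}(s)|>R\}}\,ds\le V(x)+KT$. Letting $n\to\infty$ by monotone convergence (non-explosion gives $\tau_n\uparrow\infty$) yields, with $A_R$ and $K$ independent of $\epsilon$ by (\ref{Lproper}),
$$\frac{1}{T}\int_0^T\mathbb{P}\big(|X^{\epsilon,x}(s)|>R\big)\,ds\le\frac{V(x)}{A_R\,T}+\frac{K}{A_R},$$
uniformly in $\epsilon\in(0,\epsilon_0]$ and, on compacts, uniformly in $x$. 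Fixing a starting point $x_0$ and $T\ge 1$, the right-hand side tends to $0$ as $R\to\infty$, so the family $Q^{\epsilon,x_0}_T(\cdot):=\frac{1}{T}\int_0^T P^{\epsilon}_s(x_0,\cdot)\,ds$ is tight; by Prohorov a subsequence converges weakly, and the Feller property makes the limit stationary, proving existence of $\mu^{\epsilon}_x$.

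For tightness of the whole family $\mathcal{S}=\bigcup_{\epsilon}\mathcal{S}^{\epsilon}$ I would transfer this estimate to an arbitrary stationary $\mu^{\epsilon}$. Stationarity gives $\mu^{\epsilon}(\{|y|>R\})=\int\mathbb{P}(|X^{\epsilon,y}(s)|>R)\,\mu^{\epsilon}(dy)$ for every $s$; splitting the $y$-integral over $\{|y|\le M\}$ and its complement, bounding the outer part by $\mu^{\epsilon}(\{|y|>M\})$ and the inner part via the displayed estimate (where $V$ is bounded by $\sup_{|y|\le M}V$), then letting $T\to\infty$ and $M\to\infty$, I would arrive at the clean, moment-free bound $\mu^{\epsilon}(\{|y|>R\})\le K/A_R$, uniform in $\epsilon$ and over all stationary measures. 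Since $A_R\to\infty$, for any $\gamma>0$ one picks $R$ with $K/A_R<\gamma$, and as $\{|y|\le R\}$ is compact this is exactly tightness of $\mathcal{S}$.

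I expect the main obstacle to be this last transfer step, namely producing the uniform tail bound for an \emph{arbitrary} stationary measure without any a priori $V$-integrability: naively integrating the pointwise estimate against $\mu^{\epsilon}$ fails when $\int V\,d\mu^{\epsilon}=\infty$, and the device of restricting the initial law to the balls $\{|y|\le M\}$ while using $\mu^{\epsilon}(\{|y|>M\})\to 0$ is what circumvents it. The remaining delicate point, feeding into the finiteness of $K$, is the uniform-in-$\epsilon$ control of the nonlocal (jump) part of $\mathcal{L}^{\epsilon}V$ on compact sets, which is precisely why the hypotheses demand local boundedness of $F$ and the local $L^{2}(\nu)$ growth bound (\ref{locGth}).
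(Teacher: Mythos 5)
Your proof is correct, and the existence half is essentially the paper's argument: the same localized Dynkin/It\^{o} estimate $A_R\,\mathbb{E}\int_0^{t\wedge\tau_n}I_{\{|X^{\epsilon,x}(s)|>R\}}\,ds\le V(x)+Ct$, the same uniform-in-$\epsilon$ control of the nonlocal part of $\mathcal{L}^{\epsilon}V$ via Taylor expansion, local boundedness of $F$ and (\ref{locGth}), and the Krylov--Bogoliubov construction from the Ces\`{a}ro averages. (Your $V+1$ device to verify (\ref{Lyfc}) and thereby legitimately invoke Theorem \ref{reofLy} for non-explosion is a small point the paper glosses over.) Where you genuinely diverge is the tightness step. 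The paper proves the tail bound $\mu^{\epsilon}_x(U_R^c)\le C/A_R$ only for stationary measures arising as weak limits of the Ces\`{a}ro averages $P^{\epsilon,t}(x,\cdot)$ --- it quietly redefines $\mathcal{S}^{\epsilon}$ inside the proof to mean exactly that family --- and gets the bound by applying the Portmanteau theorem to the open set $U_R^c$ along the convergent subsequence. You instead derive $\mu^{\epsilon}(U_R^c)\le K/A_R$ for an \emph{arbitrary} stationary measure directly from the stationarity identity, truncating the initial law to $\{|y|\le M\}$ so as to avoid any a priori integrability of $V$ under $\mu^{\epsilon}$, then letting $T\to\infty$ and $M\to\infty$. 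Your route is slightly longer but yields the stronger statement that the union of \emph{all} stationary measures over $\epsilon\in(0,\epsilon_0]$ is tight, matching the theorem as literally stated; the paper's route is shorter but, read strictly, only covers the Krylov--Bogoliubov limits.
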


\begin{proof}  For any fixed $x\in\mathbb{R}^{m}$, it follows from  Theorem \ref{reofLy} that the solution $X^{\epsilon}(t,x)$ is  globally defined on $[0, +\infty)$.  For any $n\in\mathbb{N}^{*}$,
we define stopping time $\tau_{n}^{\epsilon}=\inf\{t:|X^{\epsilon}(t,x)|>n\}$. Then It\^{o}'s formula and Doob's optional sampling theorem (see \cite{Apple,KaraShre}) imply that
$$ \mathbb{E}V(X^{\epsilon}(t\wedge\tau_{n}^{\epsilon},x))-V(x)
=\mathbb{E}\int_{0}^{t\wedge\tau_{n}^{\epsilon}}\mathcal{L}^\epsilon V(X^{\epsilon}(s,x))ds.   $$
Since $V(x)\in C^2(\mathbb{R}^m)$ and $F(x,y)$
is locally bounded, applying Taylor expansion and (\ref{locGth}), we obtain
$$
\sup_{\epsilon\in[0,\epsilon_0]}\int_{|y|_{\mathbb{R}^l}<c}\big(V(x+\epsilon F(x,y))-V(x)-\langle \nabla V(x),\epsilon F(x,y)\rangle\big)\nu(dy)
<
\infty.
$$

By $V(x)\in C^2(\mathbb{R}^m)$ again and (\ref{Lproper}), $C:=\displaystyle\sup_{\epsilon\in[0,\epsilon_0]}\sup_{x\in\mathbb{R}^{m}}\mathcal{L}^\epsilon V(x)<\infty$. Hence we have
$$ \mathcal{L}^\epsilon V(X^{\epsilon}(s,x))\leq -I_{\{|X^{\epsilon}(s,x)|>R\}}A_{R}+C, $$
it is easy to get
$$ A_{R}\mathbb{E}\int_{0}^{t\wedge \tau_{n}^{\epsilon}}I_{\{|X^{\epsilon}(s,x)|>R\}}ds
\leq V(x)+Ct,  $$
where we have used the condition (\ref{Lproper}).
Since $t\wedge\tau_{n}^{\epsilon}\rightarrow t$ a.s. as $n\rightarrow\infty$,
letting $n\rightarrow\infty$ and then changing the order of integration in the last inequality, we have for $t>0$,
\begin{equation}\label{consta}
  \frac{1}{t}\int_{0}^{t}P^{\epsilon}(s,x,U_{R}^{c})ds\leq \frac{1}{A_{R}}\big(\frac{V(x)}{t}+C\big),
\end{equation}
where $U_{R}^{c}=\{x\in\mathbb{R}^{m}:|x|>R\}$. This implies that
$$\lim_{R\rightarrow \infty}\liminf_{t\rightarrow \infty} \frac{1}{t}\int_{0}^{t}P^{\epsilon}(s,x,U_{R}^{c})ds = 0.$$
Applying Khasminskii \cite[Theorem 3.1, p.66]{Khas}, there exists at least a stationary measure $\mu_x^{\epsilon}$, which is produced by Krylov-Bogoliubov procedure, that is, $\mu_x^{\epsilon}$ is a weak limit of a subsequence of probability measures on $\mathbb{R}^{m}$ defined by
$$  P^{\epsilon,t}(x,B)=\frac{1}{t}\int_{0}^{t}P^{\epsilon}(s,x,B)ds. $$

Denote by $\mathcal{S}^{\epsilon}_x$ the set of all their weak limits of probability measures $\{P^{\epsilon,t}(x,\cdot):t>0,\}$ and $\mathcal{S}^{\epsilon}:=\bigcup\{\mathcal{S}^{\epsilon}_x:x\in \mathbb{R}^{m}\}$ for $\epsilon \in (0,\epsilon_0]$. Pick any $\mu_x^{\epsilon}\in \mathcal{S}$. Then there are $\epsilon\in(0,\epsilon_{0}]$, $x\in \mathbb{R}^{m}$ and $t_n\rightarrow \infty$ such that $P^{\epsilon,t_{n}}(x,\cdot) \overset{w}{\rightarrow}
\mu_x^{\epsilon}(\cdot) $ as $n \rightarrow \infty$, by the Portmanteau Theorem and (\ref{consta}), we
obtain
$$ \mu^{\epsilon}_x(U_{R}^{c})\leq \displaystyle\liminf_{n\rightarrow\infty}P^{\epsilon,t_{n}}(x,U_{R}^{c})\leq \frac{C}{A_R}.$$
Since $\lim_{R\rightarrow \infty}\frac{C}{A_R}=0$ uniformly in $\epsilon\in(0,\epsilon_0]$ by assumptions,
the set $\mathcal{S}$ of stationary measures is tight. This completes the proof.
\end{proof}

\begin{remark}From our proof,  the conclusions still hold if $C\leq 0$ and  there is a constant $\gamma > 0$ such that
$A_R \geq \gamma >0$ for $R$ sufficiently large.
\end{remark}

\begin{remark} Huang, Ji, Li and Yi {\rm\cite{HJLY0}} gave the estimate of stationary measures in the essential domain of a Lyapunov-like function in case $F\equiv 0$, which provides the criterion for the tightness of stationary measures.
\end{remark}

Our results allow the nonlinear terms in (\ref{lsode}) to be polynomial growth, which is stated as the following corollary.
\begin{corollary}\label{lnegc}
Suppose that $b(x)$, $\sigma(x)$ and $F(x,y)$ in {\rm (\ref{lsode})} are locally Lipschitz continuous and locally linear growth, and that $F(x,y)$
is locally bounded with respect to $(x,y)$.
If there are positive
constants $c_1,c_2$ and $q\geq 2$ such that for $|x|$ sufficiently large, one has
\begin{eqnarray*}
 \langle b(x),x\rangle\leq -c_1|x|^q,
\end{eqnarray*}
\begin{eqnarray*}
 \frac{1}{2}\|\sigma(x)\|^2_2+\int_{|y|_{\mathbb{R}^l}<c}|F(x,y))|^2\nu(dy)\leq c_2|x|^{q},
\end{eqnarray*}
then the conclusions of Theorems {\rm\ref{reofLy}} and {\rm\ref{tightcri}} hold.
\end{corollary}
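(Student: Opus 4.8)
The plan is to exhibit a single Lyapunov function for which the polynomial growth hypotheses immediately yield the three structural conditions (\ref{Lyunbd}), (\ref{Lproper}) and (\ref{Lyfc}) demanded by Theorems \ref{reofLy} and \ref{tightcri}. The natural candidate is the quadratic $V(x) = 1 + |x|^2$, which is nonnegative, lies in $C^{2}(\mathbb{R}^m,\mathbb{R}_+)$, and trivially satisfies (\ref{Lyunbd}) since $\inf_{|x|>R} V(x) = 1 + R^{2}\to +\infty$. Once the two differential inequalities are checked for this $V$, the conclusions follow verbatim from the two theorems, so no further work is needed.

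First I would compute $\mathcal{L}^{\epsilon}V$ explicitly. With $\nabla V(x) = 2x$ and $\partial^{2}V/\partial x_{i}\partial x_{j} = 2\delta_{ij}$, the second-order term collapses to $\epsilon^{2}\sum_{i}a_{ii}(x) = \epsilon^{2}\,\mathrm{tr}\,A(x) = \epsilon^{2}\|\sigma(x)\|_2^{2}$. The decisive simplification occurs in the jump integral: because $V$ is quadratic, $V(x+\epsilon F) - V(x) - \langle \nabla V(x), \epsilon F\rangle = \epsilon^{2}|F(x,y)|^{2}$ identically, with no Taylor remainder to estimate, so the compensated integral term is exactly $\epsilon^{2}\int_{|y|_{\mathbb{R}^l}<c}|F(x,y)|^{2}\nu(dy)$. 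Hence
\[
\mathcal{L}^{\epsilon}V(x) = 2\langle b(x), x\rangle + \epsilon^{2}\Big(\|\sigma(x)\|_2^{2} + \int_{|y|_{\mathbb{R}^l}<c}|F(x,y)|^{2}\nu(dy)\Big).
\]
Feeding in the hypotheses for $|x|$ large, namely $\langle b(x), x\rangle \leq -c_1|x|^{q}$ together with $\|\sigma(x)\|_2^{2} + \int|F(x,y)|^{2}\nu(dy) \leq 2c_2|x|^{q}$ (which follows from the stated bound on $\tfrac12\|\sigma\|_2^2 + \int|F|^2\nu(dy)$ since the jump integral is nonnegative), yields $\mathcal{L}^{\epsilon}V(x) \leq -2(c_1 - \epsilon^{2}c_2)|x|^{q}$.

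The remaining work is to fix the noise window and dispose of the bounded region. Choosing $\epsilon_0 \in \big(0, \sqrt{c_1/c_2}\,\big)$ makes $\kappa := 2(c_1 - \epsilon_0^{2}c_2) > 0$, so for every $\epsilon\in[0,\epsilon_0]$ and all sufficiently large $|x|$ one has $\mathcal{L}^{\epsilon}V(x) \leq -\kappa|x|^{q}$; taking the supremum over $\{|x|>R\}$ gives (\ref{Lproper}) with $A_R = \kappa R^{q}\to +\infty$. For (\ref{Lyfc}), I would note that on any ball $\mathcal{L}^{\epsilon}V$ is bounded above uniformly in $\epsilon\in[0,\epsilon_0]$: the drift and diffusion terms are continuous, while the jump integral is controlled on compacts by the local growth condition (\ref{locGth}) and the local boundedness of $F$. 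Combining this with the negativity established outside a large ball shows $c^{*} := \max\{0,\ \sup_{x,\epsilon}\mathcal{L}^{\epsilon}V(x)\} < \infty$, and because $V \geq 1$ we obtain $\mathcal{L}^{\epsilon}V(x) \leq c^{*} \leq c^{*}V(x)$, i.e. (\ref{Lyfc}). With (\ref{Lyunbd}), (\ref{Lproper}) and (\ref{Lyfc}) in hand, Theorems \ref{reofLy} and \ref{tightcri} apply directly.

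The only genuine obstacle is the interaction between the order-$q$ dissipativity of the drift and the order-$q$ growth of the noise coefficients: both scale like $|x|^{q}$, so dissipation is not automatically dominant and the argument must purchase strict negativity by confining $\epsilon$ to the small window $[0,\epsilon_0]$. The Lévy term, which one might expect to cause difficulty, is in fact the easy part here, precisely because the quadratic choice of $V$ renders the compensated integrand exactly $\epsilon^{2}|F|^{2}$ and thereby eliminates any remainder estimate.
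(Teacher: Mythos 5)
Your proof is correct and follows essentially the same route as the paper: a quadratic Lyapunov function whose Hessian and jump integrand are computed exactly, with the $\epsilon^{2}$ noise contribution absorbed by the order-$q$ drift dissipation once $\epsilon$ is confined to a small window. The only difference is your choice $V=1+|x|^{2}$ in place of the paper's $\tfrac12|x|^{2}$, which is a small but genuine improvement, since $V\geq 1$ makes the verification of (\ref{Lyfc}) immediate, whereas a Lyapunov function vanishing at the origin would require $\mathcal{L}^{\epsilon}V(0)\leq 0$ there.
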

\begin{proof}
Define $V:\mathbb{R}^{m}\rightarrow \mathbb{R}_+$ by
$$V(x):=\frac{1}{2}\sum_{i=1}^{m}{(x^i)}^2.$$
Then
\begin{align*}
\mathcal{L}^{\epsilon}V(x)=&
  \langle b(x),x\rangle+
  \frac{\epsilon^{2}}{2}\|\sigma(x)\|^2_2+
  \epsilon^{2}\int_{|y|_{\mathbb{R}^l}<c}|F(x,y))|^2\nu(dy)\\
   &\leq
  -(c_1-\epsilon^{2}c_2)|x|^{q}\\
   &\leq
  -\frac{c_1}{2}|x|^{q}
\end{align*}
for $|x|$ sufficiently large and $\epsilon$ sufficiently small. It is easy to see that (\ref{Lyunbd}) and (\ref {Lproper}) and all other conditions
in Theorems \ref{reofLy} and \ref{tightcri} hold. The proof is complete.
\end{proof}

It is easy to see that Theorem \ref{supportode} follows immediately from Theorems \ref{reofLy},  \ref{tightcri} and \ref{mthm}.

\subsection{On the uniqueness and ergodicity  of stationary measure}

In this subsection, we will provide a result for the uniqueness and ergodicity  of stationary measure for $\{P_t^\epsilon\}_{t\geq 0}$. To achieve this goal, we will give the sufficient conditions for $\{P_t^\epsilon\}_{t\geq 0}$ to be irreducible and strong Feller.

\begin{lemma}\label{SFir}Suppose the assumptions of Lemma {\rm\ref{lsode}} are satisfied.
If the non-degeneracy
\begin{equation}\label{nondjd}
\sup_{x\in \mathbb{R}^m}|\sigma^T(x)\big(\sigma(x)\sigma^T(x)\big)^{-1}|:=\widetilde{K}<\infty \ \ \ \text{holds},
\end{equation}
then the semigroup $P_{t}$ of solution for equations {\rm(\ref{lsode})} is  irreducible.

Furthermore, if the following conditions

\ \ \ ${\rm a)}$ $b,\sigma\in C^1_b(\mathbb{R}^m)$,

\ \ \ ${\rm b)}$ there exists a nonnegative function $\tilde{c}\in L^2(\mathbb{R}^l,\mathcal{B}(\mathbb{R}^l),\nu)$ such that
 $$|F(x,y)|\le \tilde{c}(y),\ \ \ (x,y)\in \mathbb{R}^m\times\mathbb{R}^l,$$

\ \ \ ${\rm c)}$ there exists a constant $C>0$ such that
$$\int_{|y|_{\mathbb{R}^l}<c}\|D_{x}F(0,y)\|_{2}^2\nu(dy)\leq C,\  and$$
\[\int_{|y|_{\mathbb{R}^l}<c}\|D_x F
(x_1,y)-D_xF(x_2,y)\|_{2}^2\nu(dy)\leq C|x_1-x_2|^2, \ x_1,x_2\in \mathbb{R}^m,\]
then the semigroup $P_{t}$ of solution for equations {\rm(\ref{lsode})} is  strong Feller.
\end{lemma}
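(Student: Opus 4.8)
The two assertions are proved separately and then combined: once $\{P_t\}$ is shown to be both irreducible and strong Feller, Doob's theorem yields that there is at most one invariant measure and that it is ergodic, which is the ultimate purpose of the lemma. Both properties are driven by the non-degeneracy hypothesis (\ref{nondjd}): the bound $\widetilde{K}$ on $\sigma^T(\sigma\sigma^T)^{-1}$ means the Gaussian part of (\ref{lsode}) is uniformly elliptic, so the Wiener noise alone can push the state in every direction (giving irreducibility) and can be used for an integration by parts (giving the strong Feller property), while the jump term is treated as an auxiliary component that must be controlled rather than exploited.

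\emph{Irreducibility.} I would fix $x\in\mathbb{R}^m$, a nonempty open set $U$ and $t>0$, and show $P_t(x,U)>0$ by a control/Girsanov argument. Choose a smooth path $\phi:[0,t]\to\mathbb{R}^m$ with $\phi(0)=x$ and $\phi(t)$ in the interior of $U$, and introduce the feedback control
\[
h_s=\tfrac1\epsilon\,\sigma^T(X_s)\big(\sigma(X_s)\sigma^T(X_s)\big)^{-1}\big(\dot\phi(s)-b(X_s)\big).
\]
Here (\ref{nondjd}) together with the linear growth of $b$ guarantees that $h$ has finite energy along bounded excursions, so Novikov's condition holds after a standard localisation by the exit time of a large ball, and the density $\exp\big(\int_0^t h_s\,dW_s-\tfrac12\int_0^t|h_s|^2\,ds\big)$ defines a measure $\mathbb{Q}\sim\mathbb{P}$ under which $\widehat W_s=W_s-\int_0^s h_r\,dr$ is a Wiener process. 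Under $\mathbb{Q}$ the drift becomes $b(X_s)+\epsilon\sigma(X_s)h_s=\dot\phi(s)$, so $X$ follows $\phi$ up to the Gaussian martingale $\epsilon\int\sigma\,d\widehat W$ and the compensated jump martingale $\epsilon\int\!\int F\,\tilde N$. Since both martingales have $L^2$ norm of order $\epsilon$ over bounded excursions, by the linear growth bounds on $\sigma$ and $F$ inherited from Lemma \ref{HT}, Doob's inequality shows that with positive $\mathbb{Q}$-probability $X$ stays in a tube around $\phi$, hence $X_t\in U$; equivalence of $\mathbb{Q}$ and $\mathbb{P}$ then gives $P_t(x,U)=\mathbb{P}(X_t^x\in U)>0$.

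\emph{Strong Feller.} Here I would use a Bismut--Elworthy--Li formula built on the Wiener noise. Conditions (a) and (c) ensure that $x\mapsto X_t^x$ is differentiable with Jacobian $J_t=\nabla_x X_t^x$ solving the linearised equation $dJ_t=\nabla b(X_t)J_t\,dt+\epsilon\,\nabla\sigma(X_t)J_t\,dW_t+\epsilon\int D_xF(X_{t-},y)J_{t-}\tilde N(dt,dy)$ with $J_0=I$, while (b) and the integrability in (c) give uniform moment estimates for $J_t$ and its inverse. Exploiting the bounded right inverse $\sigma^T(\sigma\sigma^T)^{-1}$, I would perform a Malliavin integration by parts in the Brownian directions only, with the Poisson measure held fixed, using the adapted Cameron--Martin shift $\dot u_s=\tfrac1{\epsilon t}\,\sigma^T(X_s)(\sigma(X_s)\sigma^T(X_s))^{-1}J_s v$ chosen so that the Malliavin derivative of $X_t$ in this direction coincides with $J_t v$. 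This produces
\[
\big\langle \nabla P_tf(x),v\big\rangle=\frac1{\epsilon t}\,\mathbb{E}\Big[f(X_t^x)\int_0^t\big\langle \sigma^T(X_s)\big(\sigma(X_s)\sigma^T(X_s)\big)^{-1}J_s v,\,dW_s\big\rangle\Big].
\]
Bounding the It\^o integral in $L^2$ by $\widetilde{K}$ and the moment estimates for $J_s$ yields $|\nabla P_tf(x)|\le C(t,\epsilon)\|f\|_\infty$, so $P_tf\in C_b^1(\mathbb{R}^m)$ and $P_t$ is strong Feller.

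\emph{Main obstacle.} The delicate point in both parts is the jump term. For irreducibility, when $\nu$ carries infinite mass near the origin one cannot simply condition on the absence of jumps, so the compensated small-jump martingale must instead be kept inside the tube by an $L^2$/maximal-inequality estimate that is uniform in the jump activity. For the strong Feller property, the real work lies in propagating differentiability through the jumps and in bounding the \emph{inverse} Jacobian $J_t^{-1}$, which is exactly where assumptions (b) and (c) are needed, and in justifying the partial Malliavin integration by parts on Wiener space while the Poisson random measure is frozen; the non-degeneracy (\ref{nondjd}) is precisely what makes this localisation in the Brownian directions possible.
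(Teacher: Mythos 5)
Your strong Feller argument is essentially the paper's: both rest on a Bismut--Elworthy--Li formula $D_x\mathbb{E}\phi(X(t,x))h=\frac{1}{t}\mathbb{E}\big[\phi(X(t,x))\int_0^t\langle\sigma^T(X_s)(\sigma(X_s)\sigma^T(X_s))^{-1}\eta_s^h,dW_s\rangle\big]$ with the Jacobian $\eta^h$ solving the linearised jump-diffusion equation and a second-moment bound $\mathbb{E}|\eta^h_t|^2\le C_T|h|^2$ coming from a)--c); the paper derives the formula via the Kolmogorov backward equation and It\^o's formula rather than a Cameron--Martin shift on Wiener space, but the content is the same. (One remark: in this version of the formula the inverse Jacobian $J_t^{-1}$ is never needed --- only $\mathbb{E}|J_sv|^2$ enters --- so the difficulty you locate there is not actually present.)

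The irreducibility part is where you diverge from the paper and where there is a genuine gap. Your Girsanov/control scheme is a legitimate alternative to the paper's route (the paper conditions on the jump times: when $\nu(U)<\infty$ it writes $P_t(x,\Gamma)=e^{-t\nu(U)}P^0_t(x,\Gamma)+\cdots$ and inherits irreducibility from the non-degenerate diffusion semigroup $P^0_t$, citing the literature for the infinite-activity case). But your positivity step does not work as written. The lemma concerns a \emph{fixed} $\epsilon$ (the paper even takes $\epsilon=1$), so saying the Gaussian and compensated-jump martingales ``have $L^2$ norm of order $\epsilon$'' and invoking Doob's inequality gives only $\mathbb{Q}(\sup_s|M_s|>\delta)\le C\epsilon^2/\delta^2$, which need not be less than $1$; Doob's inequality never yields a positive \emph{lower} bound on the probability of staying in a tube of fixed radius. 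For the Brownian integral alone one needs a support-theorem or time-change argument; for the jumps one must split off the finitely many jumps of size above a cutoff $\rho$ (and condition on there being none, an event of positive probability independent of $W$) and let $\rho\to0$ so that the remaining compensated small-jump martingale has $L^2$ norm below $\delta$ --- and then still control the joint occurrence of these events, since the integrands all depend on $X$. You correctly identify this as the main obstacle, but the proof as proposed does not overcome it, whereas the paper's decomposition sidesteps it entirely in the finite-activity case and defers to \cite{Dong-Xie} otherwise.
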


\begin{proof} For the diffusion case, i.e. $F\equiv 0$, it is well known that the semigroup $P_{t}$ of solution for equation (\ref{lsode})
is strong Feller and irreducible, see, for example, \cite{ZhXC}. We will prove the jump diffusion case. In the below, we denote by $X(t)$ the solution for equation (\ref{lsode}) and $X^d(t)$ for the the diffusion case.

{\bf (1) Irreducible:}

{\bf Step  1.} Suppose $\nu(\{|y|_{\mathbb{R}^l}<c\})<\infty.$

Let $\{\tau_i\}_{i\ge 1}$ be the interarrive times of the Poisson random measure $N$. Then $\{y_{\tau_i}\}_{i\ge 1}$ is a point process associated with the Poisson random measure $N$ which satisfies

(i) $\{y_{\tau_i}\}\subset U:= \{y:|y|_{\mathbb{R}^l}<c\},$

(ii) $\{(\tau_i,y_{\tau_i}),i\ge 1\}$ is independent and for measurable set $\widetilde{O}\subset U(\subset {\mathbb{R}^l}),t>0,$
\[
\mathbb{P}(\tau_i>t,y_{\tau_i}\in \widetilde{O})=e^{-t\nu (U)}\nu (\widetilde{O}).
\]
On $[0,\tau_1), X(t)=X^d(t),$ and $X_{\tau_1}=X_{\tau_1-}+F(X_{\tau_1},y_{\tau_1}).$
Since $\{(\tau_i,y_{\tau_i}),i\ge 1\}$ is independent with the solution $X^d(t)$, as proved in \cite{Dong} and \cite{Dong-Xie2}, we have the relationship for $x\in \mathbb{R}^m, t>0, \Gamma\in \mathcal{B}(\mathbb{R}^m)$,
\begin{align}\label{irjump}
P_t(x,\Gamma)=&e^{-t\nu(U)}P^0_t(x,\Gamma) \nonumber\\
              &+\int_{\mathbb{R}^m}\int^t_0\int_Ue^{-s\nu (U)}P_{t-s}(z+F(z,y),\Gamma)\nu(dy)ds P_t^0(x,dz),
\end{align}
where $P^0_t$ is the semigroup of solution for equation (\ref{lsode}) with $F\equiv 0$, which is irreducible. Therefore,  we have that $P_t$ is irreducible.

{\bf Step 2.} Suppose $\nu(\{|y|_{\mathbb{R}^l}<c\})=\infty.$

The  irreducibility of $X(t)$ can be proved by using the arguments in \cite{Dong-Xie}.

{\bf(2) Strong Feller property:}

Denote by $X(t,x)$ the solution of (\ref{lsode}) ($\epsilon=1$) with initial value  $x$.

For any $\phi\in C^1(\mathbb{R}^m)$, $t\geq0$, and $h\in\mathbb{R}^m$,
\[
D_x{\mathbb E}\phi(X(t,x))h={\mathbb E}[D_x\phi(X(t,x))\eta^{h}_t],
\]
where $\eta^{h}_t=D_x\big(X(t,x)\big)h$ is the solution of the equation
\begin{align}\label{sfjd}
 d \eta^{h}_t=&D_xb(X(t,x))\eta^{h}_tdt+D_x\sigma(X(t,x))\eta^{h}_tdW_t\nonumber \\
&+\int_{|y|_{\mathbb{R}^l}<c}D_xF\big(X(t-,x),y\big)\eta^{h}_t\tilde N(dt,dy), \:\:\eta^{h}_0=h.
\end{align}

From a)--c) and (\ref{nondjd}), by the standard method, we know that there exists some constant $C_T>0$,
independent of $h$ such that
\begin{equation}\label{bddJd}
    {\mathbb E}|\eta^{h}_t|^{2}\le C_T|h|^{2},\quad \textrm{for\ all} \:\:\: t\in[0,T].
\end{equation}

For $\phi\in C^1(\mathbb{R}^m)$, we can prove that $V(t,x)=\mathbb E\phi(X(t,x))$ is a solution of the equation
$$\left\{
\begin{aligned}
\frac{dV(t,x)}{dt}=&{\cal L}V(t,x)\\
 V(0,x)=&\phi(x).
\end{aligned}
\right.$$
(see \cite[Theorem 3.1, p.89]{GaMena}). Using the It\^{o} formula on $V(t-s,x)$ with respect to $s\in[0,t]$ and $x\in\mathbb{R}^m$ of $X(t,x)$
\begin{equation}\label{ItoJdp}
    \begin{split}
    &\phi(X(t,x))\\
    =&V(t,x)+\int^t_0\left[\frac{\partial}{\partial s}V(t-s,X(s,x))+{\cal L}V\big(t-s,X(s,x)\big)\right]ds \\
    &+\int^t_0 D_xV(t-s,X(s,x))\sigma(X(s,x))dW_s  \\
    &+\int^t_0\int_U\big[V\big(t-s,X(s-,x)+F(X(s-,x),y)\big)\\
    &-V(t-s,X(s-,x))\big]\tilde N(ds,dy) \\
    =&V(t,x)+ \int^t_0D_xV(t-s,X(s,x))\sigma(X(s,x))dW_s \\
    &+\int^t_0\int_U\big[V(t-s,X(s-,x)+F(X(s-,x),y))\\
    &-V(t-s,X(s-,x))\big]\tilde N(ds,dy).
    \end{split}
\end{equation}
Multiplying both sides of (\ref{ItoJdp}) by $$\int^t_0\langle\sigma^T(X(s,x))\big(\sigma(X(s,x))\sigma^T(X(s,x))\big)^{-1}\eta^{h}_s,dW_s\rangle_{\mathbb{R}^k}$$ and taking expectation, we get the following Bismut-Elworthy-Li formula for (\ref{lsode}) (see Lemma 7.13 in Da Prato and Zabczyk \cite{DaZaErg}).
Indeed,
\begin{align*}
&\mathbb{E}\big[\phi(X(t,x))
\int^t_0\langle\sigma^T(X(s,x))\big(\sigma(X(s,x))\sigma^T(X(s,x))\big)^{-1}\eta^{h}_s,dW_s\rangle_{\mathbb{R}^k}\big]\\
=&\mathbb{E}\int^t_0\langle\{D_xV(t-s,X(s,x))\sigma(X(s,x))\}^T,
\sigma^T(X(s,x))\big(\sigma(X(s,x))\sigma^T(X(s,x))\big)^{-1}\eta^{h}_s\rangle_{\mathbb{R}^{k}} ds\\
=&\mathbb{E}\int^t_0\langle \big(D_xV(t-s,X(s,x))\big)^T,
\sigma(X(s,x))\sigma^T(X(s,x))\big(\sigma(X(s,x))\sigma^T(X(s,x))\big)^{-1}\eta^{h}_s\rangle_{\mathbb{R}^{m}} ds\\
=&\mathbb{E}\int^t_0 D_xV(t-s,X(s,x))
\eta^{h}_sds\\
=&\mathbb{E}\int^t_0 D_xP_{t-s}\big(\phi(X(s,x))\big)h ds\\
=&\int^t_0 D_x\mathbb{E}P_{t-s}\big(\phi(X(s,x))\big)h ds\\
=&t D_x\mathbb{E}\phi(X(t,x)) h.
\end{align*}
This implies,
\begin{equation}\label{DeJp}
{\small \begin{split}
&D_x\mathbb{E}\phi(X(t,x))h\\
=&\frac{1}{t}\mathbb{E}\Big[\phi(X(t,x))
\int^t_0\langle\sigma^T(X(s,x))\big(\sigma(X(s,x))\sigma^T(X(s,x))\big)^{-1}\eta^{h}_s,dW_s\rangle_{\mathbb{R}^k}\Big].
\end{split}}
\end{equation}
For any $\phi\in C_b^1(\mathbb{R}^m)$, from (\ref{nondjd}), (\ref{bddJd}) and (\ref{DeJp}),
it follows
\begin{align*}
|D_x\mathbb{E}\phi(X(t,x))h|^{2}
\leq&\frac{\|\phi\|^{2}}{t^{2}}\mathbb{E}\Big|
\int^t_0\langle\sigma^T(X(s,x))\big(\sigma(X(s,x))\sigma^T(X(s,x))\big)^{-1}\eta^{h}_s,dW_s\rangle_{\mathbb{R}^k}\Big|^{2}\\
\leq&\frac{\|\phi\|^{2}}{t^{2}}\mathbb{E}
\int^t_0\Big|\sigma^T(X(s,x))\big(\sigma(X(s,x))\sigma^T(X(s,x))\big)^{-1}\eta^{h}_s\Big|^{2}ds\\
\leq&\frac{\|\phi\|^{2}}{t}\widetilde{K}C_T |h|^2.
\end{align*}
Then there exists a
positive constant $\widetilde{C}_T$, we get
\[  |D_x\mathbb{E}\phi(X(t,x))|\leq \frac{\|\phi\|}{\sqrt{t}}\widetilde{C}_T.  \]
Therefore, we have
\[
|P_t\phi(x)-P_t\phi(y)|\le \frac{\widetilde{C}_T}{\sqrt{t}}\|\phi\||x-y|,\quad \textrm{for\ all} \:\:\: t\in(0,T].
\]
By \cite[Lemma 7.15]{DaZaErg}, $P_t$  has strong Feller property.
\end{proof}

\begin{remark}
The \emph{uniformly elliptic} property of diffusion matrix $\sigma\sigma^{T}$,
i.e., there is a constant $\lambda>0$ such that $\xi^{T}\sigma(x)(\sigma(x))^{T}\xi\geq \lambda|\xi|^{2}$
for all $x \in\mathbb{R}^m$ and $\xi\in\mathbb{R}^{m}$, which implies {\rm(\ref{nondjd})}. Indeed,
the boundedness of $\widetilde{\sigma}=\sigma^{T}(\sigma\sigma^{T})^{-1}$ follows from the fact that
\[|\widetilde{\sigma}\xi|^2=\xi^T(\widetilde{\sigma})^T\widetilde{\sigma}\xi=\xi^T(\sigma\sigma^{T})^{-1}\xi\leq\lambda^{-1}|\xi|^2, \quad \forall\xi\in\mathbb{R}^m.    \]
\end{remark}

\begin{theorem}\label{if}
Suppose the assumptions of Theorem {\rm\ref{reofLy}} are satisfied. If the non-degeneracy
\begin{equation}\label{nondjd2}
|\sigma^T(x)\big(\sigma(x)\sigma^T(x)\big)^{-1}|<\infty \ \ \ \text{holds},
\end{equation}
then the semigroup $P_{t}$ is irreducible.

Furthermore, if

\ \ \ ${\rm a)}$ $b,\sigma\in C^1(\mathbb{R}^m)$,

\ \ \ ${\rm b)}$ for any $n>1$, there exists a nonnegative function $c_n\in L^2(\mathbb{R}^l,\mathcal{B}(\mathbb{R}^l),\nu)$ such that
 $$\sup_{|x|\le n}|F(x,y)|\le c_n(y),\ \ \ y\in \mathbb{R}^l,$$

\ \ \ ${\rm c)}$ there exist positive constants $C$ and $C_r$ for any $r>0$ such that
$$\int_{|y|_{\mathbb{R}^l}<c}\|D_{x}F(0,y)\|_{2}^2\nu(dy)\leq C,$$
\[\int_{|y|_{\mathbb{R}^l}<c}\|D_x F
(x_1,y)-D_xF(x_2,y)\|_{2}^2\nu(dy)\leq C_r|x_1-x_2|^2, \ |x_1|\vee|x_2|\leq r,\]
then the semigroup $P_{t}$  has the strong Feller property.
\end{theorem}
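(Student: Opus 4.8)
The plan is to reduce the local setting of Theorem \ref{if} to the globally Lipschitz setting of Lemma \ref{SFir} by reusing the truncation device from the proof of Theorem \ref{reofLy}. With the smooth cut-off $S^n$ introduced there, set $g_n(x):=xS^n(|x|)$, so that $g_n$ is smooth, $g_n(x)=x$ on $\{|x|\le n\}$, and the range of $g_n$ lies in the compact ball $\overline{B(0,n+1)}$. Writing $b_n:=b\circ g_n$, $\sigma_n:=\sigma\circ g_n$ and $F_n(x,y):=F(g_n(x),y)$, these truncated coefficients are globally Lipschitz with linear growth, and the associated process $X_n(t,x)$ agrees with $X(t,x)$ up to the exit time $\tau_n^x:=\inf\{t:|X(t,x)|>n\}$. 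Since the assumptions of Theorem \ref{reofLy} supply the Lyapunov bound (\ref{Lyfc}), the estimate obtained in its proof yields
\begin{equation}\label{pcexit}
\sup_{|x|\le R}\mathbb{P}(\tau_n^x\le t)\le \frac{e^{c^*t}\,\sup_{|x|\le R}V(x)}{\inf_{|y|>n}V(y)}\xrightarrow[n\to\infty]{}0
\end{equation}
for every fixed $R,t>0$, the quantitative non-explosion input driving both parts.

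First I would check that $b_n,\sigma_n,F_n$ satisfy every hypothesis of Lemma \ref{SFir}, so that the truncated semigroup $P^n_t$ is irreducible and strong Feller. The only point needing care is that Lemma \ref{SFir} asks for the \emph{uniform} non-degeneracy (\ref{nondjd}), whereas Theorem \ref{if} only assumes the pointwise (\ref{nondjd2}). This gap closes for free after truncation: under a) the map $z\mapsto \sigma^T(z)(\sigma(z)\sigma^T(z))^{-1}$ is continuous wherever (\ref{nondjd2}) holds, and
$$\sigma_n^T(x)\big(\sigma_n(x)\sigma_n^T(x)\big)^{-1}=\big[\sigma^T(\sigma\sigma^T)^{-1}\big](g_n(x)),$$
whose argument $g_n(x)$ ranges over the compact ball $\overline{B(0,n+1)}$, so the supremum over $x$ is finite. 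Conditions a)--c) for $b_n,\sigma_n,F_n$ follow likewise from a)--c) of Theorem \ref{if} together with the boundedness of $g_n$ and $Dg_n$ and the local bounds $c_n(y)$, $C_r$. Thus Lemma \ref{SFir} applies to each truncated system.

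The strong Feller property then transfers cleanly. For bounded measurable $\phi$ and $|x|\le R$, agreement of $X$ and $X_n$ before $\tau_n^x$ gives $|P_t\phi(x)-P^n_t\phi(x)|\le 2\|\phi\|\,\mathbb{P}(\tau_n^x\le t)$, so by (\ref{pcexit}) we have $P^n_t\phi\to P_t\phi$ uniformly on $\overline{B(0,R)}$. Each $P^n_t\phi$ is continuous (strong Feller of the truncated system), and a uniform limit of continuous functions is continuous; as $R$ is arbitrary, $P_t\phi\in C_b(\mathbb{R}^m)$. This establishes the strong Feller property of $P_t$ under a)--c) and (\ref{nondjd2}).

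The irreducibility is the genuinely delicate step, since the naive comparison $P_t(x,O)\ge P^n_t(x,O)-\mathbb{P}(\tau_n^x\le t)$ is too lossy for fixed $n$ and circular as $n\to\infty$. I would instead first strip the jumps: as in (\ref{irjump}), on the event that $N$ charges no atom in $[0,t]\times U$ the jump process coincides with the pure diffusion $X^d$, so $P_t(x,O)\ge e^{-t\nu(U)}P^0_t(x,O)$ when $\nu(U)<\infty$ (and the argument of \cite{Dong-Xie} when $\nu(U)=\infty$), reducing matters to the diffusion. For the diffusion I would invoke the Stroock--Varadhan support theorem rather than bare irreducibility: given a target ball $B(y_0,r)\subset O$, pointwise non-degeneracy (\ref{nondjd2}) lets one steer the control system $\dot\phi=b(\phi)+\sigma(\phi)u$ from $x$ into $B(y_0,r)$ along a path contained in some $B(0,N)$, so the event that $X^d(t,x)\in B(y_0,r)$ while staying inside $B(0,N)$ has positive probability for the truncated system with $n\ge N+1$; on this event $X^d$ and its truncation coincide, whence $P^0_t(x,O)>0$ and therefore $P_t(x,O)>0$. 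The crux, and the step I expect to cost the most work, is exactly this ``reach-while-staying-inside'' positivity, since plain irreducibility of $P^n_t$ does not by itself rule out that all the mass reaching $O$ first leaves $B(0,n)$; the support theorem, which is local in nature and valid under (\ref{nondjd2}), is what supplies it.
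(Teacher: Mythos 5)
Your strong Feller argument is essentially the paper's: truncate with $S^n$, apply Lemma \ref{SFir} to the truncated system, and transfer continuity through $|P_t\phi(x)-P^n_t\phi(x)|\le 2\|\phi\|\,\mathbb{P}(\tau_n^x\le t)$ together with the uniform-on-compacts decay of $\mathbb{P}(\tau_n^x\le t)$ supplied by the Lyapunov estimate; your observation that the pointwise non-degeneracy (\ref{nondjd2}) upgrades to the uniform bound (\ref{nondjd}) after composition with $g_n$ (continuity of $(\sigma\sigma^T)^{-1}$ under a) plus compactness of the range of $g_n$) is a detail the paper dismisses as ``readily checked''. Where you genuinely diverge is irreducibility. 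The paper does not use the lossy comparison $P_t(x,O)\ge P^n_t(x,O)-\mathbb{P}(\tau_n^x\le t)$ that you reject; it writes $\mathbb{P}(X^x(t)\in B_\delta(z))\ge \mathbb{P}(X_n^x(t)\in B_\delta(z))-\mathbb{P}(X^x(\tau_n^x)\in B_\delta(z))$ and kills the subtracted term exactly, because $|X^x(\tau_n^x)|\ge n$ while $\overline{B_\delta(z)}\subset B_n(0)$. However, the step in that chain which identifies $\mathbb{P}(X^x(t\wedge\tau_n^x)\in B_\delta(z))$ with $\mathbb{P}(X_n^x(t)\in B_\delta(z))$ silently requires that the truncated process not re-enter $B_\delta(z)$ after leaving $B_n(0)$ --- which is precisely the ``reach while staying inside'' issue you isolate. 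Your route (strip the jumps via the decomposition (\ref{irjump}) when $\nu(U)<\infty$, then obtain a tube estimate for the non-degenerate diffusion from the Stroock--Varadhan support theorem, with full controllability of $\dot\phi=b(\phi)+\sigma(\phi)u$ coming from (\ref{nondjd2})) closes that gap, at the cost of re-proving irreducibility from scratch rather than quoting Lemma \ref{SFir} as a black box. It is the more robust argument; the extra work you anticipate in the tube estimate is real but standard for locally Lipschitz, locally uniformly elliptic coefficients.
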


\begin{proof} It can be readily checked that for each $n\geq 1$ the coefficients $b_{n},\sigma_n$ and $F_{n}$ as
in the proof of Theorem \ref{reofLy} satisfy the assumptions of Lemma \ref{SFir}. Therefore the transition semigroup $P_{t}^{n}$ corresponding to $X_{n}(t)$
enjoys strong Feller property and irreducibility.

Thus, for any $t>0$ and $f\in \mathcal{B}_{b}(\mathbb{R}^{m})$
\begin{align*}
  &|\mathbb{E}f(X^{x}(t))-\mathbb{E}f(X^{x_{0}}(t))|\\
  =&|\mathbb{E}\big(f(X^{x}(t));t<\tau_{n}^{x}\big)+\mathbb{E}\big(f(X^{x}(t));t\geq\tau_{n}^{x}\big)\\
  &-\mathbb{E}\big(f(X^{x_0}(t));t<\tau_{n}^{x_0}\big)-\mathbb{E}\big(f(X^{x_0}(t));t\geq\tau_{n}^{x_0}\big)|\\
  =&|\mathbb{E}f(X_n^{x}(t))-\mathbb{E}\big(f(X_n^{x}(t));t\geq\tau_{n}^{x}\big)+\mathbb{E}\big(f(X^{x}(t));t\geq\tau_{n}^{x}\big)\\
  &-\mathbb{E}f(X_n^{x_0}(t))-\mathbb{E}\big(f(X^{x_0}(t));t\geq\tau_{n}^{x_0}\big)-
  \mathbb{E}\big(f(X^{x_0}(t));t\geq\tau_{n}^{x_0}\big)|\\
  \leq&|\mathbb{E}f(X_n^{x}(t))-\mathbb{E}f(X_n^{x_0}(t))|\\
  &+|\mathbb{E}\big(f(X_n^{x}(t));t\geq\tau_{n}^{x}\big)|+|\mathbb{E}\big(f(X^{x}(t));t\geq\tau_{n}^{x}\big)|\\
  &+|\mathbb{E}\big(f(X^{x_0}(t));t\geq\tau_{n}^{x_0}\big)|+
  |\mathbb{E}\big(f(X^{x_0}(t));t\geq\tau_{n}^{x_0}\big)|\\
  \leq&|\mathbb{E}f(X_n^{x}(t))-\mathbb{E}f(X_n^{x_0}(t))|+2\|f\|\mathbb{P}(\tau_{n}^{x}\leq t)+2\|f\|\mathbb{P}(\tau_{n}^{x_{0}}\leq t).
\end{align*}
Since $\mathbb{P}(\tau_{n}^{x}\leq T)\rightarrow 0$ as $n\rightarrow\infty$ uniformly for $x$ in compact of $\mathbb{R}^m$.
Thus $\forall \eta>0$, there is a sufficient large $n\in\mathbb{N}^{*}$ such that
$$  \mathbb{P}(\tau_{n}^{x}\leq T)\leq \eta $$
for all $x\in B_{\frac{1}{2}}(x_{0})$. Noting that $P^{n}_{t}$ is strong Feller, this implies
$$  \displaystyle \lim_{x\rightarrow x_0}|\mathbb{E}f(X_n^{x}(t))-\mathbb{E}f(X_n^{x_0}(t))|=0.$$
Consequently,
$$\displaystyle \lim_{x\rightarrow x_0}|\mathbb{E}f(X^{x}(t))-\mathbb{E}f(X^{x_{0}}(t))|\leq 4\eta.$$
Since $\eta$ is arbitrary, the strong Feller property of $P_t$ holds.

Now, we prove that the semigroup $P_{t}$ is irreducible. In fact, for any open ball $B_\delta(z)\subset\mathbb{R}^m$,
choosing $n\in \mathbb{N}^{*}$ sufficiently large such that $\overline{B_\delta(z)}\subset B_{n}(0)$.
Then for each $t>0$ and $x\in\mathbb{R}^{m}$, we have
\begin{align*}
  &\mathbb{P}(X^{x}(t)\in B_\delta(z))\\
  =&\mathbb{P}(X^{x}(t)\in B_\delta(z),t<\tau_{n}^x)+\mathbb{P}(X^{x}(t)\in B_\delta(z),t\geq\tau_{n}^x)\\
  \geq&\mathbb{P}(X^{x}(t)\in B_\delta(z),t<\tau_{n}^x)\\
  =&\mathbb{P}(X^{x}(t\wedge \tau_{n}^x)\in B_\delta(z),t<\tau_{n}^x)\\
  =&\mathbb{P}(X^{x}(t\wedge \tau_{n}^x)\in B_\delta(z))-\mathbb{P}(X^{x}(t\wedge \tau_{n}^x)\in B_\delta(z),t\geq\tau_{n}^x)\\
  =&\mathbb{P}(X_n^{x}(t)\in B_\delta(z))-\mathbb{P}(X^{x}(\tau_{n}^x)\in B_\delta(z)).
\end{align*}
Since $|X^{x}(\tau_{n}^x)|\geq n$, $\mathbb{P}(X^{x}(\tau_{n}^x)\in B_\delta(z))=0$.
Therefore the irreducibility of $P_{t}$ follows from the fact that $P^n_{t}$ is irreducible.
\end{proof}

\begin{corollary}Suppose all the assumptions of Theorems {\rm\ref{tightcri}} and {\rm\ref{if}} are satisfied.
Then there exists an  $\epsilon_0$  such that for $\epsilon \in (0, \epsilon_0]$,
{\rm (\ref{lsode})} has a unique stationary measure $\mu^{\epsilon}$ and $\{\mu^{\epsilon}: 0 < \epsilon \leq \epsilon_0\}$ is tight. If  $\mu^{\epsilon_i} \overset{w}{\rightarrow}
\mu $ as $\epsilon_i \rightarrow 0$, then $\mu$ is an invariant
measure of $X^{0}(t)$, which supports on the Birkhoff center $B(X^{0})$.
\end{corollary}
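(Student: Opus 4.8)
The plan is to assemble the three preceding results, the only genuinely new ingredient being the passage from existence to \emph{uniqueness} of the stationary measure. First I would fix $\epsilon\in(0,\epsilon_0]$ and observe that the hypotheses of Theorem~\ref{if} are invariant under scaling the noise by $\epsilon$. The equation (\ref{lsode}) with intensity $\epsilon$ is a diffusion with jumps whose diffusion coefficient is $\epsilon\sigma$ and whose jump coefficient is $\epsilon F$; since $(\epsilon\sigma)^{T}\big((\epsilon\sigma)(\epsilon\sigma)^{T}\big)^{-1}=\epsilon^{-1}\sigma^{T}(\sigma\sigma^{T})^{-1}$, the non-degeneracy (\ref{nondjd2}) is preserved for every $\epsilon>0$, and conditions ${\rm a)}$--${\rm c)}$ persist verbatim. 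Hence Theorem~\ref{if}, which is proved at unit intensity, applies to each $\epsilon\in(0,\epsilon_0]$ and shows that the transition semigroup $P^{\epsilon}_{t}$ is both irreducible and strong Feller.

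Second, I would invoke the classical Doob theorem (see, e.g., \cite{DaZaErg}): a Markov semigroup that is simultaneously strong Feller and irreducible admits at most one invariant probability measure, because strong Feller together with irreducibility forces all the laws $P^{\epsilon}_{t}(x,\cdot)$, $t>0$, to be mutually equivalent. On the other hand, Theorem~\ref{tightcri} already guarantees, under the assumed hypotheses, that for every $\epsilon\in(0,\epsilon_0]$ at least one stationary measure exists and that the entire family $\mathcal{S}=\bigcup\{\mathcal{S}^{\epsilon}:0<\epsilon\leq\epsilon_0\}$ is tight. Combining ``at least one'' with ``at most one'' yields precisely one stationary measure $\mu^{\epsilon}$ for each $\epsilon\in(0,\epsilon_0]$, and tightness of $\{\mu^{\epsilon}:0<\epsilon\leq\epsilon_0\}$ is inherited from that of $\mathcal{S}$.

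Finally, the limiting assertion is just Theorem~\ref{supportode}. The assumptions of Theorem~\ref{if} contain those of Theorem~\ref{reofLy}, and the assumptions of Theorem~\ref{tightcri} are in force, so all hypotheses of Theorem~\ref{supportode} hold; consequently any weak limit $\mu$ of a sequence $\mu^{\epsilon_i}$ is invariant for $X^{0}(t)$ and is supported on the Birkhoff center $B(X^{0})$.

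The routine parts here are the appeals to the earlier theorems; the single substantive step is the uniqueness argument, and the main obstacle is to make the Doob dichotomy apply cleanly. Concretely, I must check that the strong Feller and irreducibility properties established at $\epsilon=1$ in Theorem~\ref{if} genuinely transfer to every $\epsilon\in(0,\epsilon_0]$ via the scaling observation above, and that applying Doob's theorem at a single positive time $t>0$ suffices to deduce equivalence of all transition laws and hence uniqueness of the invariant measure.
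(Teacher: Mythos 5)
Your proof is correct and follows exactly the route the paper intends for this corollary (which it states without proof): existence and tightness from Theorem~\ref{tightcri}, uniqueness from the strong Feller property and irreducibility of Theorem~\ref{if} via Doob's theorem, and the limiting assertion from Theorem~\ref{mthm} (equivalently Theorem~\ref{supportode}, whose hypotheses are covered since Theorem~\ref{if} presupposes those of Theorem~\ref{reofLy}). Your explicit check that the hypotheses of Theorem~\ref{if} are preserved under the scaling $\sigma\mapsto\epsilon\sigma$, $F\mapsto\epsilon F$ is a worthwhile detail the paper leaves implicit.
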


\subsection{Examples}

\begin{example}[Monotone Cyclic Feedback Systems with Noise]
\end{example}
A typical monotone cyclic feedback system is
given by the $N+1$ equations
\begin{equation}\label{mcfs}
  \dot{x}^i(t)= -b_i{x}^i(t)+f^i({x}^{i+1}(t)),\ 0\leq i \leq N
\end{equation}
where each $b_i$ is positive, $N\geq 0$, the indices are taken mod $N+1$ and each $f^i$ enjoys the monotonicity property
\begin{equation}\label{mon}
  \frac{df^i(s)}{ds} \neq 0,\ \textrm{ for\ all}\ s\in\mathbb{R}, \ 0\leq i \leq N.
\end{equation}
After a sequence of normalizing transformations fully described in Mallet-Paret and Sell \cite{MPSE1}, we may assume that
\begin{equation}\label{mon1}
  \frac{df^i(s)}{ds} > 0,\  \delta\frac{df^N(s)}{ds} > 0,\ \textrm{ for\ all}\ s\in\mathbb{R}, \ 0\leq i \leq N-1
\end{equation}
where $\delta \in \{-1, 1\}$. Monotone cyclic feedback systems (\ref{mcfs}), (\ref{mon1}) arise in versions of the classical Goodwin model of enzyme synthesis and in the theory of neural networks. In application, the functions $f^i, i=0,1,\cdots,N$, are often assumed to have sigmoidal shapes. Hence, we always assume that $f^i, i=0,1,\cdots,N$, are bounded and continuously differentiable with bounded derivatives. Then Mallet-Paret and Smith \cite{MPSM} proved the following Poincar\'{e}-Bendixson Theorem.

\begin{theorem}[The Poincar\'{e}-Bendixson Theorem]\label{PB}
Consider the system {\rm(\ref{mcfs})} with each $f^i$ satisfying the above assumptions.
Let $x(t)$ be a solution of {\rm(\ref{mcfs})} on
$[0, \infty)$. Let $\omega(x)$ denote the $\omega$-limit set of this solution in the phase
space $\mathbb{R}^{N+1}$. Then either

${\rm (a)}$ $\omega(x)$ is a single non-constant periodic orbit; or else

${\rm (b)}$ for solutions with
$u(t)\in \omega(x)$  for all $t\in \mathbb{R}$, we have that
$$\alpha(u)\cup \omega(u)\subset \mathcal{E},$$
where $\alpha(u)$ and $\omega(u)$ denote the $\alpha$- and $\omega$-limit sets, respectively, of
this solution, and where $\mathcal{E}\subset \mathbb{R}^{N+1}$ denotes the set of equilibrium points of
{\rm(\ref{mcfs})}.
\end{theorem}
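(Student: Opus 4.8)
The plan is to follow the strategy of Mallet-Paret and Sell \cite{MPSE1}, reducing the high-dimensional cyclic flow to an essentially planar one by means of a discrete, integer-valued Lyapunov functional, and then invoking the classical planar Poincar\'e--Bendixson theorem. First I would introduce the discrete Lyapunov functional. For a nonzero vector $\xi=(\xi^0,\dots,\xi^N)\in\mathbb{R}^{N+1}$ let $V_\delta(\xi)$ count the number of sign changes in the cyclic sequence $(\xi^0,\xi^1,\dots,\xi^N,\delta\xi^0)$, with the convention (governed by the feedback parity $\delta$ in (\ref{mon1})) that makes the count even when $\delta=+1$ and odd when $\delta=-1$. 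Next I would record the variational structure: if $\tilde x(t)$ and $\hat x(t)$ are two solutions of (\ref{mcfs}), their difference $w(t)=\tilde x(t)-\hat x(t)$ satisfies a linear cyclic tridiagonal system
$$\dot w^i(t)=-b_i\,w^i(t)+c^i(t)\,w^{i+1}(t),\qquad 0\le i\le N,$$
where $c^i(t)=\int_0^1 (f^i)'\big(\theta\tilde x^{i+1}(t)+(1-\theta)\hat x^{i+1}(t)\big)\,d\theta$ has the definite sign prescribed by (\ref{mon1}), namely $c^i>0$ for $0\le i\le N-1$ and $\delta c^N>0$.

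The key step is the monotonicity lemma: the map $t\mapsto V_\delta(w(t))$ is non-increasing, and it drops strictly whenever $w(t)$ passes through a degenerate configuration (a vanishing coordinate flanked by a sign change). I would prove this by analyzing, on short time intervals, how sign changes in the cyclic sequence are created or destroyed; the definite signs of the coefficients $c^i$ force that sign changes can only be annihilated, never produced, as $t$ increases. Since $V_\delta$ is integer-valued and bounded, along any solution that exists for all $t\in\mathbb{R}$---in particular on the invariant $\omega$-limit set $\omega(x)$, and for solutions $u(t)\in\omega(x)$ defined for all $t\in\mathbb{R}$---the value $V_\delta(w(t))$ of the difference of any two such trajectories is eventually constant both as $t\to+\infty$ and as $t\to-\infty$.

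With the Lyapunov functional stabilized, I would then construct the planar reduction. Fixing a full solution $u(t)\subset\omega(x)$ and exploiting that $V_\delta$ of the difference of any two trajectories on $\omega(x)$ is eventually constant, one shows that an appropriate two-component projection $\Pi:\omega(x)\to\mathbb{R}^2$, built from a pair of coordinates adapted to the stabilized sign pattern, is injective along trajectories and transports the flow to a genuine planar flow. Applying the classical Poincar\'e--Bendixson theorem to this planar flow yields the dichotomy: either the projected $\omega$-limit set is a single non-constant periodic orbit, which lifts to alternative ${\rm (a)}$, or it consists of equilibria together with connecting orbits, which lifts to the assertion $\alpha(u)\cup\omega(u)\subset\mathcal{E}$ of alternative ${\rm (b)}$.

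The main obstacle is precisely this planar reduction. The delicate point is to show that the stabilized value of $V_\delta$ forces the projection $\Pi$ to be injective, so that no two distinct points of $\omega(x)$ collapse and the image of a trajectory is a genuine planar curve to which Jordan-curve reasoning applies; establishing this injectivity and the resulting planar structure is the technical heart of the argument of Mallet-Paret and Smith \cite{MPSM}. Once the flow on $\omega(x)$ is recognized as planar, the remaining work is the routine transfer of the two classical alternatives back to the original cyclic system.
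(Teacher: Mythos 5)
This theorem is not proved in the paper at all: it is quoted verbatim as a known result of Mallet-Paret and Smith \cite{MPSM}, and the authors use it only as an input to Theorem \ref{supp}. So there is no ``paper's proof'' to compare against; what you have written is an outline of the proof in the cited reference, and judged as such it identifies the right ingredients --- the cyclic sign-change functional $V_\delta$, the linear cyclic tridiagonal variational system satisfied by the difference $w=\tilde x-\hat x$ with off-diagonal coefficients $c^i(t)=\int_0^1 (f^i)'(\theta\tilde x^{i+1}+(1-\theta)\hat x^{i+1})\,d\theta$ of definite sign, the eventual constancy of the integer-valued $V_\delta(w(t))$ on full orbits in $\omega(x)$, and the planar reduction followed by the classical Poincar\'e--Bendixson dichotomy.

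As a proof, however, your proposal has genuine gaps at exactly the two places where the content lies, and you acknowledge as much. First, the monotonicity lemma for $V_\delta$ is asserted but not established: the claim that sign changes in the cyclic sequence can only be annihilated, never created, requires a careful local analysis of the zero set of $w$ (including the non-generic case where several consecutive coordinates vanish simultaneously), and the strict-drop statement at degenerate configurations is what ultimately forces the stabilized sign pattern to be nondegenerate on $\omega(x)$. Second, and more seriously, the injectivity of the two-coordinate projection $\Pi$ on $\omega(x)$ --- which is what licenses the use of Jordan-curve arguments --- is simply deferred to \cite{MPSM}; without it the ``planar flow'' does not exist and the classical theorem cannot be invoked. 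You should also note that the argument needs $\omega(x)$ to be nonempty and compact, which here follows from the dissipativity of (\ref{mcfs}) (the terms $-b_ix^i$ with $b_i>0$ together with the boundedness of the $f^i$), a point worth stating since the whole construction takes place on this compact invariant set. In short: your roadmap is faithful to \cite{MPSM}, but if the intent was a self-contained proof, the two central lemmas remain unproved; if the intent was to justify the theorem as used in the paper, a citation, as the authors give, is the appropriate and sufficient step.
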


Now we consider the system driven by a  L\'{e}vy process
\begin{equation}\label{smcfs}
\begin{split}
d{x}^i(t)=&\big[-b_i{x}^i(t)+f^i({x}^{i+1}(t))\big]dt\\
&+\epsilon\sum_{j=1}^{k}\sigma_{ij}(x(t))dW_j(t)+\epsilon\int_{|y|_{\mathbb{R}^l}<c}F^i(x(t-),y)\tilde{N}(dt,dy),\
\end{split}
\end{equation}
for $0\leq i \leq N$, where $(N+1)\times k-$dispersion matrix $\sigma(x):=(\sigma_{ij}(x))$ and $F$ have global Lipschitz continuous and linear growth properties.

Define $V:\mathbb{R}^{N+1}\rightarrow \mathbb{R}_+$ by
$$V(x):=\frac{1}{2}\sum_{i=0}^{N}{(x^i)}^2.$$
Then
\begin{align*}
       \mathcal{L}^\epsilon V(x)=&
  -\displaystyle\sum_{i=0}^{N}b_{i}{(x^i)}^2+\displaystyle\sum_{i=0}^{N}x^if^i({x}^{i+1})+ \frac{\epsilon^{2}}{2}\displaystyle\sum_{i=0}^{N}a_{ii}(x)\\
  &+
  \int_{|y|_{\mathbb{R}^l}<c}\big(V(x+\epsilon F(x,y))-V(x)-\langle \nabla V(x),\epsilon F(x,y)\rangle_{\mathbb{R}^{N+1},\mathbb{R}^{N+1}}\big)\nu(dy).
\end{align*}
It follows from the assumptions that all $f^i, i=0,1,\cdots,N$, are bounded and the dispersion matrix $\sigma(x)$ and $F$ have
linear growth that there is a positive constant $\widetilde{L}$ such that
$$\mathcal{L}^{\epsilon}V(x)\leq-b\displaystyle\sum_{i=0}^{N}{(x^i)}^2+\widetilde{L}\big(|x|+\epsilon^2(|x|^{2}+1)\big)$$
where $b=\displaystyle\min_{0\leq i\leq N}b_{i}$. This shows that there are $\epsilon_0>0$ and $R>0$  such that as $\epsilon\in(0,\epsilon_{0}]$  one enjoys
$$\mathcal{L}^{\epsilon}V(x)\leq-\frac{b}{2}\displaystyle\sum_{i=0}^{N}{(x^i)}^2,\  \textrm{ for}\ |x|>R.$$
By Theorem \ref{tightcri}, the set of all stationary measures for (\ref{mcfs}) ($0<\epsilon \leq \epsilon_{0}$)
is tight.

From the Poincar\'{e}-Bendixson Theorem, we know that the Birkhoff center $B(\Phi)=\mathcal{E}\cup \mathcal{P}$, where $\Phi$ is the flow generated by (\ref{mcfs}) and $\mathcal{P}$ denotes the set of nontrivial periodic orbits.

Applying  Theorem \ref{supportode}, we conclude that

\begin{theorem}\label{supp}
Let $\mu=\displaystyle\lim_{\epsilon_i \rightarrow 0}\mu^{\epsilon_i}$ be a weak limit point of
$\{\mu^{\epsilon_i}\}$. Then $\mu$ is an invariant measure of the flow $\Phi$ and the ${\rm supp}(\mu)$ is contained
in $\mathcal{E}\cup \mathcal{P}$.
\end{theorem}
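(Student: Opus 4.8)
The plan is to read Theorem \ref{supp} off as a direct application of Theorem \ref{supportode} to the noisy monotone cyclic feedback system (\ref{smcfs}), and then to convert the abstract conclusion ``${\rm supp}(\mu)$ lies in the Birkhoff center'' into the concrete statement ``${\rm supp}(\mu)\subset\mathcal{E}\cup\mathcal{P}$'' using the Poincar\'e--Bendixson Theorem \ref{PB}. First I would record that the coefficients of (\ref{smcfs}) meet the standing regularity hypotheses of Theorem \ref{supportode}: each drift component $-b_i x^i+f^i(x^{i+1})$ is globally Lipschitz with linear growth because the $f^i$ are $C^1$ with bounded derivatives, while $\sigma$ and $F$ are globally Lipschitz with linear growth by assumption, and $F$ is in particular locally bounded.

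Second, I would verify the two Lyapunov conditions with $V(x)=\frac12\sum_{i=0}^N (x^i)^2$. Condition (\ref{Lyunbd}) is immediate since $V(x)\to+\infty$ as $|x|\to\infty$. For (\ref{Lproper}) I would invoke the computation of $\mathcal{L}^\epsilon V$ already displayed above: boundedness of the $f^i$ together with the linear growth of $\sigma$ and $F$ yields $\mathcal{L}^\epsilon V(x)\le -b\sum_{i=0}^N (x^i)^2+\widetilde{L}\big(|x|+\epsilon^2(|x|^2+1)\big)$ with $b=\min_{0\le i\le N} b_i$, so that for all sufficiently small $\epsilon\in(0,\epsilon_0]$ one has $\mathcal{L}^\epsilon V(x)\le-\frac{b}{2}|x|^2$ for $|x|>R$; hence $\sup_{|x|>R}\mathcal{L}^\epsilon V(x)\le-\frac{b}{2}R^2\to-\infty$ uniformly in $\epsilon$, which is exactly (\ref{Lproper}). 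With these conditions in force, Theorem \ref{supportode} applies (its tightness input being Theorem \ref{tightcri}), giving that any weak limit $\mu$ of stationary measures is $\Phi$-invariant and satisfies ${\rm supp}(\mu)\subset B(\Phi)$.

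The only genuinely dynamical step that remains is the identification $B(\Phi)=\mathcal{E}\cup\mathcal{P}$. The inclusion $\mathcal{E}\cup\mathcal{P}\subset B(\Phi)$ is clear, since each equilibrium $x$ satisfies $x\in\omega(x)=\{x\}$ and each point of a nonconstant periodic orbit lies in its own $\omega$-limit set, so both are recurrent. For the reverse inclusion I would take any recurrent point $x$, i.e. $x\in\omega(x)$, and apply Theorem \ref{PB} to the orbit through $x$. In alternative (a), $\omega(x)$ is a single nonconstant periodic orbit, and $x\in\omega(x)$ forces $x\in\mathcal{P}$. In alternative (b), the full orbit $u(t)=\Phi_t(x)$ stays in the compact invariant set $\omega(x)$ for all $t\in\mathbb{R}$, so $\omega(u)=\omega(x)\ni x$, while (b) gives $\alpha(u)\cup\omega(u)\subset\mathcal{E}$; thus $x\in\mathcal{E}$. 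Hence the recurrent set is contained in $\mathcal{E}\cup\mathcal{P}$, and passing to the closure gives $B(\Phi)\subset\mathcal{E}\cup\mathcal{P}$. Combined with ${\rm supp}(\mu)\subset B(\Phi)$ this completes the proof.

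I expect the work here to be essentially bookkeeping, since every substantive ingredient is already available in Theorems \ref{supportode} and \ref{PB}; the one place deserving a little care is the reverse inclusion above, namely arguing in alternative (b) that a recurrent point of an $\omega$-limit set built from equilibria must itself be an equilibrium, and checking that taking the closure does not enlarge the support beyond $\mathcal{E}\cup\mathcal{P}$ (here the closedness of $\mathcal{E}$ and the orbit structure forced by Theorem \ref{PB} suffice).
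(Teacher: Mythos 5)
Your proposal is correct and follows essentially the same route as the paper: verify the Lyapunov conditions for $V(x)=\frac12\sum_{i=0}^N(x^i)^2$ to get tightness via Theorem \ref{tightcri}, apply Theorem \ref{supportode}, and identify $B(\Phi)=\mathcal{E}\cup\mathcal{P}$ from the Poincar\'e--Bendixson Theorem \ref{PB}. You in fact spell out the identification of the Birkhoff center (including the recurrence argument in alternative (b)) in more detail than the paper, which simply asserts it.
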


\begin{remark} Theorem {\rm\ref{supp}} is still valid for those systems if the Poincar\'{e}-Bendixson Theorem holds for the
unperturbed systems, for example,  planar systems and Morse-Smale higher dimensional systems, perturbed by white noise or L\'{e}vy process.
\end{remark}

Example 1 is complete.
$\qquad\qquad\qquad\qquad\qquad\qquad\qquad\qquad\qquad\qquad\quad\Box$

\

We give an example to show our result can be used to system for drift term and diffusion term to have polynomial growth.

\begin{example} Consider the system
\begin{equation}\label{lcycle}
     \left\{\begin{aligned}
dx&=[x-y-x(x^2+y^2)]dt+\epsilon (x^2+y^2)dW_{t}^{1},\\
dy&=[x+y-y(x^2+y^2)]dt+\epsilon (x^2+y^2)dW_{t}^{2}.
\end{aligned}
\right.
\end{equation}
\end{example}
Let $V(x,y)= x^2+y^2$. Then for $0<\epsilon<\frac{1}{\sqrt{2}}$,
$$\mathcal{L}^{\epsilon}V(x,y)=2(x^2+y^2)\big(1-(x^2+y^2)\big)+2\epsilon^2(x^2+y^2)^2\leq -\frac{1}{2}(x^2+y^2)^2$$
for $x^2+y^2$ sufficiently large. This shows that all conditions of Theorem \ref{supportode} hold. It is easy to see that the Birkhoff center for corresponding deterministic system in (\ref{lcycle}) with $\epsilon = 0$ is $\{O, \mathbf{S}^1\}$ where $ \mathbf{S}^1$ denotes the unit cycle. Employing Theorem \ref{supportode}, we have that ${\rm supp}(\mu)\subset \{O, \mathbf{S}^1\} $ for any stationary measures $\{\mu^{\epsilon_i}\}$ of (\ref{lcycle}) such that $\mu=\displaystyle\lim_{\epsilon_i \rightarrow 0}\mu^{\epsilon_i}$ in the sense of a weak limit.

In particular, $O(0,0)$ is a solution of (\ref{lcycle}), which implies that $\mu^{\epsilon}=\delta_O$ is a stationary measure of (\ref{lcycle}) and concentrates at the origin. If we replace $x^2+y^2$ in the diffusion terms by $x^2+y^2-1$, then $\mathbf{S}^1$ is invariant for (\ref{lcycle}) in this case. Therefore, the Haar measure on $\mathbf{S}^1$ is a stationary measure for any $\epsilon$.

Which invariant measure for deterministic system $\dot{x}=b(x)$ can be limiting measure for a sequence of stationary measures for (\ref{lsode})? Such a problem strongly depends on the type of noise, which is shown as follows.

Suppose that $X^{0,x_0}(t)$ is a bounded solution of $\dot{x}=b(x)$. Denote by $\mathcal{I}_{x_0}$ the set of invariant measures generated by  the family of probability measures
$$  P^{0,t}(x_0,B)=\frac{1}{t}\int_{0}^{t}\delta_{X^{0,x_0}(s)}(B)ds $$
via Krylov-Bogoliubov procedure. Let $r>0$ such that $X^{0,x_0}(t)\in B_r(O)$ for all $t \geq 0$. We can construct a $C^{\infty}$ diffusion term $\sigma$ satisfying $\sigma = 0$ on $B_r(O)$ and $\sigma = {\rm constant\ matrix} M$ on $(B_{r+1}(O))^c$. Consider SODEs
\begin{equation}\label{cons}
dX^{\epsilon,x}(t)=b(X^{\epsilon,x}(t))dt+\epsilon\sigma(X^{\epsilon,x}(t))dW_{t}.
\end{equation}
Then we have

\begin{proposition}\label{procons}
Suppose that $b$ is globally Lipschitz continuous. Then $X^{\epsilon,x_0}(t)=X^{0,x_0}(t)$ for all $t\geq 0$ and $\mathcal{S}^{\epsilon}_{x_0}\equiv \mathcal{I}_{x_0}$ for all $\epsilon$. In particular, for any $\mu \in \mathcal{I}_{x_0}$, $\mu_{x_0}^{\epsilon}\equiv \mu \overset{w}{\rightarrow}
\mu$ as $\epsilon \rightarrow 0$.
\end{proposition}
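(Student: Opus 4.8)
The plan is to exploit the single structural feature of the construction: the chosen orbit $X^{0,x_0}(t)$ never leaves $B_r(O)$, and $\sigma$ has been arranged to vanish identically there, so the noise is dynamically invisible along this particular trajectory. First I would record that the coefficients of (\ref{cons}) fall under the existence--uniqueness theory already in hand. By hypothesis $b$ is globally Lipschitz; the diffusion $\sigma$ is $C^{\infty}$, bounded (it interpolates between the zero matrix on $B_r(O)$ and the constant $M$ on $(B_{r+1}(O))^c$) with derivative supported in the bounded annulus $\overline{B_{r+1}(O)}\setminus B_r(O)$, hence globally Lipschitz and of linear growth; and $F\equiv 0$. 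Thus Lemma \ref{HT} applies and yields a unique strong solution $X^{\epsilon,x_0}$ of (\ref{cons}) for every $\epsilon$, in particular pathwise uniqueness.

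Next I would verify the first assertion, $X^{\epsilon,x_0}(t)=X^{0,x_0}(t)$, by this pathwise uniqueness. Write $Y(t):=X^{0,x_0}(t)$ for the deterministic trajectory. Since $Y(s)\in B_r(O)$ and $\sigma\equiv 0$ on $B_r(O)$, we have $\sigma(Y(s))=0$ for every $s\ge 0$, so the stochastic integral $\epsilon\int_0^t\sigma(Y(s))\,dW_s$ vanishes and $Y$ satisfies the integral form
$$Y(t)=x_0+\int_0^t b(Y(s))\,ds+\epsilon\int_0^t\sigma(Y(s))\,dW_s.$$
Hence $Y$ is itself an adapted, continuous solution of (\ref{cons}) with initial datum $x_0$; by the pathwise uniqueness just invoked it must coincide with $X^{\epsilon,x_0}$, giving $X^{\epsilon,x_0}(t)=X^{0,x_0}(t)$ almost surely for all $t\ge 0$ and every $\epsilon$.

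From this identification the remaining claims follow at once. Because $X^{\epsilon,x_0}(s)=X^{0,x_0}(s)$ almost surely and the latter is deterministic, the transition probability is the Dirac mass $P^{\epsilon}(s,x_0,\cdot)=\delta_{X^{0,x_0}(s)}(\cdot)$, so that for every $t>0$ the Krylov--Bogoliubov averages coincide:
$$P^{\epsilon,t}(x_0,\cdot)=\frac{1}{t}\int_0^t \delta_{X^{0,x_0}(s)}(\cdot)\,ds=P^{0,t}(x_0,\cdot).$$
Two families of measures that are equal for each $t$ share the same collection of weak limit points as $t\to\infty$, so $\mathcal{S}^{\epsilon}_{x_0}=\mathcal{I}_{x_0}$ for every $\epsilon$, each such limit being a stationary measure through the Krylov--Bogoliubov procedure used in Theorem \ref{tightcri}. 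In particular, given any $\mu\in\mathcal{I}_{x_0}$ one may take the constant-in-$\epsilon$ family $\mu^{\epsilon}_{x_0}\equiv\mu\in\mathcal{S}^{\epsilon}_{x_0}$, for which $\mu^{\epsilon}_{x_0}\overset{w}{\rightarrow}\mu$ holds trivially as $\epsilon\to 0$.

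I do not expect a genuine obstacle: the whole argument rests on the observation that a strategically vanishing diffusion coefficient renders the noise irrelevant along the selected orbit. The only points demanding care are confirming that $\sigma$ is globally Lipschitz so that Lemma \ref{HT} is applicable, and correctly passing from ``the deterministic curve solves the SDE'' to ``it is \emph{the} solution'' via pathwise uniqueness; both are routine.
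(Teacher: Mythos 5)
Your proof is correct and is precisely the argument the construction is designed to make work: the paper states Proposition \ref{procons} without proof, and the intended justification is exactly your observation that $\sigma$ vanishes along the bounded deterministic orbit, so pathwise uniqueness forces $X^{\epsilon,x_0}=X^{0,x_0}$ and the Krylov--Bogoliubov averages $P^{\epsilon,t}(x_0,\cdot)$ and $P^{0,t}(x_0,\cdot)$ coincide for every $t$. Nothing is missing; your care in checking that $\sigma$ is globally Lipschitz (derivative supported in a compact annulus) and that the deterministic curve is an adapted solution of the SDE covers the only two points that need verification.
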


This proposition illustrates that under mild regular condition on drift term, for any invariant measure $\mu$ of $\dot{x}=b(x)$, there exists a diffusion term $\sigma$ with small noise intensity $\epsilon$ such that there is a sequence of stationary measures for (\ref{cons}) converging to $\mu$ weakly as $\epsilon \rightarrow \infty$.

Example 2 is complete.
$\qquad\qquad\qquad\qquad\qquad\qquad\qquad\qquad\qquad\qquad\quad\Box$

\

We have observed from examples that the limiting measures of stationary measures will support in stable orbits of the deterministic system decided by drift term. However, the following two examples show that the limit measure can  support at saddles for deterministic system. In summary, limiting measures always support at ``most relatively stable positions".

\begin{example}[The Lemniscate of Bernoulli with Noise]
\end{example}
Let $I(x,y)=(x^{2}+y^{2})^{2}-4(x^{2}-y^{2})$. Define
$$V(I):=\frac{I^2}{2(1+I^2)^{\frac{3}{4}}}\ \textrm{and}\ H(I):= \frac{I}{(1+I^2)^{\frac{3}{8}}}.$$
Consider the vector field
\begin{equation}\label{orth}
  b(x,y):= -[\nabla V(I)+(\frac{\partial H(I)}{\partial y}, -\frac{\partial H(I)}{\partial x})^T]:= -[\nabla V(I)+\Theta(x,y)],
\end{equation}
where $\nabla V(I)=\frac{dV(I)}{dI}(\frac{\partial I}{\partial x},\frac{\partial I}{\partial y})^T$,
$\Theta(x,y)=\frac{dH(I)}{dI}(\frac{\partial I}{\partial y},-\frac{\partial I}{\partial x})^T$.

By calculation,
$$\frac{\partial I}{\partial x}=4x(x^{2}+y^{2})-8x \ , \ \frac{\partial I}{\partial y}=4y(x^{2}+y^{2})+8y.$$

Consider the unperturbed system of ordinary differential equations
\begin{equation}\label{unds}
  \left\{\begin{aligned}
\frac{dx}{dt}&=-f(I)\big(4x(x^{2}+y^{2})-8x\big)-g(I)\big(4y(x^{2}+y^{2})+8y\big),\\
\frac{dy}{dt}&=-f(I)\big(4y(x^{2}+y^{2})+8y\big)-g(I)\big(-4x(x^{2}+y^{2})+8x\big).
\end{aligned}
\right.
\end{equation}
Here $f(I)=\frac{dV(I)}{dI}=\frac{I(I^{2}+4)}{4(1+I^{2})^{\frac{7}{4}}}$ and $g(I)=\frac{dH(I)}{dI}=\frac{I^{2}+4}{4(1+I^{2})^{\frac{11}{8}}}$.

We will summarize the global behavior of (\ref{unds}) in the following proposition.

\begin{proposition}\label{Bernoulli}
The system {\rm(\ref{unds})} has a global Lipschitz constant and the equilibria $O(0,0)$, $P^+(\sqrt{2},0)$ and $P^-(-\sqrt{2},0)$. $V(I)$ is its Lyapunov function. When the initial point $p$ locates outside of the {\it Bernoulli Lemniscate}:
\begin{equation}\label{BL}
L:\ \ (x^{2}+y^{2})^{2}=4(x^{2}-y^{2}),
\end{equation}
its $\omega$-limit set $\omega(p)=L$, which is a red curve in Figure ${\rm\ref {diagram}}$; when the initial point $p\neq P^-$ (resp. $p\neq P^+$ ) locates left (resp. right) inside of the Bernoulli Lemniscate, its $\omega$-limit set the left (resp. right) branch of $L$. However, the Birkhoff center $B(\Phi)$ for this solution flow $\Phi$ is $\{O, P^+, P^-\}$.
\end{proposition}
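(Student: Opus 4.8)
The plan is to exploit the orthogonal decomposition $b=-f(I)\,\nabla I-g(I)\,J\nabla I$, where $J$ is the symplectic matrix with $J\nabla I=(\partial_y I,-\partial_x I)^T$, so that $b$ splits into a gradient part along $\nabla I$ and a Hamiltonian part tangent to the level curves of $I$. Since $\langle \nabla I,J\nabla I\rangle=0$, differentiating $I$ along the flow yields the fundamental identity $\dot I=\langle \nabla I,b\rangle=-f(I)|\nabla I|^2$, whence $\frac{d}{dt}V(I)=f(I)\,\dot I=-f(I)^2|\nabla I|^2\le 0$; this proves $V(I)$ is a Lyapunov function, with $\dot V=0$ exactly on $L\cup\{P^+,P^-\}$. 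For the equilibria I note that $g(I)=\frac{I^2+4}{4(1+I^2)^{11/8}}>0$ for every $I$, so orthogonality forces $b=0$ to give $\nabla I=0$; solving $4x(x^2+y^2-2)=0$ and $4y(x^2+y^2+2)=0$ produces exactly $O,P^+,P^-$. The global Lipschitz property will follow from boundedness of $Db$: the fractional exponents in $f$ and $g$ are tuned so that $f(I)$ decays like $|I|^{-1/2}$ and $g(I)$ like $|I|^{-3/4}$, which outweighs the cubic growth of $\nabla I$; this is a direct if lengthy estimate.

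Next I would use $\dot I=-f(I)|\nabla I|^2$ together with $f(0)=0$ to see that $L=\{I=0\}$ is invariant, so by uniqueness no orbit crosses $L$ and each of the regions $\{I>0\}$ and the two lobes $\{I<0\}$ is invariant. On $L$ the flow reduces to the Hamiltonian part $-g(0)J\nabla I$, whose only rest point is $O$, so each lobe minus $O$ is a single orbit homoclinic to $O$. By LaSalle's principle $\omega(p)$ lies in the largest invariant subset of $\{\dot V=0\}=L\cup\{P^+,P^-\}$, and the sign of $\dot I$ pins it down: if $I(p)>0$ then $I$ strictly decreases to $0$ (it can never reach the value $-4$ carried by $P^\pm$), so $\omega(p)\subset L$; if $p$ lies in the right (resp. left) lobe with $p\neq P^+$ (resp. $P^-$), then $\nabla I(p)\neq 0$ gives $\dot I>0$, $I$ strictly increases to $0$, and the orbit stays in that lobe, so $\omega(p)$ is contained in the corresponding branch of $L$.

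The main obstacle is to upgrade these inclusions to equalities, i.e. to rule out collapse of $\omega(p)$ onto the saddle $O$ or onto a single lobe. I would prove the orbit performs infinitely many revolutions as $I\to 0$. For $c>0$ the level set $\{I=c\}$ is a single closed curve enclosing both lobes (it lies above the saddle value of $I$), while for $c\in(-4,0)$ it is a single loop around one $P^\pm$; in either case the Hamiltonian part circulates along it. Introducing a transversal and the first-return map, the net change of $I$ per revolution is, to leading order, $\Delta I\approx-\frac{f(c)}{g(c)}\oint_{\{I=c\}}|\nabla I|\,ds$, and since $\frac{f(c)}{g(c)}=c(1+c^2)^{-3/8}\sim c$ as $c\to 0$ while $\oint_{\{I=c\}}|\nabla I|\,ds$ tends to the finite positive constant $\oint_L|\nabla I|\,ds$, each revolution decreases $|I|$ by at most a fixed fraction of its current value. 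Hence infinitely many revolutions are needed to reach $\{I=0\}$, so the orbit accumulates on the whole lemniscate when $I(p)>0$ and on the single branch when $p$ is inside a lobe, giving $\omega(p)=L$ (resp. the appropriate branch). The delicate point is the saddle slowdown at $O$: the revolution time $\oint ds/(g(I)|\nabla I|)$ diverges logarithmically as $c\to 0$ because $|\nabla I|\to 0$ at $O$; I must check that this only slows the orbit near $O$ without trapping it there, which is precisely what the fixed-fraction bound on $\Delta I/I$ guarantees.

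Finally the Birkhoff center drops out. A point is recurrent only if it belongs to its own $\omega$-limit set, and by the previous step every point other than $O,P^+,P^-$ fails this: points with $I\neq 0$ have their $\omega$-limit inside $L$ and hence away from themselves, while any $p\in L\setminus\{O\}$ lies on an orbit homoclinic to $O$ and so has $\omega(p)=\{O\}\neq p$. Therefore the set of recurrent points is exactly $\{O,P^+,P^-\}$, which is already closed, giving $B(\Phi)=\{O,P^+,P^-\}$. I expect the revolution/return-map estimate near the saddle in the third paragraph to be the crux; the remaining steps are direct computations or standard applications of LaSalle's invariance principle.
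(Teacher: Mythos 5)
Your proposal is sound and, for the soft parts (the Lyapunov computation, LaSalle, the identification of the equilibria, and global Lipschitzness via boundedness of $Db$), it coincides with the paper's argument: the paper likewise uses the orthogonality of $\nabla V(I)$ and $\Theta$ to get $\dot V=-|\nabla V(I)|^2$, invokes LaSalle to place $\omega(p)$ in $L\cup\{P^+,P^-\}$, and checks $|\partial b_i/\partial x|,|\partial b_i/\partial y|\le 130\sqrt[4]{8}$ for $r\ge 4$. Where you genuinely diverge is the step the paper compresses into one sentence (``combining LaSalle and Poincar\'e--Bendixson we obtain the $\omega$-limit sets as in Figure 1''): the paper computes $Db$ at $O$ and at $P^{\pm}$, identifies $O$ as a hyperbolic saddle and $P^{\pm}$ as unstable foci, and then leans on the generalized Poincar\'e--Bendixson classification of limit sets (equilibrium, periodic orbit, or graphic), whereas you build a first-return map on a transversal and estimate the per-revolution drift $\Delta I\approx-\tfrac{f(c)}{g(c)}\oint_{\{I=c\}}|\nabla I|\,ds\sim -Kc$ to force infinitely many revolutions. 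Your route is more quantitative than necessary, and the quantitative part is also not quite where the real work lies. Two remarks. First, you never verify that $O$ is a hyperbolic saddle; you need this (it gives $\dim W^s(O)=1$, and since each lobe of $L$ is homoclinic to $O$ these two orbits exhaust $W^s(O)\setminus\{O\}$, so $W^s(O)=L$) to exclude $\omega(p)=\{O\}$ for $p\notin L$ --- the ``fixed-fraction bound on $\Delta I/I$'' does not by itself prevent trapping at the saddle, because the per-revolution estimate presupposes that revolutions are completed. Second, once $\omega(p)\ne\{O\}$ is known, infinitely many returns to a transversal at a regular point of $L$ are automatic, and what actually forces $\omega(p)$ to be \emph{all} of $L$ for exterior points (rather than a single lobe) is topological, not metric: near $O$ the set $\{I>0\}$ consists of two sectors meeting only at $O$, so an exterior orbit entering along the incoming arc of one lobe must exit along the outgoing arc of the \emph{other} lobe, hence each circuit encircles the whole figure-eight; inside a lobe the corresponding sector argument keeps the orbit tracking that single lobe. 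If you replace the averaging estimate by this saddle-sector observation (plus the hyperbolicity computation you omitted), your proof closes cleanly and is in fact more complete on this point than the paper's. The Birkhoff-center conclusion then follows exactly as you say.
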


\begin{figure}[htp]
\begin{center}
  \includegraphics[width=7cm,scale=1.2]{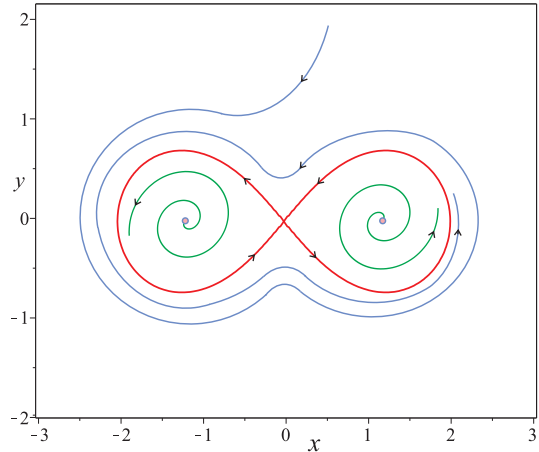}\\
  \caption{The phase portrait of (\ref{unds}) with $b(x,y)=-\nabla V(x,y)-\Theta(x,y)$.  }\label{diagram}
 \end{center}
\end{figure}
\begin{proof}
It is easy to see that
\begin{equation}\label{Vinfty}
\lim_{|(x,y)|\rightarrow \infty}V(I(x,y))=\infty.
\end{equation}
Since $\nabla V(I)$ and $\Theta(x,y)$ are orthogonal, the derivative of the function $V(I(x,y))$ along a solution is
\begin{equation}\label{LiaV}
\dot{V}=-|\nabla V(I)|^2.
\end{equation}
(\ref{Vinfty}) and (\ref{LiaV}) imply that all positive trajectories for (\ref{unds}) are bounded. The LaSalle invariance
principle deduces that for any $p\in \mathbb{R}^2$,
$$\omega(p)\subset \{(x,y): \nabla V(I)=0\}=L\cup \{P^+,P^-\}.$$
In particular, the equilibria for (\ref{unds}) is contained in $L\cup \{P^+,P^-\}.$  It is easy to calculate that there uniquely exists an equilibrium on $L$, which is the origin $O$, and that the other equilibria are $P^+$ and $P^-$. It is not hard to get that
\begin{equation*}
  \begin{array}{cc}
    Db(0,0)=\left(
     \begin{array}{cc}
       0 & -8 \\
       -8 & 0 \\
     \end{array}
   \right), &  Db(\sqrt{2},0)=Db(-\sqrt{2},0)=16\left(
     \begin{array}{cc}
        \frac{20}{17^{\frac{7}{4}}}& -\frac{5}{17^{\frac{11}{8}}} \\
        [5pt]
       \frac{5}{17^{\frac{11}{8}}} & \frac{20}{17^{\frac{7}{4}}} \\
     \end{array}
   \right). \\
  \end{array}
\end{equation*}
This implies that $(0,0)$ is a saddle point and $(\pm \sqrt{2},0)$ are unstable focus. Combining the LaSalle invariance principle and the Poincar\'{e}-Bendixson Theorem, we can obtain the $\omega$-limit set of each trajectory for (\ref{unds}), as shown in Figure \ref{diagram}.

By estimation, we can obtain the following inequalities:
$$ |\frac{\partial b_{i}(x,y)}{\partial x}|,\  |\frac{\partial b_{i}(x,y)}{\partial y}|\leq 130\sqrt[4]{8},\  r\geq 4,\ i=1,2,$$
where $r=\sqrt{x^2+y^2}$. Therefore, $b(x,y)$ is a globally Lipschitz function. This completes the proof.
\end{proof}

Now we consider perturbed system of (\ref{unds}) driven by Brownian motion:
\begin{equation}\label{persode}
\left\{\begin{aligned}
dx&=b_{1}(x,y)dt+\epsilon [\sigma_{11}(x,y)dW_{t}^{1}+\sigma_{12}(x,y)dW_{t}^{2}],\\
dy&=b_{2}(x,y)dt+\epsilon [\sigma_{21}(x,y)dW_{t}^{1}+\sigma_{22}(x,y)dW_{t}^{2}],
\end{aligned}
\right.
\end{equation}
where $\sigma_{ij}(i,j\in\{1,2\})$ satisfies global Lipschitz condition, which implies that there exist nonnegative constants $C_{1}$, $C_{2}$ such that
$$|a_{ij}(x,y)|\leq C_{1}|(x,y)|^{2}+C_{2}\leq C_{1}r^{2}+C_{2}, \:\:\textrm{for}\:\: i,j=1,2,$$
where $a_{ij}(x,y)= \displaystyle\sum_{k=1}^{2}\sigma_{ik}(x,y)\sigma_{jk}(x,y)$.

\

\begin{theorem}\label{SML}
Suppose that $\sigma_{ij}(i,j\in\{1,2\})$ satisfies global Lipschtiz condition,

{\rm(i)} if $C_{1}=0$, then for any $\epsilon$, the system {\rm(\ref{persode})} admits at least one stationary measure $\mu^{\epsilon}$;

{\rm(ii)} if $C_{1}>0$, then the system {\rm(\ref{persode})} possesses at least one stationary measure $\mu^{\epsilon}$ for $0<\epsilon<\frac{1}{8\sqrt{26C_{1}}}$.

If, in addition, the diffusion matrix $a(x,y)$ is positively definite everywhere, then for a given $\epsilon$ as above,
the stationary measure $\mu^{\epsilon}$ is unique, and $\mu^{\epsilon} \overset{w}{\rightarrow}
\delta_O(\cdot)$ as $\epsilon\rightarrow 0$, where $\delta_O(\cdot)$ denotes the Dirac measure at the saddle $O$.
\end{theorem}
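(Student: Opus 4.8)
The plan is to establish the three assertions in turn—existence and tightness of $\mu^\epsilon$, then uniqueness and ergodicity under nondegeneracy, then the identification $\mu^\epsilon\overset{w}{\to}\delta_O$—and to treat the last as the genuine difficulty. For existence and tightness I would invoke the Tightness Criterion, Theorem \ref{tightcri}, with the radial Lyapunov function $V(x,y)=x^2+y^2$. Since $F\equiv0$ here, $\mathcal{L}^\epsilon V=2\langle(x,y),b(x,y)\rangle+\epsilon^2(a_{11}+a_{22})$, and \eqref{Lyunbd} is immediate. For \eqref{Lproper} I would combine two estimates valid for large $r=\sqrt{x^2+y^2}$: the dissipativity of the drift, which follows from the orthogonal gradient structure \eqref{orth} and the explicit derivative bounds recorded in Proposition \ref{Bernoulli} (these give $\langle(x,y),b\rangle\le -c_1 r^2$ once $r\ge 4$), together with the diffusion bound $a_{11}+a_{22}\le 2(C_1 r^2+C_2)$. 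This yields $\mathcal{L}^\epsilon V\le -2(c_1-\epsilon^2 C_1)r^2+2\epsilon^2 C_2$. When $C_1=0$ the diffusion contribution is bounded, so \eqref{Lproper} holds for every $\epsilon$, giving (i); when $C_1>0$ one needs $\epsilon^2 C_1<c_1$, and tracking the explicit constants produces the stated threshold $\epsilon<\tfrac{1}{8\sqrt{26C_1}}$, giving (ii). Theorem \ref{tightcri} then simultaneously yields existence of a stationary measure $\mu^\epsilon$ and tightness of the family on the indicated range.

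Assume now $a=\sigma\sigma^T$ is positive definite everywhere. I would verify the hypotheses of Theorem \ref{if}: the nondegeneracy \eqref{nondjd2} is immediate from invertibility of $a$, while conditions (a)--(c) hold because $b$ and $\sigma$ are $C^1$ and $F\equiv0$ renders the jump conditions vacuous. Hence the semigroup $P^\epsilon_t$ is strong Feller and irreducible, and by the Doob--Khasminskii theorem the stationary measure $\mu^\epsilon$ is unique and ergodic. Combined with the previous paragraph, $\{\mu^\epsilon\}$ is a well-defined, tight, hence relatively compact family, so it suffices to identify every weak subsequential limit.

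Let $\mu$ be such a limit. By Theorem \ref{supportode}, $\mu$ is $X^0$-invariant with $\operatorname{supp}(\mu)\subset B(X^0)$, and Proposition \ref{Bernoulli} gives $B(X^0)=\{O,P^+,P^-\}$. A probability measure supported on three points must be $\mu=\alpha\delta_O+\beta\delta_{P^+}+\gamma\delta_{P^-}$, so the entire problem reduces to showing $\beta=\gamma=0$, i.e. that the limit charges neither unstable focus.

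The crux is excluding the sources $P^\pm$. From Proposition \ref{Bernoulli}, $Db(P^\pm)$ has symmetric part $16\cdot\tfrac{20}{17^{7/4}}\,\mathrm{Id}\succ 0$, so there exist $\rho,c>0$ with $\langle z-P^\pm,b(z)\rangle\ge c\,|z-P^\pm|^2$ on $B_\rho(P^\pm)$. Taking $U(z)=|z-P^\pm|^2$ and using that nondegeneracy gives $\operatorname{tr} a\ge 2\kappa_1>0$ on this ball, one finds $\mathcal{L}^\epsilon U\ge 2cU+2\epsilon^2\kappa_1>0$, so the diffusion is strictly pushed out of every neighborhood of $P^\pm$. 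I would convert this local expansion into the bound $\lim_{\epsilon\to 0}\mu^\epsilon(B_\rho(P^\pm))=0$, either through an exit-time and occupation-time argument based on the ergodic representation of $\mu^\epsilon$ (the long-run fraction of time spent near a source is negligible), or through a local Lyapunov function singular at the center that detects concentration there. This forces $\beta=\gamma=0$, so every subsequential limit equals $\delta_O$, and tightness upgrades this to $\mu^\epsilon\overset{w}{\to}\delta_O$. I expect this exclusion of the unstable foci to be the hard part: the first-order Lyapunov data of the deterministic flow vanish at every equilibrium, so the argument must genuinely exploit the second-order instability at $P^\pm$ together with the nondegeneracy of $a$, and making the escape estimate uniform as $\epsilon\to 0$ is the delicate step.
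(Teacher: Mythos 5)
Your outline of existence, tightness, and uniqueness matches the paper's route (Theorem \ref{tightcri} for existence and tightness, strong Feller plus irreducibility for uniqueness), with one caveat: the paper's Lyapunov function for verifying \eqref{Lproper} is not $x^2+y^2$ but the function $V(I(x,y))$ built into the drift via \eqref{orth}, whose orthogonality identity $\dot V=-|\nabla V(I)|^2$ together with the explicit bounds $|\nabla V|^2\ge r^2/(4\sqrt2)$ and $|\partial^2 V|\le 104\sqrt2$ is precisely what produces the stated threshold $\epsilon<1/(8\sqrt{26C_1})$ (indeed $(8\sqrt{26C_1})^{-2}=1/(1664C_1)$, which is the value of $\epsilon^2$ at which $\tfrac{1}{4\sqrt2}=208\sqrt2\,C_1\epsilon^2$). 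With $V=x^2+y^2$ you still get dissipativity $\langle z,b\rangle\le -c_1r^2+O(1)$ for large $r$, but the constants, and hence the threshold, come out differently; ``tracking the explicit constants'' with your choice will not reproduce the advertised bound.

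The genuine gap is the exclusion of $P^\pm$. You correctly reduce to $\mu(\{P^\pm\})=0$ and correctly identify the mechanism (a local anti-Lyapunov function at the unstable foci with $\mathcal{L}^\epsilon U\ge 2cU+2\epsilon^2\kappa_1$), but the conversion of this pointwise inequality into $\lim_{\epsilon\to0}\mu^\epsilon(B_\rho(P^\pm))=0$ is exactly the step you leave unexecuted, offering two candidate routes and conceding that uniformity in $\epsilon$ is delicate. The paper closes this by (a) restricting and normalizing $\mu^\epsilon$ to a neighborhood $\mathcal{U}$ of $P^+$ and invoking the Bogachev--Krylov--R\"ockner regularity result to obtain a $W^{1,p}_{loc}$ density, so $\tilde\mu^\epsilon(\{P^+\})=0$, and (b) applying the measure estimate of Huang--Ji--Liu--Yi (their Theorem B a)) on the sublevel sets $\Omega_\rho$ of the quadratic form $\tilde V^+=(z-P^+)^TB(z-P^+)$, which yields $\mu^\epsilon(\Omega_{\rho_0})\le \exp\bigl(-\tfrac{\tilde m}{\tilde M}\int_{\rho_0}^{\rho}t^{-1}\,dt\bigr)$. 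The crucial point, which neither of your suggested routes addresses, is that the anti-Lyapunov constant $\gamma=\tilde m\epsilon^2$ and the gradient-quadratic bound $H(\rho)=\tilde M\rho\epsilon^2$ both scale like $\epsilon^2$, so their ratio --- and hence the final estimate --- is independent of $\epsilon$; this is what lets the bound survive the limit $\epsilon\to0$ and forces $\mu(\{P^+\})=0$ after sending $\rho_0\to0$. Without this cancellation, or an equivalent occupation-time estimate uniform in $\epsilon$, the argument does not close.
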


\begin{proof} In fact, from the above inequalities one can see that for $r>4$,
$$|\frac{\partial^{2} V}{\partial x^{2}}|, |\frac{\partial^{2} V}{\partial y^{2}}|,|\frac{\partial^{2} V}{\partial y\partial x}| \leq 104\sqrt{2},\ |\nabla V(x,y)|^{2}\geq\frac{r^{2}}{4\sqrt{2}}, $$
and
$$\mathcal{L}^{\epsilon}V(x,y)\leq-[\frac{1}{4\sqrt{2}}-208\sqrt{2}C_{1}\epsilon^{2}]r^{2}+208\sqrt{2}C_{2}\epsilon^{2}\rightarrow -\infty \ {\rm as}\  r\rightarrow\infty.$$
Applying the Khasminskii Theorem \ref{tightcri}, we conclude that there is at least one stationary measure for (\ref{persode}) if $0<\epsilon<\frac{1}{8\sqrt{26C_{1}}}$ with $C_{1}>0$ or all $\sigma_{ij}(x,y)$ are bounded on the plane, and that this stationary measure is unique if $a(x,y)$ is positively definite everywhere.

In (i) and (ii),  from the Tightness Criterion it follows that the set of stationary measures
$\{\mu^{\epsilon}:\epsilon\in(0,\epsilon_{0}]\}$ is tight. Thus the Prohorov theorem implies that any sequence $\{\mu^{\epsilon_i}\}$ of stationary measures with $\epsilon_i\rightarrow 0$ contains a weak convergent subsequence.
Let $\mu$ be any weak limit measure. Then Theorem \ref{mthm} deduces that ${\rm supp}(\mu)\subset B(\Phi)$. However, in view of Proposition \ref{Bernoulli}, $B(\Phi)= \{O, P^+, P^-\}$, which implies that ${\rm supp}(\mu)\subset\{O, P^+, P^-\}$.

Finally, we show that $\mu(\{P^+,P^-\})=0$.
Since matrix $Db(P^+)$ has all eigenvalues with positive real parts. Thus there exists a positive
definite matrix $B$ satisfying
\[  \big(Db(P^{+})\big)^{T}B+B\big(Db(P^{+})\big)=I  \]
Let $\tilde{V}^{+}(z)=(z-P^+)^{T}B (z-P^+)$, where $z=(x,y)$.
It is easy to see that there exists a neighborhood $\mathcal{U}:=B_\delta(P^+)$ of $P^+$
such that $\langle\nabla \tilde{V}^{+}, b\rangle>0$ on
$\mathcal{U}\setminus \{P^+\}$ (e.g. see \cite{HSB}).
We denote $\rho_{M}:=\displaystyle\sup_{(x,y)\in \mathcal{U}}\tilde{V}^{+}(x,y)$ (called essential upper bound of $\tilde{V}^{+}$).
Then
\begin{equation}\label{antiLy}
  \begin{split}
\mathcal{L}^{\epsilon}\tilde{V}^{+}(x,y)
    =&\langle b(x,y),\nabla \tilde{V}^{+}(x,y)\rangle\\
    &+\frac{\epsilon^{2}}{2}[a_{11}(x,y)\frac{\partial^{2} V}{\partial x^{2}}+
    2a_{12}(x,y)\frac{\partial^{2} V}{\partial y\partial x}+a_{22}(x,y)\frac{\partial^{2} V}{\partial y^{2}}]\\
    \geq&\frac{\epsilon^{2}}{2}[a_{11}(x,y)b_{11}+2
    a_{12}(x,y)b_{12}+a_{22}(x,y)b_{22}]\\
    \geq&\widetilde{m}\epsilon^2=:\gamma>0, \quad \forall (x,y)\in\mathcal{U}.
  \end{split}
\end{equation}
We used here the fact that $B$ is positively
definite and $A(x,y)$ is positively definite on $\bar{B}_\delta(P^+)$.
It follows from (\ref{antiLy}) that $\tilde{V}^{+}$ is an anti-Lyapunov function with respect
to (\ref{persode}) in $B_\delta(P^+)$ with anti-Lyapunov constant $\widetilde{m}\epsilon^2$ and
essential lower bound $\rho_m=0$
(e.g. see \cite[Definition 2.2]{HJLY2}).
It is obviously
\begin{align}\label{bddgra}
  \nabla \tilde{V}^{+}(x,y)
=&\big(2b_{11}(x-\sqrt{2})+2b_{12}y,2b_{12}(x-\sqrt{2})+2b_{22}y\big)\nonumber\\
\neq &0, \quad
\forall (x,y)\in \big(\tilde{V}^{+}\big)^{-1}(\rho) \:\:\:\textrm{for}\: a.e.\:\rho\in[0,\rho_{M}),
\end{align}
where $\big(\tilde{V}^{+}\big)^{-1}(\rho)=\{(x,y)\in\mathcal{U}:\tilde{V}^{+}(x,y)=\rho\}$. Note that
\begin{equation}\label{Hrho}
  \begin{split}
&\frac{\epsilon^2}{2}[a_{11}(x,y)(\frac{\partial \tilde{V}^{+}}{\partial x})^{2}+
    2a_{12}(x,y)\frac{\partial \tilde{V}^{+}}{\partial x}\frac{\partial \tilde{V}^{+}}{\partial y}+a_{22}(x,y)(\frac{\partial \tilde{V}^{+}}{\partial y})^{2}]\\
    \leq& \frac{\epsilon^2}{2}\|A(x,y)\| |\big(\frac{\partial \tilde{V}^{+}}{\partial x}(x,y),\frac{\partial \tilde{V}^{+}}{\partial y}(x,y)\big)|^2\\
    \leq& \displaystyle\sup_{(x,y)\in B_\delta(P^+)}\|A(x,y)\|\widetilde{M}_0|\tilde{V}^{+}(x,y)|\epsilon^2\\
    =:&\widetilde{M}\rho\epsilon^2=:H(\rho), \quad (x,y)\in \big(\tilde{V}^{+}\big)^{-1}(\rho) \:\:\:\textrm{for}\:\:\rho\in[0,\rho_{M}).
  \end{split}
\end{equation}
Without loss of generality, we may assume $\mu^{\epsilon}(\mathcal{U})>0$ for each $\epsilon>0$. It is easy to verify that $\tilde{\mu}^{\epsilon}(\cdot)=\frac{\mu^{\epsilon}|_{\mathcal{U}}(\cdot)}{\mu^{\epsilon}(\mathcal{U})}$ is a
stationary measure in $\mathcal{U}$, by a regularity result on stationary measure in \cite{BogaKR}, we have known that
$\tilde{\mu}^{\epsilon}$ admits a positive density function $\tilde{u}\in W^{1,p}_{loc}(\mathcal{U})$. Let $\Omega_{\rho}=\{(x,y)\in\mathcal{U}:\tilde{V}^{+}(x,y)<\rho\}$,
$\Omega^{*}_{\rho}=\Omega_{\rho}\cup\big(\tilde{V}^{+}\big)^{-1}(\rho)$ for each $\rho\in[0,\rho_M)$.
The regularity implies that $\tilde{\mu}^{\epsilon}(\Omega^{*}_{\rho_m})=\tilde{\mu}^{\epsilon}(\{P^+\})=0$. Measure
estimate theorem in Huang-Ji-Liu-Yi \cite[Theorem B a)]{HJLY} asserts that for any $\rho_0\in(0,\rho_M)$,
\begin{equation*}
  \begin{split}
  \tilde{\mu}^\epsilon(\Omega_\rho)=\tilde{\mu}^\epsilon(\Omega_\rho \setminus\Omega^{*}_{\rho_m})
\geq&\tilde{\mu}^\epsilon(\Omega_{\rho_0} \setminus\Omega^{*}_{\rho_m})e^{\gamma\int_{\rho_0}^{\rho}\frac{1}{H(t)}dt}\\
=&\tilde{\mu}^\epsilon(\Omega_{\rho_0})e^{\widetilde{m}\epsilon^2\int_{\rho_0}^{\rho}\frac{1}{\widetilde{M}\epsilon^2 t}dt}\\
=&\tilde{\mu}^\epsilon(\Omega_{\rho_0})e^{\frac{\widetilde{m}}{\widetilde{M}}\int_{\rho_0}^{\rho}\frac{1}{t}dt}
  ,\quad \rho\in(\rho_0,\rho_M),
   \end{split}
\end{equation*}
This is,
\[  \mu^\epsilon(\Omega_{\rho_0})\leq \mu^\epsilon(\Omega_\rho)e^{-\frac{\widetilde{m}}{\widetilde{M}}\int_{\rho_0}^{\rho}\frac{1}{t}dt}
  \leq e^{-\frac{\widetilde{m}}{\widetilde{M}}\int_{\rho_0}^{\rho}\frac{1}{t}dt},\quad \rho\in(\rho_0,\rho_M). \]
Since $\mu^\epsilon\xlongrightarrow{w}\mu$ as $\epsilon\rightarrow 0$, and $\Omega_{\rho_0}$ is an open set, we have
\[  \mu(\Omega_{\rho_0})\leq e^{-\frac{\widetilde{m}}{\widetilde{M}}\int_{\rho_0}^{\rho}\frac{1}{t}dt}.\]
Finally, letting $\rho_0\rightarrow 0$, we obtain $\mu(\{P^+\})=0$.
Analogously, we can verify that $\mu(\{P^-\})=0$. We conclude that $\mu=\delta_O(\cdot)$, that is,
$\mu^{\epsilon} \overset{w}{\rightarrow}
\delta_O(\cdot)$ as $\epsilon\rightarrow 0$.
\end{proof}

\begin{remark} From the above arguments, we have obtained that if the system ${\rm(\ref{unds})}$ is driven by Brownian motion and
the diffusion matrix is positively definite everywhere, then any limiting measure is $\delta_{O}$. However, if we get rid of nondegenerate condition for the diffusion matrix , then it is possible for limiting measure to be either $\delta_{P^+}$ or $\delta_{P^-}$ from Proposition {\rm\ref{procons}}.
The problem is whether or not such result still holds if it is driven by L\'{e}vy process,
we can only get that the limiting measure is supported in $\{O, P^+, P^-\}$.
\end{remark}

Example 3 is complete.
$\qquad\qquad\qquad\qquad\qquad\qquad\qquad\qquad\qquad\qquad\quad\Box$

\

\begin{example}[May-Leonard System with a Noise Perturbation]
\end{example}
Consider the May-Leonard system with a white noise perturbation:
\begin{equation}
    \label{sys:02}
    \left\{
        \begin{array}{l}
        dy_1=y_1(1-y_1-\beta y_2-\gamma y_3)dt+\epsilon y_1\circ dW_t, \\
        dy_2=y_2(1-y_2-\beta y_3-\gamma y_1)dt+\epsilon y_2\circ dW_t, \\
        dy_3=y_3(1-y_3-\beta y_1-\gamma y_2)dt+\epsilon y_3\circ dW_t, \\
        \end{array}
    \right.
\end{equation}
where $\circ$ denotes the Stratonovich stochastic integral, $\beta,\ \gamma >0$ and
$\epsilon$ denotes noise intensity.

Recalling from \cite{CDJNZ}, we have the following
{\it Stochastic\ Decomposition\ Formula}:
\begin{equation}\label{sdfi}
\Phi^{\epsilon}(t,\omega,y)= g^{\epsilon}(t,\omega,g_0)\Phi^0(\int_0^tg^{\epsilon}(s,\omega,g_0)ds,\frac{y}{g_0}),
\end{equation}
where $\Phi^{\epsilon},\ \Phi^0$ are the solutions of (\ref{sys:02}) and the corresponding deterministic system without noise, respectively,
and $g^{\epsilon}$ is the solution of stochastic Logistic equation
\begin{equation}\label{slogistic}
dg =  g(1- g)dt + \epsilon g\circ dW_t.
\end{equation}

In order to obtain the stationary properties for (\ref{sys:02}) in detail, we need the asymptotic properties for deterministic flow $\Phi^0$. It is well-known from Hirsch \cite{H88} that the flow $\Phi^0$ admits an invariant surface $\Sigma$ (called {\it carrying simplex}), homeomorphic to the closed unit simplex $\Delta^2=\{y\in \mathbb{R}^3_+: \sum_i y_i=1\}$ by radial projection, such that every trajectory in $\mathbb{R}^3_+ \setminus \{O\}$ is asymptotic to one in $\Sigma$. So we will draw phase portraits on $\Sigma$ (see Table \ref{biao0}).

It is easy to see that $\Phi^0$ always possesses equilibria: the origin $O(0,0,0)$, three axial
equilibria $R_1(1,0,0),\ R_2(0,1,0),\ R_3(0,0,1)$ and the unique positive equilibrium $P=\frac{1}{1+\beta+\gamma}(1,1,1)$.
$\Phi^0$ has planar equilibria:
$R_{12}=\frac{1}{1-\beta\gamma}(1-\beta,1-\gamma,0)$, $R_{23}=\frac{1}{1-\beta\gamma}(0,1-\beta,1-\gamma)$,
$R_{31}=\frac{1}{1-\beta\gamma}(1-\gamma,0,1-\beta)$
if and only if $(1-\beta)(1-\beta\gamma)>0$, $(1-\gamma)(1-\beta\gamma)>0$.
The classification for the flow $\Phi^0$ on the carrying simplex $\Sigma$ is summarized in Table \ref{biao0}.

\begin{center}
\begin{longtable}{l@{\extracolsep{\fill}}l@{\extracolsep{\fill}}c}
\caption{The classification for the flow $\Phi^0$ on $\Sigma$}\\[-2pt]
        \hline
         Parameter conditions & Equilibria   & Phase Portrait\\
        \hline
        \endfirsthead
        \caption[]{(continued)}\\
        \hline
         Parameter conditions & Equilibria   & Phase Portrait\\
        \hline
&&\\
        \endhead
        \hline
        \endfoot
        \endlastfoot
&&\\
a: \begin{tabular}{l}
 $0<\beta,\gamma<1$
\end{tabular}
 & $O,R_{1},R_{2},R_{3},R_{12},R_{13},R_{23},P$&
    \parbox{2cm}{\vspace{2pt}\includegraphics[width=2.2cm,height=1.6cm]{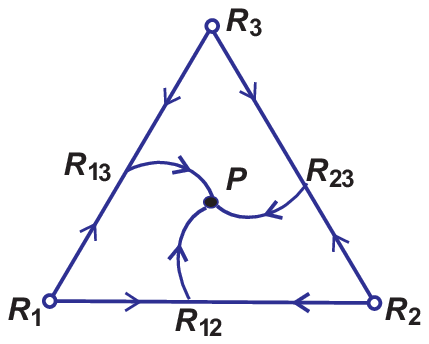}\vspace{2pt}}\\
&&\\
b:\begin{tabular}{l}
 (i)~$\beta+\gamma<2$\\
 (ii)~$\beta\geq 1,\gamma<1$\\
  or ~$\gamma\geq 1,\beta<1$
\end{tabular} &$O,R_{1},R_{2},R_{3},P$
 &
    \parbox{2cm}{\vspace{2pt}\includegraphics[width=2.2cm,height=1.6cm]{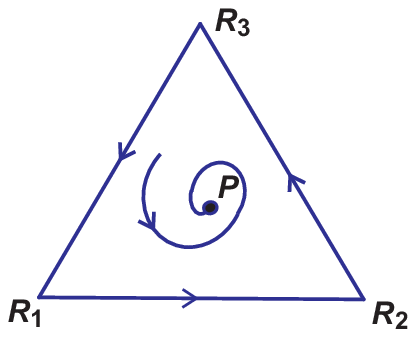}\vspace{2pt}} \\
&&\\
c:\begin{tabular}{l}
 (i) $\beta+\gamma=2$\\
(ii)~ $\beta,\gamma\neq 1$
\end{tabular} &$O,R_{1},R_{2},R_{3},P$
 &
    \parbox{2cm}{\vspace{2pt}\includegraphics[width=2.2cm,height=1.6cm]{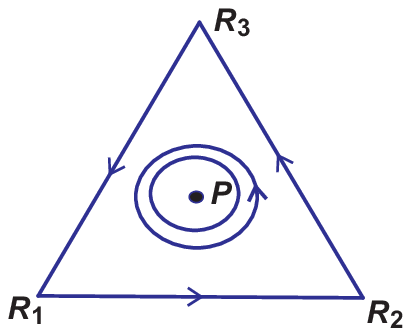}\vspace{2pt}}\\
&&\\
d:\begin{tabular}{l}
(i)~ $\beta+\gamma>2$\\
(ii)~ $\gamma> 1,\beta \leq 1$\\
or ~$\gamma\leq 1,\beta > 1$
\end{tabular} &
$O,R_{1},R_{2},R_{3},P$
 &
    \parbox{2cm}{\vspace{2pt}\includegraphics[width=2.2cm,height=1.6cm]{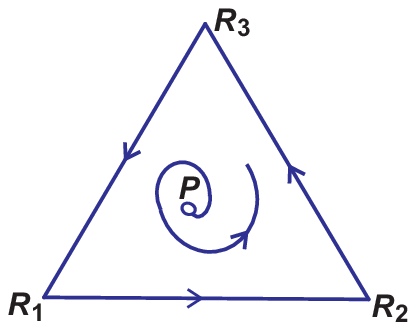}\vspace{2pt}} \\
&&\\
e: \begin{tabular}{l}
 $\beta,\gamma>1$
\end{tabular}
&
$O,R_{1},R_{2},R_{3},R_{12},R_{13},R_{23},P$
 &
    \parbox{2cm}{\vspace{2pt}\includegraphics[width=2.2cm,height=1.6cm]{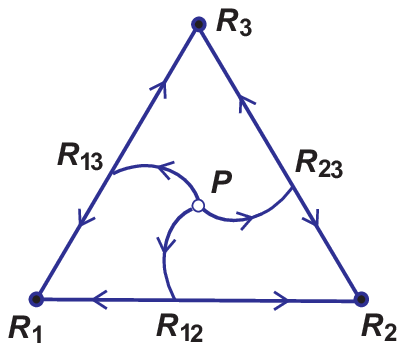}\vspace{2pt}} \\
&&\\
f: \begin{tabular}{l}
 $\beta=\gamma=1$
\end{tabular} &
$\forall x\in \Sigma\cup\{O\}$
 &\parbox{2cm}{\vspace{2pt}\includegraphics[width=2.2cm,height=1.6cm]{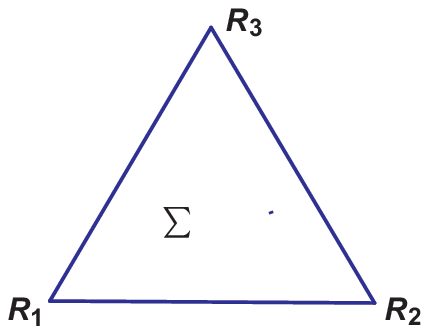}\vspace{2pt}}     \\
[-8pt]
\label{biao0}
\end{longtable}
\end{center}

Set by $\omega(z)$ the $\omega-$limit set for the trajectory $\Phi^0(t,z)$.
Then it follows from Table \ref{biao0} that any $\omega(z)$ is either an equilibrium, or a closed orbit, or heterclinic cycle. Define
$$\mathcal{A}(\omega(z)):=\{y\in \mathbb{R}^3_+: \lim_{t\to \infty}\mathrm{dist}(\Phi^0(t,y),\omega(z))=0\}$$
to be the attracting domain for $\omega(z)$,  and let $\mathcal{A}_{\Sigma}(\omega(z))$ denote the attracting domain for $\omega(z)$ on $\Sigma$, which can be derived from Table \ref{biao0} for each case. It follows from \cite[Proposition 4.13]{JN} that any pair of points on $L(y):=\{\lambda y:\lambda > 0\}$ have the same $\omega-$limit set. We can obtain the attracting domain for $\omega(z)$ as follows
\begin{equation}\label{attra}
 \mathcal{A}(\omega(z))=\bigcup\{L(y): y\in \mathcal{A}_{\Sigma}(\omega(z))\}.
\end{equation}
Together with Table \ref{biao0} and (\ref{attra}), we can obtain the attracting domains for an equilibrium, a closed orbit, or the heterclinic cycle, respectively.

Using the Stochastic Decomposition Formula (\ref{sdfi}),
we have shown that Probability Convergence (\ref{pc}) holds, that is, $\Phi^{\epsilon}$ converges to $\Phi^0$ as $\epsilon \rightarrow 0$ in probability (see \cite[Proposition 1]{CDJNZ}) and that for any $y\neq O$, the probability measures
\begin{equation}\label{0pms}
P_T(\cdot,y):=\frac{1}{T}\int_0^{T}P(t,y,\cdot)dt,\ \ T>0
\end{equation}
is tight, and has at least a limiting measure  $\nu_y^{\epsilon}$ in weak sense, which is a stationary measure for (\ref{sys:02}) (see \cite[Theorem 8]{CDJNZ}). Now denote by

$$\mathcal{M}_S(\epsilon_0):= \{\nu_y^{\epsilon}: {\rm for\ all}\ 0<\epsilon \leq \epsilon_0,\ y\in \mathbb{R}^3_+  \} $$
all the stationary measures obtained in a manner just stated, which is tight (see \cite[Proposition 2]{CDJNZ}) and produced by all solutions from $\Sigma \cup \{O\}$ (see \cite[Theorem 12]{CDJNZ}).

It is not difficult to see that stochastic Logistic equation (\ref{slogistic}) has a unique nontrivial stationary solution $u^{\epsilon}(\theta_t\omega)$ where $\theta_t$ is the metric
dynamical system generated by Brownian motion. It follows from the Stochastic Decomposition Formula that $u^{\epsilon}(\theta_t\omega)Q$ is a stationary solution of (\ref{sys:02}) for any equilibrium $Q\in \mathcal{E}$ (see \cite[Theorem 5]{CDJNZ}), whose distribution measure, denoted by $\mu^\epsilon_Q$,  defines an ergodic stationary measure for (\ref{sys:02}) (see \cite[Theorem 7]{CDJNZ}).
In addition, for each $y\in \mathcal{A}(Q)$, $P(t,y,\cdot)  {\rightarrow}
\mu^\epsilon_Q$ weakly as $t\rightarrow \infty$, and
\begin{equation}\label{bwc}
\lim_{t\rightarrow \infty}P(t,y,A)=\mu^\epsilon_Q(A), \ {\rm for\ any}\
A\in
 \mathcal{B}(\mathbb{R}_+^3).
\end{equation}
This shows
$$\mathcal{M}_S(\epsilon_0)= \{\mu^\epsilon_Q: Q\in \mathcal{E},0<\epsilon \leq \epsilon_0  \}$$
in the cases a, b, e, f of Table \ref{biao0}.

In the case c of Table \ref{biao0}, the carrying simplex is full of periodic orbits $\Gamma(h)\
(0<h\leq\frac{1}{27})$:
\begin{displaymath}
    \left \{
    \begin{array}{l}
        y_1+y_2+y_3=1, \\
        \noalign{\medskip}
        y_1y_2y_3=h,\\
 \end{array}
    \right.
\end{displaymath}
whose attracting domain is the  invariant cone surface $\Lambda(h):$
$$\ \frac{y_1y_2y_3}{(y_1+y_2+y_3)^3}=h,\ 0<h\leq\frac{1}{27}.$$
Then there exists a unique ergodic nontrivial stationary measure $\nu_h^{\epsilon}$ supporting on the cone surface $\Lambda(h)$ (see \cite[Theorem 20]{CDJNZ}). Hence,
$$\mathcal{M}_S(\epsilon_0)=\{\nu_h^{\epsilon}:0<h\leq\frac{1}{27}, 0<\epsilon \leq \epsilon_0\}\bigcup \{\mu_Q^{\epsilon}:Q\in \mathcal{E}, 0<\epsilon \leq \epsilon_0\}.$$

The above discusses illustrate a uniform characteristic: the time average measure (\ref{0pms}) of transition probability function for each solution weakly converges to an ergodic stationary measure for (\ref{sys:02}) on the attracting domain of the $\omega$-limit set of the orbit for $\Phi^0$ through the same initial point. But case d is quite different. If $y\in {\rm Int}\mathbb{R}_+^3\backslash \overline{L(P)}$, the corresponding time average  measure (\ref{0pms}) of transition probability function has infinite weak limit points, which are not ergodic (see \cite[Theorem 23 and Appendix B]{CDJNZ}).

Applying Theorem \ref{mthm}, we conclude that

\begin{theorem}
\begin{enumerate}
\item[{\rm (i)}] For any equilibrium $Q\in \mathcal{E}$, $\mu^\epsilon_Q(\cdot) \stackrel{w}{\rightarrow}
\delta_Q(\cdot)$ as $\epsilon \rightarrow 0$, which is valid to the cases {\rm a, b, e, f}.
\item[{\rm (ii)}]  For the case {\rm c},  $\nu_h^\epsilon$ converges weakly to the Haar measure on the closed orbit $\Gamma(h)$ as $\epsilon \rightarrow 0,  0<h\leq\frac{1}{27}.$
\item[{\rm (iii)}] For the case {\rm d}, if $\mu^i:= \nu_{y}^{\epsilon^i}\in \mathcal{M}_S(\epsilon_0) ,\ i= 1,2,\cdots,$ satisfying $\epsilon^i\rightarrow 0$ and $\mu^i \stackrel{w}{\rightarrow}\mu$ as $i\rightarrow \infty$, then
    $\mu(\{R_1, R_2, R_3\})=1.$

\end{enumerate}
\end{theorem}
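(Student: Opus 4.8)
The plan is to treat all three parts through the single engine provided by Theorem \ref{mthm}, supplemented by the Stochastic Decomposition Formula (\ref{sdfi}) to control the radial (logistic) direction. Since Probability Convergence (\ref{pc}) has already been verified for (\ref{sys:02}) and the family $\mathcal{M}_S(\epsilon_0)$ is tight, Theorem \ref{mthm} applies verbatim: every weak limit $\mu$ of stationary measures along $\epsilon_i \to 0$ is $\Phi^0$-invariant and is carried by the Birkhoff center $B(\Phi^0)$. The remaining work is to read off $B(\Phi^0)$ from Table \ref{biao0} in each regime and then, where the Birkhoff center is strictly larger than the target set, to discard the unwanted (unstable) pieces using the finer structure of (\ref{sdfi}).

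For parts (i) and (ii) I would argue directly from the decomposition. By (\ref{sdfi}) the solution factors as the scalar logistic flow $g^{\epsilon}$ times the deterministic angular flow $\Phi^0$ evaluated at the random time change $\int_0^t g^{\epsilon}\,ds$. The logistic equation (\ref{slogistic}) has a unique nontrivial stationary solution $u^{\epsilon}$ whose one-dimensional stationary law concentrates at the stable equilibrium $g=1$ as $\epsilon\to 0$; consequently $u^{\epsilon}(\theta_t\omega)\,Q \to Q$ for each equilibrium $Q$, which gives $\mu^{\epsilon}_Q \overset{w}{\to}\delta_Q$ for every $Q\in\mathcal{E}$, and in cases a, b, e, f, where $\mathcal{M}_S(\epsilon_0)=\{\mu^{\epsilon}_Q\}$, this settles (i). For (ii), the same concentration forces the radial marginal onto $\Sigma$ and the time change $\int_0^t g^{\epsilon}\,ds$ onto the identity, so that along $\Lambda(h)$ the limit is $\Phi^0$-invariant and supported on $\Gamma(h)\cup\{O\}$; discarding $O$ as below and invoking the unique ergodicity of a uniform rotation on a periodic orbit identifies $\mu$ with the normalized time-parametrization, i.e. the Haar measure on $\Gamma(h)$.

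For part (iii), case d, Theorem \ref{mthm} first confines $\mu$ to $B(\Phi^0)$, and the Poincar\'{e}--Bendixson together with the carrying-simplex description underlying Table \ref{biao0} shows that the heteroclinic connections are transient, so $B(\Phi^0)=\{O,R_1,R_2,R_3,P\}$. It then remains to prove $\mu(\{O\})=\mu(\{P\})=0$. The mass at $O$ is discarded exactly as in (ii): the logistic marginal avoids $g=0$, so the stationary measures assign vanishing mass to neighborhoods of the origin. The decisive and most delicate point is the mass at $P$. Here I would exploit that in (\ref{sdfi}) the noise acts only radially, so the angular dynamics on $\Sigma$ is \emph{purely deterministic}; since $P$ is a repeller on $\Sigma$ in case d and the initial data satisfy $y\notin\overline{L(P)}$ with $\overline{L(P)}$ invariant, the angular component $\Phi^0(\cdot,y/g_0)$ is driven away from $P$ toward the attracting heteroclinic cycle, whose only recurrent points are $R_1,R_2,R_3$. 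Translating this repulsion into a uniform-in-$\epsilon$ bound on the measure of small neighborhoods of $P$, and then passing to the weak limit, is the main obstacle; once it is in place, $\mu(\{O\})=\mu(\{P\})=0$ yields $\mu(\{R_1,R_2,R_3\})=1$.
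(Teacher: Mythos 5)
Your overall route is the paper's own: Theorem \ref{mthm} supplies invariance and confinement to the Birkhoff center, and the structural description of $\mathcal{M}_S(\epsilon_0)$ coming from the Stochastic Decomposition Formula (\ref{sdfi}) does the rest; indeed the paper's proof is essentially the single sentence ``Applying Theorem \ref{mthm}'', with the finer facts delegated to \cite{CDJNZ}. Your handling of (i) and (ii) is correct and, if anything, more self-contained than the paper's (direct concentration of the logistic stationary law at $g=1$ for (i); unique ergodicity of the flow restricted to a periodic orbit, after discarding $O$, for (ii)).

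The one genuine issue is the step you yourself flag as ``the main obstacle'' in (iii), the exclusion of mass at $P$. It is not an obstacle, and no uniform-in-$\epsilon$ estimate is required, precisely because of the mechanism you identify. Take an open cone $G=\{\lambda z:\ z\in N,\ \lambda>0\}$ with $N$ a small open neighborhood of $P$ in $\Sigma$ whose closure is disjoint from the $\omega$-limit set of $y/g_0$ under $\Phi^0$ (possible because $y\in{\rm Int}\,\mathbb{R}^3_+\setminus\overline{L(P)}$ forces that $\omega$-limit set to be the heteroclinic cycle on $\partial\Sigma$). Since $G$ is a cone and the noise in (\ref{sdfi}) acts only through the positive scalar $g^\epsilon$, one has $\Phi^\epsilon(t,\omega,y)\in G$ if and only if $\Phi^0\big(\int_0^t g^\epsilon ds,\ y/g_0\big)\in G$; the deterministic angular orbit leaves $G$ permanently after some finite time $T_0$, and $\int_0^t g^\epsilon ds\rightarrow\infty$ a.s., so $P(t,y,G)=\mathbb{P}\big(\int_0^t g^\epsilon ds<T_0\big)\rightarrow 0$ and the Ces\`{a}ro averages (\ref{0pms}) of $P(\cdot,y,G)$ tend to $0$. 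The open-set half of the Portmanteau theorem applied to the Krylov--Bogoliubov limit then gives $\nu_y^{\epsilon}(G)=0$ for \emph{every} $\epsilon$, hence $\mu(G)\leq\liminf_i \nu_y^{\epsilon^i}(G)=0$ and $\mu(\{P\})=0$; the same inequality combined with your logistic-marginal bound disposes of $O$. One caveat you use implicitly and should make explicit: part (iii) must be read for $y\in{\rm Int}\,\mathbb{R}^3_+\setminus\overline{L(P)}$, since for $y\in L(P)$ one has $\nu_y^{\epsilon}=\mu^{\epsilon}_P\rightarrow\delta_P$.
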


\begin{remark} The Birkhoff center in Example ${\rm(\ref{unds})}$ consists of the origin (saddle) and strongly unstable foci $\{ P^+, P^-\}$. Relatively, the orgin $O$ is more stable than $\{ P^+, P^-\}$. So all limiting measures support at the origin in the case that the diffusion matrix is nondegenerate.  The Birkhoff center for the flow $\Phi^0$ on $\Sigma$ is composed of $P$ (which is strongly repelling on $\Sigma$) and three saddles $\{R_1, R_2, R_3\}$, which are relatively more stable than $P$ and supported by all limiting measures determined by those solutions with initial points on $\Sigma$. Nevertheless, if the drift system is Morse-Smale and the diffusion matrix is positively definite everywhere, then we conjecture that all limiting measures will support at either stable equilibrium or stable periodic orbit.
\end{remark}

Example 4 is complete.
$\qquad\qquad\qquad\qquad\qquad\qquad\qquad\qquad\qquad\qquad\quad\Box$

\section{Applications to SPDEs}

Although our main result can be applied to many SPDEs, we prefer  to apply it to stochastic reaction-diffusion equations, stochastic $2D$ Navier-Stokes equations and stochastic Burgers type equations driven by Brownian motions or  L\'{e}vy process.
\subsection{Stochastic reaction diffusion equation with a polynomial nonlinearity}
Let $\Lambda\subset \mathbb{R}^{n}$ be a bounded domain with smooth boundary $\partial\Lambda$ and
let $g$ be a polynomial of odd degree
with negative leading coefficient
\begin{equation}\label{leacoet}
  g(u)=\displaystyle\sum_{i=0}^{2k-1}a_{i}u^{i}, \quad a_{2k-1}<0.
\end{equation}
Consider the following initial boundary value problem:
\begin{equation}
    \label{nonpoly}
    \left\{
        \begin{array}{l}
        \frac{\partial u}{\partial t}= \Delta u + g(u),\ x\in\Lambda,\ t>0, \\
        u(x,t)=0, \ x\in\partial\Lambda,\ t\geq0, \\
        u(x,0)=\varphi\in L^2(\Lambda).
        \end{array}
    \right.
\end{equation}
Then by Temam \cite[p.84 Theorem III.1.1]{Temam}, equation (\ref{nonpoly}) has a unique solution $u(x,t;\varphi)\in L^2(\Lambda)$.
Therefore we can define a semigroup $T(t)$ on $L^2(\Lambda)$: $T(t):\varphi\in L^2(\Lambda)\mapsto u(t;\varphi)\in H^{1}_{0}(\Lambda)$. Thanks to \cite[p.88 Theorem III.1.2]{Temam}, equation (\ref{nonpoly}) has a global attractor $\mathcal{A}_{L^{2}(\Lambda)}$ which is bounded in $H^{1}_{0}(\Lambda)$, compact and connected in $L^{2}(\Lambda)$. We have that for any $\varphi \in H^{1}_{0}(\Lambda)$, $\{u(x,t;\varphi)\}_{t\geq 0}$ is bounded in $H_{0}^{1}(\Lambda)$.
Furthermore, since $-\Delta$ has compact resolvent,
trajectory $\{u(x,t;\varphi)\}_{t\geq 0}$ has a compact closure.
And let
$$ V(\varphi)=\int_{\Lambda}\big(\frac{1}{2}|\nabla \varphi|^{2}-G(\varphi)\big)dx, \ G(u)=\int_{0}^{u}g(\xi)d\xi,  \quad \varphi\in H_{0}^{1}(\Lambda),  $$
then
\begin{align*}
  \frac{d}{dt}V(u(x,t;\varphi))
  &=\frac{d}{dt}\int_{\Lambda}\big( \frac{1}{2}|\nabla u|^{2}-G(u) \big)dx\\
  &=\int_{\Lambda}\big( \frac{1}{2}2\nabla u \cdot \nabla\frac{\partial u}{\partial t} -g(u)\frac{\partial u}{\partial t} \big)dx\\
  &=-\int_{\Lambda}\big((\Delta u+g(u)) \frac{\partial u}{\partial t}\big)dx\\
  &=-\int_{\Lambda}\big(\frac{\partial u}{\partial t}\big)^{2}dx\leq 0
\end{align*}
where we have used the Green formula and boundary condition in the third equality.
Finally, by LaSalle's invariance principle, the $\omega$-limit set $\omega(\varphi)$ is contained in
the equilibrium set of $T(t)$ for any $\varphi \in H_{0}^{1}(\Lambda)$, this is, $u(x,t;\varphi)=\varphi$
satisfies the following
boundary value problem
\begin{equation*}
\left\{
        \begin{array}{l}
        \Delta \varphi + g(\varphi)=0,\ x\in\Lambda, \\
       \varphi(x)=0, \ x\in\partial\Lambda.
        \end{array}
    \right.
\end{equation*}
This implies that all solutions for (\ref{nonpoly}) are convergent to the equilibrium points.

\begin{proposition}\label{rdb}
The Birkhoff center for ${\rm(\ref{nonpoly})}$ is the equilibrium set $\mathcal{E}$.
\end{proposition}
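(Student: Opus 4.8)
The plan is to establish the two set inclusions $\mathcal{E}\subseteq B(\Phi)$ and $B(\Phi)\subseteq\mathcal{E}$, working from the definition $B(\Phi)=\overline{\{\varphi:\varphi\in\omega(\varphi)\}}$ as the closure of the recurrent set and exploiting the Lyapunov structure already exhibited above.

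First I would dispose of the easy inclusion $\mathcal{E}\subseteq B(\Phi)$. Any equilibrium $\varphi\in\mathcal{E}$ is a fixed point of the semiflow, i.e.\ $T(t)\varphi=\varphi$ for all $t\geq 0$, so $\omega(\varphi)=\{\varphi\}$ and in particular $\varphi\in\omega(\varphi)$. Hence every equilibrium is a recurrent point, so $\mathcal{E}$ is contained in the recurrent set and, a fortiori, in its closure $B(\Phi)$.

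For the reverse inclusion I would use the dissipation identity already computed. Let $\varphi$ be an arbitrary recurrent point, meaning $\varphi\in\omega(\varphi)$. Because every trajectory starting in $H^{1}_{0}(\Lambda)$ has compact closure and $V$ is a strict Lyapunov function (its derivative along solutions equals $-\int_{\Lambda}(\partial_t u)^{2}\,dx\leq 0$, vanishing precisely at equilibria), the LaSalle invariance principle invoked above yields $\omega(\varphi)\subseteq\mathcal{E}$. Combining this with $\varphi\in\omega(\varphi)$ gives $\varphi\in\mathcal{E}$. Thus the recurrent set is exactly $\mathcal{E}$, and consequently $B(\Phi)=\overline{\mathcal{E}}$.

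It then remains to remove the closure by showing $\mathcal{E}$ is closed in the phase space. Since each map $T(t)$ is continuous, the fixed-point set $\{\varphi:T(t)\varphi=\varphi\}$ is closed for every $t$, and $\mathcal{E}=\bigcap_{t\geq 0}\{\varphi:T(t)\varphi=\varphi\}$ is therefore an intersection of closed sets, hence closed; this gives $B(\Phi)=\overline{\mathcal{E}}=\mathcal{E}$. I expect the only genuine subtlety to be bookkeeping about the topology: one must be sure that the LaSalle conclusion $\omega(\varphi)\subseteq\mathcal{E}$ and the precompactness of orbits hold in the same space in which the Birkhoff center is defined. Since every orbit eventually enters the global attractor $\mathcal{A}_{L^{2}(\Lambda)}$, which is bounded in $H^{1}_{0}(\Lambda)$ and compact in $L^{2}(\Lambda)$, all $\omega$-limit sets are nonempty and compact there, so no estimates beyond those already established are required.
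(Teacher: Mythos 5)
Your argument is correct and follows essentially the same route as the paper, which proves the proposition implicitly via the preceding computation: the strict Lyapunov function $V$ together with precompactness of orbits and LaSalle's invariance principle gives $\omega(\varphi)\subseteq\mathcal{E}$, whence the recurrent set equals $\mathcal{E}$. Your additional bookkeeping (the two inclusions, the closedness of $\mathcal{E}$ as an intersection of fixed-point sets, and the remark that smoothing into the attractor reconciles the $L^{2}$ and $H^{1}_{0}$ topologies) only makes explicit what the paper leaves tacit.
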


Now let us consider the noise perturbation system of reaction diffusion equation, such as (\ref{nonpoly}).

Consider the following stochastic reaction diffusion  equation in $\Lambda$ with Dirichlet boundary conditions:
\begin{equation}\label{eq heat}
\left\{
\begin{aligned}
&dX(t,x)=\nu \Delta X(t,x)dt+g(x,X(t,x))dt+\epsilon \sigma(x, X(t,x))d W(t), \\
&X(t,x)=0,\ x\in \partial \Lambda,\ t>0, \\
&X(0)=h\in L^2(\Lambda).
\end{aligned}
\right.
\end{equation}
Here $\nu>0$, $g: \Lambda\times\mathbb{R}\rightarrow\mathbb{R}$ and $\sigma:\Lambda\times \mathbb{R}\rightarrow l^2$
are two measurable functions. $W(t)=(W_k(t))_{k\in\mathbb{N}}$ is a sequence of independent one dimensional standard Brownian motions
on some filtered probability space $(\Omega,\mathcal{F},\{\mathcal{F}_t\}_{t\geq0},\mathbb{P})$.

For $p\geq 1$, let $L^p(\Lambda)$ be the usual $L^p$-space over $\Lambda$ with the standard norm $\|\cdot\|_p$. For $m\in\mathbb{N}$,
let $\mathbb{H}^m_0(\Lambda)$ be the usual $m$-order Sobolev space over $\Lambda$ with Dirichlet boundary conditions, and its norm is
denoted by $\|\cdot\|_{2,m}$. Denote $\mathbb{H}^{-m}(\Lambda)$ be the dual space of $\mathbb{H}^m_0(\Lambda)$. Notice that the following Poincar\'{e}
inequality holds: for some $\lambda_{\Lambda}>0$,
$$
\lambda_{\Lambda}\|u\|^2_2 \leq \|\nabla u\|^2_2,\ \ u\in\mathbb{H}^1_0(\Lambda).
$$
Let $V := \mathbb{H}^1_0(\Lambda)$ and denote by $\|u\|_V := \|\nabla u\|_2$.

Now we identify the Hilbert space $H:=L^2(\Lambda)$ with itself by the Riesz representation, and set for $q\geq2$,
$$
V_q:=\mathbb{H}^1_0(\Lambda)\cap L^q(\Lambda),\ \ \ \ \ V^*_q=\mathbb{H}^{-1}(\Lambda)+L^{q^*}(\Lambda),
$$
where $q^*:=q/(q-1)$. For any $u\in V_q$ and $w=w_1+w_2\in\mathbb{H}^{-1}(\Lambda)+L^{q^*}(\Lambda)$, we have
$$
\langle u,w\rangle_{V_q,  V^*_q}=\langle u,w_1\rangle_{\mathbb{H}^1_0,\mathbb{H}^{-1}}+\langle u,w_2\rangle_{L^q(\Lambda),L^{q^*}(\Lambda)}.
$$
In what follows, we consider the evolution triple
$$
V_q\subset H\subset V^*_q.
$$
Assume that
\begin{itemize}
\item[(C1)] There exist $q\geq 2$, $c_i>0, i=1,2,3,4$ and $h_1\in L^1(\Lambda)$, $h_2\in L^{q^*}(\Lambda)$ such that for all $u,u'\in\mathbb{R}$ and $x\in\Lambda$,
    \begin{eqnarray*}
       (u-u')(g(x,u)-g(x,u'))\leq c_1|u-u'|^2,
    \end{eqnarray*}
    \begin{eqnarray*}
       u g(x,u)\leq -c_2|u|^q+c_3|u|^2+h_1(x),
    \end{eqnarray*}
    \begin{eqnarray*}
       |g(x,u)|\leq c_4|u|^{q-1}+h_2(x),
    \end{eqnarray*}
and the mapping $u\mapsto g(x,u)$ is continuous.

\item[(C2)] There exist $c_5,c_6>0$ and $h_3\in L^1(\Lambda)$ such that for all $u,u'\in \mathbb{R}$ and $x\in \Lambda$,
\begin{eqnarray*}
 \|\sigma(x,u)-\sigma(x,u')\|^2_{l^2}\leq c_5|u-u'|^2,
\end{eqnarray*}
and
\begin{eqnarray*}
\|\sigma(x,u)\|^2_{l^2}\leq c_6|u|^2+h_3(x),
\end{eqnarray*}

\end{itemize}
where $l^2$ be the Hilbert space of all square summable sequences of real
numbers.
By Theorem 3.2 in \cite{LXZ} or Theorem 3.6 in \cite{ZhangXC}, under (C1)-(C2), for any $p\geq 1$ and $h\in L^{2p}(\Omega, \mathcal{F}_0,\mathbb{P};L^2(\Lambda))$,
there exists a unique $L^2(\Lambda)$-valued $\mathcal{F}_t$-adapted process $X^{\epsilon,h}$ with
$$
X^{\epsilon,h}\in C_{loc}([0,\infty),L^2(\Lambda))\cap L^2_{loc}([0,\infty),V)\cap L^q_{loc}([0,\infty),L^q(\Lambda)),\ \ \mathbb{P}\text{-a.s.}
$$
and equation (\ref{eq heat}) holds in $V^*_q$. Moreover, we can obtain

\begin{lemma}\label{thm heat 1}
Assume {\rm(C1)-(C2)} hold, and $q>2$ or $q=2, \nu+\frac{c_2-c_3}{\lambda_{\Lambda}}>0$. Then
there exists $\epsilon_0>0$ such that for any $\epsilon\in(0,\epsilon_0]$,
\begin{equation}\label{sheatesti}
  \mathbb{E}\big(\sup_{t\in[0,T]}\|X^{\epsilon,h}(t)\|^2_2\big)+\mathbb{E}\big(\int_0^T\|X^{\epsilon,h}(t)\|^2_{V}dt\big)
  \leq C\big(\mathbb{E}(\|h\|^2_2)+T\big).
\end{equation}
\end{lemma}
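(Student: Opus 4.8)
The plan is to run a standard energy estimate via It\^o's formula for the square of the $L^2(\Lambda)$-norm in the variational (Gelfand triple) framework $V_q\subset H\subset V_q^*$, and to exploit the coercivity/growth hypotheses (C1)--(C2) together with the Poincar\'e inequality to produce a \emph{net} dissipation that is uniform in $\epsilon\in(0,\epsilon_0]$. First I would apply the It\^o formula for $\|\cdot\|_2^2$ to the solution $X:=X^{\epsilon,h}$, which in this setting (cf.\ the existence results of \cite{LXZ,ZhangXC}) yields, for $t\ge 0$,
$$\|X(t)\|_2^2=\|h\|_2^2+2\int_0^t\big(\langle \nu\Delta X(s),X(s)\rangle+\langle g(\cdot,X(s)),X(s)\rangle\big)ds+\epsilon^2\int_0^t\|\sigma(\cdot,X(s))\|_{L^2(\Lambda;l^2)}^2 ds+M(t),$$
where $M(t)=2\epsilon\int_0^t\langle X(s),\sigma(\cdot,X(s))dW(s)\rangle$. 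To make $M$ a genuine martingale I would localize with the stopping times $\tau_R=\inf\{t:\|X(t)\|_2>R\}$, carry out all estimates on $[0,t\wedge\tau_R]$, and recover the claim by Fatou's lemma as $R\to\infty$.

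Then I estimate each term. Integration by parts with the Dirichlet condition gives $\langle \nu\Delta X,X\rangle=-\nu\|X\|_V^2$; the coercivity bound $ug(x,u)\le -c_2|u|^q+c_3|u|^2+h_1$ gives $\langle g(\cdot,X),X\rangle\le -c_2\|X\|_q^q+c_3\|X\|_2^2+\|h_1\|_1$; and (C2) gives $\|\sigma(\cdot,X)\|_{L^2(\Lambda;l^2)}^2\le c_6\|X\|_2^2+\|h_3\|_1$. The crux is to combine the dissipation $-2\nu\|X\|_V^2$ with the growth terms $2c_3\|X\|_2^2$ and $\epsilon^2 c_6\|X\|_2^2$. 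When $q>2$, Young's inequality absorbs $2c_3\|X\|_2^2\le c_2\|X\|_q^q+C|\Lambda|$ into the strong dissipation $-2c_2\|X\|_q^q$, so the quadratic growth disappears and $-2\nu\|X\|_V^2$ survives intact. When $q=2$ I instead use $\|X\|_2^2\le\lambda_\Lambda^{-1}\|X\|_V^2$ to convert part of the gradient dissipation into $L^2$-dissipation, turning $-2\nu\|X\|_V^2+2(c_3-c_2)\|X\|_2^2$ into $-2\kappa\|X\|_V^2$ with $\kappa=\nu+\frac{c_2-c_3}{\lambda_\Lambda}$, which is exactly where the hypothesis $\nu+\frac{c_2-c_3}{\lambda_\Lambda}>0$ enters to guarantee $\kappa>0$. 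In either case, choosing $\epsilon_0$ small enough that $\epsilon^2 c_6\lambda_\Lambda^{-1}$ is dominated by the surviving gradient dissipation (uniformly for $\epsilon\le\epsilon_0$), I arrive at
$$\mathbb{E}\|X(t)\|_2^2+\delta\,\mathbb{E}\int_0^t\|X(s)\|_V^2 ds\le \mathbb{E}\|h\|_2^2+Ct,\qquad t\in[0,T],$$
for some $\delta>0$ and $C$ independent of $\epsilon$ and $h$; this already yields the second term of the claim together with a bound on $\mathbb{E}\int_0^T\|X\|_2^2 ds$.

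To obtain the supremum estimate I return to the energy identity, take $\sup_{t\le T}$ and expectation, and control the martingale by the Burkholder--Davis--Gundy inequality: since $\langle M\rangle_T\le 4\epsilon^2\int_0^T\|X(s)\|_2^2\,\|\sigma(\cdot,X(s))\|_{L^2(\Lambda;l^2)}^2 ds$, I bound $\mathbb{E}\sup_{t\le T}|M(t)|$ by $C\epsilon\,\mathbb{E}\big[(\sup_{t\le T}\|X(t)\|_2^2)^{1/2}(\int_0^T(c_6\|X\|_2^2+\|h_3\|_1)ds)^{1/2}\big]$ and apply Young's inequality to split off $\tfrac12\mathbb{E}\sup_{t\le T}\|X(t)\|_2^2$ plus an $\epsilon^2$-multiple of $\mathbb{E}\int_0^T\|X\|_2^2 ds$. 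The first piece is absorbed into the left-hand side, and the second is controlled by the bound already established in the previous step, which delivers the desired estimate.

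The main obstacle is the borderline case $q=2$, where there is no super-linear $\|X\|_q^q$ dissipation to spare: one must carefully balance the gradient dissipation against both the reaction growth $c_3$ and the noise-induced It\^o correction $\epsilon^2 c_6$. It is precisely the structural condition $\nu+\frac{c_2-c_3}{\lambda_\Lambda}>0$ together with the smallness of $\epsilon_0$ that keeps the effective dissipation strictly positive, and this is what makes the final bound affine in $T$ (rather than exponential, as a naive Gr\"onwall argument would give) and uniform in $\epsilon$.
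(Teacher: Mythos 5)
Your proposal is correct and follows essentially the same route as the paper: It\^o's formula for $\|\cdot\|_2^2$, the coercivity in (C1) with the two-case treatment of the reaction term (Young's inequality to absorb the quadratic growth into $-c_2\|X\|_q^q$ when $q>2$, the Poincar\'e inequality to convert the gradient dissipation when $q=2$ under $\nu+\frac{c_2-c_3}{\lambda_\Lambda}>0$), smallness of $\epsilon_0$ to dominate the It\^o correction, and Burkholder--Davis--Gundy plus Young's inequality to absorb the supremum of the martingale into the left-hand side. The only differences (explicit localization with stopping times, splitting the a priori bound and the supremum bound into two passes instead of one) are cosmetic.
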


\begin{remark}\label{Rem 6}
By Fubini's Theorem and {\rm(\ref{sheatesti})}, we have
$$
\int_0^T\mathbb{E}\Big(\|X^{\epsilon,h}(t)\|^2_{V}\Big)dt<\infty.
$$
This implies that there exists $\mathcal{T}_0\in\mathcal{B}([0,T])$ with zero Lebesgue measure such that
$$
\mathbb{E}\Big(\|X^{\epsilon,h}(t)\|^2_{V}\Big)<\infty,\ \ \ t\in[0,T]\setminus \mathcal{T}_0.
$$
Hence for any $t\in[0,T]\setminus \mathcal{T}_0$, there exists $\Omega_t\in\mathcal{F}$ with $\mathbb{P}(\Omega_t)=1$ such that
$$
X^{\epsilon,h}(t,\omega)\in V,\ \ \omega\in \Omega_t.
$$
\end{remark}

\begin{proof} Denote by $(L_2(l^2,H),\|\cdot\|_{l^2,H})$ the Hilbert space of all Hilbert-Schmidt operators for $l^2$ to $H$.

By It\^{o}'s formula,
\begin{align*}
&\|X^{\epsilon,h}(t)\|^2_2+2\nu\int_0^t\|X^{\epsilon,h}(s)\|^2_{V}ds\ \\
=&\|h\|^2_2+2\int_0^t\langle X^{\epsilon,h}(s),g(x, X^{\epsilon,h}(s))\rangle_{H,H}ds\\
&+2\epsilon\int_0^t\langle X^{\epsilon,h}(s),\sigma(x, X^{\epsilon,h}(s))dW(s)\rangle_{H,H}\\
&+\epsilon^2\int_0^t\|\sigma(x, X^{\epsilon,h}(s))\|^2_{l^2,H}ds\\
:=&I^\epsilon_1+I^\epsilon_2(t)+I^\epsilon_3(t)+I^\epsilon_4(t).
\end{align*}

For $I^\epsilon_3$, by (C2),
\begin{equation}\label{eq heat I3}
  \begin{split}
&\mathbb{E}\Big(\sup_{t\in[0,T]}|I^\epsilon_3(t)|\Big)\\
\leq&
2\epsilon \mathbb{E}\Big(\int_0^T\|X^{\epsilon,h}(t)\|^2_2\|\sigma(x, X^{\epsilon,h}(t))\|^2_{l^2,H}dt\Big)^{1/2}\\
\leq&
2\epsilon\mathbb{E}\Big(\sup_{t\in[0,T]}\|X^{\epsilon,h}(t)\|^2_2\Big)
+2\epsilon\mathbb{E}\Big(\int_0^T\|\sigma(x, X^{\epsilon,h}(t))\|^2_{l^2,H}dt\Big)\\
\leq&
2\epsilon\mathbb{E}\Big(\sup_{t\in[0,T]}\|X^{\epsilon,h}(t)\|^2_2\Big)+
2\epsilon c_6 \mathbb{E}\Big(\int_0^T\|X^{\epsilon,h}(t)\|^2_2dt\Big)
+2\epsilon T \|h_3\|_1 \\
\leq&
2\epsilon\mathbb{E}\Big(\sup_{t\in[0,T]}\|X^{\epsilon,h}(t)\|^2_2\Big)+
\frac{2\epsilon c_6}{\lambda_{\Lambda}} \mathbb{E}\Big(\int_0^T\|X^{\epsilon,h}(t)\|^2_{V}dt\Big)
+2\epsilon T \|h_3\|_1.
  \end{split}
\end{equation}

For $I^\epsilon_4$, applying (C2) again, we have
\begin{eqnarray}\label{eq heat I4}
\mathbb{E}\Big(I^\epsilon_4(T)\Big)
\leq
\frac{\epsilon^2 c_6}{\lambda_{{\Lambda}}} \mathbb{E}\Big(\int_0^T\|X^{\epsilon,h}(s)\|^2_{V}ds\Big)
+
\epsilon^2 T \|h_3\|_1.
\end{eqnarray}

 $I^\epsilon_2$ will be estimated according to two cases.
\begin{itemize}
\item[(1)] The case $q>2$ or $q=2, c_2-c_3\geq0$.

By (C1), it is easy to see that there exist $\kappa\geq 0$ and $\tilde{h}_1\in L^1(\Lambda)$ such that
$$
   u g(x, u)\leq -\kappa |u|^q+\tilde{h}_1(x).
$$
Hence
\begin{equation}\label{eq heat I21}
I^\epsilon_2(t)\leq 2\|\tilde{h}_1\|^1_1 t.
\end{equation}

\item[(2)] The case $q=2, c_2-c_3<0, \nu+\frac{c_2-c_3}{\lambda_{\Lambda}}>0$.

    By (C1),
    \begin{equation}\label{eq heat I22}
      \begin{split}
       I^\epsilon_2(t) \leq&
        2(c_3-c_2)\int_0^t\|X^{\epsilon,h}(s)\|^2_2ds
        +2t\|h_1\|_1  \\
        \leq&\frac{2(c_3-c_2)}{\lambda_{\Lambda}}\int_0^t\|X^{\epsilon,h}(s)\|^2_{V}ds
        +2t\|h_1\|_1.
      \end{split}
    \end{equation}
\end{itemize}

Combing (\ref{eq heat I21}), (\ref{eq heat I22}), (\ref{eq heat I3}) and (\ref{eq heat I4}), we conclude that there exist $\epsilon_0,\eta_1,\eta_2>0$ such that for $0<\epsilon \leq\epsilon_0$,
$$
\frac{1}{2}\mathbb{E}\Big(\sup_{t\in[0,T]}\|X^{\epsilon,h}(t)\|^2_2\Big)+\eta_1\mathbb{E}\Big(\int_0^T\|X^{\epsilon,h}(t)\|^2_{V}dt\Big)
\leq
\mathbb{E}(\|h\|^2_2)+\eta_2 T.
$$
This completes the proof.
\end{proof}

\

Denote by $X^{0,h}$ the solution for (\ref{eq heat}) when $\epsilon=0$. Applying the results
in Lemma \ref{thm heat 1}
and It\^{o}'s formula to $\|X^{\epsilon,h}(t)-X^{0,h}(t)\|^2_2$, we can obtain

\
\begin{theorem}\label{thm heat 2}
If the assumptions of Lemma {\rm\ref{thm heat 1}} hold, then

\begin{itemize}
\item[{\rm(1)}] For any $\widetilde{M}>0$, $\delta>0$ and $t\geq 0$,
\begin{eqnarray*}
\lim_{\epsilon\rightarrow0}\sup_{\|h\|^2_V\leq \widetilde{M}}\mathbb{P}\Big(\|X^{\epsilon,h}_t-X^{0,h}_t\|^2_2\geq \delta\Big)=0.
\end{eqnarray*}

\item[{\rm(2)}] There exists at least one stationary measure $\mu^{\epsilon,h}$ for $X^{\epsilon,h}$.

\item[{\rm(3)}] For $\epsilon\in(0,\epsilon_0]$, denote by $\{\mu^{\epsilon}_{i_\epsilon},\ i_\epsilon\in I_\epsilon\}$ all stationary measures for the semigroup $\{P^{\epsilon}_t\}_{t\geq 0}$. Then $\{\mu^{\epsilon}_{i_\epsilon},\ i_\epsilon\in I_\epsilon, \epsilon\in(0,\epsilon_0]\}$ is tight.
\end{itemize}

\end{theorem}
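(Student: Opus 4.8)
The plan is to establish the three assertions in sequence, using the probability-convergence estimate (1) as the engine and a single uniform $V$-bound as the common source of tightness for (2) and (3). Throughout I would work in the evolution triple $V_q\subset H\subset V_q^*$ and lean on the a priori estimate of Lemma \ref{thm heat 1} together with the compact Sobolev embedding $V=\mathbb{H}^1_0(\Lambda)\hookrightarrow\hookrightarrow H=L^2(\Lambda)$. The restriction of the supremum in (1) to $V$-balls $\{\|h\|_V^2\le\widetilde M\}$ is exactly what the Remark following Theorem \ref{mthm} permits, since the tight sets in $L^2$ arising below are $V$-bounded.

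For (1), set $Y^\epsilon_t:=X^{\epsilon,h}(t)-X^{0,h}(t)$, which solves
$$dY^\epsilon_t=\nu\Delta Y^\epsilon_t\,dt+\big(g(x,X^{\epsilon,h}(t))-g(x,X^{0,h}(t))\big)\,dt+\epsilon\,\sigma(x,X^{\epsilon,h}(t))\,dW(t),\qquad Y^\epsilon_0=0.$$
Applying It\^o's formula to $\|Y^\epsilon_t\|^2_2$, integrating by parts to turn the Laplacian into $-2\nu\|Y^\epsilon_t\|^2_V\le 0$, estimating the drift difference by the one-sided Lipschitz bound in (C1) (so that $2\langle Y^\epsilon_t,g(x,X^{\epsilon,h})-g(x,X^{0,h})\rangle\le 2c_1\|Y^\epsilon_t\|^2_2$), and bounding the quadratic-variation term by the growth estimate $\|\sigma(x,u)\|^2_{l^2}\le c_6|u|^2+h_3(x)$ of (C2), I would take expectations (the stochastic integral being a martingale after the usual localization) to obtain
$$\frac{d}{dt}\mathbb{E}\|Y^\epsilon_t\|^2_2\le 2c_1\,\mathbb{E}\|Y^\epsilon_t\|^2_2+\epsilon^2\big(c_6\,\mathbb{E}\|X^{\epsilon,h}(t)\|^2_2+\|h_3\|_1\big).$$
Since $\|h\|^2_2\le\lambda_\Lambda^{-1}\|h\|^2_V\le\lambda_\Lambda^{-1}\widetilde M$, Lemma \ref{thm heat 1} bounds $\sup_{t\in[0,T]}\mathbb{E}\|X^{\epsilon,h}(t)\|^2_2$ by a constant depending only on $\widetilde M$ and $T$, uniformly over the $V$-ball. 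Gronwall's lemma then gives $\sup_{\|h\|^2_V\le\widetilde M}\mathbb{E}\|Y^\epsilon_t\|^2_2\le\epsilon^2 C(t,\widetilde M)\to 0$, and Chebyshev's inequality $\mathbb{P}(\|Y^\epsilon_t\|^2_2\ge\delta)\le\delta^{-1}\mathbb{E}\|Y^\epsilon_t\|^2_2$ upgrades this to (1).

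For (2), the estimate of Lemma \ref{thm heat 1} yields, for fixed deterministic $h$, $\tfrac1T\int_0^T\mathbb{E}\|X^{\epsilon,h}(s)\|^2_V\,ds\le C(1+\|h\|^2_2/T)$ with $C$ independent of $\epsilon\in(0,\epsilon_0]$. As the sublevel sets $\{\|u\|_V\le R\}$ are compact in $H$, Markov's inequality gives $\tfrac1T\int_0^T P^\epsilon(s,h,\{\|u\|_V>R\})\,ds\le C/R^2$ for $T$ large, so the Krylov--Bogoliubov averages $P^{\epsilon,T}(h,\cdot)=\tfrac1T\int_0^T P^\epsilon(s,h,\cdot)\,ds$ are tight. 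Combined with the Feller property of $\{P^\epsilon_t\}$ (obtained by an analogous It\^o/Gronwall estimate for $X^{\epsilon,h}-X^{\epsilon,h'}$, now using the Lipschitz constant $c_5$ in (C2) and the monotonicity $c_1$ in (C1), which gives $\mathbb{E}\|X^{\epsilon,h}(t)-X^{\epsilon,h'}(t)\|^2_2\le e^{(2c_1+\epsilon^2 c_5)t}\|h-h'\|^2_2$), any weak limit point is a stationary measure.

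For (3) I must control the entire family, not merely the Krylov--Bogoliubov limits. Given any stationary measure $\mu^\epsilon$ of $\{P^\epsilon_t\}$, I would run the process with initial law $\mu^\epsilon$ and integrate the energy identity: by stationarity $\mathbb{E}\|X^\epsilon(t)\|^2_2=\mathbb{E}\|X^\epsilon(0)\|^2_2$, the boundary terms cancel, and after dividing by $t$ and using the coercive bound $ug(x,u)\le -c_2|u|^q+c_3|u|^2+h_1(x)$ of (C1) to absorb the gradient contribution against the Poincar\'e inequality (in the borderline case $q=2,\ \nu+(c_2-c_3)/\lambda_\Lambda>0$, the case $q>2$ being easier) I expect $\int\|u\|^2_V\,\mu^\epsilon(du)\le C$ with $C$ uniform over all stationary measures and all $\epsilon\le\epsilon_0$; the compact embedding then forces $\mu^\epsilon(\{\|u\|_V>R\})\le C/R^2$, which is the required tightness. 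The hard part will be making this energy balance rigorous under an arbitrary stationary measure: one needs the a priori knowledge that $\mu^\epsilon$ charges only $V$ (cf.\ Remark \ref{Rem 6}), a localization argument to justify cancellation of the martingale and boundary terms, and a check that the coercivity constants, and hence $C$, do not degenerate as $\epsilon\to 0$. A safer alternative, if every stationary measure of interest is produced by the Krylov--Bogoliubov procedure, is to bound only those limits, passing the time-averaged $V$-estimate to the limit by lower semicontinuity of $u\mapsto\|u\|^2_V$ on $L^2$.
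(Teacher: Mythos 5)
Your arguments for parts (1) and (2) coincide with the paper's: the same It\^o/Gronwall estimate on $\|X^{\epsilon,h}_t-X^{0,h}_t\|^2_2$ (the paper runs it with a stopping time $\tau_N$ and Fatou rather than a differential inequality, but the content is identical), and the same Krylov--Bogoliubov argument from the time-averaged $V$-bound plus the compact embedding $V\subset H$.

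Part (3) is where your proposal has a genuine gap. Your primary route --- running the process stationarily and balancing the energy identity to get $\int\|u\|^2_V\,\mu^\epsilon(du)\le C$ --- requires exactly the ingredients you flag as missing: a priori integrability of $\|u\|^2_2$ under an \emph{arbitrary} stationary measure, justification that the stochastic integral is a true martingale under that initial law, and a localization to cancel $\mathbb{E}\|X^\epsilon(t)\|^2_2=\mathbb{E}\|X^\epsilon(0)\|^2_2$. Your fallback (bound only the Krylov--Bogoliubov limits) proves strictly less than the statement, which concerns \emph{all} stationary measures. The paper sidesteps both problems with a short argument you did not find: for any stationary $\mu^\epsilon$ and any $s\ge 0$, stationarity gives
$\mu^\epsilon(\|h\|_V\ge L)=\int_H\mathbb{P}\big(\|X^{\epsilon,h}_s\|_V\ge L\big)\,\mu^\epsilon(dh)$;
averaging over $s\in[0,t]$, applying Fubini, and then pushing $\limsup_{t\to\infty}$ inside the $\mu^\epsilon(dh)$-integral by the reverse Fatou lemma (legitimate because the time-averaged probability is bounded by $1$) reduces everything to the pointwise bound
$\limsup_{t\to\infty}\frac1t\int_0^t\mathbb{P}\big(\|X^{\epsilon,h}_s\|_V\ge L\big)\,ds\le C/L^2$,
which follows from Lemma \ref{thm heat 1} for \emph{every fixed} $h$ with a constant independent of $h$ and of $\epsilon\in(0,\epsilon_0]$, since the $\|h\|^2_2$ contribution is divided by $t$ and vanishes in the Ces\`aro limit. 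No integrability of $\mu^\epsilon$ and no energy balance under $\mu^\epsilon$ are needed; the compactness of $\{\|u\|_V\le L\}$ in $H$ then gives tightness of the whole family. You should replace your part (3) with this argument.
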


\begin{proof}
Repeating the proof of Lemma \ref{thm heat 1}, we can get that there is a positive constant $C$ such that
\begin{eqnarray}\label{eq heat star1}
\sup_{t\in[0,T]}\|X^{0,h}(t)\|^2_2+\int_0^T\|X^{0,h}(t)\|^2_{V}dt
\leq
C(\|h\|^2_2+ T),
\end{eqnarray}
where $C$ can be chosen to be the same as (\ref{sheatesti}).

Define the stopping time $\tau_N=\inf\{s\in[0,T],\,\|X^{\epsilon,h}_s\|^2_2\geq N\}\wedge T$.
Applying It\^{o}'s formula to $\|X^{\epsilon,h}(t)-X^{0,h}(t)\|^2_2$, we have
\begin{equation}\label{eq heat star 2}
  \begin{split}
  &\|X^{\epsilon,h}(t\wedge\tau_N)-X^{0,h}(t\wedge\tau_N)\|^2_2\\
  &+2\nu\int_0^{t\wedge\tau_N}\|X^{\epsilon,h}(s)-X^{0,h}(s)\|^2_{V}ds\\
=&2\int_0^{t\wedge\tau_N}\langle X^{\epsilon,h}(s)-X^{0,h}(s),g(x,X^{\epsilon,h}(s))-g(x,X^{0,h}(s))\rangle_{H,H}ds\\
      &+2\epsilon\int_0^{t\wedge\tau_N}\langle X^{\epsilon,h}(s)-X^{0,h}(s),\sigma(x,X^{\epsilon,h}(s))\rangle_{H,H}dW(s)\\
      &+\epsilon^2\int_0^{t\wedge\tau_N}\|\sigma(x,X^{\epsilon,h}(s))\|^2_{l^2,H}ds.
  \end{split}
\end{equation}
By the definition of $\tau_N$ and (\ref{eq heat star1}), we know that
$$
\Big\{2\epsilon\int_0^{t\wedge\tau_N}\langle X^{\epsilon,h}(s)-X^{0,h}(s),\sigma(x,X^{\epsilon,h}(s))\rangle_{H,H}dW(s)\Big\}_{t\in[0,T]}
$$
is a martingale.
Taking expectation of (\ref{eq heat star 2}) and by the Assumption (C1), (\ref{eq heat I4}) and (\ref{sheatesti}), we have
\begin{align*}\label{eq heat star3}
&\mathbb{E}\|X^{\epsilon,h}(t\wedge\tau_N)-X^{0,h}(t\wedge\tau_N)\|^2_2+2\nu\mathbb{E}\int_0^{t\wedge\tau_N}\|X^{\epsilon,h}(s)-X^{0,h}(s)\|^2_{V}ds\\
\leq&
2c_1\mathbb{E}\int_0^{t\wedge\tau_N}\|X^{\epsilon,h}(s)-X^{0,h}(s)\|^2_2ds+
\epsilon^2\mathbb{E}\int_0^{t\wedge\tau_N}\|\sigma(x,X^{\epsilon,h}(s))\|^2_{l^2,H}ds\\
\leq&2c_1\int_0^{t}\mathbb{E}\|X^{\epsilon,h}(s\wedge\tau_N)-X^{0,h}(s\wedge\tau_N)\|^2_2ds
+\epsilon^2\mathbb{E}\int_0^{T}\|\sigma(x,X^{\epsilon,h}(s))\|^2_{l^2,H}ds\\
\leq&
2c_1\int_0^{t}\mathbb{E}\|X^{\epsilon,h}(s\wedge\tau_N)-X^{0,h}(s\wedge\tau_N)\|^2_2ds
+\frac{\epsilon^2 c_6}{\lambda_{{\Lambda}}} \mathbb{E}\Big(\int_0^T\|X^{\epsilon,h}(s)\|^2_{V}ds\Big)
+\epsilon^2 T \|h_3\|_1\\
\leq&2c_1\int_0^{t}\mathbb{E}\|X^{\epsilon,h}(s\wedge\tau_N)-X^{0,h}(s\wedge\tau_N)\|^2_2ds
+\frac{\epsilon^2 c_6C}{\lambda_{{\Lambda}}}\Big(\mathbb{E}(\|h\|^2_2+T\Big)+\epsilon^2 T \|h_3\|_1.
\end{align*}
By Gronwall's lemma, for any $t\in [0,T]$ and integer $N$,
\begin{align*}
&\mathbb{E}\|X^{\epsilon,h}(t\wedge\tau_N)-X^{0,h}(t\wedge\tau_N)\|^2_2\\
\leq&\epsilon^2\Big(\frac{c_6C}{\lambda_{{\Lambda}}}\big(\mathbb{E}(\|h\|^2_2+T\big)+ T \|h_3\|_1\Big)\cdot e^{2c_1T}.
\end{align*}
Letting $N\rightarrow\infty$ and using the Fatou lemma, we get that for any $t\in [0,T]$,
\begin{align*}
&\mathbb{E}\|X^{\epsilon,h}(t)-X^{0,h}(t)\|^2_2\\
\leq&
\epsilon^2\Big(\frac{c_6C}{\lambda_{{\Lambda}}}\big(\mathbb{E}(\|h\|^2_2+T\big)+ T \|h_3\|_1\Big)\cdot e^{2c_1T}.
\end{align*}
Hence,  for any $t\in [0,T]$,
\begin{align*}
&\lim_{\epsilon\rightarrow0}\sup_{\|h\|^2_V\leq \widetilde{M}}\mathbb{P}\Big(\|X^{\epsilon,h}_t-X^{0,h}_t\|^2_2\geq \delta\Big)\\
\leq&\lim_{\epsilon\rightarrow0}\frac{\epsilon^2}{\delta}\Big(\frac{c_6C}{\lambda_{{\Lambda}}}\big(\widetilde{M}+T\big)+ T \|h_3\|_1\Big)\cdot e^{2c_1T}
  =0.
\end{align*}
We have obtained (1).

(2) Utilizing Lemma \ref{thm heat 1}, we deduce that for any $L>0$,
\begin{equation}\label{eq heat star 05}
\begin{split}
&\limsup_{t\rightarrow\infty}\frac{1}{t}\int_0^t\mathbb{P}\Big(\|X^{\epsilon,h}_s\|_{V}\geq L\Big)ds\\
\leq&
\frac{1}{L^2}\limsup_{t\rightarrow\infty}\frac{1}{t}\int_0^t\mathbb{E}\Big(\|X^{\epsilon,h}_s\|^2_{V}\Big)ds\\
\leq&
\frac{C}{L^2}.
\end{split}
\end{equation}
Notice that the embedding $V\subset H$ is compact. Then there exists at least one stationary measure $\mu^{\epsilon,h}$ for $X^{\epsilon,h}$ by the Prohorov theorem.

\vskip 0.3cm

(3) For $\epsilon\in(0,\epsilon_0]$, choose $\mu^\epsilon\in\{\mu^{\epsilon}_{i_\epsilon},\ i_\epsilon\in I_\epsilon\}$, by the definition of stationary measure, we have
$$
\mu^\epsilon(\|h\|_{V}\geq L)=\int_{H}\mathbb{P}\Big(\|X^{\epsilon,h}_s\|_{V}\geq L\Big)\mu^{\epsilon}(dh),\ \ \ \ \forall s\geq 0,
$$
hence, by Fubini's theorem, Fatou's lemma and (\ref{eq heat star 05})
\begin{align*}
\mu^\epsilon(\|h\|_V\geq L)
\leq&
\limsup_{t\rightarrow\infty}\frac{1}{t}\int_0^t\int_{H}\mathbb{P}\Big(\|X^{\epsilon,h}_s\|_{V}\geq L\Big)\mu^{\epsilon}(dh)ds\\
\leq&
\int_{H}\limsup_{t\rightarrow\infty}\frac{1}{t}\int_0^t\mathbb{P}\Big(\|X^{\epsilon,h}_s\|_{V}\geq L\Big)ds\mu^{\epsilon}(dh)\\
\leq&
\frac{C}{L^2}.
\end{align*}
It follows from the fact that $\{h\in V: \|h\|_{V}\leq L\}$ is compact in $H$ that $\{\mu^{\epsilon}_{i_\epsilon},\ i_\epsilon\in I_\epsilon, \epsilon\in(0,\epsilon_0]\}$ is tight.
\end{proof}

Using the Young inequality,  we can get that the  polynomial $g$ in (\ref{leacoet}) satisfies condition (C1).
This fact, together with Theorem \ref{thm heat 2} and Theorem \ref{mthm}, implies the main result in this subsection.

\
\begin{theorem}
Assume that $\nu=1$, $g$ is given in {\rm(\ref{leacoet})} and $\sigma$ satisfies condition {\rm(C2)}, then any limiting measures
of stationary measures for {\rm(\ref{eq heat})} are supported in the set of equilibrium points $\mathcal{E}$.
\end{theorem}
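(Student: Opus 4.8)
The plan is to apply the abstract Theorem~\ref{mthm} with Polish space $M=H=L^{2}(\Lambda)$, deterministic semiflow $\Phi_t=X^{0}_t=T(t)$ the solution semigroup of~(\ref{nonpoly}), and $X^{\epsilon}_t$ the solution semigroup of~(\ref{eq heat}). First I would record the structural prerequisites. The well-posedness of~(\ref{nonpoly}) via Temam~\cite{Temam} makes $T(t)$ a genuine semiflow on $L^{2}(\Lambda)$, so that properties (i)--(iii) preceding Hypothesis~(\ref{pc}) hold: $t\mapsto T(t)h$ is continuous, $h\mapsto T(t)h$ is Borel measurable, and $T(0)=\mathrm{id}$, $T(t+s)=T(t)\circ T(s)$. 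Next I would verify that the polynomial $g$ in~(\ref{leacoet}) meets Assumption~(C1): since $a_{2k-1}<0$, the product $ug(u)$ has leading term $a_{2k-1}|u|^{2k}$, so Young's inequality yields $ug(x,u)\le -c_2|u|^{q}+c_3|u|^2+h_1(x)$ with $q=2k$ together with the growth bound $|g(x,u)|\le c_4|u|^{q-1}+h_2(x)$; the one-sided Lipschitz inequality $(u-u')(g(u)-g(u'))\le c_1|u-u'|^2$ holds because $g'$ is a polynomial of even degree $2k-2$ with negative leading coefficient and is therefore bounded above by some $c_1$, so the mean value theorem gives the bound. This is the routine verification announced just before the statement.

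With (C1) in hand and (C2) assumed, I would invoke Theorem~\ref{thm heat 2}. Part~(2) gives existence of a stationary measure for each $\epsilon$, and part~(3) gives tightness of the whole family $\{\mu^{\epsilon}_{i_\epsilon}\}$; by the Prohorov theorem every sequence $\mu^{\epsilon_i}$ with $\epsilon_i\to 0$ then has a weakly convergent subsequence, so limiting measures exist. Part~(1) supplies the probability convergence, namely, for every $\widetilde M>0$ and $\delta>0$,
\[
\lim_{\epsilon\to 0}\ \sup_{\|h\|_{V}^{2}\le \widetilde M}\mathbb{P}\big(\|X^{\epsilon}_T(h)-X^{0}_T(h)\|_{2}^{2}\ge \delta\big)=0 .
\]

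The crux is that this convergence is uniform over balls in the stronger norm $\|\cdot\|_{V}=\|\nabla\cdot\|_2$ rather than over arbitrary compact subsets of $L^2(\Lambda)$, so Hypothesis~(\ref{pc}) is \emph{not} literally available. This mismatch is exactly what the Remark following Theorem~\ref{mthm} is designed to absorb: it suffices to establish the probability convergence on the particular compact sets that witness tightness. Inspecting the proof of Theorem~\ref{thm heat 2}(3), those sets are the closed balls $K_L=\{h\in V:\|h\|_{V}\le L\}$, which are compact in $H$ by the compact embedding $V\hookrightarrow H$. On $K_L$ one has $\|h\|_{V}^{2}\le L^{2}=:\widetilde M$, so the displayed convergence gives precisely $\lim_{\epsilon\to 0}\sup_{h\in K_L}\mathbb{P}(\|X^{\epsilon}_T(h)-X^{0}_T(h)\|_{2}^{2}\ge\delta)=0$, i.e.\ Hypothesis~(\ref{pc}) restricted to the $K_L$. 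Re-running the argument of Theorem~\ref{mthm} with $K=K_L$ therefore shows that any weak limit $\mu$ of stationary measures is $X^{0}$-invariant and concentrated on the Birkhoff center $B(X^{0})$.

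Finally, Proposition~\ref{rdb} identifies $B(X^{0})=\mathcal{E}$, whence $\mathrm{supp}(\mu)\subset\mathcal{E}$, completing the proof. The only genuine obstacle is the norm gap just described; once it is bridged via the Remark and the compact embedding $V\hookrightarrow H$, the theorem is a direct assembly of Theorem~\ref{thm heat 2}, Theorem~\ref{mthm}, and Proposition~\ref{rdb}.
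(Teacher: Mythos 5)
Your proposal is correct and follows essentially the same route as the paper: verify (C1) for the odd-degree polynomial with negative leading coefficient, then assemble Theorem~\ref{thm heat 2}, Theorem~\ref{mthm} and Proposition~\ref{rdb}. You also correctly identified and resolved the one subtlety the paper leaves implicit --- that the probability convergence is only uniform over $V$-balls rather than arbitrary $H$-compact sets --- via the Remark following Theorem~\ref{mthm} and the compact embedding $V\hookrightarrow H$, which is exactly the mechanism the paper announces it will use in Section~4.
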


\vskip 0.3cm
Specially, we consider one dimensional cubic reaction-diffusion equation:
\begin{equation}
    \label{dcubic}
    \left\{
        \begin{array}{l}
        \frac{\partial u}{\partial t}= \frac{\partial^2 u}{\partial x^2} + \lambda^2u(1-u^2),\ 0<x<1,\ t>0, \\
        u(0,t)=u(1,t)=0, \\
        \end{array}
    \right.
\end{equation}
where $\lambda$ is a positive parameter.

\

\begin{proposition}{\rm \cite[Theorem 4.3.12]{Hale}}\label{cubicA}
If $\lambda\in(n\pi,(n+1)\pi]$ where $n$ is an integer, then there are $2n+1$ equilibrium points
$\phi_0=0, \phi_j^+, \phi_j^-, j=1,2,...,n$ of {\rm (\ref{dcubic})}, the points $\phi_j^{\pm}$ are hyperbolic with
{\rm dim}$W^u(\phi_j^{\pm})=j-1, j=1,2,...,n.$ If $\lambda\in(n\pi,(n+1)\pi)$, then  $\phi_0=0$ is hyperbolic,
{\rm dim}$W^u(0)=n,$ and the attractor $A_\lambda$ is given
$$A_\lambda=W^u(0)\bigcup\Big(\bigcup_{j=1}^nW^u(\phi_j^{\pm})\Big).$$
Here $W^u(\phi)$ denotes the unstable manifold for the equilibrium $\phi$. Hence the Birkhoff center for the semiflow
$\Phi$ is given by
$$B(\Phi)=\{0\}\bigcup \{\phi_j^+, \phi_j^-: j=1,2,...,n\}.$$
\end{proposition}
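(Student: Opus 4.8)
The plan is to treat (\ref{dcubic}) as a special instance of the abstract reaction–diffusion problem (\ref{nonpoly}) and then read off the equilibria from the cited source. First I would observe that the nonlinearity $g(u)=\lambda^2 u(1-u^2)=\lambda^2 u-\lambda^2 u^3$ is a polynomial of odd degree $3$ with negative leading coefficient $-\lambda^2$, so it has exactly the form (\ref{leacoet}) with $k=2$ and $a_3=-\lambda^2<0$; moreover the spatial domain $(0,1)\subset\mathbb{R}$ is a bounded interval, whose boundary smoothness is trivial. Hence the framework developed for (\ref{nonpoly}) applies verbatim, and in particular the energy functional $V(\varphi)=\int_0^1\big(\tfrac12|\partial_x\varphi|^2-G(\varphi)\big)dx$ with $G(u)=\int_0^u g(\xi)\,d\xi$ is a strict Lyapunov function for the semiflow $\Phi$.

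The structural facts---that the equilibria are exactly $\phi_0=0$ together with the pairs $\phi_j^{\pm}$, $j=1,\dots,n$, their hyperbolicity, the unstable-manifold dimensions $\dim W^u(\phi_j^{\pm})=j-1$ and $\dim W^u(0)=n$, and the decomposition $A_\lambda=W^u(0)\cup\bigcup_{j=1}^n W^u(\phi_j^{\pm})$---constitute the content of Theorem 4.3.12 in \cite{Hale}, which I would simply invoke. The only part genuinely left to establish is the identification of the Birkhoff center, and this follows from the gradient structure exactly as in Proposition \ref{rdb}: computing $\tfrac{d}{dt}V(u(\cdot,t))=-\int_0^1(\partial_t u)^2\,dx\le 0$, with equality precisely at equilibria, shows that $V$ is constant on every $\omega$-limit set, so any recurrent point $x$ with $x\in\omega(x)$ must be an equilibrium. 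This gives $\{x:x\in\omega(x)\}\subseteq\mathcal{E}$, while the reverse inclusion is immediate since each equilibrium $\phi$ trivially satisfies $\phi\in\omega(\phi)$. As there are only the finitely many equilibria listed above, this set is closed, so $B(\Phi)=\overline{\{x:x\in\omega(x)\}}=\mathcal{E}=\{0\}\cup\{\phi_j^{+},\phi_j^{-}:j=1,\dots,n\}$.

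In this approach there is no serious analytic obstacle remaining on our side: the hard work---the spectral and bifurcation analysis producing the count $2n+1$ of equilibria and the precise Morse indices $j-1$---is entirely contained in Hale's theorem, whereas the Birkhoff-center step is an immediate corollary of the gradient structure already exploited for (\ref{nonpoly}). The one point I would check carefully is that the cubic equation really does fall under the hypotheses used earlier (conditions (C1)--(C2) and the form (\ref{leacoet})), so that Proposition \ref{rdb} may be applied as a black box; once that verification is in place, the proof reduces to quoting \cite{Hale} and assembling the two ingredients.
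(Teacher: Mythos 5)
Your proposal is correct and matches the paper's treatment: the paper gives no proof beyond citing \cite[Theorem 4.3.12]{Hale} for the equilibrium count, hyperbolicity, Morse indices and attractor decomposition, and the identification $B(\Phi)=\mathcal{E}$ is exactly the gradient-structure/LaSalle argument already recorded in Proposition \ref{rdb}, which applies since $g(u)=\lambda^2u(1-u^2)$ has the form (\ref{leacoet}). Your one flagged verification (that the cubic falls under the earlier framework) is indeed the only thing to check, and it holds.
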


Now we consider the perturbed equation driven by Brownian motion:
\begin{equation}\label{scubic}
\left\{
\begin{array}{rl}
dX(t,x)=& \Delta X(t,x)dt+\lambda X(t,x)\big(1-(X(t,x))^2\big)dt\\
&+\epsilon \sigma(x, X(t,x))d W(t), \\
X(t,0)=&X(t,1)=0, \ t>0, \\
X(0)=&h\in L^2([0,1]).
\end{array}
\right.
\end{equation}

It is easy to see that the cubic polynomial $g(u) = u(1-u^2)$ satisfies that condition (C1). Together with Theorem \ref{thm heat 2}, Proposition \ref{cubicA} and Theorem \ref{mthm}, we have

\begin{theorem}\label{scubicthm}
Assume that $\sigma$ in {\rm(\ref{scubic})} satisfies condition {\rm (C2)} and  $\lambda\in(n\pi,(n+1)\pi]$.
If $\mu$ is any weak limit
point of $\{\mu^{\epsilon_i}\}$ as $\epsilon_i \rightarrow 0$, then $\mu$ is supported in the set of equilibrium points
$$\{0\}\bigcup \{\phi_j^+, \phi_j^-: j=1,2,...,n\}.  $$
\end{theorem}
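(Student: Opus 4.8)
The plan is to recognize the stochastic cubic equation (\ref{scubic}) as a particular instance of the abstract stochastic reaction--diffusion equation (\ref{eq heat}) with $\nu=1$ and drift $g(u)=\lambda u(1-u^2)$, and then to feed the three conclusions of Theorem \ref{thm heat 2} into the abstract transfer principle of Theorem \ref{mthm}, reading off the support set from the Birkhoff-center computation of Proposition \ref{cubicA}. Thus the argument is almost entirely one of \emph{verification and assembly}: the analytic work (a priori bounds, probability convergence, tightness) is already discharged in Lemma \ref{thm heat 1} and Theorem \ref{thm heat 2}, while the dynamical classification is packaged in Proposition \ref{cubicA}.

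First I would check that the cubic nonlinearity $g(u)=\lambda u(1-u^2)$ satisfies hypothesis {\rm (C1)} with exponent $q=4$. The one-sided Lipschitz estimate follows from $g'(u)=\lambda(1-3u^2)\le\lambda$, giving $(u-u')(g(u)-g(u'))\le\lambda|u-u'|^2$; the dissipativity bound $ug(u)=\lambda u^2-\lambda u^4\le -c_2|u|^4+c_3|u|^2$ and the growth bound $|g(u)|\le c_4|u|^3+\text{const}$ are immediate, so all three inequalities of {\rm (C1)} hold with $h_1=h_2\equiv 0$. Since {\rm (C2)} is assumed outright and the exponent $q=4>2$, Lemma \ref{thm heat 1} applies with $\nu=1$, so the entire conclusion of Theorem \ref{thm heat 2} is available: probability convergence in the form of part (1), existence of a stationary measure $\mu^{\epsilon,h}$ from part (2), and tightness of the full family $\{\mu^\epsilon\}_{0<\epsilon\le\epsilon_0}$ from part (3).

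The one genuinely nonroutine point --- and where I expect the main obstacle to lie --- is the topological mismatch between the space in which probability convergence is \emph{uniform} and the space $H=L^2(\Lambda)$ in which weak convergence and $\Phi$-invariance are formulated. Theorem \ref{thm heat 2}(1) supplies probability convergence uniformly over $V$-balls $\{\,\|h\|^2_V\le\widetilde{M}\,\}$, not over arbitrary $H$-compacta, whereas the abstract hypothesis (\ref{pc}) nominally asks for uniformity over every compact $K\subset H$, and a generic $H$-compactum need not be $V$-bounded. This is exactly the gap bridged by the Remark following Theorem \ref{mthm}: one only needs (\ref{pc}) on compact sets that witness tightness. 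By the compact embedding $V\subset H$, the sets $K_L=\{h:\|h\|_V\le L\}$ are compact in $H$, and the tightness estimate $\mu^\epsilon(\|h\|_V\ge L)\le C/L^2$ from Theorem \ref{thm heat 2}(3) shows precisely that these $K_L$ carry almost all the mass of every $\mu^\epsilon$ uniformly in $\epsilon$. Since each $K_L$ is a $V$-ball, part (1) gives the required probability convergence on $K_L$, so the hypothesis of Theorem \ref{mthm} is verified in the reduced form permitted by that Remark.

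With these pieces in place the conclusion follows quickly. Given a weak limit point $\mu=\lim_i\mu^{\epsilon_i}$ (tightness together with Prohorov's theorem guarantees such points exist, so the statement is non-vacuous), and the hypothesis (\ref{pc}) having been verified in the reduced form above, Theorem \ref{mthm} applies to the semiflow $\Phi$ generated by (\ref{dcubic}) and yields that $\mu$ is $\Phi$-invariant and supported on the Birkhoff center $B(\Phi)$. Finally Proposition \ref{cubicA} identifies $B(\Phi)=\{0\}\cup\{\phi_j^+,\phi_j^-:j=1,\dots,n\}$ for $\lambda\in(n\pi,(n+1)\pi]$, which is exactly the asserted support set and completes the proof.
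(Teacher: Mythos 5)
Your proposal is correct and follows exactly the route the paper takes: verify condition (C1) for the cubic nonlinearity, invoke Theorem \ref{thm heat 2} for probability convergence, existence and tightness of stationary measures, and then combine Theorem \ref{mthm} with the Birkhoff-center identification in Proposition \ref{cubicA}. Your explicit handling of the $V$-versus-$H$ uniformity issue via the Remark after Theorem \ref{mthm} is precisely the bridge the paper itself flags for the SPDE applications, so there is no divergence in method --- only in the level of detail supplied.
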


\begin{remark}\label{Re8} Let $\lambda\in(n\pi,(n+1)\pi]$ and $\phi$, for example, $\phi_j^+$, be an equilibrium of {\rm (\ref{dcubic})}. Then  denote by
$$m_j={\rm min}\{\phi_j^+(x):x\in[0,1]\}\ {\rm and}\ M_j={\rm max}\{\phi_j^+(x):x\in[0,1]\}$$
the minimal and maximal values of $\phi_j^+$, respectively. As constructed in Proposition {\rm\ref{procons}},
one can construct diffusion term $\sigma$ such that  $\sigma(u)=0$ on $[m_j, M_j]$ and {\rm(C2)} is satisfied.
Thus, {\rm(\ref{scubic})} has a sequence of stationary measures whose weak limit supports at $\phi_j^+$. This shows that the result of Theorem {\rm\ref{scubicthm}} is the best as a general of result.
\end{remark}

\subsection {Stochastic 2D Navier-Stokes equation driven by L\'evy noise}
Let $D$ be an open bounded domain with smooth boundary $\partial D$ in $\mathbb{R}^2$. Denote by $u$ and $p$ the velocity and the
pressure fields, respectively. The Navier-Stokes equation is given as follows:
\begin{equation}\label{NS01}
\left\{
\begin{array}{l}
        \partial_tu-\nu\Delta u+(u\cdot\nabla) u+\nabla p=h\ \ \ \text{ in }\ \ \ D\times[0,T], \\
        \nabla\cdot u=0\ \ \ \text{in}\ \ \ D\times[0,T], \\
\end{array}
\right.
\end{equation}
with the conditions
\begin{equation}\label{condition01}
\left\{
 \begin{array}{l}
  u(\cdot,t)=0\ \ \ \ \ \text{in}\ \ \ \partial D\times[0,T]; \\
  u(0)=x\in L^2(D),
 \end{array}
\right.
\end{equation}
where $\nu>0$ is the viscosity, $h$ stands for the external force acting on the fluid.

Define
$$
V=\Big\{v\in W^{1,2}_0(D,\mathbb{R}^2):\ \nabla\cdot v=0\ a.e.\ in\ D\Big\},\ \ \ \ \ \ \|v\|_V:=\Big(\int_D\big(|\nabla v_1|^2+|\nabla v_2|^2\big)dx\Big)^{1/2},
$$

and let $H$ be the closure of $V$ in the following norm
$$
\|v\|_H:=\Big(\int_D|v|^2dx\Big)^{1/2}.
$$
By the Poincar\'{e}
inequality, we have the Gelfand triples: $V\subset H\cong H^*\subset V^*$.

\vskip 0.3cm
Define the Stokes operator $A$ in $H$ by
$$ Au=P_H\Delta u,\ \forall u\in D(A)=W^{2,2}(D,\mathbb{R}^2)\cap V,$$
 where the linear
operator $P_H$ (Helmhotz-Hodge projection) is the projection operator from $L^2(D,\mathbb{R}^2)$ into $H$. Since $V$ coincides with $D(A^{1/2})$,
we can endow $V$ with the norm $\|u\|_V=\|A^{1/2}u\|_H$. Because $A$ is a positive selfadjoint operator with compact resolvent, there is a complete orthonormal system $\{e_1,e_2,\cdots\}$  of eigenvectors of $A$ in $V$, with corresponding eigenvalues
$0<\lambda_1\leq\lambda_2\leq\cdots\rightarrow\infty$ ($Ae_i=\lambda_i e_i$), and
$$
\|u\|_V^2\geq \lambda_1\|u\|_H^2,\ \ \ \ u\in V.
$$
It is well known that the Navier-Stokes equation can be reformulated as follows:
\begin{equation}\label{dNSe}
u'=\nu Au+F(u)+h_0(u),\ \ \ u(0)=u_0\in H,
\end{equation}
and
$$
F:\mathcal{D}_F\subset H\times V\rightarrow H,\ \ \ \ F(u,v)=-P_H[(u\cdot\nabla) v],\ \ \ \ F(u)=F(u,u),\ \ h_0=P_Hh.
$$
One can show that the following mappings
$$
A:V\rightarrow V^*,\ \ \ \ \ F:V\times V\rightarrow V^*
$$
are well defined, and
$$
\langle F(u,v),w\rangle_{V^*,V}=-\langle F(u,w),v\rangle_{V^*,V},\ \ \ \ \ \langle F(u,v),v\rangle_{V^*,V}=0,\ \ \ u,v,w\in V.
$$

\textbf{Assumption 1}: For the mapping $h_0:V\rightarrow V^*$ there exists $\vartheta_0\in(0,\nu)$ such that
$$
\|h_0(v)-h_0(w)\|_{V^*}\leq \vartheta_0\|v-w\|_V,\ \ \ \ \ v,w\in V.
$$

\
Define
$$
\mathcal{A}(v)=\nu Av+F(v)+h_0(v).
$$
 Since $A$ is linear, $F$ is bilinear and the Assumption 1 holds, we can easily get that

 (H1) The map $s\mapsto \langle \mathcal{A}(v_1+sv_2),v\rangle_{V^*,V}$ is continuous on $\mathbb{R}$  for all $v,v_1,v_2\in V$.

\vskip 0.3cm
As estimated in \cite [Lemma 2.3]{Menaldi} or \cite [p.293]{BLZ}, we have that for any $\eta >0$,
\begin{align*}
  \langle F(u)-F(v),u-v\rangle_{V^*,V}
\leq&
   2\|u-v\|_V^{3/2}\|u-v\|^{1/2}_H\|v\|_{L^4(D,\mathbb{R}^2)}\\
\leq&
   \eta \|u-v\|^2_V+\frac{27}{16\eta^3} \|v\|^4_{L^4(D,\mathbb{R}^2)}\|u-v\|^{2}_H,\ \ \ \ u,v\in V.
\end{align*}
Take $\eta = \frac{\nu-\vartheta_0}{2}$. Then
\begin{align*}
  &2\langle \mathcal{A}(u)-\mathcal{A}(v),u-v\rangle_{V^*,V}\\
\leq&
  -2\nu \|u-v\|^2_V+2\eta\|u-v\|^2_V+\frac{27}{8\eta^3}\|v\|^4_{L^4(D,\mathbb{R}^2)}\|u-v\|^{2}_H\\
  &+2\|h_0(u)-h_0(v)\|_{V^*}\|u-v\|_V\\
\leq&-(\nu -\vartheta_0)\|u-v\|^2_V+\frac{27}{(\nu-\vartheta_0)^3} \|v\|^4_{L^4(D,\mathbb{R}^2)}\|u-v\|^{2}_H.
\end{align*}
Thus we get that
\vskip 0.3cm
(H2)\ \ $ 2\langle \mathcal{A}(u)-\mathcal{A}(v),u-v\rangle_{V^*,V} \leq \frac{27}{(\nu-\vartheta_0)^3} \|v\|^4_{L^4(D,\mathbb{R}^2)}\|u-v\|^{2}_H$
for all $ u,v\in V$.
\vskip 0.3cm
Similarly, we can prove that
 \vskip 0.3cm
(H3)\ \  $ 2\langle \mathcal{A}(v),v\rangle_{V^*,V} + (\nu-\vartheta_0)\|v\|^2_V \leq (\nu-\vartheta_0)^{-1}\|h_0(0)\|^2_{V^*}$ for all $ v\in V.$

\vskip 0.3cm
It follows from  (2.91) in \cite [p.291]{BLZ} that for all $ u,v\in V$,
$$|\langle F(v),u\rangle_{V^*,V}| \leq 2\|v\|_{L^4(D,\mathbb{R}^2)}\|u\|_V.$$
An easy calculation deduces that
$$ \|\mathcal{A}(v)\|_{V^*}\leq \nu \|v\|_V + 2\|v\|_{L^4(D,\mathbb{R}^2)}+\vartheta_0 \|v\|_V+\|h_0(0)\|_{V^*}$$
Applying \cite [Lemma 2.1]{Menaldi}, we have
\begin{equation}\label{l4}
\|v\|^4_{L^4(D,\mathbb{R}^2)}\leq 2\|v\|^2_H\|v\|^2_V,\ \ v\in V.
\end{equation}
Therefore, we obtain that there exists a positive constant $C$ such that
 \vskip 0.3cm
(H4)  $\|\mathcal{A}(v)\|^2_{V^*} \leq C(1+\|v\|^2_V)(1+\|v\|^2_H),\ \ v\in V.$
 \vskip 0.3cm

Now we present an attractor result for the deterministic system (\ref{dNSe}).
\begin{proposition}\label{dNSG}
If the Assumption {\rm 1} holds, then the deterministic system {\rm(\ref{dNSe})} generates a dynamical system
$\{\Phi(t)\}_{t\geq 0}$ which possesses a connected global attractor $\mathcal{A}_H$. Besides, $\Phi(t)|_{\mathcal{A}_H}$ is a group.
\end{proposition}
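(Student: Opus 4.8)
The plan is to proceed in four stages: construct the semiflow from the variational data (H1)--(H4), establish dissipativity and asymptotic compactness to produce the attractor, deduce connectedness, and finally upgrade the restricted semiflow to a group via backward uniqueness.

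First I would invoke global well-posedness. The four structural conditions (H1)--(H4) are exactly the hemicontinuity, local monotonicity, coercivity and growth hypotheses of the variational framework for evolution equations in a Gelfand triple, specialized here to the deterministic equation (\ref{dNSe}). They guarantee that for every $u_0\in H$ there is a unique solution $u(\cdot)\in C([0,\infty);H)\cap L^2_{loc}([0,\infty);V)$, so that $\Phi(t)u_0:=u(t)$ defines a semigroup satisfying the semiflow properties (i)--(iii) of Section 2. Continuous dependence of $\Phi(t)$ on the initial datum in $H$ follows by applying Gr\"onwall's inequality to $\|u(t)-v(t)\|_H^2$, using the local monotonicity (H2) and the finiteness of $\int_0^T\|v(s)\|_{L^4(D,\mathbb{R}^2)}^4\,ds$, the latter being a consequence of (\ref{l4}) and the energy bound; this makes each map $\Phi(t):H\to H$ continuous.

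Next I would extract the dissipative estimates. Pairing (\ref{dNSe}) with $u$ and using the coercivity (H3) gives
\[
\frac{d}{dt}\|u(t)\|_H^2+(\nu-\vartheta_0)\|u(t)\|_V^2\leq (\nu-\vartheta_0)^{-1}\|h_0(0)\|_{V^*}^2 .
\]
Together with the Poincar\'{e} inequality $\|u\|_V^2\geq\lambda_1\|u\|_H^2$ and Gr\"onwall's inequality, this yields a bounded absorbing set in $H$ and a uniform bound on $\int_t^{t+1}\|u(s)\|_V^2\,ds$. The main obstacle is asymptotic compactness: I would feed these integral bounds into the uniform Gr\"onwall lemma, applied to the $V$-norm after testing with $Au$ and invoking the two-dimensional bilinear estimates, to obtain a bounded absorbing set $\mathcal{B}_V$ in $V$. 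Since the Stokes operator $A$ has compact resolvent, the embedding $V\hookrightarrow H$ is compact, so $\mathcal{B}_V$ is precompact in $H$ and $\{\Phi(t)\}_{t\geq0}$ is asymptotically compact. The classical existence theorem for attractors then produces a global attractor $\mathcal{A}_H=\omega(\mathcal{B}_V)$, which is compact and invariant in $H$.

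Finally I would settle connectedness and the group structure. Connectedness is the standard consequence of the continuity of $\{\Phi(t)\}_{t\geq0}$ together with the connectedness of the phase space $H$. For the group property, invariance gives $\Phi(t)\mathcal{A}_H=\mathcal{A}_H$, so each $\Phi(t)$ is onto $\mathcal{A}_H$; injectivity on $\mathcal{A}_H$ is precisely backward uniqueness for the 2D Navier--Stokes equation, which applies because every complete orbit contained in $\mathcal{A}_H$ is bounded in $V$. Hence $\Phi(t)|_{\mathcal{A}_H}$ is a continuous bijection of the compact set $\mathcal{A}_H$, so it is a homeomorphism, its inverse defines $\Phi(-t)$, and the family $\{\Phi(t)|_{\mathcal{A}_H}\}_{t\in\mathbb{R}}$ obeys the group law, completing the proof.
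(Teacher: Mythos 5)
Your proposal is correct and follows essentially the same route as the paper, which gives no argument of its own but delegates this standard result to Temam and Hale: the coercivity (H3) yields the dissipative energy estimate and an absorbing set, asymptotic compactness comes from the compact embedding $V\subset H$, the classical attractor existence theorem gives a connected $\mathcal{A}_H$, and backward uniqueness upgrades $\Phi(t)|_{\mathcal{A}_H}$ to a group. The only step to watch is your derivation of a $V$-bounded absorbing set by testing with $Au$: since Assumption 1 only gives $h_0:V\to V^*$, the pairing $\langle h_0(u),Au\rangle$ is not directly defined, so that step should be replaced by an energy-equation (Ball-type) or splitting argument, which does not change the overall structure.
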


The proof is contained in  \cite[Theorem IV.8.2 and Theorem III.2.2]{Temam} or  \cite[Theorem 4.4.5]{Hale}.

Consider the stochastic 2D Navier-Stokes equation driven by L\'evy noise:
\begin{equation}\label{exa NS 01}
\begin{split}
   dX_t^{\epsilon,h}=&\big(\nu AX_t^{\epsilon,h}+F(X_t^{\epsilon,h})+h_0(X_t^{\epsilon,h})\big)dt\\
   &+\epsilon B(X_t^{\epsilon,h})dW_t+\epsilon\int_Z f(X_{t-}^{\epsilon,h},z)\tilde{N}(dt,dz),
\end{split}
\end{equation}
with a deterministic initial value $X_0^{\epsilon,h}=h\in H$.

Here $\{W_t\}_{t\geq0}$ is a $U$-valued cylindrical Wiener process on the probability space $(\Omega,\mathcal{F},\{\mathcal{F}_t\}_{t\geq 0}, \mathbb{P})$ with $U$ a separable Hilbert space, $(Z,\mathcal{Z})$ a locally compact Polish space, and $\nu$ a $\sigma$-finite measure on $Z$. Set $N$ being a Poisson random measure on $[0,\infty)\times Z$ with the
$\sigma$-finite intensity measure $\lambda\otimes \nu$, where $\lambda$ is the Lebesgue measure on $[0,\infty)$.
$\widetilde{N}([0,t]\times O)=N([0,t]\times O)-t\nu(O)$, $O\in \mathcal{Z}$ with $\nu(O)<\infty$, is the compensated Poisson random measure.
Denote by $(L_2(U,H),\|\cdot\|_2)$ the Hilbert space of all Hilbert-Schmidt
operators from $U$ to $H$.

\textbf{Assumption 2}: Suppose that $B:V\rightarrow L_2(U,H)$, $f:V\times Z\rightarrow H$ satisfy the following conditions:
there is a positive constant $L$ such that for any $v,w\in V$,
\begin{eqnarray*}
    \|B(v)-B(w)\|^2_2+\int_{Z}\|f(v,z)-f(w,z)\|^2_H\nu(dz) &\leq& L\|v-w\|^2_H, \\
     \int_Z\|f(v,z)\|^2_H\nu(dz) &\leq& L(1+\|v\|^2_H), \\
     \int_Z\|f(v,z)\|^4_H\nu(dz) &\leq& L(1+\|v\|^4_H).
\end{eqnarray*}
This implies that
\begin{equation}\label{eq SPDE con 1}
\|B(v)\|^2_2+\int_{Z}\|f(v,z)\|^2_H\nu(dz)
\leq
F+C\|v\|^2_H \leq K(1+\|v\|^2_H),
\end{equation}
where $F,C$ depends on $L$ and $\|B(0)\|_2$ and $K:=$ \rm{max}$\{(\nu-\vartheta_0)^{-1}\|h_0(0)\|^2_{V^*}, F, C\}$.

\vskip 0.3cm

Together with the Assumptions 1 and 2 with (H1)-(H4), all hypotheses in \cite {BLZ} are satisfied. So we have

\begin{proposition}{\rm(\cite[Theorem 1.2]{BLZ})}\label{th SPDE-solution}
Suppose that the Assumptions {\rm 1} and {\rm 2} hold. Then

{\rm(i)} for any $h\in L^4(\Omega,\mathcal{F}_0,\mathbb{P}; H)$, $T>0,$ Eq. {\rm(\ref{exa NS 01})} has a unique solution $\{X^{\epsilon,h}_t\}_{t\in[0,T]}$,

{\rm(ii)} there exists a constant $C_T$ independent of $\epsilon$ and $h$ such that, for $\epsilon\in (0,1]$,
\begin{equation}\label{eq SPDE result 1}
\sup_{t\in[0,T]}\mathbb{E}\|X^{\epsilon,h}_t\|^{4}_H+\mathbb{E}\int_0^T\|X^{\epsilon,h}_t\|^{2}_H\|X^{\epsilon,h}_t\|^{2}_Vdt
\leq
C_T(\mathbb{E}\|h\|^{4}_H+T),
\end{equation}
and
\begin{eqnarray}\label{eq SPDE result 2}
\mathbb{E}\int_0^T \|X^{\epsilon,h}_t\|^{2}_Vdt
<
\infty.
\end{eqnarray}
\end{proposition}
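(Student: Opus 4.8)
The statement is quoted from \cite[Theorem 1.2]{BLZ}, and since the authors have already verified (H1)--(H4) together with Assumption 2, the cleanest route is to observe that the pair $(\mathcal{A},(B,f))$ falls exactly within the local monotone variational framework for SPDEs driven by L\'evy noise. The plan is therefore to run a Galerkin scheme, pass to the limit by the monotonicity (Minty--Browder) method, and then establish uniqueness by an It\^o estimate on the difference of two solutions. Concretely, I would first project (\ref{exa NS 01}) onto the span $H_n$ of the first $n$ eigenvectors $e_1,\dots,e_n$ of the Stokes operator $A$, obtaining a finite-dimensional SDE with jumps whose coefficients are continuous and of at most linear growth in $H$; standard theory for such equations yields a unique c\`adl\`ag solution $X^{n}$ on $[0,T]$.

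The heart of the argument is the a priori estimate (\ref{eq SPDE result 1})--(\ref{eq SPDE result 2}). I would apply It\^o's formula to $\|X^{n}_t\|_H^2$ and use the coercivity bound (H3) to produce the dissipative term $-(\nu-\vartheta_0)\int_0^t\|X^n_s\|_V^2\,ds$, while the noise contributions are controlled by (\ref{eq SPDE con 1}); taking expectations and invoking the Poincar\'e inequality $\|v\|_V^2\ge\lambda_1\|v\|_H^2$ closes a Gronwall loop in $\mathbb{E}\|X^n_t\|_H^2$ and simultaneously bounds $\mathbb{E}\int_0^T\|X^n_s\|_V^2\,ds$. To reach the fourth-moment bound I would next apply It\^o's formula to $\|X^n_t\|_H^4$; here the compensated-Poisson part generates the extra integral $\int_Z\|f(\cdot,z)\|_H^4\,\nu(dz)$, which is precisely why Assumption 2 carries the quartic jump bound, and the continuous and jump martingale terms are handled by the Burkholder--Davis--Gundy inequality. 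All constants produced this way depend only on $L$, $\nu-\vartheta_0$, $\lambda_1$ and $T$, hence are uniform in $\epsilon\in(0,1]$ and in $n$, giving (\ref{eq SPDE result 1}).

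With these uniform bounds in hand, $\{X^n\}$ is bounded in $L^4(\Omega;L^\infty([0,T];H))\cap L^2(\Omega\times[0,T];V)$, so along a subsequence $X^n\rightharpoonup X$ weakly(-$*$) and $\mathcal{A}(X^n)\rightharpoonup \bar{\mathcal{A}}$ weakly in $L^2(\Omega\times[0,T];V^*)$. The task is to identify $\bar{\mathcal{A}}=\mathcal{A}(X)$, and I expect this to be the main obstacle: the monotonicity (H2) is only \emph{local}, the coefficient $\tfrac{27}{(\nu-\vartheta_0)^3}\|v\|_{L^4(D,\mathbb{R}^2)}^4$ being unbounded. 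I would overcome this exactly as in the local-monotone theory: the interpolation bound (\ref{l4}), $\|v\|^4_{L^4(D,\mathbb{R}^2)}\le 2\|v\|_H^2\|v\|_V^2$, shows that the obstructing factor is controlled by the already-established bound on $\mathbb{E}\int_0^T\|X_s\|_H^2\|X_s\|_V^2\,ds$ from (\ref{eq SPDE result 1}). Testing the Minty inequality against an arbitrary $V$-valued process $\psi$, after inserting a stochastic-exponential weight $\exp(-\int_0^t\!\kappa\|\psi_s\|_{L^4(D,\mathbb{R}^2)}^4\,ds)$ that absorbs this unbounded factor, then lets me pass to the limit and conclude $\bar{\mathcal{A}}=\mathcal{A}(X)$, so that $X$ solves (\ref{exa NS 01}).

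Finally, for uniqueness I would take two solutions $X,\tilde X$ with the same initial datum, apply It\^o's formula to $\|X_t-\tilde X_t\|_H^2$, and use the local monotonicity (H2) to obtain, after cancelling the noise terms up to the Lipschitz bound of Assumption 2, the inequality $\tfrac{d}{dt}\mathbb{E}\|X_t-\tilde X_t\|_H^2\le \tfrac{27}{(\nu-\vartheta_0)^3}\,\mathbb{E}\big[\|\tilde X_t\|_{L^4(D,\mathbb{R}^2)}^4\,\|X_t-\tilde X_t\|_H^2\big]$. Since $\int_0^T\|\tilde X_s\|_{L^4(D,\mathbb{R}^2)}^4\,ds<\infty$ almost surely by (\ref{l4}) and (\ref{eq SPDE result 2}), a stochastic Gronwall inequality, applied after localizing by $\tau_N=\inf\{t:\int_0^t\|\tilde X_s\|_{L^4(D,\mathbb{R}^2)}^4\,ds\ge N\}$ and then letting $N\to\infty$, forces $X\equiv\tilde X$. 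Beyond the local-monotonicity identification nothing is delicate, and for the remaining technical bookkeeping I would simply invoke \cite{BLZ}.
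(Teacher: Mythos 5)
The paper gives no proof of this proposition beyond verifying (H1)--(H4) and Assumptions 1--2 and invoking \cite[Theorem 1.2]{BLZ}; your sketch is a faithful reconstruction of exactly that cited argument (Galerkin approximation, coercivity and quartic It\^o estimates using the $L^4$ jump bound, the Minty--Browder identification with an exponential weight absorbing the locally monotone factor via $\|v\|^4_{L^4}\le 2\|v\|^2_H\|v\|^2_V$, and uniqueness by localized stochastic Gronwall). So the proposal is correct and takes essentially the same route as the paper's source.
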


\begin{lemma}\label{SPDE prop} There exist $\eta>0$ and $\epsilon_\eta>0$ such that for any $h\in H$,
\begin{eqnarray*}
    \sup_{s\in[0,T]}\mathbb{E}\|X^{\epsilon,h}_s\|^2_H+\eta\mathbb{E}\int_0^T\|X^{\epsilon,h}_s\|^2_Vds
    \leq
    \|h\|^2_H+(1+\epsilon_\eta^2)KT,\ \ \ \forall \epsilon\in(0,\epsilon_\eta].
\end{eqnarray*}
In particular,
\begin{equation}\label{deterBd}
    \sup_{s\in[0,T]}\|X^{0,h}_s\|^2_H+\eta \int_0^T\|X^{0,h}_s\|^2_Vds
    \leq
    \|h\|^2_H+2KT.
\end{equation}

\end{lemma}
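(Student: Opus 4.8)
The plan is to apply the It\^o formula to $\|X^{\epsilon,h}_t\|^2_H$ along the solution of (\ref{exa NS 01}) and then take expectations. Writing $\mathcal{A}(v)=\nu Av+F(v)+h_0(v)$, the It\^o formula produces the drift term $2\langle X_s,\mathcal{A}(X_s)\rangle_{V^*,V}$, the Gaussian quadratic-variation term $\epsilon^2\|B(X_s)\|^2_2$, the Brownian stochastic integral, and, from the Poisson part, the compensated integral $\int_Z(\|X_{s-}+\epsilon f\|^2_H-\|X_{s-}\|^2_H)\tilde{N}(ds,dz)$ together with its compensator, whose integrand collapses to $\epsilon^2\|f(X_{s-},z)\|^2_H$ once the linear term is cancelled against $2\langle X_{s-},\epsilon f\rangle_H$. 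Since Proposition \ref{th SPDE-solution} controls $\sup_t\mathbb{E}\|X^{\epsilon,h}_t\|^4_H$ and $\mathbb{E}\int_0^T\|X^{\epsilon,h}_t\|^2_V\,dt$, both the Brownian and the compensated-Poisson integrals are genuine martingales; to be safe I would first stop at $\tau_N=\inf\{s:\|X^{\epsilon,h}_s\|^2_H\geq N\}\wedge T$, exactly as in the proof of Theorem \ref{thm heat 2}, take expectations there, and let $N\to\infty$ by Fatou.

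After the two martingale contributions drop out one is left with
$$\mathbb{E}\|X^{\epsilon,h}_t\|^2_H=\|h\|^2_H+\mathbb{E}\int_0^t 2\langle X^{\epsilon,h}_s,\mathcal{A}(X^{\epsilon,h}_s)\rangle_{V^*,V}\,ds+\epsilon^2\mathbb{E}\int_0^t\Big(\|B(X^{\epsilon,h}_s)\|^2_2+\int_Z\|f(X^{\epsilon,h}_{s-},z)\|^2_H\nu(dz)\Big)ds.$$
Into this I would insert the coercivity bound (H3), namely $2\langle v,\mathcal{A}(v)\rangle_{V^*,V}\leq-(\nu-\vartheta_0)\|v\|^2_V+(\nu-\vartheta_0)^{-1}\|h_0(0)\|^2_{V^*}$, together with the noise bound (\ref{eq SPDE con 1}), $\|B(v)\|^2_2+\int_Z\|f(v,z)\|^2_H\nu(dz)\leq K(1+\|v\|^2_H)$, while recording that $(\nu-\vartheta_0)^{-1}\|h_0(0)\|^2_{V^*}\leq K$ by the very definition of $K$. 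This brings the estimate to
$$\mathbb{E}\|X^{\epsilon,h}_t\|^2_H+(\nu-\vartheta_0)\mathbb{E}\int_0^t\|X^{\epsilon,h}_s\|^2_V\,ds\leq\|h\|^2_H+(1+\epsilon^2)Kt+\epsilon^2 K\mathbb{E}\int_0^t\|X^{\epsilon,h}_s\|^2_H\,ds.$$

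The decisive step is to absorb the $\epsilon^2$-noise feedback that appears in the last term. Using the Poincar\'e inequality $\|v\|^2_V\geq\lambda_1\|v\|^2_H$ I would bound it by $\epsilon^2 K\lambda_1^{-1}\mathbb{E}\int_0^t\|X^{\epsilon,h}_s\|^2_V\,ds$ and move it to the left, leaving the net dissipation coefficient $(\nu-\vartheta_0)-\epsilon^2 K\lambda_1^{-1}$. Choosing $\epsilon_\eta>0$ so small that $\eta:=(\nu-\vartheta_0)-\epsilon_\eta^2 K\lambda_1^{-1}>0$ keeps this coefficient $\geq\eta$ for every $\epsilon\in(0,\epsilon_\eta]$ and yields, for all $t\in[0,T]$,
$$\mathbb{E}\|X^{\epsilon,h}_t\|^2_H+\eta\mathbb{E}\int_0^t\|X^{\epsilon,h}_s\|^2_V\,ds\leq\|h\|^2_H+(1+\epsilon_\eta^2)Kt\leq\|h\|^2_H+(1+\epsilon_\eta^2)KT.$$
Reading off the supremum from the first summand and the dissipation integral from the case $t=T$ controls both pieces of the asserted bound, and the deterministic estimate (\ref{deterBd}) is the special case $\epsilon=0$, where the identical computation carries no noise term at all.

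The main obstacle I anticipate is purely technical: justifying the infinite-dimensional It\^o formula for the jump-diffusion (\ref{exa NS 01}) and the mean-zero property of the compensated-Poisson and Brownian integrals, since $\mathcal{A}$ maps $V$ only into $V^*$ while the solution lives in $H$ yet integrates into $V$. The localization-and-Fatou device, combined with the $L^4$-moment bound and the finiteness of $\mathbb{E}\int_0^T\|X^{\epsilon,h}_s\|^2_V\,ds$ from Proposition \ref{th SPDE-solution}, is precisely what renders every term integrable and legitimates passing to expectations. Once that is in place, the remainder is the routine coercivity-plus-Poincar\'e energy argument sketched above.
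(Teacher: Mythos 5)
Your proposal is correct and follows essentially the same route as the paper's proof: the It\^o formula for $\|X^{\epsilon,h}_t\|^2_H$ localized at the stopping time $\sigma_N$, the martingale property of the stopped stochastic integrals via Proposition \ref{th SPDE-solution}, the coercivity bound (H3) together with (\ref{eq SPDE con 1}), and absorption of the $\epsilon^2 K\|\cdot\|^2_H$ feedback through the Poincar\'e inequality to define $\eta=\nu-\vartheta_0-K\epsilon_\eta^2/\lambda_1>0$. No substantive differences.
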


\begin{remark}\label{Rem NS} As in Remark {\rm\ref{Rem 6}}, there exists $\mathcal{T}_0\in\mathcal{B}([0,T])$ with zero Lebesgue measure
and for any $t\in[0,T]\setminus \mathcal{T}_0$, there exists $\Omega_t\in\mathcal{F}$ with $\mathbb{P}(\Omega_t)=1$ such that
$$
X^{\epsilon,h}(t,\omega)\in V,\ \ \omega\in \Omega_t.
$$
\end{remark}

\begin{proof} Using a similar argument as (4.18) in \cite{BLZ}, we have
\begin{equation}\label{eq SPDE es}
  \begin{split}
&\|X^{\epsilon,h}_t\|^2_H\\
=&\|h\|^2_H+2\int_0^t\langle \mathcal{A}(\bar{X}^{\epsilon,h}_s),\bar{X}^{\epsilon,h}_s\rangle_{V^*,V}ds\\
           &+2\epsilon\int_0^t\langle B(\bar{X}^{\epsilon,h}_s)dW_s,\bar{X}^{\epsilon,h}_s\rangle_{H,H}\\
    & +2\epsilon\int_0^t\int_{Z}\langle f(\bar{X}^{\epsilon,h}_{s},z),\bar{X}^{\epsilon,h}_{s}\rangle_{H,H}\widetilde{N}(ds,dz)\\
    & +\epsilon^2\int_0^t\|B(\bar{X}^{\epsilon,h}_s)\|^2_2ds + \epsilon^2 \int_0^t\int_{Z}\|f(\bar{X}^{\epsilon,h}_{s},z)\|^2_HN(ds,dz),
  \end{split}
\end{equation}
where $\bar{X}^{\epsilon,h}$ is any $V$-valued progressively measurable $dt\times\mathbb{P}$ version of $X^{\epsilon,h}$.

Set $\sigma_N=\inf\{s\geq0:\ \|X_s^{\epsilon,h}\|_H\geq N\}\bigwedge T$. By (\ref{eq SPDE result 1}) and (\ref{eq SPDE result 2}),
$$
L_N(t)=2\epsilon\int_0^{t\wedge \sigma_N}\langle B(\bar{X}^{\epsilon,h}_s)dW_s,\bar{X}^{\epsilon,h}_s\rangle_{H,H}
        +
       2\epsilon\int_0^{t\wedge \sigma_N}\int_{Z}\langle f(\bar{X}^{\epsilon,h}_{s},z),\bar{X}^{\epsilon,h}_{s}\rangle_{H,H}\widetilde{N}(ds,dz)
$$
is a square integrable martingale.
Combining (H3) and (\ref{eq SPDE con 1}), we obtain
\begin{equation*}
    \mathbb{E}\|X^{\epsilon,h}_{t\wedge \sigma_N}\|^2_H
   \leq
    \|h\|^2_H+\mathbb{E}\Big(\int_0^{t\wedge \sigma_N} \big(K-(\nu-\vartheta_0)\|\bar{X}^{\epsilon,h}_s\|^2_V\big)ds\Big)
    +\epsilon^2\mathbb{E}\Big(\int_0^{t\wedge \sigma_N}K(1+\|\bar{X}^{\epsilon,h}_s\|^2_H)ds\Big).
\end{equation*}

Recall that
\begin{eqnarray*}
\|v\|^2_V\geq \lambda_1\|v\|^2_H,\ \ \forall v\in V,
\end{eqnarray*}
we obtain
\begin{eqnarray*}
    \mathbb{E}\|X^{\epsilon,h}_{t\wedge \sigma_N}\|^2_H+(\nu-\vartheta_0-\frac{K\epsilon^2}{\lambda_1})\mathbb{E}\int_0^{t\wedge \sigma_N}\|\bar{X}^{\epsilon,h}_s\|^2_Vds
    \leq
    \|h\|^2_H+(1+\epsilon^2)Kt.
\end{eqnarray*}
Choose $\eta>0$ and $\epsilon_\eta>0$ such that
$$
(\nu-\vartheta_0-\frac{K\epsilon^2}{\lambda_1})\geq\eta,\ \ \ \forall \epsilon\in(0,\epsilon_\eta].
$$
Let $N\rightarrow\infty$. Then
\begin{eqnarray*}
    \mathbb{E}\|X^{\epsilon,h}_t\|^2_H+\eta\mathbb{E}\int_0^t\|\bar{X}^{\epsilon,h}_s\|^2_Vds
    \leq
    \|h\|^2_H+(1+\epsilon_\eta^2)Kt,\ \ \ \forall \epsilon\in(0,\epsilon_\eta],
\end{eqnarray*}
which implies that
\begin{eqnarray*}
    \sup_{s\in[0,t]}\mathbb{E}\|X^{\epsilon,h}_s\|^2_H+\eta\mathbb{E}\int_0^t\|\bar{X}^{\epsilon,h}_s\|^\alpha_Vds
    \leq
    \|h\|^2_H+(1+\epsilon_\eta^2)Kt,\ \ \ \forall \epsilon\in(0,\epsilon_\eta].
\end{eqnarray*}
This completes the proof.
\end{proof}

\begin{theorem}\label{SPDE main th} Suppose the Assumptions {\rm 1} and {\rm 2} hold.
Then
\begin{itemize}
\item[{\rm(1)}] For any $\widetilde{M}>0$, $\delta>0$ and $T\geq 0$,
\begin{eqnarray*}
\lim_{\epsilon\rightarrow0}\sup_{\|h\|^2_V\leq \widetilde{M}}\mathbb{P}\Big(\|X^{\epsilon,h}_T-X^{0,h}_T\|^2_H\geq \delta\Big)=0.
\end{eqnarray*}

\item[{\rm(2)}] There exists at least one stationary measure $\mu^{\epsilon,h}$ for $X^{\epsilon,h}$.

\item[{\rm(3)}] For $\epsilon\in(0,\epsilon_\eta]$, denote by $\{\mu^{\epsilon}_{i_\epsilon},\ i_\epsilon\in I_\epsilon\}$ all stationary measures for the semigroup $\{P^{\epsilon}_t\}_{t\geq 0}$. Then $\{\mu^{\epsilon}_{i_\epsilon},\ i_\epsilon\in I_\epsilon, \epsilon\in(0,\epsilon_\eta]\}$ is tight.
\end{itemize}
\end{theorem}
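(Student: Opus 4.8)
The plan is to follow, step for step, the three-part scheme already carried out for the reaction--diffusion equation in Theorem \ref{thm heat 2}, since the abstract variational structure here is identical: a Gelfand triple $V\subset H\subset V^*$ with compact embedding $V\hookrightarrow H$, a local monotonicity estimate (H2), a coercivity/growth estimate (H3)--(H4), the a priori bounds of Proposition \ref{th SPDE-solution}, and the uniform-in-$\epsilon$ energy estimate of Lemma \ref{SPDE prop}. Parts (2) and (3) will be essentially verbatim transcriptions of the corresponding parts of Theorem \ref{thm heat 2}, so the genuine work is part (1), the probability convergence. Note also that, by the Remark following Theorem \ref{mthm}, it suffices to establish (1) over sets of the form $\{\|h\|_V^2\le\widetilde M\}$, which is precisely the shape of the compact sets furnished by the tightness in (3); thus the three parts together feed directly into Theorem \ref{mthm}.

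For part (1), I would apply It\^o's formula to $\|X^{\epsilon,h}_t-X^{0,h}_t\|_H^2$, introduce the stopping time $\sigma_N=\inf\{t:\|X^{\epsilon,h}_t\|_H\ge N\}\wedge T$ so that the Brownian and compensated-Poisson stochastic integrals become square-integrable martingales (using \eqref{eq SPDE result 1}--\eqref{eq SPDE result 2}), and take expectations. The drift contribution is controlled by the local monotonicity estimate (H2), where the decisive point is that the quadratic-growth factor is attached to the \emph{deterministic} solution: $2\langle\mathcal{A}(X^{\epsilon,h}_s)-\mathcal{A}(X^{0,h}_s),X^{\epsilon,h}_s-X^{0,h}_s\rangle_{V^*,V}\le \tfrac{27}{(\nu-\vartheta_0)^3}\|X^{0,h}_s\|_{L^4(D,\mathbb{R}^2)}^4\,\|X^{\epsilon,h}_s-X^{0,h}_s\|_H^2$, so the Gronwall coefficient $g(s):=\tfrac{27}{(\nu-\vartheta_0)^3}\|X^{0,h}_s\|_{L^4}^4$ is deterministic. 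By \eqref{l4} one has $\|X^{0,h}_s\|_{L^4}^4\le 2\|X^{0,h}_s\|_H^2\|X^{0,h}_s\|_V^2$, and then \eqref{deterBd} bounds both $\sup_{s\le T}\|X^{0,h}_s\|_H^2$ and $\int_0^T\|X^{0,h}_s\|_V^2\,ds$ in terms of $\|h\|_H^2+KT$; since $\|h\|_V^2\le\widetilde M$ forces $\|h\|_H^2\le\lambda_1^{-1}\widetilde M$ via $\|v\|_V^2\ge\lambda_1\|v\|_H^2$, the quantity $\int_0^T g(s)\,ds$ is finite and bounded uniformly over the ball $\{\|h\|_V^2\le\widetilde M\}$. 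The remaining $\epsilon^2$-terms (the quadratic variations) are dominated by Assumption 2 through \eqref{eq SPDE con 1} together with the energy bound of Lemma \ref{SPDE prop}. The deterministic-coefficient Gronwall lemma then yields $\mathbb{E}\|X^{\epsilon,h}_{t\wedge\sigma_N}-X^{0,h}_{t\wedge\sigma_N}\|_H^2\le \epsilon^2\,C\exp\!\big(\int_0^T g(s)\,ds\big)$; letting $N\to\infty$ with Fatou and then invoking Chebyshev's inequality gives $\sup_{\|h\|_V^2\le\widetilde M}\mathbb{P}(\|X^{\epsilon,h}_T-X^{0,h}_T\|_H^2\ge\delta)\le \epsilon^2 C(\widetilde M,T)/\delta\to0$.

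For part (2), Lemma \ref{SPDE prop} gives $\limsup_{t\to\infty}\tfrac1t\int_0^t\mathbb{E}\|X^{\epsilon,h}_s\|_V^2\,ds\le (1+\epsilon_\eta^2)K/\eta$, whence, by Chebyshev, $\limsup_{t\to\infty}\tfrac1t\int_0^t\mathbb{P}(\|X^{\epsilon,h}_s\|_V\ge L)\,ds\le (1+\epsilon_\eta^2)K/(\eta L^2)$; since $V\hookrightarrow H$ is compact (because $A$ has compact resolvent), the level sets $\{\|h\|_V\le L\}$ are compact in $H$, so the Krylov--Bogoliubov time averages $P^{\epsilon,t}(h,\cdot)=\tfrac1t\int_0^t P(s,h,\cdot)\,ds$ are tight and a stationary measure exists by Prohorov's theorem. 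For part (3), for any stationary $\mu^\epsilon$ the invariance identity $\mu^\epsilon(\|h\|_V\ge L)=\int_H\mathbb{P}(\|X^{\epsilon,h}_s\|_V\ge L)\,\mu^\epsilon(dh)$, valid for every $s\ge0$, combined with time-averaging, Fubini and Fatou and the bound from part (2), gives $\mu^\epsilon(\|h\|_V\ge L)\le (1+\epsilon_\eta^2)K/(\eta L^2)$ uniformly in $\epsilon\in(0,\epsilon_\eta]$, and compactness of $\{\|h\|_V\le L\}$ in $H$ yields the asserted tightness. The main obstacle is part (1): the nonlinearity $F$ is only locally monotone, so the estimate must be closed by transferring the $\|\cdot\|_{L^4}^4$ growth factor onto the deterministic trajectory (making the Gronwall exponent deterministic and, crucially, finite and uniform in $h$ on $V$-balls), while simultaneously handling the jump terms via the stopping-time localization.
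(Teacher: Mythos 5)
Your proposal is correct and follows essentially the same route as the paper: parts (2) and (3) are, as you say, verbatim transcriptions of Theorem \ref{thm heat 2}, and for part (1) you isolate exactly the decisive point, namely that the quadratic-growth factor in (H2) attaches to the deterministic trajectory and is bounded uniformly on $V$-balls via (\ref{l4}), (\ref{deterBd}) and $\|h\|_H^2\le\lambda_1^{-1}\|h\|_V^2$. The only cosmetic difference is that the paper carries out the Gronwall step by applying It\^o's formula directly to $\exp\big(-\int_0^{t}\rho(X_s^{0,h})ds\big)\|X^{\epsilon,h}_{t}-X^{0,h}_{t}\|_H^2$ with $\rho(v)=\tfrac{27}{(\nu-\vartheta_0)^3}\|v\|_{L^4}^4$ (the integrating-factor form), whereas you apply It\^o to the unweighted difference and invoke Gronwall afterward; the two are equivalent.
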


\begin{proof} (1) Set $\sigma_N:=\inf\{s\geq 0:\ \|X_s^{\epsilon,h}\|_H\geq N\}\bigwedge T$ and $\rho(v):=\frac{27}{(\nu-\vartheta_0)^3} \|v\|^4_{L^4(D,\mathbb{R}^2)}$.
By It\^{o}'s formula, we have
\begin{align*}
 & \exp\Big(-\int_0^{t\wedge \sigma_N}\rho(X_s^{0,h})ds\Big)\|X^{\epsilon,h}_{t\wedge \sigma_N}-X_{t\wedge \sigma_N}^{0,h}\|^2_H\\
=&\int_0^{t\wedge \sigma_N}  \exp\Big(-\int_0^{s}\rho(X_r^{0,h})dr\Big)
     \Big(
       2\langle \mathcal{A}(\bar{X}^{\epsilon,h}_s)-\mathcal{A}(X^{0,h}_s), \bar{X}^{\epsilon,h}_s-X^{0,h}_s\rangle_{V^*,V}\\
&- \rho(X_s^{0,h})\|\bar{X}^{\epsilon,h}_{s}-X_{s}^{0,h}\|^2_H
     \Big)ds\\
&+
  2\epsilon\int_0^{t\wedge \sigma_N}  \exp\Big(-\int_0^{s}\rho(X_r^{0,h})dr\Big)
       \langle B(\bar{X}^{\epsilon,h}_s)dW_s,\bar{X}^{\epsilon,h}_s-X^{0,h}_s\rangle_{H,H}\\
&+
  \epsilon^2\int_0^{t\wedge \sigma_N}  \exp\Big(-\int_0^{s}\rho(X_r^{0,h})dr\Big)
      \|B(\bar{X}^{\epsilon,h}_s)\|^2_2ds\\
& +
  2\epsilon\int_0^{t\wedge \sigma_N}\int_{Z}  \exp\Big(-\int_0^{s}\rho(X_r^{0,h})dr\Big)
      \langle f(\bar{X}^{\epsilon,h}_{s},z),\bar{X}^{\epsilon,h}_{s}-X^{0,h}_{s}\rangle_{H,H}\tilde{N}(ds,dz)\\
& +
  \epsilon^2\int_0^{t\wedge \sigma_N}\int_{Z}  \exp\Big(-\int_0^{s}\rho(X_r^{0,h})dr\Big)
  \|f(\bar{X}^{\epsilon,h}_{s},z)\|^2_HN(ds,dz),
\end{align*}
where $\bar{X}^{\epsilon,x}$ is any $V$-valued progressively measurable $dt\times\mathbb{P}$ version of $X^{\epsilon,x}$.
\vskip 0.3cm

From (\ref{eq SPDE result 1}), (\ref{eq SPDE result 2}) and Lemma \ref{SPDE prop}, we know that
\begin{align*}
L_N(t)=&2\epsilon\int_0^{t\wedge \sigma_N}  \exp\Big(-\int_0^{s}\rho(X_r^{0,h})dr\Big)
       \langle B(\bar{X}^{\epsilon,h}_s)dW_s,\bar{X}^{\epsilon,h}_s-X^{0,h}_s\rangle_{H,H}\\
        &+
       2\epsilon\int_0^{t\wedge \sigma_N}\int_{Z}  \exp\Big(-\int_0^{s}\rho(X_r^{0,h})dr\Big)
      \langle f(\bar{X}^{\epsilon,h}_{s},z),\bar{X}^{\epsilon,h}_{s}-X^{0,h}_{s}\rangle_{H,H}\tilde{N}(ds,dz)
\end{align*}
is a square integrable martingale. Therefore from (H2), (\ref{eq SPDE con 1}) and Lemma \ref{SPDE prop},
\begin{align*}
& \mathbb{E}\Big(\exp\Big(-\int_0^{t\wedge \sigma_N}\rho(X_s^{0,h})ds\Big)\|X^{\epsilon,h}_{t\wedge \sigma_N}-X_{t\wedge \sigma_N}^{0,h}\|^2_H\Big)\\
   \leq&
      \epsilon^2\mathbb{E}\int_0^{t\wedge \sigma_N}\|B(\bar{X}^{\epsilon,h}_s)\|^2_2ds
         +
      \epsilon^2\mathbb{E}\int_0^{t\wedge \sigma_N}\int_{Z} \|f(\bar{X}^{\epsilon,h}_{s},z)\|^2_H\nu(dz)ds\\
   \leq&
      \epsilon^2\mathbb{E}\int_0^tK(1+\|\bar{X}^{\epsilon,h}_s\|^2_H)ds\\
   \leq&
     K\epsilon^2t\Big(1+\sup_{s\in[0,t]}\mathbb{E}\|X_s^{\epsilon,h}\|^2_Hds\Big)\\
   \leq&
     \epsilon^2KT\Big(1+\|h\|^2_H+(1+\epsilon_\eta^2)KT\Big),
\end{align*}
which implies that
\begin{eqnarray*}
\mathbb{E}\Big(\exp\Big(-\int_0^{t}\rho(X_s^{0,h})ds\Big)\|X^{\epsilon,h}_{t\wedge \sigma_N}-X_{t\wedge \sigma_N}^{0,h}\|^2_H\Big)
   \leq
 \epsilon^2KT\Big(1+\|h\|^2_H+(1+\epsilon_\eta^2)KT\Big).
\end{eqnarray*}

\vskip 0.3cm
By (\ref{l4}) and Lemma \ref{SPDE prop},
\begin{align*}
\int_0^{t}\rho(X_s^{0,h})ds\leq&
\frac{54}{(\nu-\vartheta_0)^3}\int_0^t\|X^{0,h}_s\|^2_H\|X^{0,h}_s\|^2_Vds\\
\leq&
\frac{54}{(\nu-\vartheta_0)^3}C_\eta(\|h\|^{2}_H+2KT)^2,
\end{align*}
where $\eta$ comes from Lemma \ref{SPDE prop},
we obtain
\begin{align*}
&\mathbb{E}\Big(\|X^{\epsilon,h}_{t\wedge \sigma_N}-X_{t\wedge \sigma_N}^{0,h}\|^2_H\Big)\\
   \leq&
\epsilon^2KT \exp\Big(\frac{54}{(\nu-\vartheta_0)^3}C_\eta(\|h\|^{2}_H+2KT)^2\Big)\Big(1+\|h\|^2_H+(1+\epsilon_\eta^2)KT\Big).
\end{align*}
Leting $N\rightarrow\infty$, we have
\begin{align*}
&\mathbb{E}\Big(\|X^{\epsilon,h}_{t}-X_{t}^{0,h}\|^2_H\Big)\\
   \leq&
\epsilon^2KT \exp\Big(\frac{54}{(\nu-\vartheta_0)^3}C_\eta(\|h\|^{2}_H+2KT)^2\Big)\Big(1+\|h\|^2_H+(1+\epsilon_\eta^2)KT\Big),
\end{align*}
which implies the first part of this theorem by Chebyshev's inequality.

\vskip 0.3mm

Notice that the embedding $V \subset H$ is compact. Then the proofs for (2) and (3) are exactly the same as those in Theorem \ref{thm heat 2}, so we omit it.\qquad \end{proof}

\vskip 0.3cm
\begin{theorem}\label{NS} Assume that {\rm (\ref{exa NS 01})} satisfies the Assumptions {\rm 1} and {\rm 2}.
Let $\{\mu^{\epsilon_i}\}$ be a sequence of stationary measures for {\rm (\ref{exa NS 01})} such that $\mu^{\epsilon_i} \overset{w}{\rightarrow}\mu $ as $\epsilon_i \rightarrow 0$. Then $\mu$ is an invariant
measure of $\Phi$ and its support is contained in the Birkhoff
center for $\Phi(t)|_{\mathcal{A}_H}$.
\end{theorem}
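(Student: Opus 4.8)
The plan is to obtain Theorem \ref{NS} as a direct application of the general framework Theorem \ref{mthm} to the deterministic 2D Navier--Stokes semiflow $\Phi$ on the Polish space $H$, once the three hypotheses of that framework are supplied. These are already at hand: Theorem \ref{SPDE main th}(2) gives existence of a stationary measure for each $\epsilon$, Theorem \ref{SPDE main th}(3) gives tightness of the whole family $\{\mu^{\epsilon}:0<\epsilon\le\epsilon_\eta\}$ (so any sequence $\mu^{\epsilon_i}$ is relatively compact and admits weak limits), and Theorem \ref{SPDE main th}(1) is the probability convergence estimate, with the metric $\rho$ on $H$ taken to be $\|\cdot\|_H$ so that $\rho(X^{\epsilon}(T,h),\Phi(T,h))=\|X^{\epsilon,h}_T-X^{0,h}_T\|_H$.

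The first step, and the one I expect to be the main obstacle, is to reconcile the form of probability convergence with what Theorem \ref{mthm} requires. Theorem \ref{mthm} asks for $(\ref{pc})$ over an arbitrary compact $K\subset H$, whereas Theorem \ref{SPDE main th}(1) delivers it only uniformly over the balls $\{\|h\|_V^2\le\widetilde M\}$ of $V$. Here the Remark following Theorem \ref{mthm} is decisive: one needs $(\ref{pc})$ only on the compact sets $K$ that witness tightness of $\{\mu^{\epsilon}\}$. By the construction in the proof of Theorem \ref{SPDE main th}(3), tightness is witnessed precisely by the sets $K_L=\{h\in V:\|h\|_V\le L\}$, which are compact in $H$ because the embedding $V\subset H$ is compact and $K_L$ is closed in $H$ by weak lower semicontinuity of $\|\cdot\|_V$. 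Since these are exactly the $V$-balls over which Theorem \ref{SPDE main th}(1) establishes uniform convergence in the $H$-metric, the relevant instance of $(\ref{pc})$ holds. Invoking Theorem \ref{mthm} then yields that every weak limit $\mu$ is $\Phi$-invariant and that $\mathrm{supp}(\mu)\subset B(\Phi)$.

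It remains to sharpen the localization of the support from $B(\Phi)$ to the Birkhoff center of the restricted group $\Phi(t)|_{\mathcal{A}_H}$. By Proposition \ref{dNSG}, $\Phi$ possesses a connected global attractor $\mathcal{A}_H$ and $\Phi(t)|_{\mathcal{A}_H}$ is a group. Any recurrent point $x$, meaning $x\in\omega(x)$, satisfies $x\in\omega(x)\subset\mathcal{A}_H$, so the set of recurrent points lies in $\mathcal{A}_H$; as $\mathcal{A}_H$ is closed, $B(\Phi)\subset\mathcal{A}_H$. Since recurrence is an intrinsic property of a point together with its orbit and $\mathcal{A}_H$ is invariant, the recurrent points of $\Phi$ coincide with those of $\Phi(t)|_{\mathcal{A}_H}$, whence their closures agree and $B(\Phi)=B\big(\Phi(t)|_{\mathcal{A}_H}\big)$. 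Therefore $\mathrm{supp}(\mu)\subset B(\Phi)=B\big(\Phi(t)|_{\mathcal{A}_H}\big)$, which completes the proof. The dynamical identifications in this last step are routine once the attractor and group structure from Proposition \ref{dNSG} are in place; the genuine point of care is the compact-set matching invoked via the Remark in the second paragraph.
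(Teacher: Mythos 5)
Your proposal is correct and follows exactly the route the paper takes: Theorem \ref{NS} is obtained by feeding the three conclusions of Theorem \ref{SPDE main th} into Theorem \ref{mthm}, with the Remark after Theorem \ref{mthm} used to reconcile the $V$-ball form of the probability convergence with the compact sets $\{h\in V:\|h\|_V\le L\}$ that witness tightness. Your additional observation that $B(\Phi)=B\big(\Phi(t)|_{\mathcal{A}_H}\big)$ via Proposition \ref{dNSG} is a correct filling-in of a step the paper leaves implicit.
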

\begin{proof}
It follows directly from  Theorems \ref{SPDE main th} and \ref{mthm}.
\qquad \end{proof}

\subsection{Stochastic  Burgers type equations}
Consider the classic Burgers equation  (see \cite[p.257-258]{DaZaErg})
\begin{equation}
    \label{Burgers}
    \left\{
        \begin{array}{l}
        \frac{\partial u}{\partial t}= \frac{\partial^2 u}{\partial x^2} + \lambda u\frac{\partial u}{\partial x},\ 0<x<1,\ t>0, \\
        u(0,t)=u(1,t)=0, \\
        u(x,0)=\varphi(x).
\end{array}
    \right.
\end{equation}
The solution generates a strongly monotone flow in a suitable function space $V=W^{1,2}_0(0,1)$
(see \cite{Hir88a}, or \cite{Pola, HNW}).
It is easy to compute that the trivial solution is the unique stationary solution for (\ref{Burgers}).
Set $H=L^2([0,1])$ and $A$ denote the Laplace operator. Since $V$ coincides with $D(A^{1/2})$, we endow $V$ with the norm $\|u\|_V=\|A^{1/2}u\|_H$, which is equivalent with the usual norm in the Sobolev space $V$.
Also we can prove that $\|u^{\varphi}(t)\|^2_V $ is bounded on $[0, \infty )$ for every initial value $\varphi\in V$.

In fact, if $u(0)=\varphi\in V$, then there exists a unique solution $u^\varphi\in C([0,\infty), V)\cap L^2([0,\infty),D(A))$ of (\ref{Burgers}). This fact can be obtained by a fixed-point theorem, and the proof is omitted.
We have
\begin{eqnarray}\label{eq Burgers H}
\|u^\varphi(t)\|^2_H+2\int_0^t\|u^\varphi(s)\|^2_{V}ds=\|\varphi\|^2_H,
\end{eqnarray}
where we have used fact that $\langle u\frac{\partial u}{\partial x},u\rangle_{H,H}=0$.

Since $|\langle u\frac{\partial u}{\partial x},Au\rangle_{H,H}|\leq c\|u\|^2_V\|u\|_{D(A)}$ (\cite{Temam95} or Lemma 2.2 in \cite{Dong-Xu}),
we have
\begin{align*}\label{eq Burgers V}
 &\|u^\varphi(t)\|^2_V+2\int_0^t\|u^\varphi(s)\|^2_{D(A)}ds\\
 =&
 \|\varphi\|^2_V-2\lambda\int_0^t\langle u^\varphi(s)\frac{\partial u^\varphi(s)}{\partial x},Au^\varphi(s)\rangle_{H,H}ds \\
 \leq&
 \|\varphi\|^2_V+2c|\lambda|\int_0^t\|u^\varphi(s)\|^2_V\|u^\varphi(s)\|_{D(A)}ds\\
 \leq&
 \|\varphi\|^2_V+\int_0^t\|u^\varphi(s)\|^2_{D(A)}ds+(\lambda c)^2\int_0^t\|u^\varphi(s)\|^4_Vds,
\end{align*}
hence
$$
\|u^\varphi(t)\|^2_V+\int_0^t\|u^\varphi(s)\|^2_{D(A)}ds
\leq
\|\varphi\|^2_V+(\lambda c)^2\int_0^t\|u^\varphi(s)\|^2_V\cdot \|u^\varphi(s)\|^2_V ds,
$$
by Gronwall's inequality and (\ref{eq Burgers H}),
\begin{eqnarray*}\label{eq Burgers V0}
 \|u^\varphi(t)\|^2_V+\int_0^t\|u^\varphi(s)\|^2_{D(A)}ds
 \leq
 \|\varphi\|^2_Ve^{(\lambda c)^2\int_0^t\|u^\varphi(s)\|^2_Vds}
 \leq
 \|\varphi\|^2_Ve^{\frac{1}{2}(\lambda c)^2\|\varphi\|^2_H}.
\end{eqnarray*}
This implies that $\{u^{\varphi}(t)\}_{t\geq 0} $ is bounded in $V$. Then applying the theory on monotone dynamical systems (see \cite{Hir88a}), we conclude that all solutions for (\ref{Burgers}) are convergent to the trivial solution. Therefore, the Birkhoff center for (\ref{Burgers}) is $\{0\}$.

Consider one dimensional stochastic  Burgers equation driven by L\'evy noise:
\begin{equation}\label{sburgers}
\begin{split}
  dX_t^{\epsilon,h}=&
 \big(\Delta X_t^{\epsilon,h}+X_t^{\epsilon,h}\cdot \nabla X_t^{\epsilon,h}\big)dt\\
  &+\epsilon B(X_t^{\epsilon,h})dW_t+\epsilon\int_Z f(X_{t-}^{\epsilon,h},z)\tilde{N}(dt,dz),
\end{split}
\end{equation}
with deterministic initial value $ X_0^{\epsilon,h}=h$.

\begin{theorem}\label{sBeq}
Suppose {\rm(\ref{sburgers})} satisfies the Assumption {\rm 2} in subsection {\rm 4.2} and $h\in H$.
Then the results of Theorem {\rm\ref{SPDE main th}} hold. Moreover any limiting measures
for its stationary measures are the Dirac measure $\delta_0$.
\end{theorem}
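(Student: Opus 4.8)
The statement has two parts: (a) the three conclusions of Theorem \ref{SPDE main th} must be established for the stochastic Burgers equation (\ref{sburgers}), and (b) every weak limit of its stationary measures must be identified as $\delta_0$. My plan is to observe that (\ref{sburgers}) fits the same variational framework used for the stochastic 2D Navier--Stokes equation in subsection 4.2, with drift operator $\mathcal{A}(u)=\Delta u+u\,\partial_x u$ and no external forcing term $h_0$. Thus the first part reduces to checking the structural hypotheses (H1)--(H4) and the a priori bounds of Lemma \ref{SPDE prop} and Proposition \ref{th SPDE-solution}; once these are in hand, parts (1)--(3) of Theorem \ref{SPDE main th} follow by the identical argument (the exponential It\^o functional for (1), the compact embedding $V\subset H$ plus uniform $V$-estimates for (2) and (3)).

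The crucial point is the local monotonicity inequality of type (H2) for the Burgers drift. Writing $w=u-v$ and using $u\,u_x-v\,v_x=\tfrac12\partial_x\big((u+v)w\big)$, integration by parts gives $\langle u\,u_x-v\,v_x,w\rangle_{V^*,V}=\tfrac14\int_0^1(u_x+v_x)w^2\,dx$. Substituting $u_x+v_x=w_x+2v_x$ and noting that $\tfrac14\int_0^1 w_x w^2\,dx=\tfrac1{12}[w^3]_0^1=0$ by the Dirichlet boundary conditions, this reduces to $\tfrac12\int_0^1 v_x w^2\,dx$, which depends only on $v$. I would then bound it using the one-dimensional Gagliardo--Nirenberg inequality $\|w\|_{L^4}^2\leq C\|w\|_V^{1/2}\|w\|_H^{3/2}$ together with Young's inequality to obtain
$$|\langle u\,u_x-v\,v_x,w\rangle_{V^*,V}|\leq \eta\|w\|_V^2+C_\eta\|v\|_V^{4/3}\|w\|_H^2.$$
Combined with $\langle\Delta w,w\rangle_{V^*,V}=-\|w\|_V^2$ and $\eta$ small, this yields $2\langle\mathcal{A}(u)-\mathcal{A}(v),w\rangle_{V^*,V}\leq -\tfrac12\|w\|_V^2+\rho(v)\|w\|_H^2$ with $\rho(v)=C\|v\|_V^{4/3}$, exactly the ingredient the Navier--Stokes proof feeds into the functional $\exp\!\big(-\int_0^t\rho(X^{0,h}_s)ds\big)\|X^{\epsilon}-X^{0}\|_H^2$; time-integrability of $\rho(X^{0,h})$ follows from the deterministic energy equality (\ref{eq Burgers H}). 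The coercivity for the a priori estimates is even simpler, since $\langle u\,u_x,u\rangle_{H,H}=0$ gives $2\langle\mathcal{A}(v),v\rangle_{V^*,V}=-2\|v\|_V^2$, so Lemma \ref{SPDE prop} holds with constant coming solely from Assumption 2. I expect verifying this monotonicity bound---in particular the vanishing of the cubic boundary term and the sharp $1D$ embedding---to be the main technical point; compactness of $V\subset H$ then delivers existence and tightness exactly as in Theorem \ref{thm heat 2}.

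For the second part I would invoke Theorem \ref{mthm}. Part (1) of Theorem \ref{SPDE main th} supplies the Probability Convergence hypothesis (\ref{pc}) on the balls $\{\|h\|_V^2\leq\widetilde M\}$, which are precisely the compact subsets of $H$ furnishing tightness, so the Remark following Theorem \ref{mthm} applies. Hence any weak limit $\mu$ of stationary measures $\mu^{\epsilon_i}$ is invariant for the deterministic Burgers semiflow $\Phi$ on $H$ and is concentrated on its Birkhoff center $B(\Phi)$. The analysis preceding (\ref{sburgers}) already establishes that all solutions of the deterministic Burgers equation converge to the trivial solution, so $B(\Phi)=\{0\}$. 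Since a probability measure supported on a single point is the Dirac mass there, $\mu=\delta_0$, which completes the proof.
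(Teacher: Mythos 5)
Your proposal is correct and follows essentially the same route as the paper: reduce to the framework of Theorem \ref{SPDE main th} via the a priori energy estimate (using $\langle u\partial_x u,u\rangle_{H,H}=0$) and a local monotonicity inequality for $\mathcal{A}(v)=v_{xx}+vv_x$, then conclude via Theorem \ref{mthm} and the fact that the deterministic Birkhoff center is $\{0\}$. The only difference is that the paper simply cites \cite[Lemma 2.1(2)]{BLZ} for the bound $2\langle\mathcal{A}(u)-\mathcal{A}(v),u-v\rangle_{V^*,V}\leq-\|u-v\|_V^2+C(1+\|v\|_V^2)\|u-v\|_H^2$, whereas you derive an equivalent estimate (with $\rho(v)=C\|v\|_V^{4/3}$) directly by integration by parts and the one-dimensional Gagliardo--Nirenberg inequality, which is a valid, self-contained substitute.
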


\begin{proof} The It\^{o} formula deduces that
\begin{equation}\label{eq star 3}
\begin{split}
 &\|X^{\epsilon,h}_t\|^2_H+2\int_0^t\|X^{\epsilon,h}_s\|^2_Vds\\
 =& \|h\|^2_H +2\epsilon\int_0^t\langle B(X^{\epsilon,h}_s)dW_s,X^{\epsilon,h}_s\rangle_{H,H}\\
     &+2\epsilon\int_0^t\int_{Z}\langle f(X^{\epsilon,h}_{s-},z),X^{\epsilon,h}_{s-}\rangle_{H,H}\widetilde{N}(ds,dz)\\
     &+\epsilon^2\int_0^t\|B(X^{\epsilon,h}_s)\|^2_2ds + \epsilon^2 \int_0^t\int_{Z}\|f(X^{\epsilon,h}_{s-},z)\|^2_HN(ds,dz).
\end{split}
\end{equation}
By the same argument of Lemma \ref{SPDE prop},
there  exist $\eta>0$, $\epsilon_\eta>0$ and $F>0$ such that

\begin{equation}\label{besti}
\begin{split}
&\sup_{s\in[0,T]}\mathbb{E}\|X^{\epsilon,h}_s\|^2_H+\eta\mathbb{E}\int_0^{T}\|X^{\epsilon,h}_s\|^2_Vds\\
    \leq&
    \|h\|^2_H+(1+\epsilon_\eta^2)FT,\  \forall \epsilon\in(0,\epsilon_\eta].
\end{split}
\end{equation}

Define $\mathcal{A}: V\rightarrow V^*$ by

$$\mathcal{A}(v) := v_{xx} +vv_x.$$
According to \cite[Lemma 2.1(2)]{BLZ}, for any $u, v\in V$, there is a positive constant $C$ such that
\begin{equation}\label{Aesti}
2\langle \mathcal{A}(u)-\mathcal{A}(v),u-v\rangle_{V^*,V}\leq -\|u-v\|^2_V+C(1+\|v\|^2_V)\|u-v\|^2_H.
\end{equation}
Combining(\ref{besti}) and (\ref{Aesti}), similar to the proof of Theorem \ref{SPDE main th}, we can obtain the probability convergence, existence of stationary measures and their tightness. Thus the last conclusion follows immediately from Theorem \ref{mthm}.
\end{proof}

\section{FDEs driven by white noise}

Consider the $m$-dimensional stochastic functional differential equations (SFDEs)
\begin{equation}\label{SFDE}
\begin{array}{l}
    dX^{\epsilon,\phi}(t)=b(X_{t}^{\epsilon,\phi})dt+\epsilon\sigma(X_{t}^{\epsilon,\phi})dW(t),\:\: \\ X_{0}^{\epsilon,\phi}=\phi\in\mathcal{C}:= C([-\tau,0],\mathbb{R}^{m}),
\end{array}
\end{equation}
where $W=\{W_{t}=(W_{t}^{1},\cdots,W_{t}^{k}),t\geq 0\}$ is a $k$-dimensional Wiener process,
$b(\cdot): \mathcal{C}\rightarrow\mathbb{R}^{m}$ and $\sigma(\cdot): \mathcal{C}\rightarrow\mathbb{R}^{m\times r}$
satisfy global Lipschitz condition and linear growth condition, that is, there exists a positive constant $L$ such that
$\forall \phi,\psi\in \mathcal{C}$,

{\rm(a)} $|b(\phi)-b(\psi)|+\|\sigma(\phi)-\sigma(\psi)\|_2\leq L\|\phi-\psi\|$,

{\rm(b)} $|b(\phi)|^{2}+\|\sigma(\phi)\|_{2}^{2}\leq L^{2}(1+\|\phi\|^{2})$.

\noindent Here $\mathcal{C}$ denotes the set of continuous functions
$\phi(s)$ from $[-\tau,0]$ into $\mathbb{R}^{m}$ with the uniform norm
$\|\phi\|=\displaystyle\sup_{-\tau\leq s\leq 0}|\phi(s)|$.

It is known that the Hypotheses (a) and (b) are sufficient
to ensure the global existence and uniqueness of a strong solution to (\ref{SFDE}).
Let $X^{\epsilon,\phi}(t)$
denote the solution to (\ref{SFDE}) with initial data $X^{\epsilon,\phi}_{0}=\phi$.
Then the segment process of $X^{\epsilon,\phi}(t)$ is given by
$$ X^{\epsilon,\phi}_{t}(\theta)=X^{\epsilon,\phi}(t+\theta),\:\theta\in[-\tau,0]. $$
That is, $\{X^{\epsilon,\phi}_{t}\}_{t\geq 0}$ is a process on $\mathcal{C}$.
Furthermore, the segment process $\{X^{\epsilon,\phi}_{t}\}_{t\geq 0}$ to (\ref{SFDE}) is immediately a Feller process
on the path space $\mathcal{C}$ (see, e.g. Mohammed \cite[ Theorem III 3.1, p.67-68]{Moham}), where
the associated Markov semigroup
\begin{equation}\label{SfdeMs}
   P_{t}^{\epsilon}g(\phi)=\mathbb{E}g(X^{\epsilon,\phi}_{t}),\quad t\geq 0,\:\phi \in\mathcal{C},\: g\in\mathcal{B}_{b}(\mathcal{C}).
\end{equation}

Consider the corresponding $m$-dimensional deterministic functional differential equations (FDEs)
\begin{equation}\label{FDE}
    dX^{\phi}(t)=b(X_{t}^{\phi})dt,\:\: X_{0}^{\phi}=\phi\in\mathcal{C}.
\end{equation}
Under Hypothesis (a), it is easy to see that (\ref{FDE}) generates a
semiflow $\Phi_{t}(\phi)=\Phi^{\phi}_{t}$, $t\geq0$ on $\mathcal{C}$.

The following result reveals a close connection between (\ref{SFDE}) and (\ref{FDE}).

\begin{lemma}
\label{sfdel2}
Suppose  {\rm(a)} and  {\rm(b)} hold.
Let $K\subset\mathcal{C}$ be a compact  ${\rm(}bounded {\rm)}$ set and $T>0$. Then for  sufficiently small $\epsilon>0$
\[ \displaystyle\sup_{\phi\in K}\mathbb{E}\big[\displaystyle\sup_{0\leq t\leq T}\|X^{\epsilon,\phi}_{t}-\Phi_{t}(\phi)\|^{2}\big]\leq
C\epsilon^{2}, \]
where $C=C(K,L,T)$ is a positive constant, depending only on $K$, $L$ and $T$.
\end{lemma}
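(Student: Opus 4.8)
The plan is to reduce the statement to a single Gronwall estimate for the running second moment of the pointwise error. Fix $\phi\in K$, write $X^{\phi}$ for the solution of (\ref{FDE}) with $X^{\phi}_0=\phi$, so that $\Phi_t(\phi)=X^{\phi}_t$, and set $Y(t):=X^{\epsilon,\phi}(t)-X^{\phi}(t)\in\mathbb{R}^m$. Since both processes start from $\phi$, one has $Y(s)=0$ for all $s\in[-\tau,0]$, and subtracting the integral forms of (\ref{SFDE}) and (\ref{FDE}) gives, for $t\ge0$,
\[
Y(t)=\int_0^t\big[b(X^{\epsilon,\phi}_s)-b(X^{\phi}_s)\big]\,ds+\epsilon\int_0^t\sigma(X^{\epsilon,\phi}_s)\,dW(s).
\]
The first observation is that the target quantity is exactly a running supremum of $|Y|$: because $Y$ vanishes on $[-\tau,0]$, one has $\sup_{0\le t\le T}\|X^{\epsilon,\phi}_t-\Phi_t(\phi)\|=\sup_{0\le t\le T}\sup_{-\tau\le\theta\le0}|Y(t+\theta)|=\sup_{0\le r\le T}|Y(r)|$. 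I therefore aim to control $g_\phi(t):=\mathbb{E}\big[\sup_{0\le u\le t}|Y(u)|^2\big]$, and the same vanishing property lets me dominate the segment norm appearing in the coefficients by a running supremum, $\|X^{\epsilon,\phi}_s-X^{\phi}_s\|^2\le\sup_{0\le r\le s}|Y(r)|^2$.

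First I would record an a priori moment bound. Using linear growth (b), the Cauchy--Schwarz and Burkholder--Davis--Gundy inequalities, and Gronwall's lemma in the standard way (exactly as for (\ref{HTE}) in Lemma \ref{HT}), one obtains a finite constant
\[
M:=\sup_{\epsilon\in(0,1]}\ \sup_{\phi\in K}\ \sup_{0\le s\le T}\mathbb{E}\|X^{\epsilon,\phi}_s\|^2<\infty,
\]
where finiteness is uniform over $\phi\in K$ because $K$ is bounded, so $\sup_{\phi\in K}\|\phi\|<\infty$; here only boundedness of $K$, not compactness, is needed. This bound in particular guarantees $g_\phi(t)<\infty$, which legitimizes the Gronwall step below.

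For the main estimate, split $\sup_{0\le u\le t}|Y(u)|^2$ into a drift and a diffusion contribution via $(a+b)^2\le2a^2+2b^2$. For the drift term, Cauchy--Schwarz together with the Lipschitz bound (a) gives
\[
\mathbb{E}\Big(\int_0^t|b(X^{\epsilon,\phi}_s)-b(X^{\phi}_s)|\,ds\Big)^2\le TL^2\int_0^t\mathbb{E}\|X^{\epsilon,\phi}_s-X^{\phi}_s\|^2\,ds\le TL^2\int_0^tg_\phi(s)\,ds.
\]
For the diffusion term, the Burkholder--Davis--Gundy inequality and the linear growth (b) yield, with a universal constant $C_1$,
\[
\epsilon^2\,\mathbb{E}\sup_{0\le u\le t}\Big|\int_0^u\sigma(X^{\epsilon,\phi}_s)\,dW(s)\Big|^2\le\epsilon^2C_1L^2\int_0^t\big(1+\mathbb{E}\|X^{\epsilon,\phi}_s\|^2\big)\,ds\le\epsilon^2C_1L^2T(1+M).
\]
Collecting the two bounds produces
\[
g_\phi(t)\le 2TL^2\int_0^tg_\phi(s)\,ds+2\epsilon^2C_1L^2T(1+M),
\]
and Gronwall's inequality gives $g_\phi(T)\le 2\epsilon^2C_1L^2T(1+M)e^{2TL^2T}=:C\epsilon^2$, with $C=C(K,L,T)$ independent of $\phi\in K$ and of $\epsilon\in(0,1]$. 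Taking the supremum over $\phi\in K$ yields the claim.

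The main obstacle is the functional (delay) structure: the coefficients are evaluated at the whole segment $X^{\epsilon,\phi}_s$ rather than at a single time point, so one cannot close the Gronwall loop directly in terms of $|Y(s)|$. The device that resolves this is the observation that $Y\equiv0$ on $[-\tau,0]$, which lets both the target quantity and the segment norms in the coefficients be replaced by the running supremum of $|Y|$ over $[0,s]$; everything else is a routine combination of the a priori moment estimate, Burkholder--Davis--Gundy, and Gronwall.
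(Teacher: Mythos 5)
Your proof is correct and is exactly the standard argument the authors have in mind (the paper states ``The proof is easy, so we omit it''): subtract the integral equations, use that the error vanishes on $[-\tau,0]$ to convert segment norms into running suprema, bound the drift by Cauchy--Schwarz plus the Lipschitz condition (a), bound the $\epsilon$-weighted stochastic integral by Doob/Burkholder--Davis--Gundy plus the linear growth condition (b) and the a priori moment bound (uniform over the bounded set $K$), and close with Gronwall. Nothing further is needed.
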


The proof is easy, so we omit it.

\

The following probability convergence can be obtained by Chebyshev's inequality.

\begin{corollary}\label{corosfd}
Let $K\subset \mathcal{C}$ be a compact set. Then for any $T>0$ and $\delta>0$, we have
$$ \displaystyle\lim_{\epsilon\rightarrow 0}\sup_{\phi\in K}\mathbb{P}\big\{\displaystyle\sup_{0\leq t\leq T}\|X^{\epsilon,\phi}_{t}-\Phi_{t}(\phi)\|\geq\delta\big\}=0.  $$
\end{corollary}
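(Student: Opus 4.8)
The plan is to deduce the corollary directly from the second-moment estimate in Lemma \ref{sfdel2} by a Markov--Chebyshev argument; indeed, the corollary is announced precisely as an immediate consequence of that lemma.

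First I would fix $T>0$, $\delta>0$, and a compact set $K\subset\mathcal{C}$. For each fixed $\phi\in K$, the quantity $Y^{\epsilon,\phi}:=\sup_{0\le t\le T}\|X^{\epsilon,\phi}_{t}-\Phi_{t}(\phi)\|$ is a nonnegative random variable, so Chebyshev's inequality (applied to $(Y^{\epsilon,\phi})^{2}$) gives
$$\mathbb{P}\big\{Y^{\epsilon,\phi}\ge\delta\big\}=\mathbb{P}\big\{(Y^{\epsilon,\phi})^{2}\ge\delta^{2}\big\}\le\frac{1}{\delta^{2}}\,\mathbb{E}\big[(Y^{\epsilon,\phi})^{2}\big].$$

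Next I would take the supremum over $\phi\in K$ on both sides. Since the bound $\tfrac{1}{\delta^{2}}\mathbb{E}[(Y^{\epsilon,\phi})^{2}]$ holds for every individual $\phi$, one obtains
$$\sup_{\phi\in K}\mathbb{P}\big\{Y^{\epsilon,\phi}\ge\delta\big\}\le\frac{1}{\delta^{2}}\sup_{\phi\in K}\mathbb{E}\big[(Y^{\epsilon,\phi})^{2}\big].$$
At this point Lemma \ref{sfdel2} applies verbatim: for all sufficiently small $\epsilon>0$ the right-hand supremum is bounded by $C\epsilon^{2}$ with $C=C(K,L,T)$ independent of $\epsilon$ and of $\phi$, so that $\sup_{\phi\in K}\mathbb{P}\{Y^{\epsilon,\phi}\ge\delta\}\le C\epsilon^{2}/\delta^{2}$. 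Letting $\epsilon\to0$ forces this quantity to $0$, which is exactly the asserted probability convergence.

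The key point, and the reason there is essentially no obstacle, is that Lemma \ref{sfdel2} already furnishes a moment bound that is \emph{uniform} in $\phi\in K$; hence the interchange of $\sup_{\phi}$ with the probability estimate requires no extra argument, and the order in which one takes the supremum and applies Chebyshev is immaterial. All of the genuine analytic content, namely the Gronwall-type estimate controlling the $L^{2}$ distance between the stochastic segment process $X^{\epsilon,\phi}_{t}$ and the deterministic semiflow $\Phi_{t}(\phi)$ under the global Lipschitz and linear-growth hypotheses (a)--(b), has already been absorbed into Lemma \ref{sfdel2}, so that Corollary \ref{corosfd} reduces to this single elementary probabilistic deduction.
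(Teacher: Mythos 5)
Your proposal is correct and coincides with the paper's intended argument: the paper explicitly states that the corollary "can be obtained by Chebyshev's inequality" from Lemma \ref{sfdel2}, whose moment bound is already uniform over $\phi\in K$. Your Chebyshev-plus-uniform-bound deduction is exactly that route, carried out in full.
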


In order to apply Theorem \ref{mthm}, we need to present a criterion on the existence of
stationary measures for (\ref{SFDE}) and  their tightness. For this purpose, from now on, we
focus on the following stochastic functional differential equations
\begin{equation}\label{SFDEsnp}
  dX^{\epsilon}(t)=[-BX^{\epsilon}(t)+Ag(X^{\epsilon}_{t})]dt+\epsilon\sigma(X^{\epsilon}_{t})dW(t)
\end{equation}
with initial data $X^{\epsilon}_{0}=\phi\in \mathcal{C}$, where
$B=(b_{ij})_{m\times m}$, $A=(a_{ij})_{m\times m}$ are two matrices,
$\sigma(\phi)=(\sigma_{ij}(\phi))_{m\times k}$ is an $m\times k$ matrix valued function defined on $\mathcal{C}$,
and $g:\mathcal{C}\rightarrow \mathcal{C}$ is a measurable function.
We will suppose the following assumptions on $g$ and $\sigma$:

${\rm(A_{1})}$ There exists a positive constant $\widetilde{L}$ such that for all
$\phi,\psi\in\mathcal{C}$
$$  |g(\phi)-g(\psi)|\leq \widetilde{L}\|\phi-\psi\|. $$

${\rm(A_{2})}$
There exists a positive constant $L$ such that for all $\phi,\psi\in\mathcal{C}$
$$  \|\sigma(\phi)-\sigma(\psi)\|_2 \leq L\|\phi-\psi\|. $$

To show the existence of a stationary measure,  it is sufficient  to  prove the tightness of the segments by using
the Arzel\`{a}-Ascoli tightness characterization and the Krylov-Bogoliubov theorem.

Using the idea presented in \cite[Proposition 2.1]{Scheu}, we have

\begin{theorem}\label{unbud}Assume ${\rm(A_{1})}$, ${\rm(A_{2})}$ and
\begin{equation}\label{dissip}
\langle x,Bx\rangle\geq b|x|^{2}\:\: \textrm{for\:any}\:\: x\in\mathbb{R}^{m}.
\end{equation}
If $b$\ satisfies the following:
\begin{equation}\label{conb}
  b>\frac{\gamma^{2} e^{6\tau}\big(16\widetilde{L}^{3}|A|^{3}\big)^{2}}{(1-\kappa e^{-3\tau})^{2}},
\end{equation}
where $\kappa\in(1,e^{3\tau})$ is arbitary, and $\gamma=\gamma(\kappa)=9[\frac{2\sqrt{\kappa}-1}{(\sqrt{\kappa}-1)^{2}}+1]$.
Then there exists $\epsilon_{0}>0$ such that
\begin{equation}\label{esti}
\displaystyle\sup_{0<\epsilon\leq\epsilon_{0}}\sup_{t\geq 0}\mathbb{E}\big[\|X^{\epsilon,\phi}_{t}\|^{6}\big]\leq 2e^{3\tau} \big(\|\phi\|^{6}+\widetilde{M}\big),
\end{equation}
where $\widetilde{M}$ is a constant independent of $\epsilon \in (0, \epsilon_{0}]$. Furthermore, for each $\phi\in \mathcal{C}$, there exists at least a stationary measure $\mu^{\epsilon,\phi}$ for {\rm(\ref{SFDEsnp})} corresponding to the segment process $\{X^{\epsilon,\phi}_{t}\}_{t\geq0}$.
\end{theorem}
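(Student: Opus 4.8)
The plan is to prove the theorem in two stages: first establish the uniform-in-time sixth-moment bound (\ref{esti}), which is the analytic heart of the statement, and then deduce the existence of a stationary measure from it by a tightness-plus-Krylov--Bogoliubov argument. For the moment bound I would apply It\^o's formula to $V(x)=|x|^6$ along the solution of (\ref{SFDEsnp}). With $\nabla V(x)=6|x|^4x$ and $D^2V(x)=6|x|^4 I+24|x|^2xx^{T}$, the dissipativity hypothesis (\ref{dissip}) produces the dominant negative drift $-6b|X^{\epsilon}(t)|^6$, the feedback term is controlled by $6|A|\,|X^{\epsilon}(t)|^5\,|g(X^{\epsilon}_t)|$, and the diffusion contribution by a multiple of $\epsilon^2\|\sigma(X^{\epsilon}_t)\|_2^2\,|X^{\epsilon}(t)|^4$. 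Using ${\rm(A_1)}$ in the form $|g(\phi)|\le |g(0)|+\widetilde{L}\|\phi\|$ and ${\rm(A_2)}$ for the linear growth of $\sigma$, followed by Young's inequality to split $|X^{\epsilon}(t)|^5\|X^{\epsilon}_t\|$ into $\delta|X^{\epsilon}(t)|^6+C_\delta\|X^{\epsilon}_t\|^6$, one arrives after taking expectations at a differential inequality in which the genuinely negative coefficient $-6b$ competes against a positive contribution of order $\widetilde{L}^3|A|^3$ generated by the cubing.

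The essential difficulty, and the reason the explicit threshold (\ref{conb}) is required, is that the segment sup-norm $\|X^{\epsilon}_t\|^6=\sup_{-\tau\le\theta\le0}|X^{\epsilon}(t+\theta)|^6$ is not a pointwise quantity, so a naive Gronwall argument on $\mathbb{E}|X^{\epsilon}(t)|^6$ does not close. Following the idea of \cite[Proposition 2.1]{Scheu}, I would instead work on successive windows $[n\tau,(n+1)\tau]$ and derive a recursion for the sup-moments $a_n:=\sup_{t\in[(n-1)\tau,n\tau]}\mathbb{E}\|X^{\epsilon}_t\|^6$ of the form $a_{n+1}\le \kappa^{-1}a_n+C$. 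The passage from an endpoint value to the supremum over a window of length $\tau$ is what produces the factors $e^{3\tau}$ and $e^{6\tau}$ (the power $3$ being $6/2$), while the constant $\gamma(\kappa)$ absorbs the Burkholder--Davis--Gundy bound for the sixth moment of the stochastic integral together with the arithmetic of summing a geometric-type series in $\kappa^{-1/2}$. The hypothesis (\ref{conb}) is precisely the condition ensuring that the effective contraction factor is strictly below $1$ (equivalently $\kappa e^{-3\tau}<1$ with enough margin), so that $\limsup_n a_n\le C/(1-\kappa^{-1})$; unwinding the constants then yields (\ref{esti}) uniformly for $0<\epsilon\le\epsilon_0$, where the smallness of $\epsilon_0$ is used to keep the $\epsilon^2$ diffusion terms subordinate. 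I expect this step to be the main obstacle, as it requires tracking the simultaneous interaction of the delay window, the cubic power, and the forced smallness on $\epsilon$.

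With (\ref{esti}) in hand, the existence of a stationary measure is comparatively routine. The uniform bound $\sup_t\mathbb{E}\|X^{\epsilon}_t\|^6\le 2e^{3\tau}(\|\phi\|^6+\widetilde{M})$ controls the size of the marginals, while Kolmogorov's continuity criterion, applied to increment moments of the form $\mathbb{E}|X^{\epsilon}(t)-X^{\epsilon}(s)|^{2p}\le C|t-s|^{p}$ (obtained from ${\rm(A_1)}$, ${\rm(A_2)}$ and the Burkholder--Davis--Gundy inequality together with (\ref{esti})), yields a uniform modulus of continuity for the paths. By the Arzel\`a--Ascoli characterization of relative compactness in $\mathcal{C}=C([-\tau,0],\mathbb{R}^m)$, these two facts make the family of time-averaged measures $\frac{1}{T}\int_0^T P^{\epsilon}_t(\phi,\cdot)\,dt$ tight on $\mathcal{C}$. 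Since the segment process $\{X^{\epsilon,\phi}_t\}_{t\ge0}$ is Feller by Mohammed \cite[Theorem III 3.1, p.67-68]{Moham}, the Krylov--Bogoliubov procedure extracts a weak limit of these averages, which is then a stationary measure $\mu^{\epsilon,\phi}$ for (\ref{SFDEsnp}); this completes the proof and, in conjunction with Corollary \ref{corosfd}, prepares the ground for applying Theorem \ref{mthm}.
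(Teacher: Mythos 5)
Your overall architecture coincides with the paper's: both follow the Scheutzow scheme of deriving a contraction-type recursion for sup-moments over successive windows of length $\tau$, iterating it to get the uniform bound (\ref{esti}), and then obtaining a stationary measure via the Arzel\`a--Ascoli tightness characterization and the Krylov--Bogoliubov procedure. The divergence is in the key technical step, and it leaves a genuine gap. You propose to apply It\^o's formula directly to $|x|^{6}$ and control the resulting martingale by a Burkholder--Davis--Gundy estimate over each window. The paper instead applies It\^o to $Z(t)=|X(t)|^{2}$, uses the stochastic variation-of-constants formula to represent $Z(t)$ with the damping kernel $e^{-2b(t-s)}$ inside the stochastic convolution, and only then passes to sixth moments by cubing via the elementary inequality $(x_{1}+x_{2}+x_{3}+x_{4})^{3}\leq\kappa x_{1}^{3}+\gamma(\kappa)(x_{2}^{3}+x_{3}^{3}+x_{4}^{3})$ --- which, incidentally, is the sole source of the specific constant $\gamma(\kappa)=9[\frac{2\sqrt{\kappa}-1}{(\sqrt{\kappa}-1)^{2}}+1]$, not the BDG inequality or any geometric series as you suggest.

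The reason this matters is quantitative. The theorem asserts the conclusion under the explicit threshold (\ref{conb}), which is a condition on $\sqrt{b}$ (i.e.\ $b$ exceeding the \emph{square} of a fixed expression). In the paper this is attainable only because the supremum of the stochastic convolution $\int_{0}^{t}e^{-2b(t-s)}\langle X(s),\sigma(X_{s})dW(s)\rangle$ is estimated by the technical result \cite[Lemma 2.2]{Scheu}, whose constant $a_{3,2b}=\Lambda(\alpha)b^{-1/2}$ decays in $b$; together with the $b^{-3}$ decay of the drift contribution $C^{3}/(8b^{3})$, this is exactly what lets the contraction condition reduce to (\ref{conb}). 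A direct BDG bound on the sixth-moment martingale arising from It\^o on $|x|^{6}$ over a window of length $\tau$ carries no damping in $b$ at all, so the martingale contribution to your recursion would not shrink as $b$ grows, and the contraction factor would not close under (\ref{conb}) as stated (at best you would prove the result under a different, unspecified hypothesis on $b$, which is not the theorem). To repair your plan you would need either to reintroduce the variation-of-constants representation (at the level of $|x|^{2}$ or $|x|^{6}$) so that the martingale term sees the kernel $e^{-cb(t-s)}$, or to supply an alternative mechanism producing the $b^{-1/2}$ decay; as written, the proposal does not identify where that decay comes from.
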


\begin{proof} Fixing $\epsilon$, to simplify notation, we let $X(t)=X^{\epsilon,\phi}(t)$ and
set $Z(t)=|X(t)|^{2}$, $t\geq 0$. By the It\^{o} formula, (\ref{dissip}), ${\rm(A_{1})}$ and ${\rm(A_{2})}$,
we have
\begin{align*}
    dZ(t)\leq& -2bZ(t)dt+2|A||X(t)| |g(X_{t})|dt+2\epsilon^{2}(L^{2}\|X_{t}\|^{2}+\|\sigma(0)\|_{2}^{2})dt\\
    &+2\epsilon\langle X(t),\sigma(X_{t})dW(t)\rangle \\
    \leq& -2bZ(t)dt+2|A||X(t)| (\widetilde{L}\|X_{t}\|+|g(0)|)dt+2\epsilon^{2}(L^{2}\|X_{t}\|^{2}+\|\sigma(0)\|_{2}^{2})dt\\
    &+2\epsilon\langle X(t),\sigma(X_{t})dW(t)\rangle \\
    \leq& -2bZ(t)dt+2|A|(\widetilde{L}\|X_{t}\|^{2}+\widetilde{L}\|X_{t}\|^{2}+\frac{|g(0)|^{2}}{\widetilde{L}})dt
    +2\epsilon^{2}(L^{2}\|X_{t}\|^{2}+\|\sigma(0)\|_{2}^{2})dt\\
    &+2\epsilon\langle X(t),\sigma(X_{t})dW(t)\rangle \\
    =&:-2bZ(t)dt+C\|X_{t}\|^{2}dt
    +D dt
    +2\epsilon\langle X(t),\sigma(X_{t})dW(t)\rangle,\quad t\geq0
\end{align*}
where $C=C(\epsilon)=4\widetilde{L}|A|+2\epsilon^{2}L^{2}$, and $D=D(\epsilon)=\frac{2|A||g(0)|^{2}}{\widetilde{L}}+2\epsilon^{2}\|\sigma(0)\|_{2}^{2}$.
Then the stochastic variation of constants formula yields that
\begin{align*}
     Z(t)&\leq e^{-2bt}Z(0)+\int_{0}^{t}e^{-2b(t-s)}(C\|X_{s}\|^{2}+D)ds\\
     &+2\epsilon
     \int_{0}^{t}e^{-2b(t-s)}\langle X(s),\sigma(X_{s})dW(s)\rangle\\
        &\leq e^{-2bt}Z(0)+ \frac{C}{2b}\displaystyle\sup_{0\leq s\leq t}\|X_{s}\|^{2}+\frac{D}{2b}\\
        &+2\epsilon\int_{0}^{t}e^{-2b(t-s)}\langle X(s),\sigma(X_{s})dW(s)\rangle, \ \forall t\geq 0.
\end{align*}
Hence, for $0\leq t\leq\tau$ we obtain that
\begin{align*}
     \displaystyle\sup_{0\leq t\leq\tau}e^{t}Z(t)\leq& Z(0)
     +\frac{C}{2b}e^{\tau}\displaystyle\sup_{0\leq s\leq\tau}\|X_{s}\|^{2}
     +\frac{D}{2b}e^{\tau}\\
     &+2\epsilon e^{\tau}\displaystyle\sup_{0\leq t\leq\tau}
     |\int_{0}^{t}e^{-2b(t-s)}\langle X(s),\sigma(X_{s})dW(s)\rangle|,
\end{align*}
where we have used the fact that $b>\frac{1}{2}$. It is easy to see that
for any $\kappa\in(1,e^{3\tau})$, there exists $\gamma=\gamma(\kappa)>1$ such that
\begin{equation}\label{fourineq}
  (x_{1}+x_{2}+x_{3}+x_{4})^{3}\leq \kappa x^{3}_{1}+\gamma(x^{3}_{2}+x^{3}_{3}+x^{3}_{4})\:\: \textrm{for\:all}\: x_{1},x_{2},x_{3},x_{4}\geq 0.
  \footnote{For arbitrary $\kappa\in(1,e^{3\tau})$, choosing $\gamma=\gamma(\kappa)=9[\frac{2\sqrt{\kappa}-1}{(\sqrt{\kappa}-1)^{2}}+1]$ such that
  (\ref{fourineq}) holds.}
\end{equation}
Combining the above inequality (\ref{fourineq}) and taking
expectations, we obtain that
\begin{equation}\label{crest}
\begin{split}
 &\mathbb{E}\big[\displaystyle\sup_{0\leq t\leq\tau}e^{3t}|Z(t)|^{3}\big]\\
 \leq& \kappa\mathbb{E}\big[|Z(0)|^{3}\big]
 +\gamma\Big(\frac{C^{3}}{8b^{3}}e^{3\tau}\mathbb{E}\big[\displaystyle\sup_{0\leq s\leq\tau}\|X_{s}\|^{6}\big]
 +\frac{D^{3}}{8b^{3}}e^{3\tau}\\
 &+8\epsilon^{3} e^{3\tau}\mathbb{E}\big[\displaystyle\sup_{0\leq t\leq\tau}
 |\int_{0}^{t}e^{-2b(t-s)}\langle X(s),\sigma(X_{s})dW(s)\rangle|^{3}\big]\Big).
\end{split}
\end{equation}

From L\'{e}vy's celebrated martingale characterization of Brownian motion (see \cite[Theorem 3.16, p.157]{KaraShre}), we know that there exists a one-dimensional Brownian motion $B$ with respect to the same filtration such that
 $$\langle X(s),\sigma(X_{s})dW(s)\rangle=\beta(s,\omega)dB(s),$$
where
$$\beta(s,\omega)= \Big(\displaystyle\sum_{j=1}^{k}\big(\displaystyle\sum_{i=1}^{m}X_i(s)\sigma_{ij}(X_{s})\big)^2\Big)^\frac{1}{2}.$$

By the technical result \cite[Lemma 2.2]{Scheu} and $(A_{2})$, we get that
\begin{align*}
&\mathbb{E}\big[\displaystyle\sup_{0\leq t\leq\tau}
      |\int_{0}^{t}e^{-2b(t-s)}\langle X(s),\sigma(X_{s})dW(s)\rangle|^{3}\big]\\
    \leq& 2\tau a_{3,2b}\big[\big(2L^{3}+\|\sigma(0)\|_{2}^{3}\big)\mathbb{E}\|X_{\tau}\|^{6}
     +L^{3}\mathbb{E}\|X_{0}\|^{6}+\|\sigma(0)\|_{2}^{3}\big],
\end{align*}
where
\begin{equation}\label{factorfor}
\begin{split}
  a_{3,2b}&=C_{3}\Big(\frac{3-1}{6b}\Big)^{3\alpha-1}\Gamma\Big(\frac{3\alpha-1}{3-1}\Big)^{3-1}
    \Big[\Big(\frac{1}{4b}\Big)^{1-2\alpha}\Gamma\Big(1-2\alpha\Big)\Big]^{\frac{3}{2}}\\
    &:=\Lambda\Big(\alpha\Big)\Big(\frac{1}{b}\Big)^{\frac{1}{2}}.
\end{split}
\end{equation}
where $C_{3}=\Big(\frac{81}{8}\Big)^{\frac{3}{2}}$ is  the universal positive constant in the Burkholder-Davis-Gundy inequality
(see \cite[Theorem 7.3, p.40]{Mao}) and $\Gamma(s)=\int_{0}^{+\infty}t^{s-1}e^{-t}$ is a Gamma function and $\alpha\in(\frac{1}{3},\frac{1}{2})$.

Continuing on from line (\ref{crest}) and using above inequality, we have
\begin{equation}\label{finduc}
{\small \begin{split}
    &\mathbb{E}\big[\displaystyle\sup_{0\leq t\leq\tau}e^{3t}|Z(t)|^{3}\big]\\
    \leq&\kappa\mathbb{E}|Z(0)|^{3}
     +\gamma\frac{C^{3}}{8b^{3}}e^{3\tau}\mathbb{E}\big[\|X_{0}\|^{6}+\|X_{\tau}\|^{6}\big]
     +\gamma\frac{D^{3}}{8b^{3}}e^{3\tau}\\
     &+\gamma8\epsilon^{3} e^{3\tau}2\tau a_{3,2b}\big[(2L^{3}+\|\sigma(0)\|_{2}^{3})\mathbb{E}\|X_{\tau}\|^{6}
     +L^{3}\mathbb{E}\|X_{0}\|^{6}+\|\sigma(0)\|_{2}^{3}\big]\\
    \leq&\gamma\frac{D^{3}}{8b^{3}}e^{3\tau}+16\epsilon^{3}\tau\gamma a_{3,2b}e^{3\tau}\|\sigma(0)\|_{2}^{3}\\
    & +\kappa\mathbb{E}|Z(0)|^{3}+\big(\gamma\frac{C^{3}}{8b^{3}}e^{3\tau}+16\epsilon^{3}\tau\gamma a_{3,2b}e^{3\tau}L^{3}\big)\mathbb{E}\|X_{0}\|^{6}\\
     &+\big[\gamma\frac{C^{3}}{8b^{3}}e^{3\tau}+16\epsilon^{3}\tau\gamma a_{3,2b}e^{3\tau}\big(2L^{3}+\|\sigma(0)\|_{2}^{3}\big)\big] \mathbb{E}\|X_{\tau}\|^{6}.
\end{split}}
\end{equation}
Define a Lyapunov function $V:C([-\tau,0],\mathbb{R})\rightarrow\mathbb{R}_{+}$ by
\[ V(\zeta)=\displaystyle\sup_{-\tau\leq s\leq 0}e^{3s}|\zeta(s)|^{3}.  \]
Let $\psi(s)=|\phi(s)|^{2}$, $s\in[-\tau,0]$. Therefore, (\ref{finduc}) along with the fact that
$\mathbb{E}|Z(0)|^{3}\leq \mathbb{E}V(\psi)$,
$\mathbb{E}\|X_{0}\|^{6}\leq e^{3\tau}\mathbb{E}V(\psi)$,
$\mathbb{E}\big[\displaystyle\sup_{0\leq s\leq\tau}e^{3s}|Z(s)|^{3}\big]=e^{3\tau}\mathbb{E}V(Z_{\tau})$ and
$\mathbb{E}\|X_{\tau}\|^{6}\leq e^{3\tau}\mathbb{E}V(Z_{\tau})$
imply that
\begin{equation}\label{iter1}
   \begin{split}
    &\big\{1-\gamma e^{3\tau}[\frac{C^{3}}{8b^{3}}+16\epsilon^{3}\tau a_{3,2b}(2L^{3}+\|\sigma(0)\|_{2}^{3})]\big\}
    \mathbb{E}V(Z_{\tau})\\
    \leq& \big[\kappa e^{-3\tau}
    +\gamma e^{3\tau}(\frac{C^{3}}{8b^{3}}+16\epsilon^{3}\tau a_{3,2b}L^{3})\big] \mathbb{E}V(\psi)\\
    &+\gamma (\frac{D^{3}}{8b^{3}}+16\epsilon^{3}\tau a_{3,2b}\|\sigma(0)\|_{2}^{3}).
    \end{split}
\end{equation}
We assume  that
\begin{equation*}
  \left\{
 \begin{aligned}
 &  1-\gamma e^{3\tau}[\frac{C^{3}}{8b^{3}}+16\epsilon^{3}\tau a_{3,2b}(2L^{3}+\|\sigma(0)\|_{2}^{3})]>0 \\
 &  \delta=\delta(\epsilon):=\frac{\kappa e^{-3\tau}
    +\gamma e^{3\tau}(\frac{C^{3}}{8b^{3}}+16\epsilon^{3}\tau a_{3,2b}L^{3})}{1-\gamma e^{3\tau}[\frac{C^{3}}{8b^{3}}+16\epsilon^{3}\tau a_{3,2b}(2L^{3}+\|\sigma(0)\|_{2}^{3})]}<1,
 \end{aligned}
\right.
\end{equation*}
which is equivalence to
\begin{equation}\label{bineq1}
  \left\{
 \begin{aligned}
 & \frac{C^{3}}{8b^{3}}+16\epsilon^{3}\tau(2L^{3}+\|\sigma(0)\|_{2}^{3})a_{3,2b}<\frac{1}{\gamma e^{3\tau}} \\
 &  \frac{C^{3}}{4b^{3}}+16\epsilon^{3}\tau (3L^{3}+\|\sigma(0)\|_{2}^{3})a_{3,2b}
 <\frac{1-\kappa e^{-3\tau}}{\gamma e^{3\tau}}.
 \end{aligned}
\right.
\end{equation}
By (\ref{factorfor}) and the fact that $b>1$, it suffices to show that
\begin{equation}\label{estiofb}
  \left\{
\begin{aligned}
 &b> \gamma^{2} e^{6\tau}[\frac{C^{3}}{8}+16\epsilon^{3}\tau(2L^{3}+\|\sigma(0)\|_{2}^{3})
 \Lambda]^{2}\\
  &b>\frac{\gamma^{2} e^{6\tau}[\frac{C^{3}}{4}+16\epsilon^{3}\tau (3L^{3}+\|\sigma(0)\|_{2}^{3})\Lambda]^{2}}{(1-\kappa e^{-3\tau})^{2}}.
\end{aligned}
\right.
\end{equation}
This shows that we can find $\epsilon_{0}>0$ such that for each $\epsilon\leq \epsilon_{0}$, (\ref{estiofb}) holds as long as (\ref{conb}) is satisfied. Let $\rho:=\frac{\gamma (\frac{D^{3}}{8b^{3}}+16\epsilon^{3}\tau a_{3,2b}\|\sigma(0)\|_{2}^{3})}{1-\gamma e^{3\tau}[\frac{C^{3}}{8b^{3}}+16\epsilon^{3}\tau a_{3,2b}(2L^{3}+\|\sigma(0)\|_{2}^{3})]}$.
Then for every $\epsilon\leq \epsilon_{0}$
$$\frac{\rho}{1-\delta}=
\frac{\gamma (\frac{D^{3}}{8b^{3}}+16\epsilon^{3}\tau a_{3,2b}\|\sigma(0)\|_{2}^{3})}
{1-\kappa e^{-3\tau}-\gamma e^{3\tau}[\frac{C^{3}}{4b^{3}}
+16\epsilon^{3}\tau a_{3,2b}(3L^{3}+\|\sigma(0)\|_{2}^{3})]}\leq\frac{\rho(\epsilon_{0})}{1-\delta(\epsilon_{0})}.$$
If (\ref{estiofb}) holds, then from (\ref{bineq1}) we have
\begin{equation}\label{iterfirs}
  \mathbb{E}V(Z_{\tau})\leq \delta \mathbb{E}V(\psi)+\rho.
\end{equation}
Iterating (\ref{iterfirs}), we get that
\begin{equation}\label{iterk}
  \mathbb{E}V(Z_{k\tau})\leq \delta^{k} \mathbb{E}V(\psi)+\rho(\frac{1}{1-\delta})
  \leq \mathbb{E}V(\psi)+\frac{\rho}{1-\delta}  \  \  \textrm{for all } \ k\in\mathbb{N}^{*}.
\end{equation}
This implies that
$$  \displaystyle\sup_{k\in\mathbb{N}^{*}}\mathbb{E}\|Z_{k\tau}\|^{3}\leq e^{3\tau}(\mathbb{E}V(\psi)+\frac{\rho}{1-\delta}). $$
Note that for $t\in[k\tau,(k+1)\tau]$,  $\|Z_{t}\|^3\leq \|Z_{k\tau}\|^3+\|Z_{(k+1)\tau}\|^3$, $\forall k\in\mathbb{N}$. In term of
the original process $X$, we conclude that for all $0<\epsilon\leq\epsilon_{0}$,
$$ \displaystyle\sup_{t\geq 0}\mathbb{E}\|X_{t}^{\epsilon,\phi}\|^{6} \leq 2e^{3\tau}
\left(\mathbb{E}V(\psi)+\frac{\rho}{1-\delta}\right)\leq 2e^{3\tau} \left(\|\phi\|^{6}+\frac{\rho(\epsilon_{0})}{1-\delta(\epsilon_{0})}\right). $$

By adopting the Arzel\`{a}-Ascoli tightness characterization, we can
show that the law $\{P^{\epsilon}_{t}(\phi,\cdot)\}_{t\geq 0}$ of segment process $\{X^{\epsilon,\phi}_{t}\}_{t\geq0}$
is tight in $(\mathcal{C},\mathcal{B}(\mathcal{C}))$ (see, \cite[Theorem 2.3]{Scheu}), which implies that $\{Q_t(\cdot):= \frac{1}{t}\int_0^t P^{\epsilon}_{s}(\phi,\cdot)ds\}_{t\geq 0}$ is tight in $(\mathcal{C},\mathcal{B}(\mathcal{C}))$. Then
applying Krylov-Bogoliubov theorem,  we can conclude that $\{Q_t(\cdot)\}_{t\geq 0}$ has at least a weak convergence limit
$\mu^{\epsilon,\phi}$ which is stationary for the segment process $\{X^{\epsilon,\phi}_{t}\}_{t\geq0}$.
We omit the details and refer the readers to the proof of \cite[Theorem 2.3, 3.2]{Scheu}
and \cite[Theorem 3.1.1 p.21]{DaZaErg}.
\end{proof}

For each $0<\epsilon\leq\epsilon_{0}$, the following assumption is a sufficient
condition to guarantee the uniqueness of a stationary measure.

${\rm(A_{3})}$ The diffusion matrix $\sigma\sigma^{T}$ is \emph{uniformly elliptic} in $\mathcal{C}$, i.e.,
there is a constant $\lambda>0$ such that $x^{T}\sigma(\phi)(\sigma(\phi))^{T}x\geq \lambda|x|^{2}$
for all $\phi \in\mathcal{C}$ and $x\in\mathbb{R}^{m}$.

The next result is implicitly proved in \cite[Theorem 3.1]{Hairer}.

\begin{lemma}\label{amos}
Under the assumptions ${\rm(A_{1})}$, ${\rm(A_{2})}$ and ${\rm(A_{3})}$, there exists a unique stationary
measure for {\rm(\ref{SFDEsnp})}, that is, $\mu^{\epsilon,\phi}\equiv \mu^{\epsilon}$ is independent of $\phi$ for each $0<\epsilon\leq\epsilon_{0}$.
\end{lemma}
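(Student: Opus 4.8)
The plan is to verify the hypotheses of the generalized Harris theorem~\cite[Theorem~3.1]{Hairer}---a Lyapunov (drift) condition forcing the segment process back into bounded sets, together with a one-step contraction and a minorization on those sets---after which uniqueness of the invariant measure is automatic. Existence of a stationary $\mu^{\epsilon,\phi}$ for each $\phi\in\mathcal{C}$ and each $0<\epsilon\le\epsilon_0$ is already delivered by Theorem~\ref{unbud}, so the task is purely to show that any two invariant measures coincide; equivalently, that the laws of the segments $X^{\epsilon,\phi}_t$ and $X^{\epsilon,\psi}_t$ merge as $t\to\infty$, uniformly for $\phi,\psi$ in bounded sets.

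The drift condition is read off directly from the uniform sixth-moment bound~(\ref{esti}): taking $V(\zeta)=\|\zeta\|^6$ on $\mathcal{C}$, that estimate shows $V$ decreases off a bounded region and the sublevel sets $\{V\le R\}$ are recurrent, which is exactly the Foster--Lyapunov input of the theorem. For the contraction I would use a synchronous coupling: run $X=X^{\epsilon,\phi}$ and $\widetilde X=X^{\epsilon,\psi}$ driven by the \emph{same} Wiener process and set $e(t)=\widetilde X(t)-X(t)$. It\^{o}'s formula applied to $|e(t)|^2$, combined with the dissipativity~(\ref{dissip}), the Lipschitz bound~${\rm(A_1)}$ on $g$ and~${\rm(A_2)}$ on $\sigma$, yields the delay inequality
\begin{equation*}
 \frac{d}{dt}\,\mathbb E|e(t)|^2\le -2b\,\mathbb E|e(t)|^2+2\widetilde L|A|\,\mathbb E\big(\|e_t\|\,|e(t)|\big)+\epsilon^2L^2\,\mathbb E\|e_t\|^2 .
\end{equation*}
Since the dissipation threshold~(\ref{conb}) makes $b$ dominate the memory feedback measured by $\widetilde L|A|$, a Halanay-type argument forces $\mathbb E\|X^{\epsilon,\phi}_t-X^{\epsilon,\psi}_t\|^2$ to decay exponentially, which supplies the Wasserstein contraction demanded on each sublevel set.

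To turn this contraction into genuine uniqueness one still needs the transition kernels from distinct initial data to overlap on bounded sets---the minorization (small-set) condition---and this is exactly where the uniform ellipticity~${\rm(A_3)}$ enters. Because~${\rm(A_3)}$ makes $\sigma\sigma^{T}$ invertible with the pseudo-inverse $\sigma^{T}(\sigma\sigma^{T})^{-1}$ bounded (the very quantity appearing in Lemma~\ref{SFir}), the noise acts non-degenerately on the instantaneous component $X(t)\in\mathbb R^m$; propagated over a full delay window, this fills in the entire segment, so that for any $t>\tau$ the law $P^{\epsilon}_t(\phi,\cdot)$ is non-degenerate on $\mathcal{C}$. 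A support/controllability argument then produces a probability measure $\mathbb Q$ and a constant $\kappa>0$ with $P^{\epsilon}_t(\phi,\cdot)\ge\kappa\,\mathbb Q(\cdot)$ uniformly for $\phi$ in any fixed bounded set, which is the minorization required by the theorem.

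With the Foster--Lyapunov bound from~(\ref{esti}), the contraction from the synchronous coupling, and the minorization from~${\rm(A_3)}$ all in place, \cite[Theorem~3.1]{Hairer} applies and yields a single invariant probability measure, exponentially attracting in the associated Wasserstein metric; as the limit cannot depend on the starting segment, $\mu^{\epsilon,\phi}\equiv\mu^{\epsilon}$ for every $0<\epsilon\le\epsilon_0$. The step I expect to be the main obstacle is the minorization in the infinite-dimensional path space $\mathcal{C}$ when the Brownian motion enters only through the finite-dimensional value $X(t)$: one must show that, although the noise is instantaneously $m$-dimensional, its action over $[t-\tau,t]$ renders the whole segment $X_t$ suitably non-degenerate, so that transition laws from distinct bounded initial data genuinely overlap. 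Quantifying this smoothing---rather than the comparatively routine moment and contraction estimates---is the crux, and it is precisely what the cited Hairer result encapsulates.
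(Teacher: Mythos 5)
The paper offers no argument here at all: it simply records that the statement ``is implicitly proved in \cite[Theorem 3.1]{Hairer}'', so your decision to route everything through that reference matches the paper's (non-)proof. The trouble is with your reconstruction of what that theorem does. The classical minorization you propose to verify --- a common component $P^{\epsilon}_t(\phi,\cdot)\ge\kappa\,\mathbb{Q}(\cdot)$ uniformly over $\phi$ in a bounded subset of $\mathcal{C}$ --- is precisely the condition that is unavailable for stochastic delay equations: for $t\ge\tau$ the segment $X^{\epsilon,\phi}_t$ is a whole Brownian path functional, the semigroup on $\mathcal{C}$ fails to be strong Feller, and transition probabilities from distinct initial segments are typically mutually singular, so no such $\mathbb{Q}$ exists. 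The raison d'\^etre of the cited paper is to \emph{replace} minorization by an asymptotic-coupling condition: one couples $X^{\epsilon,\phi}$ with a copy of $X^{\epsilon,\psi}$ to which a binding drift proportional to $X(t)-\widetilde X(t)$ has been added, absorbs that drift into the Brownian motion by Girsanov using the bounded pseudo-inverse $\sigma^{T}(\sigma\sigma^{T})^{-1}$ supplied by ${\rm(A_{3})}$, and checks that the shift is square-integrable on $[0,\infty)$ because the binding forces exponential decay of $|X(t)-\widetilde X(t)|$. That is where ${\rm(A_{3})}$ actually enters; it does not enter through a small-set bound. Deferring ``the crux'' to a citation that does not contain it is a genuine gap.

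Your synchronous-coupling step is also not justified as stated. A Halanay-type argument for the displayed delay inequality requires $b$ to dominate $\widetilde L|A|$ (plus the noise contribution) in the \emph{first} power, whereas {\rm(\ref{conb})} is a largeness condition built around $(\widetilde L|A|)^{6}$ and tailored to the sixth-moment bound {\rm(\ref{esti})}; the implication is at best delicate and is nowhere checked. Moreover, if the synchronous coupling really did yield a uniform Wasserstein contraction, uniqueness of the invariant measure would follow immediately and ${\rm(A_{3})}$ would be superfluous --- a strong indication that this is not the mechanism the lemma rests on.
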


\begin{remark}If $\sigma$ satisfies ${\rm(A_{2})}$ and ${\rm(A_{3})}$, then $\sigma$
admits a continuous bounded right inverse, i.e., there exists a continuous
function $\widetilde{\sigma}:\mathcal{C}\rightarrow \mathbb{R}^{k\times m}$
such that for all $\phi \in\mathcal{C}$, $\sigma(\phi)\widetilde{\sigma}(\phi)=I_{m}$,
and $\displaystyle\sup_{\phi\in\mathcal{C}}|\widetilde{\sigma}(\phi)|<\infty$.
Actually, it is easy to see that $\widetilde{\sigma}=\sigma^{T}(\sigma\sigma^{T})^{-1}$.
\end{remark}

Let $\bar{B}_R(0):=\{\phi \in \mathcal{C}:\|\phi\|\leq R\}$ for a given $R>0$.
The following result gives the tightness for the family of stationary measures
$\{\mu^{\epsilon,\phi}\}_{0<\epsilon\leq\epsilon_{0},\phi\in \bar{B}_R(0)}$.

\begin{theorem}\label{unitig} Suppose the assumptions of Theorem {\rm \ref{unbud}} hold. Then the set of stationary measures $\{\mu^{\epsilon,\phi}\}_{0<\epsilon\leq\epsilon_{0},\phi\in \bar{B}_R(0)}$ is tight. If we additionally assume that ${\rm(A_{3})}$ holds,
then the set of stationary measures $\{\mu^{\epsilon}\}_{0<\epsilon\leq\epsilon_{0}}$ is tight.
\end{theorem}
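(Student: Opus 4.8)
The plan is to prove tightness on the path space $\mathcal{C}=C([-\tau,0],\mathbb{R}^m)$ directly through the Arzel\`a--Ascoli characterization of compactness, exploiting that the sixth-moment bound (\ref{esti}) of Theorem \ref{unbud} is uniform in both the noise intensity $\epsilon\in(0,\epsilon_0]$ and the initial datum $\phi\in\bar B_R(0)$. Writing $w(\zeta,h)=\sup_{|\theta-\theta'|\le h}|\zeta(\theta)-\zeta(\theta')|$ for the modulus of continuity on $\mathcal{C}$, the standard tightness criterion for laws on a space of continuous paths reduces the first assertion to two uniform bounds: a uniform control of $\mu^{\epsilon,\phi}(\|\zeta\|>N)$ for large $N$, and the statement that $\lim_{h\to0}\sup_{\epsilon,\phi}\mu^{\epsilon,\phi}(w(\zeta,h)>\delta)=0$ for every $\delta>0$. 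Recall from the proof of Theorem \ref{unbud} that each $\mu^{\epsilon,\phi}$ is a weak limit, along some $t_n\to\infty$, of the Krylov--Bogoliubov averages $Q^{\epsilon,\phi}_{t}(\cdot)=\frac1t\int_0^t P^\epsilon_s(\phi,\cdot)\,ds$, so every estimate below will be proved at the level of $P^\epsilon_s(\phi,\cdot)$ and then transported to $\mu^{\epsilon,\phi}$ by the Portmanteau theorem applied to \emph{open} sets (which is the correct direction for upper bounds, since $\zeta\mapsto\|\zeta\|$ and $\zeta\mapsto w(\zeta,h)$ are continuous).

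The boundedness bound is immediate. Since $\{\|\zeta\|>N\}$ is open, Portmanteau gives $\mu^{\epsilon,\phi}(\|\zeta\|>N)\le\liminf_n Q^{\epsilon,\phi}_{t_n}(\|\zeta\|>N)$, and by Chebyshev together with (\ref{esti}),
$$
Q^{\epsilon,\phi}_{t}(\|\zeta\|>N)=\frac1t\int_0^t\mathbb{P}\big(\|X^{\epsilon,\phi}_s\|>N\big)\,ds\le\frac{2e^{3\tau}\big(R^6+\widetilde M\big)}{N^6},
$$
a bound independent of $t,\epsilon$ and of $\phi\in\bar B_R(0)$; hence $\mu^{\epsilon,\phi}(\|\zeta\|>N)\le 2e^{3\tau}(R^6+\widetilde M)N^{-6}$ uniformly.

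The equicontinuity estimate is the main work. I will show there are $C=C(R)$ and $\alpha\in(0,2)$ with
$$
\sup_{0<\epsilon\le\epsilon_0}\ \sup_{\phi\in\bar B_R(0)}\ \sup_{s\ge\tau}\ \mathbb{E}\big[w(X^{\epsilon,\phi}_s,h)^6\big]\le C\,h^{\alpha},\qquad 0<h\le1.
$$
For $s\ge\tau$ the whole segment lies over $[s-\tau,s]\subset[0,\infty)$, so for $u<v$ in this interval I decompose $X^{\epsilon,\phi}(v)-X^{\epsilon,\phi}(u)$ into its drift part $\int_u^v[-BX^{\epsilon,\phi}(r)+Ag(X^{\epsilon,\phi}_r)]\,dr$ and its martingale part $\epsilon\int_u^v\sigma(X^{\epsilon,\phi}_r)\,dW(r)$. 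The drift is controlled by $|{-Bx}|\le|B||x|$ and the Lipschitz/linear-growth bound $|g(\psi)|\le\widetilde L\|\psi\|+|g(0)|$ from $(A_1)$, while the martingale part is handled by the Burkholder--Davis--Gundy inequality together with the linear growth $(A_2)$; in each case the arising moments of $\|X^{\epsilon,\phi}_r\|$ are closed up by (\ref{esti}). Using H\"older in time and $\epsilon\le\epsilon_0\le1$, this yields $\mathbb{E}|X^{\epsilon,\phi}(v)-X^{\epsilon,\phi}(u)|^6\le C(R)|v-u|^{3}$ uniformly in $(\epsilon,\phi,s)$, whence Kolmogorov's continuity criterion upgrades the pointwise increment bound to the stated modulus estimate with any $\alpha$ below $2$. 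This is exactly the Arzel\`a--Ascoli tightness computation of \cite[Theorem 2.3]{Scheu}; the only new point is that every constant is uniform in $(\epsilon,\phi)$ precisely because (\ref{esti}) is. Transporting to $\mu^{\epsilon,\phi}$, openness of $\{w(\zeta,h)>\delta\}$ and the split $\frac1{t_n}\int_0^{t_n}=\frac1{t_n}\int_0^{\tau}+\frac1{t_n}\int_\tau^{t_n}$ give
$$
\mu^{\epsilon,\phi}\big(w(\zeta,h)>\delta\big)\le\liminf_n\Big(\frac{\tau}{t_n}+\frac1{t_n}\int_\tau^{t_n}\frac{\mathbb{E}[w(X^{\epsilon,\phi}_s,h)^6]}{\delta^6}\,ds\Big)\le\frac{C(R)\,h^\alpha}{\delta^6},
$$
the contribution of $[0,\tau)$ (where the segment still feels the merely continuous $\phi$) being at most $\tau/t_n\to0$. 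The right-hand side tends to $0$ as $h\to0$, uniformly in $(\epsilon,\phi)$, which is the required equicontinuity condition; combined with the boundedness bound, the tightness criterion gives the first assertion.

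For the second assertion, assumption $(A_3)$ together with Lemma \ref{amos} forces $\mu^{\epsilon,\phi}\equiv\mu^{\epsilon}$ to be independent of $\phi$ for each $\epsilon$; choosing $\phi=0\in\bar B_R(0)$ identifies $\{\mu^{\epsilon}\}_{0<\epsilon\le\epsilon_0}$ with a subfamily of the already tight family $\{\mu^{\epsilon,\phi}\}_{0<\epsilon\le\epsilon_0,\,\phi\in\bar B_R(0)}$, so it is tight. I expect the main obstacle to be the uniform increment estimate in the third paragraph, namely keeping every constant independent of $\epsilon\in(0,\epsilon_0]$ and of $\phi\in\bar B_R(0)$ and treating the segment structure correctly; once the uniformity of (\ref{esti}) is in hand, however, this reduces to routine bookkeeping along the lines of \cite{Scheu}.
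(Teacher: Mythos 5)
Your proposal is correct and follows essentially the same route as the paper's proof: Krylov--Bogoliubov averages transported by the Portmanteau theorem on open sets, Chebyshev with the uniform sixth-moment bound (\ref{esti}) for the boundedness part, a drift/martingale decomposition with Burkholder--Davis--Gundy and Kolmogorov's tightness criterion for the equicontinuity part, and Lemma \ref{amos} for the second assertion. The only cosmetic differences are that the paper controls the tail of $|\varphi(0)|$ rather than of the full sup-norm (which, with equicontinuity, already suffices for Arzel\`a--Ascoli) and leaves the $[0,\tau)$ contribution to the time average implicit, which you handle explicitly.
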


\begin{proof} Fix $R>0$. For given $0<\epsilon\leq\epsilon_{0}$  and $\phi\in \bar{B}_R(0)$, from the proof of Theorem \ref{unbud},
we know that there exists a sequence $\{T_{n}\}\rightarrow+\infty$,
depending on $\epsilon$ and $\phi$,  such that
\begin{equation}\label{KBtw}
  \frac{1}{T_{n}}\int_{0}^{T_{n}}\mathbb{P}(X^{\epsilon,\phi}_{s}\in \cdot)ds\xlongrightarrow{w}\mu^{\epsilon,\phi}(\cdot),
  \  \text{as} \ n\rightarrow\infty.
\end{equation}
Then
  \begin{align*}
&\mu^{\epsilon,\phi}\{\varphi \in \mathcal{C}:|\varphi(0)|>\lambda\}\\
   \leq&  \displaystyle\liminf_{n\rightarrow\infty}
   \frac{1}{T_{n}}\int_{0}^{T_{n}}\mathbb{P}(|X^{\epsilon,\phi}_{s}(0)|>\lambda)ds\\
   =&\displaystyle\liminf_{n\rightarrow\infty}
   \frac{1}{T_{n}}\int_{0}^{T_{n}}\mathbb{P}(|X^{\epsilon,\phi}(s)|>\lambda)ds\\
   \leq & \frac{\displaystyle\sup_{0<\epsilon\leq\epsilon_{0}}\sup_{t\geq 0}\mathbb{E}|X^{\epsilon,\phi}(t)|^{6}}{\lambda^{6}}\\
   \leq & \frac{2e^{3\tau} \big(R^{6}+\widetilde{M}\big)}{\lambda^{6}}
   \longrightarrow 0 \ \textrm{uniformly in}\ 0<\epsilon\leq\epsilon_{0}\   \textrm{and}\ \phi\in \bar{B}_R(0) \ \textrm{as} \ \lambda\rightarrow\infty.
   \end{align*}
Here we have used the fact that $\{\varphi\in \mathcal{C}:|\varphi(0)|>\lambda\}$ is an open set, (\ref{KBtw}) and
Portmanteau Theorem (\cite[Theorem 2.1, p.16 ]{Bill})
to obtain the first inequality,  the Chebyshev's inequality to the second inequality, and (\ref{esti}) to the last inequality.
This means that
\begin{equation}\label{ubound}
 \displaystyle\lim_{\lambda\rightarrow\infty}\sup_{0<\epsilon\leq\epsilon_{0},\phi\in \bar{B}_R(0)}\mu^{\epsilon,\phi}\{\varphi\in \mathcal{C}:|\varphi(0)|>\lambda\}=0.
\end{equation}

For every $\gamma>0$, by a similar argument as used in the above first inequality, we have
\begin{align*}
 &\mu^{\epsilon,\phi}
        \{\varphi\in \mathcal{C}:\displaystyle\sup_{-\tau\leq u\leq v\leq 0 \atop v-u\leq\delta}|\varphi(v)-\varphi(u)|>\gamma\}\\
   \leq & \displaystyle\liminf_{n\rightarrow\infty}
   \frac{1}{T_{n}}\int_{0}^{T_{n}}\mathbb{P}(
   \displaystyle\sup_{-\tau\leq u\leq v\leq 0\atop v-u\leq\delta}|X^{\epsilon,\phi}_{t}(v)-X^{\epsilon,\phi}_{t}(u)|>\gamma)dt\\
   =&\displaystyle\liminf_{n\rightarrow\infty}
   \frac{1}{T_{n}}\int_{0}^{T_{n}}\mathbb{P}(
   \displaystyle\sup_{-\tau\leq u\leq v\leq 0\atop v-u\leq\delta}|X^{\epsilon,\phi}(t+v)-X^{\epsilon,\phi}(t+u)|>\gamma)dt\\
   \leq & \displaystyle\sup_{t\geq\tau}\mathbb{P}(\sup_{-\tau\leq u\leq v\leq 0\atop v-u\leq\delta}|X^{\epsilon,\phi}(t+v)-X^{\epsilon,\phi}(t+u)|>\gamma)\\
   = & \displaystyle\sup_{t\geq0}\mathbb{P}(\sup_{t\leq u\leq v\leq t+\tau \atop v-u\leq\delta}|X^{\epsilon,\phi}(v)-X^{\epsilon,\phi}(u)|>\gamma)\\
   \leq & \displaystyle\sup_{t\geq0}\mathbb{P}(\sup_{t\leq u\leq v\leq t+\tau\atop v-u\leq\delta}
   \int_{u}^{v}|b(X^{\epsilon,\phi}_{s})|ds>\frac{\gamma}{2})\\
   &+\displaystyle\sup_{t\geq0}\mathbb{P}( \epsilon\sup_{t\leq u\leq v\leq t+\tau\atop v-u\leq\delta}
   |\int_{u}^{v}\sigma(X^{\epsilon,\phi}_{s})dW(s)|>\frac{\gamma}{2}).
\end{align*}
Here $b(\phi):=-B\phi(0)+Ag(\phi)$, which  maps bounded sets
in $\mathcal{C}$ into bounded sets in $\mathbb{R}^{m}$. From this fact and (\ref{esti}), it is easy to yield that
$$ \displaystyle\lim_{\delta\rightarrow0}\sup_{0<\epsilon\leq\epsilon_{0},\phi\in \bar{B}_R(0)}\sup_{t\geq0}
\mathbb{P}(\sup_{t\leq u\leq v\leq t+\tau\atop v-u\leq\delta}\int_{u}^{v}|b(X^{\epsilon,\phi}_{s})|ds>\frac{\gamma}{2})=0.$$
Let $J^{\epsilon}(v):=\epsilon\int_{0}^{v}\sigma(X^{\epsilon,\phi}_{s})dW(s)$, $v\geq 0$. The
continuity of $\sigma$ implies that $\{J^{\epsilon}(v),v\geq 0\}$ is a continuous $m$-dimensional
local martingale. Then by Burkholder-Davis-Gundy inequality, H\"{o}lder inequality, ${\rm(A_{2})}$, $C_{r}$-inequality
and Fubini's theorem, we have for any $t>s\geq 0$
\begin{align*}
\mathbb{E}|J^{\epsilon}(t)-J^{\epsilon}(s)|^{6}
   =& \mathbb{E}|\epsilon\int_{s}^{t}\sigma(X^{\epsilon,\phi}_{r})dW(r)|^{6}\\
   \leq & \widetilde{C}_{6}\Big(2e^{3\tau} \big(R^{6}+\widetilde{M}\big)+1\Big)|t-s|^{3},
\end{align*}
where $\widetilde{C}_{6}=2^{5}\epsilon^{6}_{0}C_{6}(L^{6}+\|\sigma(0)\|_{2}^{6})$ is a constant
independent of $\epsilon$. This means that
there exists some positive constant $c$ such that
$$ \displaystyle\sup_{0<\epsilon\leq\epsilon_{0},\phi\in \bar{B}_R(0)}\sup_{t\geq0}
\mathbb{E}|J^{\epsilon}_{t}(v)-J^{\epsilon}_{t}(u)|^{6}\leq c|v-u|^{3} \ \text{for all} \ u,v\in[0,\tau].  $$
From the Kolmogorov's tightness argument (see,  Karatzas and Shreve \cite[Problem 4.11, p.64]{KaraShre}), we can
deduce that
$$ \displaystyle\lim_{\delta\rightarrow0}\sup_{0<\epsilon\leq\epsilon_{0},\phi\in \bar{B}_R(0)}\sup_{t\geq0}
\mathbb{P}\Big(\displaystyle\sup_{t\leq u\leq v\leq t+\tau\atop v-u\leq\delta}|J^{\epsilon}(v)-J^{\epsilon}(u)|>\frac{\gamma}{2}\Big)=0.$$
In other words,
$$ \displaystyle\lim_{\delta\rightarrow0}\sup_{0<\epsilon\leq\epsilon_{0},\phi\in \bar{B}_R(0)}\sup_{t\geq0}
\mathbb{P}\Big(\epsilon\displaystyle\sup_{t\leq u\leq v\leq t+\tau\atop v-u\leq\delta}
|\int_{u}^{v}\sigma(X^{\epsilon,\phi}_{s})dW(s)|>\frac{\gamma}{2}\Big)=0.$$
Therefore we obtain that
\begin{equation}\label{eqcon}
\displaystyle\lim_{\delta\rightarrow0}\sup_{0<\epsilon\leq\epsilon_{0},\phi\in \bar{B}_R(0)}\mu^{\epsilon,\phi}
        \{\varphi\in \mathcal{C}:\displaystyle\sup_{-\tau\leq u\leq v\leq 0 \atop v-u\leq\delta}|\varphi(v)-\varphi(u)|>\gamma\}=0.
\end{equation}
Consequently, the conclusion follows immediately from (\ref{ubound}) and (\ref{eqcon}).
\end{proof}

\begin{example}[Hopfield Neural Network Models with Noise]
\end{example}
We consider the stochastic delayed {\rm Hopfield} equations
\begin{equation}\label{HopSFDEs}
  dX^{\epsilon}(t)=[-BX^{\epsilon}(t)+Ag(X^{\epsilon}(t-\tau))]dt+\epsilon\sigma(X_{t}^{\epsilon})dW(t)
\end{equation}
where $B={\rm diag}(b_{1},\cdots,b_{m})$, $A=(a_{ij})_{m\times m}$,
$g(x)=(g_{1}(x_{1}),\cdots,g_{m}(x_{m}))^{T}$, and $\sigma(\phi)=(\sigma_{ij}(\phi))_{m\times m}$
is an $m\times m$ matrix defined on $\mathcal{C}$.

Hopfield-type neutral networks have many applications to parallel computation and signal processing involving the solution of optimization problems. It is often required that the network should have a unique stationary solution that is globally attractive. For this purpose, we present the following.

\begin{theorem} Assume ${\rm(A_{1})}-{\rm(A_{3})}$, and
\[{\rm(A_{4})} \ \text{There exists some constant} \ \widehat{M}>0 \ \text{such that} \ |g(x)|\leq \widehat{M} \  \text{ for all} \  \ x\in\mathbb{R}^{m}.\]
If $b$ satisfies condition {\rm(\ref{conb})}, where $b=\displaystyle\min_{1\leq i\leq m}b_{i}$,
then for each $\epsilon\in(0,\epsilon_{0}]$, the system ${\rm(\ref{HopSFDEs})}$  has a unique
invariant measure $\mu^{\epsilon}$ for the segment process \{$X^{\epsilon}_{t}\}_{t\geq 0}$.
Furthermore, $\mu^{\epsilon}$ weakly converges to $\delta_{p}$ as $\epsilon\rightarrow 0$, where $p$ is a globally asymptotically stable equilibrium for differential equations {\rm(\ref{HopSFDEs})} with $\epsilon=0$.
\end{theorem}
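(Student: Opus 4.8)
The plan is to read $(\ref{HopSFDEs})$ as a special case of the delayed system $(\ref{SFDEsnp})$ and then assemble the three ingredients already prepared in this section: uniqueness of the stationary measure, tightness of the family $\{\mu^\epsilon\}$, and the abstract support statement of Theorem $\ref{mthm}$. Setting $\hat g(\phi):=g(\phi(-\tau))$, the bound $|\hat g(\phi)-\hat g(\psi)|=|g(\phi(-\tau))-g(\psi(-\tau))|\leq\widetilde{L}\,\|\phi-\psi\|$ shows $\hat g$ obeys ${\rm(A_1)}$, so $(\ref{HopSFDEs})$ is exactly $(\ref{SFDEsnp})$ with this $\hat g$; moreover, since $B={\rm diag}(b_1,\dots,b_m)$ gives $\langle x,Bx\rangle=\sum_i b_i x_i^2\geq b|x|^2$ with $b=\min_i b_i$, the dissipativity $(\ref{dissip})$ holds with precisely the $b$ occurring in $(\ref{conb})$. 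First I would invoke Theorem $\ref{unbud}$ to obtain, for every $\phi$ and every $\epsilon\in(0,\epsilon_0]$, a stationary measure together with the uniform sixth-moment estimate $(\ref{esti})$; Lemma $\ref{amos}$ (via ${\rm(A_3)}$) then makes this measure unique and independent of $\phi$, giving $\mu^\epsilon$, and Theorem $\ref{unitig}$ yields tightness of $\{\mu^\epsilon:0<\epsilon\leq\epsilon_0\}$.

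Next I would analyse the unperturbed semiflow $\Phi$ generated by $(\ref{FDE})$. A constant state $p\in\mathbb{R}^m$ is an equilibrium of $(\ref{HopSFDEs})$ with $\epsilon=0$ exactly when $p=B^{-1}Ag(p)$, and as ${\rm(A_4)}$ makes $g$ bounded, the continuous self-map $q\mapsto B^{-1}Ag(q)$ of a large closed ball of $\mathbb{R}^m$ has a fixed point by Brouwer's theorem. That $p$ (viewed as the constant function in $\mathcal{C}$) attracts every trajectory is the asserted global asymptotic stability; it can be read off from the Lyapunov--Krasovskii functional
\[
 W(y_t)=|y(t)|^2+\frac{|A|^2\widetilde{L}^2}{b}\int_{t-\tau}^{t}|y(s)|^2\,ds,\qquad y(t)=x(t)-p,
\]
whose derivative along $\dot y=-By+A[g(x(t-\tau))-g(p)]$ satisfies $\dot W\leq-\bigl(b-|A|^2\widetilde{L}^2/b\bigr)|y(t)|^2$, strictly negative whenever $b>|A|\widetilde{L}$. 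Consequently $\omega(\phi)=\{p\}$ for every $\phi\in\mathcal{C}$, the only $\phi$ obeying $\phi\in\omega(\phi)$ is $p$, and the Birkhoff center is $B(\Phi)=\{p\}$.

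The remaining work is to merge the two halves. The Hopfield drift $b(\phi)=-B\phi(0)+A\hat g(\phi)$ together with $\sigma$ meets the global Lipschitz and linear-growth requirements ${\rm(a)}$--${\rm(b)}$, so Corollary $\ref{corosfd}$ delivers the probability-convergence hypothesis $(\ref{pc})$ for the segment process on the Polish space $\mathcal{C}$, and Theorem $\ref{mthm}$ then says that every weak limit of $\{\mu^\epsilon\}$ is $\Phi$-invariant and concentrated on $B(\Phi)$. For an arbitrary sequence $\epsilon_i\to0$, tightness and Prohorov's theorem extract a subsequence with $\mu^{\epsilon_{i_k}}\overset{w}{\rightarrow}\mu$, and since ${\rm supp}(\mu)\subset B(\Phi)=\{p\}$ necessarily $\mu=\delta_p$, the Dirac mass at the constant state $p$; as every subsequential limit is this same $\delta_p$, the entire family converges, $\mu^\epsilon\overset{w}{\rightarrow}\delta_p$ as $\epsilon\to0$. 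The principal obstacle is the deterministic step: pinning the Birkhoff center to the single equilibrium $p$, which rests on the \emph{global} (not merely local) asymptotic stability of $p$ for the delayed drift. This is the one place where the strength of $(\ref{conb})$ — or an accompanying standard neural-network stability bound such as $b>|A|\widetilde{L}$ — is genuinely used, and it is exactly this global attractivity that collapses the support to the single point identifying the limit measure.
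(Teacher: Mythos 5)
Your proof is correct and follows the same overall skeleton as the paper's: identify (\ref{HopSFDEs}) as an instance of (\ref{SFDEsnp}) with the dissipativity (\ref{dissip}) holding for $b=\min_i b_i$, obtain existence, uniqueness and tightness of $\{\mu^{\epsilon}\}$ from Theorem \ref{unbud}, Lemma \ref{amos} and Theorem \ref{unitig}, get the probability convergence from Corollary \ref{corosfd}, and then feed everything into Theorem \ref{mthm} after showing the Birkhoff center of the unperturbed semiflow is the single point $p$. The one place where you genuinely diverge is the deterministic step: the paper disposes of it by citing the global attractivity theorem of Van den Driessche and Zou for delayed Hopfield networks (together with a remark comparing operator and trace norms), whereas you give a self-contained argument — existence of $p$ via a fixed point of $q\mapsto B^{-1}Ag(q)$ using the boundedness of $g$ from ${\rm(A_4)}$, and global asymptotic stability via the Lyapunov--Krasovskii functional $W(y_t)=|y(t)|^2+\frac{|A|^2\widetilde{L}^2}{b}\int_{t-\tau}^{t}|y(s)|^2ds$, whose derivative estimate is correct and yields attraction under $b>|A|\widetilde{L}$. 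Your version is more transparent and makes explicit exactly which smallness-of-delay-coupling condition is used, at the cost of having to check that (\ref{conb}) actually implies $b>|A|\widetilde{L}$; this does hold, since the tightness argument already forces $b>1$ (so the claim is immediate when $|A|\widetilde{L}\leq 1$), and when $|A|\widetilde{L}>1$ the sixth power in (\ref{conb}) dominates $|A|\widetilde{L}$ outright. It would be worth adding one sentence to that effect so the reader sees that no extra hypothesis beyond (\ref{conb}) is being smuggled in.
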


\begin{proof} Let $\widetilde{b}(\phi)=-B\phi(0)+Ag(\phi(-\tau))$, $\phi\in\mathcal{C}$. It is easy to see
that $\widetilde{b}$ is globally Lipschitz continuous in $\mathcal{C}$. The existence and uniqueness of invariant measure,
the tightness of the set $\{\mu^{\epsilon},0<\epsilon\leq\epsilon_{0}\}$ follow from Theorem \ref{amos} and
Theorem \ref{unitig}, respectively. The probability convergence condition holds by Corollary \ref{corosfd}.
Combining with assumption ${\rm(A_{4})}$, we get that  the unperturbed system has a unique equilibrium $p$
which is globally asymptotically stable (see \cite[Theorem 2.4]{DriZou}),
where we have used the fact that the operator norm is less than the trace norm. The
final assertion follows from Theorem \ref{mthm}.
\end{proof}

\section{ Appendix: Poincar\'{e} recurrence theorem for continuous dynamical system/semiflow}
In this section we give a full proof of the Poincar\'{e} recurrence theorem for a semiflow (or flow)
on the separable metric space $(M,\rho)$. The original idea is borrowed from Ma\~{n}\'{e} \cite{Ma}
and Hirsch \cite{Hirs}.

Throughout this section we assume that $\Phi:\mathbb{R}_{+}\times M\longrightarrow M$ is a mapping
with the following properties

(i) $\Phi_{\cdot}(x)$ is continuous, for all $x\in M$,

(ii) $\Phi_{t}(\cdot)$ is Borel measurable, for all $t\in\mathbb{R}_{+}$,

(iii) $\Phi_{0}={\rm id}$, $\Phi_{t}\circ\Phi_{s}(x)=\Phi_{t+s}(x)$,
for all $t,s\in \mathbb{R}_{+}$. Here $\circ$ denotes composition of mappings.

Let $x\in M$. Then the \emph{$\omega$-limit set of $x$} is defined by

$\omega(x)=
\{y\in M: \textrm{for~every~neighborhood}~U~\textrm{of}~y,~\textrm{and~for~every}~k\in\mathbb{N},
~\textrm{there~exists}~s\geq k~\textrm{such~that}~\Phi_{s}(x)\in U\}$.

We note that $\Phi_{\cdot}(x)$ is continuous, thus

$\omega(x)=
\{y\in M: \textrm{for~every~neighborhood}~U~\textrm{of}~y, \textrm{and~for~every}~k\in\mathbb{N},
~\textrm{there~exists}~~s\geq k~\textrm{and}~s\in\mathbb{Q}~\textrm{such~that}~\Phi_{s}(x)\in U\}$.

Using the semigroup properties of $\Phi$, it is easy to see that $\omega(x)=\omega(\Phi_{t}(x))$ for
every $t\in\mathbb{R}_{+}$.

Let $(M,\mathcal{B}(M),\mu)$ be a probability space and $\mu$ be a \emph{$\Phi$-invariant
probability measure}, i.e., $\mu\circ\Phi_{t}^{-1}=\mu$ for all $t\in\mathbb{R}_{+}$,
where $\mathcal{B}(M)$ is the $\sigma$-algebra of Borel sets in $M$.

The proof of the following Poincar\'e recurrence theorem follows the line of argument for
the discrete time measurable mapping case (see, e.g., Ma\~{n}\'{e} \cite[p.28-29]{Ma}).

\begin{theorem}
  Let $M$ be a separable metric space and $\mu$ be $\Phi$-invariant.
Then $\mu(B(\Phi))=1$, where $B(\Phi)=\overline{\{x\in M:x\in\omega(x)\}}$
denotes the Birkhoff center of $\Phi$.
\end{theorem}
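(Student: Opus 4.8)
The plan is to reduce the continuous-time statement to the classical Poincar\'e recurrence theorem for the time-one map $T:=\Phi_1$, and to exploit the separability of $M$ through a countable base. First I would record that, since $\mu$ is $\Phi$-invariant, the map $T=\Phi_1$ is measurable (by property (ii)) and measure-preserving, i.e. $\mu(T^{-1}E)=\mu\circ\Phi_1^{-1}(E)=\mu(E)$ for all $E\in\mathcal{B}(M)$. Because $M$ is a separable metric space it is second countable, so I may fix a countable base $\{U_n\}_{n\ge1}$ of open sets, and this is the only place separability enters.

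Next I would establish the key recurrence lemma for the (not necessarily invertible) measure-preserving map $T$: for any $A\in\mathcal{B}(M)$ the ``never returns'' set $N(A):=\{x\in A: T^k x\notin A \text{ for all } k\ge1\}$ is $\mu$-null. The argument is the standard disjointness one: the preimages $N(A), T^{-1}N(A), T^{-2}N(A),\dots$ are pairwise disjoint, for if $x\in T^{-m}N(A)\cap T^{-n}N(A)$ with $m<n$ then $T^mx\in N(A)\subset A$ while $T^{n-m}(T^mx)=T^nx\in A$, contradicting the defining property of $N(A)$; since all these sets have the common measure $\mu(N(A))$ and $\mu(M)=1$, we must have $\mu(N(A))=0$.

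Then I would upgrade ``never returns'' to ``returns only finitely often.'' Writing $\tilde A:=\{x\in A: T^kx\in A \text{ for only finitely many } k\ge1\}$, I observe that any $x\in\tilde A$ possesses a last return time $k_0\ge0$, and that $T^{k_0}x\in N(A)$, so $\tilde A\subset\bigcup_{j\ge0}T^{-j}N(A)$, a countable union of null sets; hence $\mu(\tilde A)=0$. Applying this to each base element gives $\mu(\tilde U_n)=0$. A point $x$ fails to be recurrent for $T$ precisely when some basic neighborhood $U_n\ni x$ is entered only finitely often, that is $x\in\tilde U_n$, so the non-recurrent set is $\bigcup_n\tilde U_n$ and is null. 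Therefore $\mu(\{x: x\in\omega_T(x)\})=1$, and since returns at integer times $k\to\infty$ witness membership in the semiflow $\omega$-limit set, $\{x:x\in\omega_T(x)\}\subset\{x:x\in\omega(x)\}\subset B(\Phi)$, whence $\mu(B(\Phi))=1$.

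The main obstacle I anticipate is twofold: carrying out the recurrence lemma without invertibility of $T$ — working with preimages $T^{-k}$ rather than images is exactly what keeps the disjointness and measure-preservation arguments valid in the non-invertible semiflow setting — and the passage from the ``never returns'' null set to the ``finitely many returns'' null set via the last-return-time decomposition. The reduction to the time-one map and the use of the countable base are routine once these two points are settled.
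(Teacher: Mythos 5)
Your proof is correct, and it takes a genuinely different route from the one in the paper. You reduce everything to the time-one map $T=\Phi_1$, prove the classical discrete-time Poincar\'e recurrence theorem for the (non-invertible) measure-preserving map $T$ via the pairwise disjointness of the preimages $T^{-m}N(A)$, upgrade ``never returns'' to ``only finitely many returns'' by the last-return-time decomposition $\tilde A\subset\bigcup_{j\ge 0}T^{-j}N(A)$, and then observe that integer-time returns to arbitrarily small basic neighborhoods already witness membership in the semiflow's $\omega$-limit set, so that $\{x:x\in\omega_T(x)\}\subset\{x:x\in\omega(x)\}\subset B(\Phi)$. The paper instead works directly in continuous time: for an open set $A$ it considers returns $\Phi_s\circ\Phi_t(x)\in A$ along \emph{rational} times $s\ge k$, shows the exceptional sets $C_k$ are null by comparing $\mu\bigl(\bigcup_{s\ge 0,\,s\in\mathbb{Q}}\Phi_s^{-1}(A)\bigr)$ with its image under $\mu\circ\Phi_k^{-1}$, and then uses a special countable base with $\mathrm{diam}(U_n)\to 0$ and $\bigcup_{n\ge k}U_n=M$ together with a $\limsup$ construction to pass to the recurrence set. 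Your argument is more modular and somewhat more elementary: the only continuous-time input is the trivial inclusion $\omega_T(x)\subset\omega(x)$, any countable base suffices (no shrinking-diameter or $\limsup$ device is needed), and the measure-theoretic core is the textbook disjointness argument. The paper's version proves marginally more (recurrence along rational times, uniformly shifted by an arbitrary $t$), but that extra strength is not needed for the stated conclusion. The one point worth making explicit in your write-up is measurability: each $\tilde U_n$ is contained in the Borel null set $\bigcup_{j\ge 0}T^{-j}N(U_n)$, so the complement of $\bigcup_n\bigcup_j T^{-j}N(U_n)$ is a Borel set of full measure contained in the closed set $B(\Phi)$, which is all one needs to conclude $\mu(B(\Phi))=1$ without worrying about whether the recurrence set itself is measurable.
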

\begin{proof}For given $t\in\mathbb{R}_{+}$. Let $A$ be an open set in $M$ and
$$A_{0}=\{x\in A: \forall k\in\mathbb{N},\:\exists s\geq k \:
\textrm{and}\:s\in\mathbb{Q} \:\textrm{such\:that} \:\Phi_{s}\circ\Phi_{t}(x)\in A     \}.$$
We claim that $A_{0}\in\mathcal{B}(M)$ and $\mu(A)=\mu(A_{0})$. In fact, for every
$k\in\mathbb{N}$, let $C_{k}=\{x\in A:\Phi_{s}\circ\Phi_{t}(x)\not\in A,\:\forall s\geq k\:\textrm{and}\:s\in\mathbb{Q}\}$.
It is easy to see that $A_{0}=A\setminus\displaystyle\bigcup_{k=1}^{\infty}C_{k}$.
Let $x\in A_{0}$ if and only if $x\in A$ and $\forall k\in\mathbb{N}$, $\exists s\geq k$
and $s\in\mathbb{Q}$ such that $\Phi_{s}\circ\Phi_{t}(x)\in A$ if and only if $x\in A$ and
$x\not\in \displaystyle\bigcup_{k=1}^{\infty}C_{k}$. Note that
$C_{k}=A\setminus\displaystyle\bigcup_{s\geq k,s\in\mathbb{Q}}(\Phi_{s}\circ\Phi_{t})^{-1}(A)$
which shows that $C_{k}\in\mathcal{B}(M)$ and implies that
{\small \begin{equation}\label{countg}
C_{k}\subset \displaystyle\bigcup_{s\geq 0}\Phi_{s}^{-1}(A)
\setminus\displaystyle\bigcup_{s\geq k,s\in\mathbb{Q}}\Phi_{s+t}^{-1}(A)
=\displaystyle\bigcup_{s\geq 0, s\in\mathbb{Q}}\Phi_{s}^{-1}(A)
\setminus\displaystyle\bigcup_{s\geq k,s\in\mathbb{Q}}\Phi_{s+t}^{-1}(A),
\end{equation}}
where we have used the fact that $\Phi_{\cdot}(x)$ is continuous and $A$ is an open set.
From (\ref{countg}) we have that
\begin{displaymath}
    \begin{array}{rl}
      \mu(C_{k})&\leq \mu(\displaystyle\bigcup_{s\geq 0, s\in\mathbb{Q}}\Phi_{s}^{-1}(A))-
     \mu(\displaystyle\bigcup_{s\geq k,s\in\mathbb{Q}}\Phi_{s+t}^{-1}(A))\\
     &\leq \mu(\displaystyle\bigcup_{s\geq 0, s\in\mathbb{Q}}\Phi_{s}^{-1}(A))-
     \mu\circ\Phi_{t}^{-1}(\displaystyle\bigcup_{s\geq k,s\in\mathbb{Q}}\Phi_{s}^{-1}(A))\\
     &=\mu(\displaystyle\bigcup_{s\geq 0, s\in\mathbb{Q}}\Phi_{s}^{-1}(A))-
     \mu\circ\Phi_{k}^{-1}(\displaystyle\bigcup_{s\geq 0,s\in\mathbb{Q}}\Phi_{s}^{-1}(A))\\
     &=0.
    \end{array}
\end{displaymath}
Therefore we get that $A_{0}\in\mathcal{B}(M)$ and $\mu(A_{0})\geq\mu(A)-\displaystyle\sum_{k=1}^{\infty}\mu(C_{k})=\mu(A)$.
This completes the proof of the claim.

Since, furthermore, $M$ is a separable metric space, we can find the countable basis $\{U_{n}\}_{n\in\mathbb{N}}$ of $M$ such that $\displaystyle\lim_{n\rightarrow\infty}{\rm diam}(U_{n})=0$
and $\displaystyle\bigcup_{n=k}^{\infty}U_{n}=M$ for every $k\in\mathbb{N}$. Let
$\widehat{U}_{n}=\{x\in U_{n}:\forall k\in\mathbb{N},\exists s\geq k\: \textrm{and}\:s\in\mathbb{Q}
\:\textrm{such\:that} \:\Phi_{s}\circ\Phi_{t}(x)\in U_{n}\}$ for every $n\in\mathbb{N}$.
From the above claim, we have $\widehat{U}_{n}\in\mathcal{B}(M)$ and $\mu(U_{n}\setminus\widehat{U}_{n})=0$.
Let $\widehat{M}=\displaystyle\limsup_{n\rightarrow\infty}\widehat{U}_{n}
=\displaystyle\bigcap_{k=0}^{\infty}\bigcup_{n=k}^{\infty}\widehat{U}_{n}$,
then we have
\begin{displaymath}
    \begin{array}{rl}
    \mu(M\setminus\widehat{M})&=\mu(\displaystyle\bigcup_{k=0}^{\infty}
    (M\setminus\bigcup_{n=k}^{\infty}\widehat{U}_{n} ))\\
    &=\mu(\displaystyle\bigcup_{k=0}^{\infty}
    (\bigcup_{n=k}^{\infty}U_{n}\setminus\bigcup_{n=k}^{\infty}\widehat{U}_{n} ))\\
    &\leq\mu(\displaystyle\bigcup_{k=0}^{\infty}
    \bigcup_{n=k}^{\infty}(U_{n}\setminus\widehat{U}_{n} ))\\
    &=0.
    \end{array}
\end{displaymath}
This means that $\mu(\widehat{M})=1$. Due to this fact it is sufficient
to prove that $\widehat{M}\subset\{x\in M:x\in\omega(x)\}$ which we now prove.
Let $x\in\widehat{M}$. For any $r>0$, since $\displaystyle\lim_{n\rightarrow\infty}{\rm diam}(U_{n})=0$,
$\exists N\in\mathbb{N}$ such that $\forall n\geq N$ ${\rm diam}(U_{n})<\frac{r}{3}$.
Note that $x\in\displaystyle\bigcup_{n=N}^{\infty}\widehat{U}_{n}$.
Hence there exists $n\geq N$ such that $x\in\widehat{U}_{n}\subset U_{n}$, it follows
that $U_{n}\subset B(x,r)$. This implies that $\forall k\in\mathbb{N}$,
$\exists s\geq k$ and $s\in\mathbb{Q}$ such that $\Phi_{s}\circ\Phi_{t}(x)\in U_{n}\subset B(x,r)$,
i.e., $x\in\omega(\Phi_{t}(x))=\omega(x)$.
\end{proof}

This theorem immediately implies the following assertion.

\begin{remark}
 Let ${\rm supp}(\mu)$ denote the support of $\mu$, where $\mu$ is $\Phi$-invariant.
Then ${\rm supp}(\mu)\subset B(\Phi)$.
\end{remark}

If we additionally assume that $\Phi_{t}(\cdot)$ is continuous for every $t\in\mathbb{R}_{+}$,
this is, $\Phi$ is a \emph{semiflow}. (If we can replace $\mathbb{R}_{+}$ by $\mathbb{R}$, then
$\Phi$ defines a \emph{flow}.)
Then we can prove the following assertion.

\begin{proposition}
${\rm supp}(\mu)$ is forward invariant.
If, in addition, ${\rm supp}(\mu)$ is a compact set and $\Phi_{t}(\cdot):M\rightarrow M$ is
an injective mapping (or homeomorphism), then ${\rm supp}(\mu)$ is invariant.
\end{proposition}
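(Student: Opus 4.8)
The plan is to work directly from the pointwise characterization of the support, which is available because $M$ is a separable (hence second countable) metric space: a point $x$ lies in ${\rm supp}(\mu)$ if and only if $\mu(U)>0$ for every open neighborhood $U$ of $x$, and ${\rm supp}(\mu)$ is then the smallest closed set of full measure. Write $S:={\rm supp}(\mu)$. For the first assertion I would fix $t\geq 0$ and $x\in S$ and verify $\Phi_t(x)\in S$. Given any open neighborhood $V$ of $\Phi_t(x)$, continuity of $\Phi_t(\cdot)$ makes $\Phi_t^{-1}(V)$ an open neighborhood of $x$, so $\mu(\Phi_t^{-1}(V))>0$ since $x\in S$; the invariance relation $\mu\circ\Phi_t^{-1}=\mu$ then gives $\mu(V)=\mu(\Phi_t^{-1}(V))>0$. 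As $V$ was arbitrary, $\Phi_t(x)\in S$, which proves $\Phi_t(S)\subseteq S$, i.e. forward invariance.

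For the second assertion, under the extra hypotheses I would first note that compactness of $S$ together with continuity of $\Phi_t(\cdot)$ forces $\Phi_t(S)$ to be compact, hence closed and in particular Borel measurable; this is precisely where compactness is needed, since the continuous image of a Borel set need not be Borel in general. I would then show $\mu(\Phi_t(S))=1$: because $S\subseteq\Phi_t^{-1}(\Phi_t(S))$ for any map, the invariance of $\mu$ applied to the set $\Phi_t(S)$ yields
\[
\mu(\Phi_t(S))=\mu\big(\Phi_t^{-1}(\Phi_t(S))\big)\geq\mu(S)=1.
\]
Thus $\Phi_t(S)$ is a closed set of full measure, and minimality of the support gives $S\subseteq\Phi_t(S)$. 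Combining this with the forward invariance already established yields $\Phi_t(S)=S$ for every $t\geq 0$.

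Finally I would invoke injectivity to upgrade this set-invariance to genuine invariance of the dynamics on $S$: the map $\Phi_t|_S\colon S\to S$ is a continuous bijection (surjective by $\Phi_t(S)=S$, injective by hypothesis) of the compact metric space $S$, hence a homeomorphism, so the restricted semiflow extends to a flow on $S$ and $S$ is invariant in the full two-sided sense. I expect the only real subtlety — the main obstacle — to be bookkeeping about measurability and the correct direction of the invariance identity: one must apply $\mu\circ\Phi_t^{-1}=\mu$ to the set $\Phi_t(S)$ rather than attempt to push the measure forward (which is false for non-injective maps), and exploit the inclusion $S\subseteq\Phi_t^{-1}(\Phi_t(S))$ instead of equality. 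Everything else is routine once the support is characterized pointwise.
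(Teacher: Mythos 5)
Your proof is correct, and its skeleton matches the paper's: forward invariance comes from the fact that $\Phi_t^{-1}(S)$ is a full-measure set containing $S$ in a suitable sense (you phrase this pointwise via neighborhoods, the paper notes $\Phi_t^{-1}(S)$ is closed of full measure and invokes minimality of the support — the two formulations are equivalent), and the reverse inclusion comes from $\Phi_t(S)$ being compact, hence closed and Borel, of full measure. The one genuine difference is where injectivity enters. The paper computes $\mu(\Phi_t(H))=\mu(\Phi_t^{-1}(\Phi_t(H)))=\mu(H)=1$ by using injectivity to get the \emph{equality} $\Phi_t^{-1}(\Phi_t(H))=H$; you observe that only the inclusion $S\subseteq\Phi_t^{-1}(\Phi_t(S))$, valid for any map, is needed, since it already forces $\mu(\Phi_t(S))\geq\mu(S)=1$. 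So your argument establishes the set identity $\Phi_t(S)=S$ from compactness alone, and injectivity is used only for your (strictly speaking superfluous) closing remark that $\Phi_t|_S$ is a homeomorphism. This is a mild but real sharpening of the hypothesis usage; it buys you the two-sided invariance of the support for non-injective semiflows with compact support, at no extra cost. The measurability bookkeeping you flag (applying $\mu\circ\Phi_t^{-1}=\mu$ only to the Borel set $\Phi_t(S)$, never pushing the measure forward) is exactly the right concern and is handled correctly.
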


\begin{proof} Let $H:={\rm supp}(\mu)$. The continuity of $\Phi_{t}(\cdot)$ implies
that $\Phi_{t}^{-1}(H)$
is a closed set. By the invariance of $\mu$,
$\mu(\Phi_{t}^{-1}(H))=\mu(H)=1$. This implies that
$H\subset\Phi_{t}^{-1}(H)$. Therefore
$\Phi_{t}(H)\subset H$.

The injectivity of $\Phi_{t}$ implies that $\Phi_{t}^{-1}(\Phi_{t}(H))
=H$. Thus
$$ 1=\mu(H)= \mu(\Phi_{t}^{-1}(\Phi_{t}(H)))=\mu(\Phi_{t}(H)).$$
Note that $\Phi_{t}(H)$ is a closed set (more precisely, a compact set)
from the fact that $H$ is a compact set and
$\Phi_{t}(\cdot)$ is continuous.
Therefore we have $H\subset\Phi_{t}(H)$.
\end{proof}

Following we assume that $H:={\rm supp}(\mu)$ is a closed invariant set.
Let $\Phi|_{H}$ denote the restricted semiflow.
Then by the Poincar\'{e} recurrence theorem, we obviously have

\begin{corollary} $H= B(\Phi|_{H})$.
This means that every point of $H$ is recurrent for $\Phi|_{H}$.
\end{corollary}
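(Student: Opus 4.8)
The plan is to deduce the identity $H=B(\Phi|_H)$ from the Poincar\'{e} recurrence theorem already proved above, but applied to the restricted triple $(H,\Phi|_H,\mu)$ instead of $(M,\Phi,\mu)$. One inclusion is immediate: by definition $B(\Phi|_H)=\overline{\{x\in H:x\in\omega(x)\}}$ is the closure of a subset of the closed set $H$, so $B(\Phi|_H)\subset H$. Hence only the reverse inclusion $H\subset B(\Phi|_H)$ requires an argument, and this is exactly what the recurrence theorem will supply once its hypotheses are verified for the restriction.

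First I would check that the hypotheses transfer to the restriction. Being a closed subset of the separable metric space $M$, the set $H$ is itself separable metric in the induced topology. Because $H$ is invariant, $\Phi|_H:\mathbb{R}_{+}\times H\to H$ is well defined and inherits properties (i)--(iii) from $\Phi$. It then remains to view $\mu$ as a Borel probability measure on $H$: since $\mu(H)=\mu({\rm supp}(\mu))=1$, this restriction is a genuine probability measure on $\mathcal{B}(H)$, and for any Borel $A\subset H$ one has $(\Phi_t|_H)^{-1}(A)=\Phi_t^{-1}(A)\cap H$, whence $\mu((\Phi_t|_H)^{-1}(A))=\mu(\Phi_t^{-1}(A))=\mu(A)$, using $\mu(M\setminus H)=0$ and the $\Phi$-invariance of $\mu$. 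Thus $\mu$ is $\Phi|_H$-invariant on $H$.

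Next I would identify the support of $\mu$ computed inside $H$ and then invoke the two earlier results. For $x\in H$ and any relatively open neighborhood $U\cap H$ of $x$ (with $U$ open in $M$ and $x\in U$), we have $\mu(U\cap H)=\mu(U)>0$, because $x\in{\rm supp}(\mu)$ and $\mu(M\setminus H)=0$; hence the support of $\mu$ regarded as a measure on $H$ is all of $H$. Applying the recurrence theorem to $(H,\Phi|_H,\mu)$ gives $\mu(B(\Phi|_H))=1$, and the accompanying Remark (${\rm supp}(\mu)\subset B(\Phi)$, now applied to the restricted system) yields $H={\rm supp}(\mu)\subset B(\Phi|_H)$. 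Combined with the trivial inclusion from the first paragraph, this gives $H=B(\Phi|_H)$.

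Since the steps are essentially bookkeeping, there is no deep obstacle; the one point deserving care is the coincidence of the two notions of $\omega$-limit set. For $x\in H$ the forward orbit $\{\Phi_t(x)\}_{t\geq0}$ stays in the closed invariant set $H$, so every limit of a sequence $\Phi_{s_n}(x)$ with $s_n\to\infty$ again lies in $H$; consequently the $\omega$-limit set computed with $M$-neighborhoods agrees with the one computed with $H$-neighborhoods. This is what guarantees that ``recurrent for $\Phi$ and lying in $H$'' and ``recurrent for $\Phi|_H$'' describe the same points, so that the Birkhoff center appearing in the statement is precisely the object produced by the recurrence theorem for the restricted semiflow.
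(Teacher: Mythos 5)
Your proof is correct and follows the same route as the paper: apply the Poincar\'{e} recurrence theorem (and the remark ${\rm supp}(\mu)\subset B(\Phi)$) to the restricted system $(H,\Phi|_H,\mu)$, noting that the support of $\mu$ viewed as a measure on $H$ is all of $H$, while the reverse inclusion $B(\Phi|_H)\subset H$ is trivial. The paper states this in one line; you have simply filled in the bookkeeping (transfer of hypotheses, invariance of the restricted measure, agreement of $\omega$-limit sets), all of which is sound.
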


\begin{proof} The fact that $H\subset B(\Phi|_{H})\subset H$ follows directly
from the Poincar\'{e} recurrence theorem and the definition of
the support of invariant measure $\mu$.
\end{proof}

If, in addition, $H$ is a compact set.
We refer to Hirsch \cite{Hirs} for additional properties of the support of $\mu$ in
the case of $\mu$ is ergodic.
Recall that
$\Phi$-invariant probability measure $\mu$ is said to be \emph{ergodic} if
for any $A\in\mathcal{B}(M)$ with the property $\Phi_{t}(A)=A$ for all $t\in\mathbb{R}_{+}$,
we have either $\mu(A)=0$ or $\mu(A)=1$.

A subset $A\subset M$ is an \emph{attractor} if $A$ is compact and invariant $(\Phi_{t}(A)=A)$
and contained in an open set $N\subset M$ such that
$$  \displaystyle\lim_{t\rightarrow\infty}{\rm dist}(\Phi_{t}(x),A)=0 \:\:\textrm{uniformly\:in}\: x\in N.   $$
Furthermore, if there is an attractor that contains all $\omega$-limit points, then we call
$\Phi$ is \emph{dissipative}.

A nonempty compact invariant set $A\subset M$ is called \emph{attractor-free}
if the restricted flow $\Phi|_{A}$ has no attractor other than $A$ itself.
By a result of Conley \cite{Conl} $A$ is attractor-free this
is the equivalent to $A$ is connected and every point of $A$ is chain recurrent for $\Phi|_{A}$.
The definition of chain recurrent set we refer the reader to \cite{Bena} since this notion will not
be used here. Meanwhile, a detailed discussion about this relation we refer to
Bena\"{i}m \cite[p.23]{Bena1}.

Furthermore, for this compact invariant set $H:={\rm supp}(\mu)$, the
following result is proved in Bena\"{i}m and Hirsch \cite{BenaHirs},
Hirsch \cite{Hirs}, respectively.

\begin{proposition}
Each component of $H$ is attractor-free.
In addition, if $\mu$ is ergodic, then $H$ itself is attractor-free.
\end{proposition}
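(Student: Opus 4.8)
The plan is to combine the two tools already in place: the Poincar\'{e} recurrence theorem of the Appendix, which controls $\mu$ orbit by orbit, and Conley's characterization cited above, namely that a nonempty compact invariant set is attractor-free precisely when it is connected and every one of its points is chain recurrent for the restricted semiflow. Write $H:={\rm supp}(\mu)$, which is compact and invariant. First I would record a recurrence lemma: by the Poincar\'{e} recurrence theorem $\mu(\{x: x\in\omega(x)\})=1$, so the genuinely recurrent points are dense in $H$; since the chain recurrent set of $\Phi|_{H}$ is closed and contains every recurrent point, it must equal $H$. Hence every point of $H$ is chain recurrent for $\Phi|_{H}$.

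The engine for both assertions is an attractor--repeller splitting of $\mu$. Suppose $A$ is an attractor for $\Phi|_{H}$, with dual repeller $A^{*}:=\{x\in H:\omega(x)\cap A=\emptyset\}$ and basin $W:=\{x\in H:\omega(x)\subset A\}$, so that $H=W\sqcup A^{*}$ and $W\setminus A$ consists of connecting orbits. For $\mu$-a.e. $x$ one has $x\in\omega(x)$; if moreover $x\in W$ then $x\in\omega(x)\subset A$. Therefore $\mu(W\setminus A)=0$, so $\mu$ is carried by the two disjoint compacta $A$ and $A^{*}$. Since $H={\rm supp}(\mu)$, this forces $H=A\sqcup A^{*}$, a partition of $H$ into relatively clopen invariant sets; in particular every attractor of $\Phi|_{H}$ is a union of connected components of $H$.

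For the ergodic statement this concludes matters quickly. If $\mu$ is ergodic and $A$ is an attractor of $\Phi|_{H}$, then $A$ is invariant and relatively clopen, so $\mu(A)\in\{0,1\}$ by ergodicity. A nonempty relatively open subset of a support has positive measure, while a proper clopen piece would have measure strictly below one; hence either $A=\emptyset$ or $A=H$. Thus $\Phi|_{H}$ admits no attractor other than $H$ itself, i.e. $H$ is attractor-free, and by Conley's criterion $H$ is then connected.

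For the first assertion I would fix a connected component $C$ of $H$ and apply Conley's criterion to $\Phi|_{C}$: $C$ is connected, compact and invariant, so it remains only to show that every point of $C$ is chain recurrent for $\Phi|_{C}$. Genuine recurrence transfers for free, since if $x\in C$ and $x\in\omega(x)$ the returning orbit segments stay in $C$, making $x$ recurrent, hence chain recurrent, for $\Phi|_{C}$. The main obstacle is precisely this step in full generality: recurrent points are dense in $H$ but need not be dense in the individual component $C$ (components of a compact metric set need not be open and may accumulate on one another), so a chain for $\Phi|_{H}$ issuing from $x\in C$ could a priori jump into neighbouring components. The natural way to close this gap is to confine the chains to $C$ by localizing the splitting of the second paragraph---either through the Conley decomposition of $\Phi|_{C}$ combined with an invariant measure carried by $C$, or via the ergodic decomposition of $\mu$, whose pieces have connected attractor-free supports by the case just proved. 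Making this confinement rigorous is the delicate point that the cited works of Conley and of Bena\"{i}m and Hirsch supply, and I would invoke their machinery at exactly this juncture.
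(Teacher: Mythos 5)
First, a point of reference: the paper does not actually prove this proposition; it is stated with a bare citation to Bena\"{i}m--Hirsch and Hirsch, so there is no internal argument to compare against. Judged on its own terms, your treatment of the \emph{second} assertion is complete and correct, and is more than the paper supplies: the dichotomy $H=W\sqcup A^{*}$ for an attractor $A$ of $\Phi|_{H}$, the observation that Poincar\'{e} recurrence forces $\mu(W\setminus A)=0$, the resulting clopen partition $H=A\sqcup A^{*}$, and the ergodicity argument together do show that an ergodic $\mu$ has attractor-free (hence, by Conley, connected) support. The intermediate conclusion that every attractor of $\Phi|_{H}$ is a union of components of $H$ is also correct.

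The \emph{first} assertion, however, is left with a genuine gap, which you locate accurately but whose two proposed repairs would not close. The difficulty is that a component $C$ of $H$ may have $\mu(C)=0$ and may contain no recurrent point and no support of any ergodic component of $\mu$. For instance, for the planar Hamiltonian flow of $y^{2}/2-x^{2}/2+x^{4}/4$, put $\mu=\sum_{n}2^{-n}\lambda_{1/n}$ with $\lambda_{c}$ the time-average measure on the periodic orbit $\{H=c\}$; then ${\rm supp}(\mu)$ has the figure-eight level $\{H=0\}$ as a connected component of measure zero, disjoint from every ergodic support. So there need be no invariant measure ``carried by $C$'' seeing all of $C$, and the ergodic decomposition only produces chain transitive sets lying in the other components; neither route reaches $C$. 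What is actually needed is a limit argument: approximate $z\in C$ by recurrent points $y_{k}\in H$ (dense in $H$ by your first step); each $\overline{O(y_{k})}=\omega(y_{k})$ is a compact, connected, internally chain transitive subset of $H$; after normalizing chains to have hop times in $[T,2T]$ one checks that a Hausdorff limit of internally chain transitive sets is internally chain transitive, so a subsequential limit $K_{\infty}\ni z$ is an internally chain transitive subset of $C$; finally, an internally chain transitive set meeting the basin $W$ of an attractor $A$ of $\Phi|_{C}$ must be contained in $A$, contradicting $z\in W\setminus A$. This (or an equivalent attractor-block argument) is precisely the content of the cited results of Bena\"{i}m--Hirsch and Hirsch that your proof still has to import.
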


If $\Phi$ is a strongly monotone semiflow in an ordered Banach space $M$,
let $A$ denote attractor-free, then by more detailed structural analysis of $A$,
Hirsch \cite{Hirs} points out that either $A$ is unordered, or else $A$ is contained in totally ordered,
compact arc of equilibria.

\end{document}